\documentclass[11pt, reqno]{amsart}

\usepackage[margin= 1.24in]{geometry}

\usepackage{amssymb,amsthm,amsfonts,amscd,latexsym, dsfont, color}

\usepackage[dvipsnames]{xcolor}
\usepackage{tikz}
\usetikzlibrary{arrows.meta}

\usepackage{bm}

\usepackage{multirow}

\newcommand{\T}{\mathrm{T}}
\newcommand{\N}{\mathrm{N}}

\newcommand{\im}{\mathrm{image}} 
\newcommand{\coker}{\mathrm{coker}}

\usepackage{dynkin-diagrams}
\usepackage[title]{appendix}
\usepackage[american]{babel} 

\usepackage{array}
\usepackage[citecolor=ForestGreen,linkcolor=blue,urlcolor=blue]{hyperref} 
 \usepackage{graphics}
\usepackage{wrapfig}
\usepackage{pst-node}
\usepackage{pythonhighlight}

\usepackage{tikz-cd} 

\hypersetup{colorlinks}
\usepackage[normalem]{ulem}

\usepackage{setspace} 

\usepackage{tcolorbox}

\usepackage{footnote}

\usepackage{cancel}

\newcommand{\hol}{\mathrm{hol}}

\newcommand{\pr}{\mathrm{pr}}

\usepackage{MnSymbol}
\usepackage{enumitem}

\newcommand{\Span}{\mathrm{span}}

\definecolor{bluegreen}{RGB}{5, 170, 204}

\counterwithin{equation}{section}
\definecolor{defnyellow}{RGB}{255,224,102} 
\definecolor{lightblue}{RGB}{170, 220, 255} 
\definecolor{darkblue}{RGB}{0, 125, 230}

\newcommand{\Gtwosplit}{\mathrm{G}_2'}

\newtcolorbox{bluebox}[1]{colback=lightblue,colframe=darkblue,fonttitle=\bfseries,title=#1}
\newtcolorbox{redbox}[1]{colback=red!5!white,colframe=red!75!black,fonttitle=\bfseries,title=#1}

\usepackage{pgfplots}

\usepgfplotslibrary{colormaps}
\pgfplotsset{
    compat=newest,
    colormap={mycolormap}{color=(lightgray) color=(white) color=(lightgray) } }

\definecolor{mydarkblue}{RGB}{37, 42, 200}

\definecolor{mygreen}{RGB}{0, 150, 50}

\newcommand{\vis}{\partial_{\mathrm{vis}}}
\newcommand{\Ein}{\mathrm{Ein}}

\newcommand{\Mat}{\mathsf{Mat}} 
\newcommand{\Id}{\mathrm{Id}}
 
\newcommand{\SO}{\mathrm{SO}}

\newcommand{\PSL}{\mathrm{PSL}}
\newcommand{\Gr}{\mathsf{Gr}} 
\newcommand{\Stab}{\mathsf{Stab}} 
\usepackage{mathrsfs}
\usepackage[utf8]{inputenc}
\usepackage[T1]{fontenc}
\newcommand{\R}{\mathbb R}
\newcommand{\RP}{\mathbb R \mathbb P}
\newcommand{\CP}{\mathbb C \mathbb P}
\newcommand{\C}{\mathbb C}
\newcommand{\Z}{\mathbb Z}
\newcommand{\Q}{\mathbb Q}

\newcommand{\Ha}{\mathbb{H}} 

\newcommand{\g}{\mathfrak{g}}
\newcommand{\frakk}{\mathfrak{k}} 
\newcommand{\frakp}{\mathfrak{p}} 

\newcommand{\X}{\mathbb{X}}

\newcommand{\End}{\mathsf{End}} 
\newcommand{\D}{\mathbb{D}}
\newcommand{\sphere}{\mathbb{S}}

\newcommand{\sllie}{\mathfrak{sl}}
\newcommand{\SL}{\mathrm{SL}}
\newcommand{\Sp}{\mathrm{Sp}}
\newcommand{\GL}{\mathrm{GL}}

\newcommand{\tr}{\text{tr}}
\newcommand{\id}{\text{id}}

 \newcommand{\Hom}{\mathrm{Hom}}

 \newcommand{\del}{\partial}

\usepackage{mathtools}

\newcommand{\Pho}{\mathrm{Pho}}

\newcommand{\rank}{\text{rank}}

\renewcommand{\im}{\text{im}}

\newcommand{\Flag}{\mathrm{Flag}}

\newcommand{\diag}{\mathrm{diag}}

\newcommand{\Hit}{\mathsf{Hit}}

\newcommand{\Diff}{\mathsf{Diff}}

\newtheorem{mainthm}{Theorem}
\newtheorem{maincor}{Corollary}

\theoremstyle{plain}

\newtheorem{theorem}{Theorem}[section]
\newtheorem{proposition}[theorem]{Proposition}
\newtheorem{lemma}[theorem]{Lemma}
\newtheorem{corollary}[theorem]{Corollary}

\newtheorem{example}{Example}[section]

\newtheorem{definition}[theorem]{Definition}
\newtheorem{remark}[theorem]{Remark}

\newcommand{\lh}[1]{\mathcal{F}_{1,#1-1}}

\usepackage{soul}

\usepackage{graphicx}

\begin{document}

\title{On Line-Hyperplane Structures for Hitchin Representations}
\author{Parker Evans and Andrea Tamburelli}

\begin{abstract}
Let $n \geq 4$ and $\rho: \pi_1S \rightarrow \PSL(n,\R)$ be a Hitchin representation.  
We study the topology of a cocompact domain of discontinuity $\Omega_{\rho}$ in the flag manifold $\mathcal{F}_{1,n-1}$ of line-hyperplane pairs in $\R^n$ defined by Guichard-Wienhard. In particular, we lift $\Omega$ to its $(\Z_2\times \Z_2)$-cover $\hat{\Omega}$ in the Stiefel manifold $V_2(\R^n)$. The domain $\hat{\Omega}$ is highly connected and smoothly fibers over hyperbolic space $\Ha^2$ with unknown fiber $\hat{\mathfrak{F}}$, a closed $(2n-5)$-manifold. 
We determine the homeomorphism type of $\hat{\mathfrak{F}}$ as well as its diffeomorphism type up to connected sum with a homotopy sphere. 
The classification of $\hat{\mathfrak{F}}$ involves computing both its homology and a certain differential topology invariant needed to invoke Wall's classification of highly connected odd-dimensional manifolds.
\end{abstract}

\maketitle 

\vspace{-4ex}
\tableofcontents 

\section{Introduction}

Let $S = S_g$ be a closed oriented surface of genus $g \geq 2$. Recall that \emph{Teichm\"uller space} $T(S)$ can be seen as the moduli space of marked hyperbolic structures on $S$. A hyperbolic metric on a surface $S$ is tantamount to a \emph{locally homogeneous} $(\PSL(2,\R), \Ha^2)$-structure on $S$, in the modern language of $(G,X)$-structures. 
The holonomies of the hyperbolic structures in $T(S)$ are \emph{Fuchsian}, or discrete and faithful, representations $\pi_1S \rightarrow \PSL(2,\R)$, and the locus of all Fuchsian representations comprises a connected component of the \emph{character variety} $\chi(S,\PSL(2,\R))$ of reductive representations up to conjugation. In this way, the holonomy map explicitly identifies these geometric and algebraic models of Teichm\"uller space. 

There are now many moduli spaces of $(G,X)$-structures associated to surface group representations, which are much less understood than Teichm\"uller space. To describe these analogues, 
we replace Fuchsian representations by ($G$-)\emph{Hitchin} representations $\rho:\pi_1S \rightarrow G$, where $G$ is a split real simple Lie group of higher rank, such as $\PSL(n,\R)$ for $n\geq 3$. These $G$-Hitchin representations are deformations of Fuchsian representations $\rho_0:\pi_1S\rightarrow \PSL(2,\R)$ once post-composed by the inclusion $\iota_{\pr}: \PSL(2,\R) \hookrightarrow G$ of the \emph{principal} $\PSL(2,\R)$-subgroup of $G$. When $G=\PSL(n,\R)$, for example, the inclusion $\iota_{\pr}$ is just the unique irreducible representation $\PSL(2,\R)\rightarrow \PSL(n,\R)$ up to conjugation. 
We denote the \emph{$G$-Hitchin component}, the space of Hitchin representations in the $G$-character variety $\chi(S,G)$, by $\Hit(S,G)$. Under this definition, the compelling features of $\Hit(S,G)$  are perhaps hidden. In \cite{Hit92}, Hitchin defined this space and identified interesting analogies with Teichm\"uller space. To name one:  $\Hit(S,G)$ is a cell of dimension $(2g-2)\dim(G)$. 
Another remarkable feature, later discovered, is that Hitchin representations are also discrete and faithful \cite{Lab06, FG06}. Thus, Hitchin components are often referred to as \emph{higher Teichm\"uller spaces} \cite{Wie18}. 

In the groundbreaking work \cite{GW12}, Guichard-Wienhard proved the component $\Hit(S,G)$ parametrizes \emph{some} moduli space $\mathscr{M}$ of $(G,X)$-structures on \emph{some} closed manifold $M$, where $X=G/P$ is a prescribed \emph{flag manifold} of $G$. 
Recently, \cite{AMTW25, Dav25} proved that this mysterious manifold $M$ admits a fiber bundle structure $p:M\rightarrow S$ over the original surface $S$. 
With this fact in hand, the \cite{GW12} result more precisely says the following: the holonomy $\hol: \pi_1M\rightarrow G$ of any geometric structure in $\mathscr{M}$ factors through $p_*: \pi_1M \rightarrow \pi_1S$ to obtain the form $\hol = \overline{\hol} \circ p_*$, and there is a homeomorphism $\mathscr{M}\rightarrow \Hit(S,G)$ given by taking this \emph{descended holonomy} $\overline{\hol}$. Thus, geometric structures in $\mathscr{M}$ on the fiber bundle $M \rightarrow S$ secretly encode representations of the fundamental group of the base surface $S$, and this construction provides a geometric incarnation of $\Hit(S,G)$ analogous to that of $T(S)$. 

Let us now fix $G$ and $X$.  
After seeing this remarkable `geometrization' theorem, one can't help but wonder:  
what is this manifold $M$? Indeed, $M=M_{\rho}$ is indirectly constructed in \cite{GW12} as a compact quotient $M_{\rho} = \rho(\pi_1S)\backslash \Omega_{\rho}$ of an open domain of discontinuity $\Omega_{\rho} \subset X$, whose topology is independent of the Hitchin representation $\rho$. Such domains were constructed more generally for \emph{Anosov representations} in \cite{GW12} and later \cite{KLP18}. 
Now, from the perspective of this construction, the topology of the manifold $M$ is rather unclear. 

In fact, when $\rho$ is Hitchin, the domain $\Omega_{\rho}$ admits a fiber bundle $p:\Omega_{\rho}\rightarrow \Ha^2$, which leads to a projection $p:M_{\rho} \rightarrow S$ \cite{AMTW25, Dav25}. The common fiber $\mathfrak{F}$ of these projections is known to be a closed codimension two submanifold of $X$. 
In principle, $\mathfrak{F}$ depends on the fibration $p$, but its homotopy type is fixed since $\mathfrak{F}$ is homotopy equivalent to the domain $ \Omega$. 

Recently, there has been progress towards understanding the topology of these $(G,X)$-manifolds $M$ and their fibers $\mathfrak{F}$ \cite{CTT19,DS20, ADL24, AMTW25, Dav25, DE26a, DE26, Har26}. See \cite{DE26} for a recent summary of results. On the other hand, \emph{explicit descriptions} of the moduli space $\mathscr{M}$ of $(G,X)$-structures on $M \rightarrow S$ with $G$-Hitchin descended holonomy are much more elusive, and exist only for the Lie groups $\PSL(2,\R)$, $\SL(3,\R)$, $\PSL(4,\R)$, $\SO(2,3)$, and $\Gtwosplit$ \cite{Gol90, CG93, GW08, Bar10, CTT19, NR25, RT25, DE26a}. 

\subsection{Main Results}
In this paper, we consider the case of $G=\PSL(n,\R)$ and $X = \mathcal{F}_{1,n-1}$, the flag manifold of line-hyperplane pairs $(\ell, H)$ in $\R^n$. While Hitchin representations admit a cocompact domain of discontinuity in projective space $\RP^{n-1}$ for $n=3$ and $n\geq 4$ even \cite{GW12}, for $n \geq 5$ odd they admit no such domain in any Grassmannian $\Gr_k(\R^n)$ \cite{Ste23}. On the other hand, Hitchin representations admit a cocompact domain in $\mathcal{F}_{1,n-1}$ for all $n\geq 3$ \cite{GW12}. 
This pair $G=\PSL(n,\R)$ and $X=\mathcal{F}_{1,n-1}$ is thus quite natural to consider in pursuit of a uniform and economic `geometrization'. 

The first nontrivial case is $n=3$. 
Such $(\SL(3,\R),\mathcal{F}_{1,2})$-structures were studied prior to the general work of Guichard-Wienhard by Barbot in \cite{Barb01,Barb10}. He constructed $\mathcal{F}_{1,2}$-structures on circle bundles $\sphere^1\rightarrow M_{\rho} \rightarrow S_g$ for \emph{non-Hitchin} representations $\rho:\pi_1(S_g)\rightarrow \SL(3,\R)$, including  deformations of Fuchsian representations $\rho_0:\pi_1S\rightarrow \SL(2,\R)$ under the post-composition of the reducible inclusion  $\iota_{2+1}:\SL(2,\R)\rightarrow \SL(3,\R)$. 

Now, returning to the Hitchin case, the line-hyperplane structures for $n=3$ were described recently by Nolte and Riestenberg in \cite{NR25}. The domain $\Omega_{\rho} \subset \mathcal{F}_{1,2}$ is disconnected and its quotient $M_{\rho}$ is diffeomorphic to three copies of the projective unit
tangent bundle $\mathbb{P}(\T^1S)$. 
In \cite{NR25}, a detailed description of one component is provided, including the distinguishing synthetic features of these Hitchin $(\SL(3,\R), \mathcal{F}_{1,2})$-structures, in a similar spirit to \cite{GW08}. That is to say, both $M$ and $\mathscr{M}$ are fairly well understood for $n=3$. However, before the present work, nothing was known for $n \geq 4$ about the fiber bundle structure of $M$.

Let us now describe the new results. For $\rho:\pi_1S \rightarrow \PSL(n,\R)$ Hitchin and $n\geq 4$, we lift the \cite{GW12}-domain $\Omega_{\rho} \subset \mathcal{F}_{1,n-1}$ to a $(\Z_2\times \Z_2)$-cover $\hat{\Omega}_{\rho}$ in the Stiefel manifold $V_2(\R^n)$. When $n\geq 5$, the domain $\hat{\Omega}_{\rho}$ is the universal cover of $\Omega_{\rho}$. Again by \cite{AMTW25, Dav25}, $\hat{\Omega}_{\rho}$ also fibers over $\Ha^2$, with fiber $\hat{\mathfrak{F}}$ some closed manifold of dimension $2n-5$. 

It turns out that $\hat{\mathfrak{F}}$ is \emph{highly connected}. Such manifolds were classified by Wall in the odd-dimensional case via surgery theory in \cite{Wal67}. 
The essential difficulty to invoke his results in our case is the computation of a seemingly intractable invariant $S\beta$, described shortly in Theorem \ref{Thm:WallWilkens}. Now, by applying Wall's classification, we obtain our main theorem, which determines $\hat{\mathfrak{F}}$ topologically, and also smoothly up to finite ambiguity. 

\begin{mainthm}\label{thm:A}
Let $\rho:\pi_1S \rightarrow \PSL(n,\R)$ be a Hitchin representation. Denote $\hat{\Omega}_{\rho} \subset V_2(\R^n)$ as the $(\Z_2\times \Z_2)$-cover of the domain $\Omega_{\rho} \subset \mathcal{F}_{1,n-1}$. Consider the smooth manifold $Y_n$ given by 
\begin{align*}
    Y_n:= \#_{i=1}^{k(n)}(\sphere^{n-3}\times \sphere^{n-2}), 
\end{align*}
where $k(n)=n-1$ when $n$ is even and $k(n)=2n-1$ when $n$ is odd. 

Then there is a smooth fiber bundle projection 
$p: \hat{\Omega}_{\rho} \rightarrow \Ha^2$ with fiber $\hat{\mathfrak{F}}$ such that 
\begin{enumerate}[label=(\roman*)]
    \item When $n\geq 6$, $\hat{\mathfrak{F}}$ is diffeomorphic to $ Y_n \#\Sigma$, for some homotopy $(2n-5)$-sphere $\Sigma$. 
    \item When $n=5$, $\hat{\mathfrak{F}}$ is diffeomorphic to $Y_5 = \#_{i=1}^9(\sphere^2\times \sphere^3)$. 
    \item When $n=4$, $\hat{\mathfrak{F}}$ is diffeomorphic to the double of the $T(2,6)$-link complement in $\sphere^3$. 
\end{enumerate}
\end{mainthm}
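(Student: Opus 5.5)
The fiber bundle structure comes first. By \cite{AMTW25, Dav25}, the domain $\Omega_\rho$ smoothly fibers over $\Ha^2$. I would pull this fibration back along the $(\Z_2\times\Z_2)$-cover $\hat\Omega_\rho\to\Omega_\rho$. Since $\Ha^2$ is contractible, $\hat\Omega_\rho\cong \Ha^2\times\hat{\mathfrak F}$, so $\hat{\mathfrak F}$ is a closed $(2n-5)$-manifold homotopy equivalent to $\hat\Omega_\rho$. Because the topology of $\hat\Omega_\rho$ is independent of $\rho$ within the Hitchin component, I may take $\rho$ Fuchsian, i.e.\ $\iota_{\pr}\circ\rho_0$. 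The Guichard--Wienhard domain is then explicit: it is the set of $(\ell,H)$ such that for every $x\in\partial\Ha^2$ the pair is not in a bad position relative to the osculating flag $\xi(x)$.

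The next step is to compute the homotopy type of $\hat\Omega$. I would describe the complement $K=V_2(\R^n)\setminus\hat\Omega$ as a union over $\partial\Ha^2\cong\sphere^1$ of Schubert-type subvarieties of $V_2(\R^n)$. This presents $K$ as the total space of a (stratified) bundle over the circle. Alexander/Lefschetz duality in $V_2(\R^n)$ then gives the homology of $\hat\Omega$ from that of $K$. Equivalently, one can use a Morse/Mayer--Vietoris argument with the $\PSL(2,\R)$-equivariant structure, counting cells via the weights of the irreducible representation $\R^n=\mathrm{Sym}^{n-1}\R^2$. The expected outcome is that $\hat\Omega$ is $(n-4)$-connected with $H_{n-3}\cong\Z^{k(n)}$ and torsion-free homology otherwise concentrated as in $Y_n$. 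The parity split in $k(n)$ should come from the number of relevant components and orientation behavior of the bad loci, which differ for $n$ even and odd. High connectivity for $n\ge5$ follows because $V_2(\R^n)$ is $(n-3)$-connected and $K$ has codimension $\geq n-2$, so general position applies.

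Having shown $\hat{\mathfrak F}$ is $(n-4)$-connected of dimension $2(n-3)+1$ with free $H_{n-3}$, I would invoke Wall's classification (Theorem~\ref{Thm:WallWilkens}). Such a manifold is determined up to connected sum with a homotopy sphere by the rank of $H_{n-3}$ together with the invariant $S\beta$, which lies in $\Hom(H_{n-3},\pi_{n-3}(\SO))$ or is the relevant normal-bundle data for the embedded $(n-3)$-spheres. I expect the main obstacle to be showing $S\beta=0$, i.e.\ that generators of $H_{n-3}$ are represented by embedded spheres with trivial (stably trivial) normal bundle. My first attempt would be to realize the generators concretely inside $V_2(\R^n)$ as orbits or linear spheres, for instance $\{(v,w): v\in \sphere(L), w \text{ fixed}\}$ for suitable subspaces $L$ adapted to the flag. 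I would then observe that their normal bundles in $\hat\Omega$, which is open in the parallelizable-up-to-stable $V_2(\R^n)$, restrict from stably trivial bundles. Since $T\hat\Omega=T\hat{\mathfrak F}\oplus\underline{\R}^2$ and $V_2(\R^n)$ is stably parallelizable, $\hat{\mathfrak F}$ is stably parallelizable. In the ranges where Wall's invariant reduces to a stable one, this already forces it to vanish, and the remaining unstable cases would be handled by the explicit spheres. Vanishing of $S\beta$ identifies $\hat{\mathfrak F}$ with $Y_n\#\Sigma$ for $n\ge6$, giving (i).

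The small cases need separate arguments. For $n=5$, $\hat{\mathfrak F}$ is a simply connected closed $5$-manifold with $H_2\cong\Z^9$ torsion-free and $w_2=0$ (by stable parallelizability). Smale's classification then gives $\#_9(\sphere^2\times\sphere^3)$, and there are no exotic $5$-spheres, which gives (ii). For $n=4$, $\hat{\mathfrak F}$ is a $3$-manifold, and Wall's theory is unavailable. Instead I would directly identify $\hat\Omega\subset V_2(\R^4)\simeq \T^1\sphere^3$ and its fiber. The fiber should split along a torus-boundary decomposition into two copies of the complement of the fibered link formed by the bad loci, which I expect to be the $T(2,6)$ torus link (the $(2,6)$ pattern coming from the Veronese curve in $\RP^3$ lifted to $\sphere^3$). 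I would then verify the gluing is the identity double, using the symmetry exchanging line and hyperplane (duality), which gives (iii).
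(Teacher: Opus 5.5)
Your architecture matches the paper's: reduce to the Fuchsian case, show the fiber is $(n-4)$-connected, stably parallelizable, with torsion-free homology of the right rank, then use Wall for $n\ge 6$, Smale for $n=5$, and a direct argument for $n=4$. Your general-position argument for connectivity and your derivation of stable parallelizability from $\T\hat\Omega\cong\T\hat{\mathfrak F}\oplus\varepsilon^2_{\R}$ are fine.

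\textbf{The $S\beta$ step is misformulated.} With $m=n-3$, the invariant $S\beta$ lives on $H_{m+1}=H_{n-2}(\hat{\mathfrak F})$, i.e.\ on embedded $(n-2)$-spheres. It records the \emph{once}-stabilized normal bundle in the unstable group $\pi_{n-3}(\SO(n-2))$. Your formulation, embedded $(n-3)$-spheres with trivial normal bundle, is Wall's $\alpha$, and that vanishes automatically by stable parallelizability.

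For $n$ even this slip is harmless: $S\beta$ is detected by its stabilization (Proposition~\ref{Prop:CompareBetas}(ii)), and the paper argues exactly as you do. For odd $n\ge 7$ with $n\neq 9$ it is not. There, $\pi_{n-3}(\SO(n-2))\to\pi_{n-3}(\mathbf{SO})$ has kernel $\Z_2$, generated by the clutching class of $\T\sphere^{n-2}$. Stable parallelizability says nothing about this component, and the paper's example $\T^1\sphere^{m+1}$ versus $\sphere^{m+1}\times\sphere^m$ shows it can be nonzero for parallelizable highly connected manifolds.

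Your fallback cannot detect it either. The normal bundle of a sphere of $\hat{\mathfrak F}$ inside $\hat\Omega\cong\hat{\mathfrak F}\times\R^2$ is the twice-stabilized bundle. That bundle is in the stable range and hence automatically trivial, so no computation in $\hat\Omega$ sees the $\Z_2$. You need explicit embedded generators of $H_{n-2}$ inside the fiber, and control of their normal bundles there.

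\textbf{What the paper uses that you lack: a concrete model of the fiber.} This is the almost-fibration $p(x,y)=x$ of Lemma~\ref{Lem:StructureLemma}. Its fibers are $\sphere(\mathcal R^\perp)\cong\sphere^{n-4}$ away from $r(n)$ circles of pencil eigenvectors, and $\sphere^{n-3}$ over those circles. The model supplies two families of generators.
\begin{itemize}
\item Lifts $\Sigma_i^*$ of separating spheres $\Sigma_i\subset S_{gen}$. Vanishing of $S\beta$ on them uses that $\mathcal R^\perp\oplus\varepsilon^1_{\R}$ extends over a neighborhood of each singular circle (Lemma~\ref{lem:trivial_Rperp}), plus Lemma~\ref{lem:section}. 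For the non-liftable circles, the clutching $(-\mathrm{id}_{\R^{n-3}})\oplus 1$ has positive determinant precisely because $n$ is odd.
\item Spheres $Q_i$ glued from a hemisphere bundle $\sphere^1\times\D^{n-3}$ over a singular circle and $\D^2\times\sphere^{n-4}$ over a spanning disk. After smoothing and checking embeddedness, one builds an $(n-4)$-frame of $\N Q_i^s\oplus\varepsilon^1_{\R}$ and uses Adams' theorem to exclude $\T\sphere^{n-2}$ (Theorem~\ref{thm:nightmare_spheres}).
\end{itemize}

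\textbf{The same missing model leaves your other steps unproved.}
\begin{itemize}
\item The Betti numbers $k(n)$ and torsion-freeness are only announced as ``expected''. Torsion-freeness is needed both for Wall's simplified classification and for Smale's theorem at $n=5$. A Lefschetz-duality route would require $\check H^{n-1}$ of the thickening $\hat K_\Lambda$, a bundle over $\partial\Ha^2$ whose fiber is a union of $n-1$ products of spheres, together with its monodromy. None of this is carried out.
\item For $n=4$, the splitting into two link complements comes from $p$: it has $\sphere^0$-fibers over $\sphere^3\setminus S_{sing}$ and $\sphere^1$-fibers over the eigenvector link $S_{sing}$. The paper reads off the identity gluing from the limits in Lemma~\ref{Lem:Replace}, not from duality. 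Without $p$, the $T(2,6)$ link and the identity double remain guesses.
\end{itemize}
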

\setcounter{maincor}{1}

\begin{maincor}\label{cor:B}
Let $n\geq 6$. The fiber $\hat{\mathfrak{F}}$ of $p$ is homeomorphic to $Y_{n}$. 
\end{maincor}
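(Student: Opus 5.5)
Corollary~\ref{cor:B} follows from part (i) of Theorem~\ref{thm:A} once we know that connected sum with a homotopy sphere does not change the homeomorphism type. So I will assume Theorem~\ref{thm:A}(i): for $n\geq 6$ there is a diffeomorphism $\hat{\mathfrak{F}}\cong Y_n\#\Sigma$ for some homotopy $(2n-5)$-sphere $\Sigma$. It remains to produce a homeomorphism $Y_n\#\Sigma\cong Y_n$.

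\textbf{Step 1: $\Sigma$ is a topological sphere.} Since $n\geq 6$, we have $2n-5\geq 7\geq 5$. The topological generalized Poincar\'e conjecture (Smale's $h$-cobordism theorem in the smooth category, or Newman's engulfing in the topological category) therefore applies, and $\Sigma$ is homeomorphic to $\sphere^{2n-5}$. Better still, $\Sigma$ is obtained by gluing two closed $(2n-5)$-disks along a diffeomorphism $\phi$ of $\sphere^{2n-6}$. The Alexander trick, coning $\phi$ radially, extends $\phi$ to a homeomorphism of the disk. This gives an explicit homeomorphism $\Sigma\to \sphere^{2n-5}$ which restricts to the identity on a chosen disk $D\subset \Sigma$.

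\textbf{Step 2: connected sum with $\sphere^{2n-5}$ is trivial.} Perform the connected sum $Y_n\#\Sigma$ by removing the interior of the disk $D$ from $\Sigma$ and of some disk from $Y_n$, then gluing. Choose the homeomorphism $h:\Sigma\to\sphere^{2n-5}$ from Step 1 to be the identity near $\partial D$. Then $h$ restricted to $\Sigma\setminus \mathrm{int}\,D$ is a homeomorphism onto a closed disk $\sphere^{2n-5}\setminus\mathrm{int}\,D$, compatible with the gluing maps. Extending by the identity on the $Y_n$ side gives a homeomorphism
\[
Y_n\#\Sigma \cong Y_n\#\sphere^{2n-5}\cong Y_n .
\]
The last homeomorphism holds because gluing a disk back into the puncture recovers $Y_n$.

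Composing with the diffeomorphism of Theorem~\ref{thm:A}(i) gives $\hat{\mathfrak{F}}\approx Y_n$ as topological manifolds.

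The only potential obstacle is making sure the connected sum operation is well defined up to homeomorphism independently of the choices of disks and orientations. Here $Y_n$ is orientable, and $\Sigma$ admits an orientation-reversing self-homeomorphism, since it is homeomorphic to a sphere. This settles the orientation issue, and disk independence is standard (the disk theorem, in its topological version via Step 1). No exotic-structure information is needed topologically. The smooth ambiguity lies exactly in the group $\Theta_{2n-5}$, which is invisible to homeomorphism type.
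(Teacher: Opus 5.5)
Your argument is correct and is essentially how the paper obtains Corollary~\ref{cor:B}. The paper gives no separate proof: the corollary is read off from Theorem~\ref{thm:A}(i) together with Smale's result (cited when $\Theta_{2n-5}$ is interpreted as the smooth structures on the topological sphere) that a homotopy $(2n-5)$-sphere is homeomorphic to the standard one, so connected sum with it does not change the homeomorphism type. You supply the standard details the paper leaves implicit, namely the twisted-sphere decomposition and the Alexander trick. Your final orientation discussion is unnecessary: $\Sigma$ minus an open disk is itself a disk, and any boundary gluing of a disk onto $Y_n$ minus an open ball extends over the disk by coning.
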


Recall that a \emph{homotopy $m$-sphere} $\Sigma$ is a closed smooth $m$-manifold that is homotopy equivalent to $\sphere^m$. 
By deep work of Kervaire-Milnor and Smale, the set $\Theta_m$ of all homotopy $m$-spheres up to diffeomorphism, an abelian group under connected sum, is \emph{finite} for $m\geq5$ \cite{KM63, Sma62a}. 
Thus, Theorem \ref{thm:A} indeed determines $\hat{\mathfrak{F}}$ smoothly up to finite ambiguity. 

As Theorem \ref{thm:A} makes clear, the cases $n=4$ and $n=5$ are exceptional. Indeed, $n=4$ is the unique case in which $\hat{\mathfrak{F}}$ is not simply connected. For details, see  $\S$\ref{Sec:Intro:n=4}. For further remarks on $n=5$, when $\hat{\mathfrak{F}}$ is 5-dimensional, see $\S$\ref{Sec:Intro:n=5}. 

It is now known that an odd-dimensional sphere $\sphere^{m}$ admits a unique smooth structure exactly when $m\in \{1,3,5,61\}$, with the final case $m=61$ addressed in \cite{WX17}. Thus, $\Theta_{61}$ is trivial. Correspondingly, we conclude the smooth structure of $\hat{\mathfrak{F}}$ in another case.  

\begin{maincor}\label{cor:C}
When $n=33$, the fiber $\hat{\mathfrak{F}}$ of $p$ is diffeomorphic to $\#_{i=1}^{65}(\sphere^{30}\times \sphere^{31})$. 
\end{maincor}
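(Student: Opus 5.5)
Corollary~\ref{cor:C} is the specialization of Theorem~\ref{thm:A}(i) to $n=33$. The plan is to check that the numerical data line up, and then use that the exotic-sphere ambiguity vanishes in the relevant dimension.

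First, $n=33\geq 6$, so Theorem~\ref{thm:A}(i) applies. It gives a diffeomorphism $\hat{\mathfrak{F}}\cong Y_{33}\#\Sigma$ for some homotopy $(2n-5)$-sphere $\Sigma$. We have $2n-5=61$. Since $33$ is odd, $k(33)=2\cdot 33-1=65$. The sphere dimensions are $n-3=30$ and $n-2=31$. Hence
\[
Y_{33}=\#_{i=1}^{65}(\sphere^{30}\times\sphere^{31}),
\]
a closed smooth $61$-manifold, consistent with $\dim\hat{\mathfrak{F}}=2n-5$.

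Second, we remove $\Sigma$. By the result of \cite{WX17} recalled just before the statement, $\sphere^{61}$ has a unique smooth structure, so $\Theta_{61}=0$. The homotopy sphere $\Sigma$ is therefore diffeomorphic to the standard $\sphere^{61}$. Connected sum with the standard sphere is the identity up to diffeomorphism, since it is the neutral element of the connected-sum operation on smooth manifolds. Thus $Y_{33}\#\Sigma\cong Y_{33}$, which is the claimed diffeomorphism.

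There is no real obstacle here; all the substance lies in Theorem~\ref{thm:A}(i) and in the deep input $\Theta_{61}=0$. The only point needing care is the arithmetic of $k(n)$, $2n-5$, $n-3$ and $n-2$ at $n=33$.
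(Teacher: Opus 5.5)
Your proposal is correct and follows the paper's own argument: specialize Theorem~\ref{thm:A}(i) to $n=33$, so that $k(33)=65$, the spheres are $\sphere^{30}$ and $\sphere^{31}$, and $\dim\hat{\mathfrak{F}}=61$. Then use $\Theta_{61}=0$ from \cite{WX17} to discard the homotopy sphere $\Sigma$.
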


To rephrase Theorem \ref{thm:A}, the quotient $M_{\rho}= \rho(\pi_1S)\backslash \Omega_{\rho}$ fibers over $S$ with smooth fiber $\mathfrak{F}$ that is a free $(\Z_2\times \Z_2)$-quotient of $\hat{\mathfrak{F}}\cong Y_n \#\Sigma$. 

In the next portion of the introduction, we introduce our techniques and explain our decision to work upstairs in the Stiefel manifold, where the more natural results are obtained.

\subsection{Techniques and Further Discussion}\label{Sec:Technique}
One of the main contributions of this work is to employ techniques from the realm of highly connected manifolds to the study of geometric structures associated to surface group representations. 

The fiber $\hat{\mathfrak{F}}$ we seek is a priori known to be a closed codimension two submanifold of $V_2(\R^n)\cong \T^1\sphere^{n-1}$. It is well known that the Stiefel manifold $V_2(\R^n)$ is $(n-3)$-connected. A convenient feature of the domain $\hat{\Omega}$ is that its complement $\hat{K}$ admits the structure of a finite CW complex and hence has bounded cohomological dimension. Using the homotopy equivalence $\hat{\mathfrak{F}} \simeq \hat{\Omega}$, one finds that $\hat{\mathfrak{F}}$ is $(n-4)$-connected. Thus, just like the Stiefel manifold, $\hat{\mathfrak{F}}$ is a \emph{highly connected (odd) manifold}, namely an $(m-1)$-connected $(2m+1)$-manifold. This key fact allows us to apply Wall's classification of such manifolds achieved in the series \cite{Wal63c, Wal63a, Wal63b, Wal65, Wal66, Wal67}. On the other hand, the original fiber  $\mathfrak{F}=\hat{\mathfrak{F}}/(\Z_2\times \Z_2)$, is not highly connected, and cannot currently be understood through such a classification; see $\S$\ref{Subsec:Cover} for further discussion. 

We now outline the main steps involved in the proof of Theorem \ref{thm:A}, namely the computation of the invariants needed to apply Wall's classification.  

The starting point is to realize the fiber $\hat{\mathfrak{F}}$ as a \emph{base of pencil} as in \cite{Dav25}. In many similar cases, such fibers are \emph{themselves fiber bundles}, often sphere bundles; see $\S$\ref{Sec:RelatedWork}.
However, currently, this is not quite the case.
Instead, we prove a \emph{Structure Lemma} that realizes $\hat{\mathfrak{F}}$ as the union of a pair of fiber bundles. 
More precisely, we study a smooth surjective map $p: \hat{\mathfrak{F}} \rightarrow \sphere^{n-1}$, which decomposes both the total space $\hat{\mathfrak{F}}$ and the base $\sphere^{n-1}$ into pieces: a ``generic locus'' $\hat{\mathfrak{F}}_{gen}\rightarrow S_{gen}$, and a ``singular locus'' $\hat{\mathfrak{F}}_{sing} \rightarrow S_{sing}$. That is, we split $\hat{\mathfrak{F}} = \hat{\mathfrak{F}}_{gen} \sqcup \hat{\mathfrak{F}}_{sing}$ and $\sphere^{n-1}=S_{gen} \sqcup S_{sing}$. The map $p$ restricts to an honest fiber bundle with fiber a sphere on the generic locus and the singular locus, though the sphere fibers are of different dimensions for the two fiber bundles. The miracle here is that the failure of the `projection' $p$ to be a submersion is sufficiently tame so as to allow us to understand $\hat{\mathfrak{F}}$ as a sort of bundle nonetheless. Using the Structure Lemma, with only the Mayer-Vietoris and Gysin exact sequences, and a few additional geometric insights, we can compute the homology of $\hat{\mathfrak{F}}$. The important feature that emerges from the calculation is that $H_*(\hat{\mathfrak{F}},\Z)$ is torsion-free.

Of course, in general, the homology $H_*(Y)$ of a topological space $Y$ is too weak of an invariant to classify $Y$ up to homeomorphism or even up to homotopy type. However, in the presence of a generous amount of connectivity, one might not be far off. Indeed, this is exactly the purpose of Wall's classifications of highly connected manifolds \cite{Wal61, Wal67}. 

To finish our proof sketch of Theorem \ref{thm:A}, we describe Wall's classification\footnote{We note that Wall's student Wilkens handled the missing cases $m \in \{3,7\}$ unaddressed by Wall in \cite{Wil72}.} in the odd-dimensional case under the two additional hypotheses of  
torsion-free homology and \emph{stably trivial} tangent bundle, which $\hat{\mathfrak{F}}$ satisfies. 

\begin{theorem}[The Classification: Simplified Invariants {\cite{Wal67, Wil72}}]\label{Thm:WallWilkens}
Let $m \geq 3$ and $P^{2m+1}$ be a closed smooth $(m-1)$-connected manifold with stably trivial tangent bundle and torsion-free integer homology. Then $P$ is classified smoothly up to connected sum with a homotopy $(2m+1)$-sphere by an invariant
\[ S\beta \in \Hom\big( H_{m+1}(P,\Z), \pi_{m}(\SO(m+1)) \big). \] 
\end{theorem}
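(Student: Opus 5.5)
This statement is a specialization of Wall's classification of $(m-1)$-connected $(2m+1)$-manifolds \cite{Wal67}, together with Wilkens' treatment of $m\in\{3,7\}$ \cite{Wil72}. The plan is to start from Wall's full invariant package and show that the two extra hypotheses collapse it to the single invariant $S\beta$.

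\textbf{Step 1: Wall's general invariants.} For closed smooth $(m-1)$-connected $P^{2m+1}$ with $m\ge 3$, Wall proves that $P$ is determined up to connected sum with a homotopy sphere by three pieces of data:
\begin{itemize}
\item the group $H=H_m(P)$, which by Poincar\'e duality and universal coefficients is also $H_{m+1}(P)$ modulo torsion;
\item the torsion linking form $b$ on $\operatorname{Tors} H_m(P)$;
\item the map $\alpha\colon H_{m+1}(P)\to \pi_m(\SO(m+1))$, or equivalently the pair $(\beta, S\beta)$.
\end{itemize}
The map $\alpha$ assigns to an embedded $(m+1)$-sphere (realized by Hurewicz plus Haefliger/Whitney embedding in the metastable range) the clutching function of its normal bundle. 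Here $S\colon \pi_m(\SO(m+1))\to\pi_m(\SO)$ is stabilization, and $S\alpha$ is the stable tangent bundle restricted to $(m+1)$-spheres. Wall also requires a quadratic compatibility relation linking $\alpha$, $b$, and the homotopy of $\SO(m+1)$. For $m\in\{3,7\}$ the Hopf-invariant-one phenomena modify this relation, and Wilkens supplies the corrected statement.

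\textbf{Step 2: Apply the hypotheses.} Torsion-freeness makes $\operatorname{Tors}H_m(P)=0$, so $b$ is empty, and $H_m\cong H_{m+1}$ is free of rank determined by the Betti number. Each embedded $(m+1)$-sphere then has normal bundle $\nu$ satisfying $\nu\oplus T\sphere^{m+1}=TP|$. Stable triviality of $TP$ therefore forces $\nu$ to lie in the kernel of $\pi_m(\SO(m+1))\to\pi_m(\SO)$, which is cyclic and generated by the tangent bundle of $\sphere^{m+1}$ (suitably interpreted in the exceptional cases). This should leave exactly one unconstrained datum, the invariant the statement calls $S\beta\in\Hom(H_{m+1}(P),\pi_m(\SO(m+1)))$. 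The rank of the free group $H_{m+1}$ is itself recovered from the domain of this homomorphism.

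\textbf{Step 3: Realization and uniqueness.} Conversely, two such manifolds with isomorphic $S\beta$ are related by a sequence of handle moves. Wall's theorem gives a diffeomorphism of their handlebody cores, that is, of $P$ minus a disk, so $P$ is determined up to $\#\Sigma$.

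\textbf{Main obstacle.} The expected main obstacle is matching notation: identifying precisely which of Wall's invariants survives the hypotheses and checking that it lands in $\pi_m(\SO(m+1))$ rather than in the stable group. The delicate points are the cases where $m$ is even versus odd, and the cases $m\in\{3,7\}$ handled by Wilkens. I would settle this by reading off Wall's classification table case by case and citing the corresponding statements in \cite{Wal67, Wil72}.
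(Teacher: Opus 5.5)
Your architecture matches the paper's: specialize Wall's invariant system using the two hypotheses, then remove a ball, apply Wall's Theorem~7 to the almost-closed manifold, and reglue. Your Step~3 is exactly the paper's argument. But Steps~1--2, which carry all the content, misstate Wall's invariants and then only assert the reduction.

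What you call $\alpha\colon H_{m+1}(P)\to\pi_m(\SO(m+1))$ is the paper's $S\beta$, and your ``$S\alpha$'' is Wall's $\hat\beta$. Wall's list does not contain $S\beta$ itself. Wall's $\alpha$ lives one degree down: it is defined on $\pi_m(P)\cong H_m(P)$, records the rank-$(m+1)$ normal bundles of embedded $m$-spheres, and takes values in $\pi_{m-1}(\SO(m+1))$. The degree-$(m+1)$ data come from $\beta\colon\pi_{m+1}(P)\to\pi_m(\SO(m))$. They enter Wall's list only through $\hat\beta\in H^{m+1}(P;\pi_m(\mathbf{SO}))$ and, for even $m$ (with Wall's exceptions), $\hat\phi\in H^{m+1}(P;\Z_2)$. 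The list also contains $\hat\alpha$, the quadratic form $q$ (for $m$ odd), and the exceptional invariants $\omega,\omega(f)$.

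Having dropped the genuine $\alpha$, you never prove a step the argument needs. Stable triviality of $\T P$ forces $\alpha=0$, because $\pi_{m-1}(\SO(m+1))\to\pi_{m-1}(\mathbf{SO})$ is injective in this range. This kills $\hat\alpha$, and it is the reason $\omega,\omega(f)$ drop out: they are defined only when $\alpha\not\equiv 0 \bmod 2$. Torsion-freeness then disposes of $b$ and $q$.

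Second, ``this should leave exactly one unconstrained datum'' is the claim to be proved, not an argument. Two things are required.
\begin{enumerate}
\item[(a)] $S\beta=S_*\circ\beta\circ h_{m+1}^{-1}$ must be a well-defined homomorphism on $H_{m+1}(P)$. The Hurewicz kernel $H_m(P)\otimes\Z_2$ is spanned by composites $g\circ\eta$ with $\eta$ generating $\pi_{m+1}(\sphere^m)\cong\Z_2$, and it must lie in $\ker\beta$. Wall's formula $\beta(g\circ\eta)=F(\alpha(g),\eta)$ is linear in $\alpha(g)$, so this again uses $\alpha=0$. Also, $\beta$ is only quadratic, $\beta(x+y)=\beta(x)+\beta(y)+\partial\mu(x,y)$, so additivity comes from $S_*\circ\partial=0$.
\item[(b)] $S\beta$ must carry exactly the information in Wall's $(\hat\beta,\hat\phi)$. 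For $m$ even, $m\neq 6$, $S_*\colon\pi_m(\SO(m))\to\pi_m(\SO(m+1))$ is onto and $(\mathbf{S}_*,\phi)\colon\pi_m(\SO(m+1))\to\pi_m(\mathbf{SO})\oplus\Z_2$ is an isomorphism, so $S\beta\leftrightarrow(\hat\beta,\hat\phi)$. For $m$ odd or $m=6$, $\mathbf{S}_*$ is injective on $S_*\pi_m(\SO(m))$, so $S\beta\leftrightarrow\hat\beta$.
\end{enumerate}
Your observation that $\ker\big(\pi_m(\SO(m+1))\to\pi_m(\mathbf{SO})\big)$ is cyclic on $[\T\sphere^{m+1}]$ is correct, provided you apply it to $\nu\oplus\varepsilon^1_{\R}$ rather than the rank-$m$ bundle $\nu$. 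But it only locates the values of $S\beta$; it does not show that $S\beta$ is precisely what survives of Wall's system. For $m$ odd that kernel is infinite cyclic, and since $\nu\oplus\varepsilon^1_{\R}$ has a nowhere-zero section its Euler class vanishes, so in fact $S\beta=0$ there, consistent with (b).
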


The most difficult part of the paper is to compute this invariant $S\beta$, which we now describe. Choose any homology class $\sigma \in H_{m+1}(P,\Z)$. By work of Haefliger \cite{Hae62}, this class can be represented by a smoothly embedded sphere $f_{\sigma}: \sphere^{m+1} \hookrightarrow P$. As it turns out, the once-stabilized normal bundle $\N f_{\sigma} \oplus \varepsilon^1_{\R}$, where $\varepsilon^{1}_{\R}$ denotes a trivial real rank one line bundle, is independent of such $f_{\sigma}$. Consequently, the invariant $S\beta$ can be described as follows: 
\[ S\beta(\sigma) = [\N f_{\sigma} \oplus \varepsilon^1_{\R}] \in \pi_{m}(\SO(m+1)),\]
where the right-hand side denotes the clutching function of the given vector bundle. 

To conclude Theorem \ref{thm:A} from Theorem \ref{Thm:WallWilkens}, we compute the Betti number $b_{m+1}(\hat{\mathfrak{F}})$ and prove that the invariant $S\beta$ vanishes for $\hat{\mathfrak{F}}$. 
As $\dim(\hat{\mathfrak{F}})=2n-5$, we have 
$m=n-3$ in our case. 
The verification that $S\beta =0$ requires that we explicitly represent a collection of generating classes $\sigma \in H_{n-2}(\hat{\mathfrak{F}},\Z)$ by smoothly embedded $(n-2)$-spheres $Q_{\sigma} \hookrightarrow \hat{\mathfrak{F}}$ and then prove that the normal bundle of $Q_{\sigma}$ becomes trivial after 1-stabilization. 

\subsubsection{Computing $S\beta$}

Since the computation of the invariant $S\beta$ is the most involved part of the paper, we provide a brief overview of the approach here. 

Recall that we split $\hat{\mathfrak{F}}$ into two pieces: $\hat{\mathfrak{F}} = \hat{\mathfrak{F}}_{gen} \sqcup \hat{\mathfrak{F}}_{sing}$. 
Now, the generators in middle homology $H_{n-2}(\hat{\mathfrak{F}}, \Z)$ come in two types from our perspective: classes that are induced by inclusion from $\hat{\mathfrak{F}}_{gen}$ or $\hat{\mathfrak{F}}_{sing}$, and classes that are not. 

In terms of the former, it turns out we need only include generators from $H_{n-2}(\hat{\mathfrak{F}}_{gen},\Z)$. We compute $S\beta$ on these homology classes rather directly, but only after developing a clear geometric picture of the fibration $\hat{\mathfrak{F}}_{gen} \rightarrow S_{gen}$. Now, the base $S_{gen}$ has the homotopy type of a CW complex and has geometrically evident embedded spherical generators for $H_{n-2}$. We work to show these spheres lift to $\hat{\mathfrak{F}}_{gen}$. It happens that our argument to verify the possibility of lifting leads simultaneously to the proof that $S\beta$ must vanish on these classes.

The homological generators not induced from inclusion provide much more difficulty. In this case, for each such class $\sigma \in H_{n-2}(\hat{\mathfrak{F}},\Z)$, we proceed as follows. First, we produce an $(n-2)$-sphere $Q_{\sigma} \subset \hat{\mathfrak{F}}$ realized in two pieces through the geometry of the almost-fibration $p: \hat{\mathfrak{F}}\rightarrow \sphere^{n-1}$. In particular, $Q_{\sigma}$ is obtained by gluing a smooth copy of $\mathbb{B}^2\times \sphere^{n-4} \subset \hat{\mathfrak{F}}_{gen}$ with a smooth copy of $\mathbb{S}^1 \times \mathbb{D}^{n-3} \subset  \hat{\mathfrak{F}}_{sing}$. However, this construction only produces a $C^0$-embedded sphere $Q_{\sigma}$, as the two pieces meet along a `corner singularity'. We provide an explicit smoothing of $Q_{\sigma}$ to obtain a $C^{\infty}$-submanifold $Q_{\sigma}^{s}$, and
carefully check that an appropriate version of the smoothing process preserves embeddedness.
We can then finally verify that the once-stabilized normal bundle of $Q_{\sigma}^s$ is trivial. Our strategy for this task is somewhat direct and somewhat indirect. Concretely, we produce enough global sections of the bundle $\N Q_{\sigma}^s \oplus \varepsilon^1_{\R}$ to guarantee triviality, but we do not achieve an explicit trivialization. 

Finally, we conclude $S\beta=0$ for $\hat{\mathfrak{F}}$. Theorem \ref{thm:A} then says that $\hat{\mathfrak{F}}$ has the simplest possible topology within its cohort of highly connected companions, aside from homotopy spheres.  

\subsubsection{The exceptional case $n=4$}\label{Sec:Intro:n=4}

The case $n=4$ is exceptional, as $\hat{\mathfrak{F}}$ is not simply connected and it is also a 3-manifold. In this case, we find $\hat{\mathfrak{F}}$ is diffeomorphic to the double of a torus link complement in $\sphere^3$. 
While this case does provide a concrete perspective on some of our general ideas, the fiber $\hat{\mathfrak{F}}$ is quite different from the generic case. See $\S$\ref{Sec:n=4} for further details. 

\subsubsection{The illustrative case $n=5$}\label{Sec:Intro:n=5}

Wall's classification does not apply in the case $m=2$, corresponding to simply connected 5-manifolds $P^5$. However, in this case an even stronger result is known: a complete classification up to diffeomorphism due to Smale \cite{Sma62}, when $P$ is \emph{spin}, and Barden \cite{Bar65} in general. Our manifold $\hat{\mathfrak{F}}$ is indeed spin, and so \cite{Sma62} asserts that the second homology group over the integers is a complete invariant. We thus conclude Theorem \ref{thm:A}(ii) from the calculation of $H_2(\hat{\mathfrak{F}},\Z) \cong \Z^9$. 

We emphasize: to this date, there is \emph{no complete classification} of closed 5-manifolds up to diffeomorphism, even imposing finite cyclic fundamental group, or even $\pi_1(P)=\Z_2$. However, there is considerable related work in this direction. For partial results classifying closed 5-manifolds $P$ with fundamental group $\Z_2$, including certain circle bundles over simply connected 4-manifolds, see \cite{HS13, Ott26}. In \cite{Su11}, Su classified free involutions on $\sphere^2\times \sphere^3$, including determining all possible quotients. Very recently, \cite{Jin2026} computed the mapping class group of $\#_k (\sphere^2\times \sphere^3)$. To the best of our knowledge, despite these results, the classification of quotients of $\#_k(\sphere^2\times \sphere^3)$ under free finite group actions remains open. 

\subsubsection{Working in the cover}\label{Subsec:Cover}
Given the above model case $n=5$ and the status of finite quotients of simply connected 5-manifolds, we do not see how to determine $\mathfrak{F}=\hat{\mathfrak{F}}/(\Z_2\times \Z_2)$ in that case.
More generally, for $m\geq 3$, there is currently no classification of free $G$-actions on a fixed highly connected odd-dimensional manifold $M^{2m+1}$ for finite groups $G$, or even finite cyclic groups. In the related setting where $M=M^{2m}$ is even-dimensional, the theory is much more restricted, as any such $G$-action must respect the \emph{intersection form} on $H^m(M,\Z)$. For work in this direction, see \cite{SuYang21, FT24a, FT24b}. 
On the other hand, in the odd case, there is more flexibility as $H^m(M,\Z)$ is linked with $H^{m+1}(M,\Z)$ rather than itself. As a testament to this freedom, Johnson showed in \cite{Joh18} that \emph{any} finite group $G$ can act freely on $M_{k,m} := \#_{k}(\sphere^m\times \sphere^{m+1})$ for some $m,k$. However, the classification problem of free $G$-actions on a fixed space $M_{k,m}$, or further $M_{k,m}\#\Sigma$, with $\Sigma$ an exotic sphere, is still open. 

An additional issue in our case is that the $(\Z_2\times \Z_2)$-action of deck transformations on $\hat{\mathfrak{F}}$ does not respect the almost-fibration $p: \hat{\mathfrak{F}} \rightarrow \sphere^{n-1}$, so our main source of insight is lost. Thus, in all cases $m \geq 2$, we believe it is currently out of reach to determine $\mathfrak{F} = \hat{\mathfrak{F}}/(\Z_2\times \Z_2)$.

\subsection{Related Work}\label{Sec:RelatedWork}

We now discuss the results and strategies in some related works.

An important feature exhibited in many cases by the fibers $\mathfrak{F}$ of cocompact domains of discontinuity $\Omega \rightarrow \Ha^2$ of surface group representations, when realized as bases of pencils, is that $\mathfrak{F}$ itself has a fiber bundle structure. 
In \cite{CTT19}, Collier-Tholozan-Toulisse, for $G=\SO_0(2,n+1)$ and $X=\Pho(\R^{2,n+1})$, read off the topology of the fiber $\mathfrak{F}\cong \Pho(\R^{2,n})$ of a domain $\Omega \subset X$ for \emph{maximal representations} via the relationship between the domain of discontinuity and an (equivariant) maximal spacelike surface $\sigma: (\tilde{S},J)\rightarrow \Ha^{2,n}$ in pseudo-hyperbolic space. We recall that 
$V_2(\R^{n}) \cong_{\mathbf{Diff}} \Pho(\R^{2,n})$. 
As explained in \cite[Appendix A]{DE26a}, we can reinterpret work of \cite{CTT19} through the lens of bases of pencils. 
In fact, we can do the same for results of Alessandrini-Davalo-Li in \cite{ADL24} on fibers of projective structures for Hitchin and quasi-Hitchin representations. The base of pencil approach was also employed in recent work by Davalo and the first named author in \cite{DE26a, DE26} to study the fibers of domains in the \emph{Einstein universe} $\Ein^{p-1,p}$ and in certain $\Gtwosplit$-partial flag manifolds, where $\Gtwosplit$ is the split real simple (adjoint) group of type $G_2$. The results on the fibers $\mathfrak{F}$ of cocompact domains of discontinuity $\Omega$ in the latter three works can be derived by realizing $\mathfrak{F}$ as a sphere bundle, and then using some keen geometric intuition to identify this sphere bundle explicitly. In fact, in \cite{CTT19, ADL24, DE26a, DE26}, this geometric understanding of the fiber $\mathfrak{F}$ is further harnessed to yield descriptions of the fiber bundle $\mathfrak{F} \rightarrow M \rightarrow S$ achieved under the quotient of $\Omega$.

In these related cases, there was also one additional feature to note: the good fortune that $\mathfrak{F}$ was a readily identifiable object, namely a Stiefel manifold, or a finite quotient of one. The loss of these luxuries in the current case, the fibration of $\hat{\mathfrak{F}}$ and the capacity for its `global recognition', presents significant complications for identifying the fiber.

Our strategy is inspired in part by contrasting recent work  \cite{AMTW25}, where Alessandrini-Maloni-Tholozan-Wienhard show the fiber $\mathfrak{F}$ of a domain $\Omega$ in the space of complex Lagrangians $X=\mathrm{Lag}(\C^4)$ for $G=\Sp(4,\C)$-quasi-Hitchin representations to be homeomorphic to $\CP^2\# \overline{\CP^2}$ by using the classification of (topological) simply connected 4-manifolds. 
The group $H^2(\mathfrak{F},\Z) \cong \Z\oplus \Z$ was originally computed by Dumas-Sanders in \cite{DS20}. In \cite{AMTW25}, the authors recompute this group in order to further determine the intersection form on $H^2(\mathfrak{F},\Z)$ and then invoke the aforementioned classification. 
Recently, Hart upgraded the \cite{AMTW25} result on $\mathfrak{F}$ to `diffeomorphism' in \cite{Har26} by developing and implementing a classification of smooth circle actions on simply connected 4-manifolds. For $G=\SL(3,\C)$, he also studied the fibration of a domain $\Omega_{\rho}$ in the \emph{complex} flag manifold $\mathcal{F}_{1,2}^{\C}=\Flag(\C^3)$ for $\rho:\pi_1S \rightarrow \SL(3,\C)$ any $\iota$-Fuchsian. In that case, he showed the fiber is diffeomorphic to $(\sphere^2\times \sphere^2)\#(\sphere^2\times \sphere^2)$ also using circle actions. In particular, his approach is quite different from ours presently. It would be interesting to see if one can use circle actions to study the fibers $\mathfrak{F}$ of cocompact domains of discontinuity in other situations as well. 

Surprisingly at first sight, the case $G=\Sp(4,\C)$ and $X=\mathrm{Lag}(\C^4)$ mentioned above matches the previous patterns: $\CP^2\# \overline{\CP^2}$ is also a sphere bundle, namely the twisted product $\sphere^2\tilde{\times} \sphere^2$ over $\sphere^2\cong\CP^1$. Through the exceptional isogeny between $\Sp(4,\C)$ and $\SO(5,\C)$, which identifies $\mathrm{Lag}(\C^4)$ with the complex quadric $\mathcal{Q}^3\subset \CP^4$, we can also reinterpret this case via the base of pencil approach. 

Finally, we recall that in \cite{DS20}, Dumas-Sanders, among many other things, computed the homology of any cocompact domain $\Omega = \Omega_{\rho, I}$ in a flag manifold $\mathcal{F}=G^{\C}/P$ constructed via the \cite{KLP18}-framework when $\rho: \pi_1S \rightarrow G^{\C}$ is an Anosov representation to a complex Lie group. In particular $H_*(\Omega,\Z)$ is torsion-free, nonzero only in even dimensions, and depends only on the ideal $I$ and not on the representation $\rho$. 
Their calculation of $H_*(\Omega,\Z)$ in complete generality in terms of the balanced ideal $I$ and the cellular decomposition of the ambient flag manifold is impressive. The less surprising part of their result is 
the vanishing of the torsion for $H_*(\Omega,\Z)$, given that the ambient flag manifold $\mathcal{F}$ and the thickening $K$ admit cellular structures with cells only in even dimensions. On the other hand, there is no analogue of the \cite{DS20} result when $G$ is not complex, as there are already cases in $\SL(3,\R)$ when the homology of $\Omega_{\rho,I}$ depends on $\rho$ and not just $I$ \cite[Example 6.9]{Dav25}. Hence, our calculation of $H_*(\hat{\Omega})$ is indeed necessary.
Furthermore, as our ambient flag manifold is real, the torsion-free nature of $H_*(\hat{\Omega},\Z)\cong H_*(\hat{\mathfrak{F}},\Z)$ is rather delicate, and requires careful verification. \newpage

\subsection{Organization}

The layout of the paper is as follows:
\begin{itemize}[noitemsep]
    \item In $\S$\ref{Sec:Preliminaries}, we introduce the basic objects: the flag manifold $\mathcal{F}$, the domain of discontinuity $\Omega$, and the fibration of $\Omega$ over $\Ha^2$ via \emph{bases of pencils} as in \cite{Dav25}. 
    \item In $\S$\ref{Sec:FiberGeometry}, we produce the almost-fibration $p:\hat{\mathfrak{F}}\rightarrow \sphere^{n-1}$ and explore the consequences. This includes computing the homology via the almost-fibration. 
    \item In $\S$\ref{Sec:FiberTopology}, we recall Wall's classification and apply it to the fiber $\hat{\mathfrak{F}}$. In particular, we explain the a priori vanishing of some of his invariants and then compute $S\beta$. 
    \item In $\S$\ref{Sec:SpecialCases}, we address the special cases $n\in \{3,4,5\}$. The case $n=3$ is a sanity check. The case $n=4$ is exceptional as $\hat{\mathfrak{F}}$ is a 3-manifold with nontrivial fundamental group. When $n=5$, $\hat{\mathfrak{F}}$ is a 5-manifold and we appeal to the Smale-Barden classification. 
\end{itemize}

\subsection*{Acknowledgments}
    We thank Mason Hart for suggesting to use $\hat{\Omega}$ to verify the high connectivity of $\hat{\mathfrak{F}}$, which simplified our argument. We thank Alex Nolte and Jesus Sanchez for comments and Sara Maloni and Max Riestenberg for their support. A.T. acknowledges support from the Italian Ministry of University and Research (MUR) through the FIS3 project “Geometry and Topology of higher Teichm\"uller spaces”, grant no. FIS-2024-05142.

\section{Preliminaries}\label{Sec:Preliminaries}

In this section, we introduce the necessary preliminaries. To start, we describe the symmetric space $\X$ of $\PSL(n,\R)$ geometrically and then equivariantly embed the flag manifold $\mathcal{F}_{1,n-1}$ into the visual boundary $\vis \X$ of $\X$. We then recall the definition of the domain of discontinuity $\Omega \subset \mathcal{F}_{1,n-1}$ of interest from \cite{GW12}, along with the base of pencil approach to fiber the domain $\Omega$ over $\Ha^2$ from \cite{Dav25}. In particular, we first introduce bases of pencils in generality, then discuss this construction concretely for the flag manifold $\mathcal{F}_{1,n-1}$.

\subsection{The Symmetric Space}\label{Sec:SymmetricSpace}

Let us now introduce some basic geometry of the $\PSL(n,\R)$-symmetric space $\X$ and of non-compact symmetric spaces more generally.  

\subsubsection{A geometric model} 
We begin by recalling a standard identification of $\X$ with the following model space $\mathcal{M}_n$: 
\[ \mathcal{M}_n=\{A \in \Mat_n(\R) \mid A> 0,\, \det(A) =1, \,A=A^T\}. \] 
That is, $\mathcal{M}_n$ consists of all matrix representatives of unit volume Euclidean inner products on $\R^n$. Note that $\PSL(n,\R)$ acts on $\mathcal{M}_n$ by pushforward: $g\cdot A = g_{*}A=(g^{-1})^TAg^{-1}$, upon which $\mathcal{M}_n$ and $\X =G/K$, the homogeneous space of the maximal compact, become isomorphic $G$-spaces. 
Going forward, we may abusively write $Q \in \X$ to mean $Q \in \mathcal{M}_n$. 

Recall that given a point $x \in \X$, the subgroup $K_x =\Stab_G(x)$ is a copy of the maximal compact subgroup and induces a Cartan decomposition $\g=\frakk_x \oplus \frakp_x$, where $\frakk_x$ is the Lie subalgebra of $K_x$. We may then identify $\T_x\X \cong \frakp_x$. Equivalently, one can make the following standard identification of the tangent space: 
\[ \T_x\X = \{ \psi \in \End(\R^n) \mid \psi^{*x} = \psi\}. \] 
In other words, $\T_x\X$ consists of all $x$-symmetric endomorphisms of $\R^n$. 

We now fix the basepoint $o =I$ as the identity matrix in $\X$ using the model $\mathcal{M}_n$.
Choose once-and-for-all an $o$-orthonormal basis $(e_i)_{i=1}^n$. Correspondingly, we obtain a Cartan subalgebra $\mathfrak{a} < \mathfrak{sl}_n(\R)$ in this basis $(e_i)$, consisting of all trace-free diagonal transformations: 
\begin{align}\label{Model_CSA}
    \mathfrak{a} = \bigg\{ \diag(\lambda_1,\dots, \lambda_n) \in \End(\R^n)\mid \lambda_i \in \R, \,\sum_{i=1}^n \lambda_i = 0 \bigg\}. 
\end{align}
Observe that $\mathfrak{a}\subset \T_o\X$. 
We fix the usual Weyl chamber: 
\begin{align}\label{Model_WeylChamber}
    \mathfrak{a}^+ = \{ \diag(\lambda_1,\dots, \lambda_n) \in \mathfrak{a} \mid \lambda_1 > \lambda_2 > \cdots > \lambda_n \}. 
\end{align}
We shall always write $\mathfrak{a}^+$ for an open Weyl chamber and $\overline{\mathfrak{a}^+}$ for its closure.   
We shall also use the following standard choice of simple restricted real roots $\Delta = (\alpha_i)_{i=1}^{n-1}$ of the corresponding restricted root system of $(\g,\mathfrak{a})$.  
Namely, $\alpha_i(X) = \lambda_{i}-\lambda_{i+1}$, for $1 \leq i \leq n-1$. \medskip 

\subsubsection{Cartan projection} Let $G$ be a non-compact simple Lie group and $\X=\X_{G}$ its symmetric space. The following projection is a fundamental operation of interest. 

\begin{definition}[Cartan Projection]
The \textbf{Cartan projection} $\mu:\T\X\rightarrow \overline{\mathfrak{a}^+}$ is the map that sends $X \in \T_x\X$ to the unique element in its $G$-orbit in $\overline{\mathfrak{a}^+}$.  
\end{definition}

For $G =\PSL(n,\R)$, the Cartan projection is quite explicit. Any tangent vector $\psi \in \T_{Q}\X$ is a $Q$-self-adjoint endomorphism $\psi: \R^n \rightarrow \R^n$. Thus, $\psi$ is diagonalizable over $\R$ with real eigenvalues $\{\psi_i\}_{i=1}^n$. The Cartan projection $\mu(\psi)$ is simply the matrix $\diag(\psi_1,\psi_2,\dots, \psi_n)$, with the eigenvalues listed in monotonically decreasing order. 

\subsubsection{The visual boundary} Let $G$ be a non-compact simple Lie group. The associated symmetric space $\X_{G}$ is a complete, simply connected, non-positively curved Riemannian manifold. There are many compactifications of such spaces. We shall use the \emph{visual compactification} of $\X$, denoted $\overline{\X} = \X \sqcup \vis\X$, where we compactify by adding the \emph{visual boundary} $\vis\X$. 
Recall that $\vis \X$ consists of all geodesic rays $\gamma:[0,\infty) \rightarrow \X$ up to the equivalence relation of being at bounded Hausdorff distance. Since $G$ acts by isometries on $\X$, there is an induced action on $\vis\X$, which is also continuous. 

We briefly describe the topology on $\vis {\X}$. To this end, fix any point $x \in \X$ and $v \in \T^1_{x}\X$. We shall denote $\gamma_{x,v}:[0,\infty)\rightarrow \X$ as the unique geodesic with $\gamma'_{x,v}(0)=v$. There is a map $\iota_x: \T^1_{x}\X \rightarrow \vis\X$ given by $v\mapsto [\gamma_{x,v}]$ and the topology on $\T^1_{x}\X$ is characterized by the fact that $\iota_x$ is a homeomorphism for one (in fact, any) $x \in \X$. We shall use the terminology that $v \in \T_x\X$ \emph{points towards} $a \in \vis\X$ when $[\gamma_{x,v}]=a$. We may also use the suggestive notation $\gamma_{x,v}(\infty) = a$ to denote this equality.
We refer to \cite{BH99,Ebe96} for further details on the visual compactification.

\subsection{Bases of Pencils}\label{Sec:BaseOfPencil}
In this subsection, we recall the geometric notion of a \emph{base of pencil} and the relation between these objects and cocompact domains of discontinuity for surface group representations as in \cite{Dav25}. Most of the discussion in this subsection is general: $G$ can be any non-compact simple real Lie group. \medskip 

To start, we introduce a simple notion. 
\begin{definition}
A \textbf{pencil} $\mathcal{P}$ is a 2-plane $\mathcal{P}\subset \T_x\X$ for some $x \in \X$. 
\end{definition}

Recall that a \emph{flag manifold} $\mathcal{F}$ is a homogeneous $G$-space of the form $G/P$, where $P$ is a \emph{parabolic subgroup}. Geometrically, a parabolic subgroup $P$ is exactly the stabilizer of a point in the visual boundary $\vis \X$ of the $G$-symmetric space. 

Our goal in this subsection shall be to associate to a given pencil $\mathcal{P} \subset \T_x\X$ an expected codimension two submanifold of a given flag manifold $\mathcal{F}$ called the \emph{base} of the pencil. However, for the construction of such a base of pencil, we must first fix an equivariant embedding of a given flag manifold in the visual boundary $\vis\X$.

To define such embeddings, we recall some Lie theory.
First, recall that any parabolic subgroup $P < G$ is conjugate to a `standard' parabolic subgroup $P_{\Theta}$, where $\Theta \subseteq \Delta$ is a subset of the simple restricted real roots $\Delta$, defined as follows. 
The Lie algebra $\mathfrak{p}_{\Theta}$ consists of the sum of all root spaces of non-negative $\Theta$-height and $P_{\Theta}$ is the normalizer of $\mathfrak{p}_{\Theta}$ in $G$. 

We now describe a bijection between equivariant embeddings of flag manifolds and certain directions in the model Weyl chamber, which is essentially implicit in \cite{Dav25}. 

\begin{proposition}[Embedding Flag Manifolds]\label{Prop:CoherentEmbedding}
Let $\Theta \subseteq \Delta$ be given. Choose any direction $\tau \in \sphere \big(\,\overline{\mathfrak{a}^+} \,\big)$ such that 
\[ \Xi(\tau)= \{ \alpha \in \Delta \mid \alpha(\tau)\neq 0\} \] 
satisfies $\Xi(\tau)=\Theta$. 

Then there is a canonically associated equivariant embedding $\iota_{\tau}: G/P_{\Theta} \hookrightarrow \vis\X$. Moreover, every $G$-equivariant embedding $G/P_{\Theta}\hookrightarrow \vis \X$ obtains this form for some $\tau$. 
\end{proposition}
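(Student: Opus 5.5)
The plan is to send a coset $gP_\Theta$ to the endpoint of the geodesic ray through $o$ in the direction $\tau$, translated by $g$:
\[ \iota_\tau(gP_\Theta) := g\cdot \gamma_{o,\tau}(\infty) = \gamma_{g\cdot o,\, g_*\tau}(\infty). \]
Here $\tau$ is regarded as a unit vector in $\mathfrak{a}\subset \T_o\X$, and the geodesic is $\gamma_{o,\tau}(t)=\exp(t\tau)\cdot o$. The argument then has three parts: the map is well defined and $G$-equivariant, it is an embedding, and every equivariant embedding arises this way.

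\textbf{Well-definedness and equivariance.} These reduce to a single claim: the stabilizer of $\xi_\tau:=\gamma_{o,\tau}(\infty)$ in $G$ is exactly $P_{\Xi(\tau)}=P_\Theta$. This is the standard description of stabilizers of boundary points in a symmetric space (Borel–Tits / Eberlein). I would prove it in three steps.

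\emph{(a) $P_\Theta$ fixes $\xi_\tau$.} Use the decomposition $P_\Theta = M_\Theta A N_\Theta$.
\begin{itemize}
\item $A=\exp(\mathfrak{a})$ moves the ray $\exp(t\tau)\cdot o$ to a parallel ray in the same flat, which lies at bounded distance.
\item For $n\in N$ (positive root spaces), $d(\exp(t\tau)\, n\, o, \exp(t\tau)o)=d(\mathrm{Ad}_{\exp t\tau}(n)\, o, o)$. Since $\alpha(\tau)\ge 0$ for every positive root $\alpha$, this stays bounded.
\item The Levi factor $L_\Theta$ centralizes $\tau$, because its root spaces come from roots spanned by $\Delta\setminus\Theta$, and these vanish on $\tau$. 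Hence $L_\Theta$ fixes the ray up to bounded distance.
\end{itemize}

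\emph{(b) The stabilizer is no larger.} By the Bruhat decomposition, any $g$ fixing $\xi_\tau$ lies in $P_\emptyset w P_\Theta$ for some Weyl element $w$. The condition $g\xi_\tau=\xi_\tau$ then forces $w\tau=\tau$, and the Weyl-group stabilizer of the dominant vector $\tau$ is $W_{\Delta\setminus\Theta}\subset P_\Theta$.

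\emph{(c) Topology.} Because $\iota_\tau$ is the orbit map at $\xi_\tau$ and the $G$-action on $\vis\X$ is continuous, $\iota_\tau$ is a continuous injection from the compact space $G/P_\Theta$ to the Hausdorff space $\vis\X$, hence a topological embedding. Its image is the $G$-orbit of $\xi_\tau$. Using $G=KP_\Theta$, this orbit equals $K\cdot\xi_\tau\cong\iota_o(\mathrm{Ad}(K)\tau)$, so in the sphere $\T^1_o\X$ it is the smooth compact orbit $\mathrm{Ad}(K)\tau$. This also gives a smooth embedding in the natural smooth structure on $\vis\X\cong\T^1_o\X$.

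\textbf{Every equivariant embedding has this form.} Let $\iota$ be such an embedding and set $\xi=\iota(eP_\Theta)$. Equivariance forces $P_\Theta\subseteq \Stab(\xi)$. Write $\xi=\iota_o(v)$ with $v\in\T^1_o\X\cong \frakp$. Every element of $\frakp$ is $\mathrm{Ad}(K)$-conjugate into $\overline{\mathfrak{a}^+}$, and since $G=KP_\emptyset$ we can arrange this while keeping $\xi$ fixed by $P_\emptyset\subseteq P_\Theta$. So we get $\xi=\xi_{\tau'}$ for a unique $\tau'\in\sphere(\overline{\mathfrak{a}^+})$. By step (b), $\Stab(\xi)=P_{\Xi(\tau')}$. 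Injectivity of $\iota$ forces $\Stab(\xi)=P_\Theta$, and since standard parabolics determine their subsets of $\Delta$, this gives $\Xi(\tau')=\Theta$. Hence $\iota=\iota_{\tau'}$, because both are equivariant and agree at the basepoint.

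I expect the main obstacle to be the stabilizer identification in step (b), specifically the uniqueness part. One has to show that $g\in P_\emptyset wP_\Theta$ fixes $\xi_\tau$ only when $w\tau=\tau$. The cleanest route is to compare Busemann functions: a boundary point of a Weyl chamber is fixed by a group element only if its image lies in the closed chamber, and distinct points of $W\tau$ give genuinely different boundary points because they lie at positive Tits angle. This is essentially the standard "type" map $\vis\X\to\sphere(\overline{\mathfrak{a}^+})$, whose $G$-invariance I would quote from \cite{BH99,Ebe96} rather than reprove.
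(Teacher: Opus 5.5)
Your proposal is correct and follows the paper's route. In both, $\iota_\tau$ is the orbit map at $[\gamma_\tau]$, and everything rests on $\Stab_G([\gamma_\tau])=P_\Theta$; the paper only asserts this (and calls the converse clear), while you argue it via the Langlands and Bruhat decompositions.

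A few small repairs are needed:
\begin{enumerate}
\item The quantity to bound is $d(n\exp(t\tau)o,\exp(t\tau)o)=d(\mathrm{Ad}_{\exp(-t\tau)}(n)\,o,o)$, not the expression you wrote.
\item In the paper's convention the minimal parabolic is $P_\Delta$; your $P_\emptyset$ is all of $G$.
\item The step $w\tau=\tau$, which you flag as the main obstacle, is immediate: both rays lie in the flat $\exp(\mathfrak{a})\cdot o$ and diverge linearly unless $w\tau=\tau$.
\item In the converse, $G=KP_\Delta$ does not place $\xi$ in the closed chamber; Bruhat does. Write $\xi=k\xi_{\tau'}$ with $k\in P_\Delta wP_\Delta$ to get $\xi=\xi_{w\tau'}$. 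Then $N$-invariance forces $w\tau'$ to be dominant, hence $w\tau'=\tau'$.
\end{enumerate}
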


\begin{proof}
Let $\tau$ be given. Construct $\gamma_{\tau}:[0,\infty)\rightarrow \X$ with initial velocity $\tau$.
The stabilizer subgroup $P_{\tau} = \Stab_{G}([\gamma_{\tau}])$ is conjugate to $P_{\Theta}$.
Thus, the $G$-orbit $\mathcal{F}_{\tau}$ of $[\gamma_{\tau}]\in \vis \X$ is an embedded copy of $G/P_{\Theta}$. If we regard $G/P_{\Theta}$ as the space of all subgroups in $G$ conjugate to $P_{\Theta}$, then $\iota_{\tau}$ is described by $\iota_{\tau}(gP_{\tau}g^{-1} ) = g\cdot [\gamma_{\tau}]$. 

Conversely, it is clear that every such $G$-equivariant embedding obtains this form. 
\end{proof}

Proposition \ref{Prop:CoherentEmbedding} says there is a ($|\Theta|-1$)-dimensional collection of equivariant embeddings of $G/P_{\Theta}$ in the visual boundary. In particular, such an embedding is unique exactly when $|\Theta|=1$, or equivalently, when $P_{\Theta}$ is a maximal parabolic.  

\begin{definition}
Given $\tau \in \T^1_x\X$, we denote by $\bm{\mathcal{F}_{\tau}} \subset \vis \X$ the $G$-orbit of $[\gamma_{x,\tau}]$ in $\vis\X$. By Proposition \ref{Prop:CoherentEmbedding}, $\mathcal{F}_{\tau} \cong G/P_{\Theta(\tau)}$, where $\Theta(\tau) = \{ \alpha \in \Delta \mid \alpha(\mu(\tau)) \neq 0\}$. 
\end{definition}

We now define a \emph{base of pencil} as in \cite{Dav25}. Recall that $\gamma_{x,v}:[0,\infty) \rightarrow \X$ denotes the unique geodesic with $\gamma_{x,v}'(0)=v$. We will also use this new notation $\mathcal{F}_{\tau}$ for a chosen equivariantly embedded flag manifold. 

\begin{definition}[Base of Pencil]\label{Defn:TauBasePencil}
Let $\mathcal{P} \subset \T_{x} \mathbb{X}$ be a pencil and $\tau \in \sphere(\overline{\mathfrak{a}^+})$. Then the $\bm{\tau}$\textbf{-base of} $\boldsymbol{\mathcal{P}}$, denoted $\mathcal{B}_{\tau}(\mathcal{P})$, is given by 
\[ \mathcal{B}_{\tau}(\mathcal{P}) = \{ \gamma_{x, v}(\infty) \in \mathcal{F}_{\tau} \subset \vis \X\mid v \in \T_x\X, \; v\, \bot \,\mathcal{P} \}.\]
\end{definition}

In other words, the base $\mathcal{B}_{\tau}(\mathcal{P})$ contains all $\Theta(\tau)$-flags reached in $\mathcal{F}_{\tau}$ inside the visual boundary by traveling from $x$ via directions orthogonal to $\mathcal{P}$ in $\T_x\X$.

We emphasize: the base of pencil restricts tangent vectors in \emph{two ways}: $v \in \T_x\X$ must point towards $\mathcal{F}_{\tau}$ and $v$ must be orthogonal to the pencil $\mathcal{P}$.  

\subsubsection{Regularity of Pencils}
We now use the Cartan projection to define a notion of regularity for tangent vectors $X \in \T \X$, which will be important for the study of bases of pencils. 

\begin{definition}[$\Theta$-regularity]\label{Defn:Regular}
Let $\Theta \subseteq \Delta$ be a subset of simple roots. 
\begin{itemize}
    \item A tangent vector $X \in \T \X$ is $\mathbf{\Theta}$\textbf{-regular} when $\alpha(\mu(X)) \neq 0$ for all $\alpha \in \Theta$. 
    \item A pencil $\mathcal{P} \subset \T_x\X$ is $\mathbf{\Theta}$\textbf{-regular} when every element $\psi \in \mathcal{P}$ is $\Theta$-regular. 
    \item An immersion $u:M \rightarrow \X$ is $\mathbf{\Theta}$\textbf{-regular} when the tangent pencils $du(\T_pM)$ are each $\Theta$-regular for all $p \in M$. 
    \item An $\sllie_2\R$-subalgebra $\mathfrak{s} \subset \g$ is $\mathbf{\Theta}$\textbf{-regular} when the inclusion $\iota_{\mathfrak{s}}: \Ha^2_{\mathfrak{s}} \hookrightarrow \X$ of the sub-symmetric space of the corresponding analytic subgroup is $\Theta$-regular.  
\end{itemize}
\end{definition}

We shall be particularly interested in $\Delta$-regular pencils in the $\PSL(n,\R)$-symmetric space, which are easy to characterize.

\begin{proposition}[$\Delta$-regularity, concretely]
\label{Prop:DeltaRegularity}
Let $\X$ be the $\PSL(n,\R)$-symmetric space. 
A tangent vector $\psi \in \T_Q\X$ is $\Delta$-regular if and only if $\psi: \R^n \rightarrow \R^n$ has distinct eigenvalues.
\end{proposition}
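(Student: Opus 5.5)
The plan is to unwind the definitions directly. By Definition \ref{Defn:Regular}, $\psi \in \T_Q\X$ is $\Delta$-regular exactly when $\alpha_i(\mu(\psi)) \neq 0$ for every simple root $\alpha_i$, $1 \leq i \leq n-1$. So the task is to identify $\mu(\psi)$ concretely and then read off what the simple roots measure.

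First, identify the Cartan projection. As recorded after the definition of the Cartan projection, $\psi$ is $Q$-self-adjoint. Hence there is a $Q$-orthonormal eigenbasis, and $\psi$ has real eigenvalues $\psi_1 \geq \cdots \geq \psi_n$, listed with multiplicity. One caveat: in the projective setting $\psi$ is trace-free, so it lies in $\sllie_n\R$.

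Choose $g \in \SL(n,\R)$ carrying $o = I$ to $Q$, so that $Q$-orthonormal bases correspond to $o$-orthonormal bases. Conjugating by $g$, and then by a permutation matrix (adjusted by a sign to lie in $\SL$) in $K_o$, moves $\psi$ to $\diag(\psi_1,\dots,\psi_n) \in \overline{\mathfrak{a}^+}$. Since eigenvalues are conjugation invariant and the diagonal element of $\overline{\mathfrak{a}^+}$ in an orbit is unique, this gives $\mu(\psi) = \diag(\psi_1,\dots,\psi_n)$.

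Second, evaluate the simple roots: $\alpha_i(\mu(\psi)) = \psi_i - \psi_{i+1}$. All of these are nonzero if and only if no two consecutive entries of the monotonically ordered list agree. Because the list is sorted, a repeated eigenvalue must appear in consecutive positions. So this condition is equivalent to all $n$ eigenvalues being distinct, which gives both directions of the claim.

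The only point needing any care is the first step, namely that the $G$-orbit of $\psi$ meets $\overline{\mathfrak{a}^+}$ exactly in the sorted diagonal matrix of eigenvalues. This amounts to the spectral theorem together with the fact that the action of $G$ on $\T\X$ transports $\psi$ by conjugation. I expect no real obstacle here.
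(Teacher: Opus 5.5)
Your proposal is correct and takes essentially the same approach as the paper: identify $\mu(\psi)$ with the diagonal matrix of eigenvalues in decreasing order, then note that the simple roots $\alpha_i(\mu(\psi)) = \psi_i - \psi_{i+1}$ are all nonzero exactly when the eigenvalues are distinct. The only difference is that you justify the form of the Cartan projection, via the conjugation action and the spectral theorem, and make explicit that repeated entries of a sorted list are adjacent; the paper takes both points as already established in its discussion of the symmetric space.
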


\begin{proof}
The vector $\psi$ is $\Delta$-regular if and only if its Cartan projection $\mu(\psi) = \diag(\psi_1,\dots, \psi_n)$ has distinct eigenvalues. However, as we have already noted in $\S$\ref{Sec:SymmetricSpace}, the entries $\psi_i$ are exactly the eigenvalues of $\psi$.  
\end{proof} 

\begin{remark}[Regular rank one sub-symmetric spaces]\label{Remk:RegularRankOne}
Let $u: \Ha^2 \rightarrow \X_G$ be a totally geodesic embedding. Fix any nonzero vector $X \in \T \Ha^2$. Then $u$ is $\Theta$-regular if and only if $du(X)$ is $\Theta$-regular. Indeed, the Cartan projection sends the whole tangent bundle $\T\Ha^2$ to a single ray: $\mu( du(\T\Ha^2))= \R_{+}\{\tau\} \subset \overline{\mathfrak{a}^+}$, for some $\tau \in \sphere(\overline{\mathfrak{a}^+})$.  
\end{remark}

As a consequence of the previous remark, to verify whether an $\sllie_2\R$-subalgebra $\mathfrak{s} = \Span_{\R} \langle E,F,H\rangle $ is $\Theta$-regular, we need only check whether the semisimple element $H$ is $\Theta$-regular. In particular, we may apply this verification procedure to the principal $\sllie_2$.  

\begin{corollary}[$\T \Ha^2_{\pr}$ is $\Delta$-regular]\label{Cor:PrincipalRegularity}
The inclusion map $\Ha^2_{\pr} \hookrightarrow \X$ is $\Delta$-regular, where $\Ha^2_{\pr} \subset \X$ is the sub-symmetric space of a principal $\PSL(2,\R)$-subgroup of $\PSL(n,\R)$. 
\end{corollary}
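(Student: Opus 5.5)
By Remark \ref{Remk:RegularRankOne} and the discussion that follows it, $\Delta$-regularity of the whole immersion $\Ha^2_{\pr}\hookrightarrow \X$ reduces to a single tangent vector. Indeed, $\PSL(2,\R)$ acts transitively on $\T^1\Ha^2_{\pr}$, and the Cartan projection is $G$-invariant. So every nonzero tangent vector of $\Ha^2_{\pr}$ has Cartan projection on one ray $\R_+\{\tau\}$. It therefore suffices to exhibit one nonzero tangent vector at one point and check that it is $\Delta$-regular.

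We place $\Ha^2_{\pr}$ through the basepoint $o=I$. Concretely, we conjugate the irreducible representation $\iota_{\pr}:\PSL(2,\R)\to\PSL(n,\R)$, realized on $\mathrm{Sym}^{n-1}(\R^2)$, so that it maps $\SO(2)$ into $\SO(n)=K_o$. This is possible because $\iota_{\pr}(\SO(2))$ is compact, hence lies in some conjugate of $\SO(n)$. The orbit of $o$ is then $\Ha^2_{\pr}$, and $\T_o\Ha^2_{\pr}$ is the $\frakp_o$-part of $d\iota_{\pr}(\sllie_2\R)$, namely the $o$-symmetric elements. In particular, the image $d\iota_{\pr}(H)$ of the semisimple element $H=\diag(1,-1)$ is symmetric, after arranging the conjugation so that the standard Cartan involution of $\sllie_2\R$ is intertwined with transpose. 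Hence $d\iota_{\pr}(H)$ is a nonzero vector in $\T_o\Ha^2_{\pr}$.

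It remains to compute the eigenvalues of $d\iota_{\pr}(H)$. In the monomial basis $x^{n-1-k}y^{k}$, $k=0,\dots,n-1$, of $\mathrm{Sym}^{n-1}(\R^2)$, the element $H$ acts diagonally with eigenvalues $n-1-2k$:
\[ d\iota_{\pr}(H)=\diag(n-1,\,n-3,\,\dots,\,-(n-1)). \]
These $n$ numbers are pairwise distinct. By Proposition \ref{Prop:DeltaRegularity}, $d\iota_{\pr}(H)$ is therefore $\Delta$-regular, and in fact $\alpha_i(\mu)=2$ for every $i$. The corollary then follows from the reduction in the first paragraph.

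The only point needing care is confirming that the image of $H$ really is a tangent vector to $\Ha^2_{\pr}$ at $o$, i.e.\ that it is symmetric rather than merely diagonalizable. This is handled by choosing the compatible Cartan involutions as above. Alternatively, one can bypass it: any $\PSL(n,\R)$-conjugate of $d\iota_{\pr}(H)$ has the same real spectrum, and the Cartan projection of a tangent vector depends only on that spectrum.
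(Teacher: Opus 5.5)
Your proof is correct and follows the paper's route: reduce via Remark~\ref{Remk:RegularRankOne} to the single vector $H$, then check that $H$ has distinct eigenvalues. The paper cites Kostant for that last step, whereas you compute the weights $n-1-2k$ on $\mathrm{Sym}^{n-1}(\R^2)$ directly. One small correction: conjugating $\iota_{\pr}(\SO(2))$ into $\SO(n)$ does not by itself make the orbit of $o$ equal to $\Ha^2_{\pr}$. Already for $n=3$, the fixed set of $\iota_{\pr}(\SO(2))$ in $\X$ is a whole geodesic, and $d\iota_{\pr}(H)$ is self-adjoint at only one of its points. It is the stronger intertwining of Cartan involutions you impose next that actually does the job.
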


\begin{proof}
It is well-known by work of Kostant that a semisimple element $H \in \mathfrak{s}_{\pr}$ has distinct eigenvalues \cite{Kos59}. The result follows by Remark \ref{Remk:RegularRankOne}, since we may regard $H$ also as a tangent vector in $\T \Ha^2_{\pr}$. 
\end{proof}

\subsubsection{Nearest Point Projection}\label{Subsec:NearestPointProjection}
In \cite{Dav25}, Davalo explains the relation between bases of pencils and fibrations of certain cocompact domains of discontinuity $\Omega_{\rho}$ for surface group representations. We briefly summarize his results in the case of \emph{Fuchsian-Hitchin} representations $\rho:\pi_{1}S \rightarrow \PSL(n,\R)$.\medskip

First, we recall that these Fuchsian-Hitchin representations obtain the form $ \rho=\iota_{\pr} \circ \rho_{0}$, where $\rho_0:\pi_1S\rightarrow \PSL(2,\R)$ is Fuchsian and $\iota_{\pr}: \PSL(2,\R)\rightarrow \PSL(n,\R)$ is the unique irreducible representation up to conjugation, called \emph{principal}. 

Associated to the principal $\PSL(2,\R)$-subgroup is the sub-symmetric space $\Ha^2_{\pr}$ of the $\PSL(n,\R)$-symmetric space $\X$. Given a Fuchsian-Hitchin representation $\rho$, we obtain a \emph{uniformization}, namely a $\rho$-equivariant homeomorphism $f: \tilde{S} \rightarrow \Ha^2_{\pr}$. 
We may instead view $f$ as an embedding $f: \tilde{S} \rightarrow \mathbb{X}$ with a totally geodesic image $\Ha^2_{\pr}$. 
Crucially, Corollary \ref{Cor:PrincipalRegularity} says the map $f$ is $\Delta$-regular.

To define the domain of interest, we use the fixed equivariant embedding $\lh{n} \hookrightarrow \vis\X$ described in the following subsection. 
After fixing an arbitrary basepoint $o \in \X$, we can define a domain $\Omega_{f}$ in the flag manifold $\mathcal{F}_{1,n-1}$ using Busemann functions by 
\begin{align}\label{Omega_Busemann}
    \Omega_{f} := \{a \in \mathcal{F}_{1,n-1} \mid b_{a,o} \circ f\; \text{is proper, bounded below} \}.
\end{align}
In fact, $\Omega_f$ is independent of choice of $o$. 
There is a natural projection $\pi: \Omega_f \rightarrow \tilde{S}$ that maps $a$ to the unique point $x \in \tilde{S}$ such that $b_{a,o} \circ f$ has a critical point at $f(x)$. This critical point is unique by \cite[Lemma 7.2]{Dav25}, making $\pi$ well-defined. Geometrically, this map $\pi$ is the extension of the \emph{nearest point projection} $\X \rightarrow \Ha^2_{\pr}$ to the domain $\Omega_{f} \subset \mathcal{F}_{\tau} \subset \vis\X$, where we identify $\tilde{S}$ and $\Ha^2_{\pr}$. The domain $\Omega_f$ is a cocompact domain of discontinuity for $\pi_1S$ by \cite[Theorem 7.8]{Dav25}. 

The key properties of $\pi$ applied to our setting are summarized in the following lemma. 

\begin{lemma}[Nearest Point Projection]\label{Lem:NearestPointProjection}
Let $f: \tilde{S} \rightarrow \mathbb{X}$ be a $\rho$-equivariant embedding with totally geodesic image $\Ha_{\pr}^{2}$, where $\rho:\pi_1S\rightarrow \PSL(n,\R)$ is Fuchsian-Hitchin. Then the smooth map $\pi: \Omega_{f} \rightarrow \tilde{S}$ satisfies the following: \begin{enumerate}[noitemsep, label=(\roman*)]
    \item $\Omega_{f}$ is open in $\mathcal{F}_{1,n-1}$, 
    \item $\pi$ is a fiber bundle projection,
    \item $(\Omega_f)|_x = \mathcal{B}(\mathcal{P}_x)$, where $\mathcal{P}_x \subset \T_{f(x)}\mathbb{X}$ is the tangent pencil $ df(\T_{x}\tilde{S})$. 
\end{enumerate}
\end{lemma}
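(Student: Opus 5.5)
This lemma collects results of \cite{Dav25}, so the plan is to reduce each item to the general statements there and check that their hypotheses hold for our $f$. The key hypothesis is regularity. By Corollary \ref{Cor:PrincipalRegularity}, $f$ is a $\Delta$-regular totally geodesic embedding, and in particular it is $\{\alpha_1,\alpha_{n-1}\}$-regular, which matches $\Theta(\tau)$ for the embedding $\mathcal{F}_{1,n-1}\hookrightarrow\vis\X$. Since $\Ha^2_{\pr}$ is totally geodesic with a constant Cartan direction (Remark \ref{Remk:RegularRankOne}), Busemann functions restricted to it are convex. Cocompactness of $\Omega_f$ modulo $\rho(\pi_1S)$ is \cite[Theorem 7.8]{Dav25}.

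\emph{Step 1 (item (iii)).} Fix $a\in\mathcal{F}_{1,n-1}$ and $x\in\tilde S$. Let $v\in\T^1_{f(x)}\X$ be the unit vector pointing toward $a$. The gradient of $b_{a,o}$ at $f(x)$ is $-v$, so $d(b_{a,o}\circ f)_x=0$ exactly when $v\perp\mathcal{P}_x$. By Definition \ref{Defn:TauBasePencil}, this is the condition $a\in\mathcal{B}(\mathcal{P}_x)$. To identify $\pi^{-1}(x)$ with $\mathcal{B}(\mathcal{P}_x)$, I would also show the converse: a critical point of $b_{a,o}\circ f$ forces $b_{a,o}\circ f$ to be proper and bounded below. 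For this I would use $\Delta$-regularity and \cite[Lemma 7.2]{Dav25}. Along any geodesic ray in $\Ha^2_{\pr}$ leaving $f(x)$ in a direction $w\in\mathcal{P}_x$, the asymptotic slope of the Busemann function is $-\langle \mu(w), \cdot\rangle$ evaluated against the flag $a$. Genericity of $a$ relative to the limit flags of $\Ha^2_{\pr}$ should make this slope positive, which is exactly the condition defining the Guichard--Wienhard domain. The same lemma gives uniqueness of the critical point, so $\pi$ is well defined.

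\emph{Step 2 (items (i) and (ii)).} Consider $\Phi:\tilde S\times\mathcal{F}_{1,n-1}\to \T^*\tilde S$ given by $(x,a)\mapsto d(b_{a,o}\circ f)_x$. This map is smooth, since Busemann functions depend smoothly on the point in the flag manifold. The set of $a$ with a critical point is the projection of $\Phi^{-1}(0)$. The Hessian of $b_{a,o}\circ f$ at a critical point is positive definite, by strict convexity coming from regularity. By the implicit function theorem, the critical point then persists under small perturbations of $a$ and depends smoothly on $a$. Hence $\Omega_f$ is open and $\pi$ is smooth. To see that $\pi$ is a submersion, I would use the transitivity of the $\PSL(2,\R)$ action on $\Ha^2_{\pr}$ together with equivariance of $\pi$. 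Precomposing with the orbit map $h\mapsto h\cdot a$ shows that $d\pi$ is surjective. Equivariance also yields local trivializations: over a disk $U\ni x$, choose a smooth section $s:U\to\PSL(2,\R)$ with $s(y)x=y$, and map $U\times\pi^{-1}(x)\to\pi^{-1}(U)$ by $(y,a)\mapsto s(y)\cdot a$. This map is a diffeomorphism, so $\pi$ is a fiber bundle. It is in fact globally trivial, since $\Ha^2$ is contractible.

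\emph{Main obstacle.} I expect the hardest part to be the properness/bounded-below equivalence in Step 1. It requires relating the sign of asymptotic Busemann slopes to the transversality conditions defining $\Omega_\rho$. For this I would rely directly on the arguments in \cite[\S7]{Dav25} rather than reprove them.
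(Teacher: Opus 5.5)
Your proposal is correct. The paper gives no argument of its own for this lemma: it is stated as a summary of \cite{Dav25}, with uniqueness of the critical point taken from \cite[Lemma 7.2]{Dav25}. So where you defer to Davalo, you are doing exactly what the paper does.

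Where you genuinely differ is in (i)--(ii). You get openness and smoothness of $\pi$ from the implicit function theorem at a nondegenerate critical point, and the bundle structure from homogeneity. For the homogeneity step you need slightly more than the $\rho$-equivariance the paper records. Since $\Omega_f$ depends only on $f(\tilde{S})=\Ha^2_{\pr}$ and $b_{ga,o}=b_{a,o}\circ g^{-1}+\mathrm{const}$, both $\Omega_f$ and $\pi$ (viewed as a map to $\Ha^2_{\pr}$) are equivariant for the full principal $\PSL(2,\R)$. This uses the Fuchsian symmetry essentially, but in exchange it gives a global trivialization $\Omega_f\cong \Ha^2\times\mathcal{B}(\mathcal{P}_x)$ at once. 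The general results of \cite{Dav25}, which are not restricted to the Fuchsian case, cannot appeal to transitivity in this way.

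On what you call the main obstacle: as written, your \emph{genericity} sentence is circular. Positivity of the asymptotic slopes characterizes flags outside the thickening, but what must be shown is that every $a\in\mathcal{B}(\mathcal{P}_x)$ lies there. Your own Step 2 observation settles this directly.
\begin{itemize}
\item The Hessian of $b_{a,o}$ at $f(x)$ degenerates exactly on the centralizer of $v$ in $\T_{f(x)}\X$.
\item Suppose a nonzero $w\in\mathcal{P}_x$ commuted with $v$. Since $w$ has simple spectrum, $v$ would act by $+c$ on one eigenline $e_i$ of $w$, by $-c$ on another eigenline $e_j$, and by $0$ on the rest.
\item Then $\langle v,w\rangle$ would be a nonzero multiple of $\lambda_i(w)-\lambda_j(w)\neq 0$, contradicting $v\perp\mathcal{P}_x$.
\item Since $f$ is totally geodesic, the Hessian of $b_{a,o}\circ f$ at $x$ is therefore positive definite.
\item Convexity along the geodesics of $\Ha^2_{\pr}$ issuing from $f(x)$, together with compactness of the unit circle in $\mathcal{P}_x$, then gives uniform linear growth. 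Hence $b_{a,o}\circ f$ is proper and bounded below.
\end{itemize}
This proves $\mathcal{B}(\mathcal{P}_x)\subset\Omega_f$ without any appeal to Tits angles or to Theorem~\ref{thm:EquivalentDomains}. Uniqueness of the critical point also follows from convexity plus this nondegeneracy.
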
 

We will recall in $\S$\ref{Subsec:ComparisonDomains} the relation between $\Omega_f$ defined here and the domain of discontinuity $\Omega_{1,n-1}$ defined in \cite{GW12}. That is, for $\rho$ Fuchsian-Hitchin $\Omega_f =\Omega_{1,n-1}$; see Theorem \ref{thm:EquivalentDomains}. 

The Stiefel manifold $V_2(\R^n)$ of orthonormal pairs in $\R^n$ is a four-fold cover of $\mathcal{F}_{1,n-1}$, as described in the next subsection. 
Later, we consider the domain $\hat{\Omega} \subset V_2(\R^n)$ obtained by lifting $\Omega_{f}$. 
Hence, we record a consequence of the nearest point projection upstairs.
\begin{corollary}[Fibering the Lifted Domain]\label{Cor:NearestPointProjUpstairs}
Let $f$ be given as in Lemma \ref{Lem:NearestPointProjection} and $\Omega_f$ be the domain \eqref{Omega_Busemann}.
Given a covering $q: V_2(\R^n) \rightarrow \lh{n}$, define $\hat{\Omega} := q^{-1}(\Omega_f)$. 
Then the map $\pi\circ q: \hat{\Omega} \rightarrow \tilde{S}$ defines a fiber bundle projection with fiber $\hat{\Omega}|_{x} =  q^{-1}(\mathcal{B}(\mathcal{P}_x))$. 
\end{corollary}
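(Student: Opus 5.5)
The plan is to pull the bundle structure of Lemma \ref{Lem:NearestPointProjection} back along the covering $q$ and check that the pullback of a fiber bundle by a covering is again a fiber bundle. Two facts drive the argument: $q$ is a finite covering map, in particular a local diffeomorphism, and $\tilde{S}$ is contractible.

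First, $\hat{\Omega}=q^{-1}(\Omega_f)$ is open in $V_2(\R^n)$, since $\Omega_f$ is open by Lemma \ref{Lem:NearestPointProjection}(i) and $q$ is continuous. The restriction $q|_{\hat{\Omega}}:\hat{\Omega}\to\Omega_f$ is again a covering map, being the restriction of a covering to the full preimage of an open set. Hence $\pi\circ q$ is smooth. Its fibers are
\[ (\pi\circ q)^{-1}(x)=q^{-1}(\pi^{-1}(x))=q^{-1}(\mathcal{B}(\mathcal{P}_x)) \]
by Lemma \ref{Lem:NearestPointProjection}(iii).

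Second, we show that $\pi\circ q$ is locally trivial. Fix $x\in\tilde{S}$ and choose a contractible neighborhood $U$ of $x$, for instance a disk. By Lemma \ref{Lem:NearestPointProjection}(ii) there is a trivialization $\Phi:\pi^{-1}(U)\cong U\times F$ with $F=\mathcal{B}(\mathcal{P}_x)$. Then $q^{-1}(\pi^{-1}(U))\to U\times F$ is a covering map. Since $U$ is contractible, and in particular simply connected, this covering is pulled back from a covering $\hat F\to F$. Concretely, the covering over $U\times F$ is classified by a homomorphism out of $\pi_1(U\times F)=\pi_1(F)$, per component. So it is isomorphic to $U\times\hat F$ with $\hat F=q^{-1}(F)$, compatibly with the projection to $U$. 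This gives the local trivialization $(\pi\circ q)^{-1}(U)\cong U\times q^{-1}(\mathcal{B}(\mathcal{P}_x))$.

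Smoothness of this trivialization holds because covering isomorphisms are local diffeomorphisms. Alternatively, one can use Ehresmann's theorem. The map $\pi\circ q$ is a submersion, since $q$ is a local diffeomorphism and $\pi$ is a submersion. It is also proper over compact subsets of $\tilde{S}$, because $\pi$ is proper there (its fibers are compact bases of pencils, and the structure is locally trivial) and $q$ is a finite covering.

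The only step needing care is identifying the covering over $U\times F$ with a product. The cleanest route is the Ehresmann argument just described, which I would use to avoid classifying coverings altogether.
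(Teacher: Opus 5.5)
Your proposal is correct and follows essentially the paper's approach. The paper gives no written proof: it records this corollary as an immediate consequence of Lemma~\ref{Lem:NearestPointProjection}, and your argument supplies the routine verification it leaves implicit — restrict $q$ to $\hat{\Omega}=q^{-1}(\Omega_f)$, read off the fibers from part (iii), and get local triviality either from the product structure of coverings over $U\times F$ with $U$ contractible or from Ehresmann's theorem applied to the proper submersion $\pi\circ q$.
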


\subsection{The Flag Manifold \texorpdfstring{$\lh{n}$}{F(1,n-1)}}\label{Sec:FlagManifold}

In this section, we introduce the flag manifold $\mathcal{F}_{1,n-1}$. 
We first describe the diffeomorphism type of this space as well as its realization as a fiber bundle. After choosing an equivariant embedding $\iota_{\tau}$ of $ \mathcal{F}_{1,n-1}$ in the visual boundary $\vis \X$ of the $\PSL(n,\R)$-symmetric space $\X$, we describe the resulting tangent vectors pointing towards $\mathcal{F}_{1,n-1}$. We then write the equations for the $\tau$-\emph{base of pencil} in $\mathcal{F}_{1,n-1}$.

\subsubsection{Geometry of $\mathcal{F}_{1,n-1}$} The flag manifold $\mathcal{F}_{1,n-1}$ consists of all line-hyperplane pairs:
\[ \mathcal{F}_{1,n-1} = \{(\ell, H) \in \Gr_1(\R^n) \times \Gr_{n-1}(\R^n) \mid \ell \subset H\}.\]
We begin with a basic description of the flag manifold in terms of the tangent bundle of projective space $\RP^{n-1}$. 

\begin{proposition}[Line-Hyperplane Pairs, Geometrically]\label{Prop:LHTopologicalType}
Let $Q \in \X$ be a Euclidean inner product. Then there is an associated diffeomorphism
$F_Q:\mathbb{P}\T^1\RP^{n-1}\rightarrow \mathcal{F}_{1,n-1}$ between the projective unit tangent bundle $\mathbb{P}\T^1\RP^{n-1}$ of projective space $\RP^{n-1}$ and $\lh{n}$. 
\end{proposition}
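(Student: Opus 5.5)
The plan is to write down $F_Q$ explicitly using the inner product $Q$, check that it is a well-defined smooth bijection, and then produce a smooth inverse. A point of $\mathbb{P}\T^1\RP^{n-1}$ is represented by a pair $(\ell,[w])$. Here $\ell\in\RP^{n-1}$, and $[w]$ is a tangent line at $\ell$. Using $Q$, identify $\T_\ell\RP^{n-1}\cong \Hom(\ell,\ell^{\perp_Q})$. A nonzero tangent vector at $\ell=[u]$, with $Q(u,u)=1$, then corresponds to a vector $w\in u^{\perp_Q}$, and the tangent line is the line $\R w\subset \ell^{\perp_Q}$. Changing $u\mapsto -u$ sends the homomorphism $u\mapsto w$ to $-u\mapsto -w$, so the line $\R w$ is intrinsic.

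With this identification, define
\[ F_Q(\ell,\R w) = \big(\ell,\ (\R w)^{\perp_Q}\big). \]
The hyperplane $(\R w)^{\perp_Q}$ contains $\ell$ because $w\perp_Q u$, so the image lies in $\mathcal{F}_{1,n-1}$.

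For the inverse, start from $(\ell,H)$ with $\ell\subset H$. Let $H^{\perp_Q}$ be the $Q$-orthogonal line of $H$. Since $\ell\subset H$, this line is contained in $\ell^{\perp_Q}$, so it defines a point of the projectivized tangent space at $\ell$. Set $G_Q(\ell,H)=(\ell, H^{\perp_Q})$. The two maps are clearly mutually inverse, because $((\R w)^{\perp_Q})^{\perp_Q}=\R w$.

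For smoothness, I will work with the diagram coming from the Stiefel manifold $V_2(\R^n)$ of $Q$-orthonormal pairs $(u,w)$. The unit tangent bundle $\T^1\sphere^{n-1}$ is identified with $V_2(\R^n)$. The projection $(u,w)\mapsto(\R u,\R w)$ presents $\mathbb{P}\T^1\RP^{n-1}$ as the quotient of $V_2(\R^n)$ by the free $\Z_2\times\Z_2$-action $(u,w)\mapsto(\pm u,\pm w)$. Similarly, $(u,w)\mapsto(\R u,\, w^{\perp_Q})$ is a smooth surjective submersion $V_2(\R^n)\to\mathcal{F}_{1,n-1}$. Its fibers are exactly the $\Z_2\times\Z_2$-orbits. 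Both quotient maps are therefore smooth principal $\Z_2\times\Z_2$-coverings, and $F_Q$ is the map induced on quotients by the identity of $V_2(\R^n)$. Hence $F_Q$ and its inverse are smooth, and $F_Q$ is a diffeomorphism. This also records the four-fold covering $q:V_2(\R^n)\to\mathcal{F}_{1,n-1}$ used later in the paper.

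The only step needing care is checking that $(u,w)\mapsto(\R u,w^{\perp_Q})$ is a submersion onto $\mathcal{F}_{1,n-1}$. I would verify this via homogeneity: $\SO(Q)$ acts transitively on $V_2(\R^n)$, and it also acts transitively on $\mathcal{F}_{1,n-1}$, since any line-hyperplane pair is carried to $(\R e_1, e_2^{\perp})$ by a $Q$-orthogonal frame adapted to $\ell$ and $H^{\perp_Q}$. The map is $\SO(Q)$-equivariant, and an equivariant map between homogeneous spaces of the same dimension, $2n-3$, with discrete fibers is a covering. I do not expect any genuine obstacle beyond this bookkeeping.
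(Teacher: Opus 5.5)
Your proposal is correct and follows essentially the same route as the paper: both realize $\mathcal{F}_{1,n-1}$ and $\mathbb{P}\T^1\RP^{n-1}$ as quotients of $V_2^Q(\R^n)\cong \T^1\sphere^{n-1}$ by the sign-change group $\Z_2\times\Z_2$, using the map $(u,v)\mapsto([u],[v]^{\perp_Q})$, and obtain $F_Q$ as the induced map on quotients. You additionally write out $F_Q$ and its inverse explicitly and justify the covering/submersion claim through homogeneity, a claim the paper simply asserts.
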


While elementary, the proof involves seeing the Stiefel manifold $V_2(\R^n) =\T^1\sphere^{n-1}$ as a finite cover of $\lh{n}$, which will be useful later. We also wish to emphasize the dependency of these identifications on the choice of Euclidean inner product $Q$. 

\begin{proof}
We first build the Stiefel manifold of orthonormal pairs with respect to $Q$:
\[ V_2^Q(\R^n) = \{ (u,v) \in \R^n\times \R^n\mid \langle u,u\rangle_Q=1, \langle v,v\rangle_Q=1, \langle u,v\rangle_Q=0\}.\]
Note that $V_2^Q(\R^n)$ is diffeomorphic to $\T^1\sphere^{n-1}$. 

Consider the map 
\begin{align*}
    \pi: V_2^Q(\R^n)&\longrightarrow \lh{n} \\
     (u,v) & \longmapsto ([u], [v]^{\bot_Q}). 
\end{align*}
Observe that $\pi$ is an $O(Q)$-equivariant surjective submersion, with fibers given by 
\[ \pi^{-1}(\ell,H)=\{(u,v),(u,-v),(-u,v),(-u,-v)\}\]
for some pair $(u,v) \in V_2^Q(\R^n).$ 
Now, let $\Gamma < \Diff(V_2^Q(\R^n))$ be the $(\Z_2\times \Z_2)$-subgroup generated by $(u,v)\mapsto (u,-v)$ and $(u,v)\mapsto (-u,v)$. The map $\pi$ descends to a diffeomorphism $\T^1\sphere^{n-1}/\Gamma \cong \lh{n}$. However, $\mathbb{P}\T^1\RP^{n-1}$ is diffeomorphic to $\T^1\sphere^{n-1}/\Gamma$.
\end{proof}

\subsubsection{An Equivariant embedding of $\mathcal{F}_{1,n-1}$} In fact, Proposition \ref{Prop:LHTopologicalType} describes one possible equivariant embedding $\mathcal{F}_{1,n-1} \hookrightarrow \vis\X$. 
Given $f=(\ell,H)$ and $Q \in \X$, consider the 4-tuple of representatives $\pi^{-1}(f) \subset  V_2^Q(\R^n)$ for the map $\pi$ from Proposition \ref{Prop:LHTopologicalType}. For any preimage $(u,v) \in \pi^{-1}(f)$, we define the following linear map
$\phi_{u,v}:\R^n \rightarrow \R^n$: 
\begin{align}\label{PointingLH_Simple}
    \phi_{u,v}=\begin{cases}
        u &\mapsto u \\
        v &\mapsto -v \\
        \langle u,v\rangle^{\bot_Q} &\mapsto 0
    \end{cases}
\end{align}
Since $\phi$ is $Q$-symmetric, we may regard $\phi$ as a tangent vector $\phi \in \T_Q\X$. Using these tangent vectors, we introduce the following definition. 

\begin{definition}
Let $f = (\ell, H) \in \lh{n}$. 
Observe that $\phi_{u,v}$ in \eqref{PointingLH_Simple} is independent of $(u,v) \in \pi^{-1}(f)$. Hence, we may define $\phi_{Q,f} :=\phi_{u,v}$ for any $(u,v) \in \pi^{-1}(f)$. 
\end{definition}

\begin{lemma}[Pointing Towards $\lh{n}$]\label{Lem:LHEmbedding}
Consider the map $\Psi: \lh{n} \times \X\rightarrow \T\X$ given by $(f, Q) \mapsto \phi_{Q,f}$. Then 
$\Psi$ is an $\PSL(n,\R)$-equivariant map satisfying 
\[ \Stab_{\PSL(n,\R)}([\gamma_{Q, \phi_{Q,f}}])= \Stab_{\PSL(n,\R)}(f). \]
\end{lemma}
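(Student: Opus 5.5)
First I would establish equivariance. For $g \in \PSL(n,\R)$, the action on $\X$ is by pushforward, $g\cdot Q = (g^{-1})^TQg^{-1}$, and on tangent vectors by conjugation, $g\cdot\psi = g\psi g^{-1}$. If $(u,v)$ is $Q$-orthonormal with $\pi(u,v)=f$, then $(gu,gv)$ is $g_*Q$-orthonormal, since $\langle gu,gv\rangle_{g_*Q} = \langle u,v\rangle_Q$. Moreover $g$ sends $[u]$ to $[gu]$ and $[v]^{\perp_Q}$ to $[gv]^{\perp_{g_*Q}}$, so $(gu,gv)$ represents $g\cdot f$. The conjugate $g\phi_{u,v}g^{-1}$ fixes $gu$, negates $gv$, and kills $g(\langle u,v\rangle^{\perp_Q}) = \langle gu,gv\rangle^{\perp_{g_*Q}}$. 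Hence $g\cdot \phi_{Q,f} = \phi_{g_*Q,\,g f}$, which is exactly $\Psi(gf,gQ) = g\cdot\Psi(f,Q)$. In the case $n=4$ one should be careful with signs, since $g$ is only defined up to $\pm 1$ in $\PSL$, but conjugation is unaffected by this.

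Next I would compute the stabilizer in a model. By transitivity and equivariance, it suffices to take $Q=o=I$ and $f=f_0=([e_1],\langle e_1,\dots,e_{n-1}\rangle)$, with $(u,v)=(e_1,e_n)$. Then $\phi=\diag(1,0,\dots,0,-1)$, so the geodesic is $\gamma(t)=\exp(t\phi)\cdot o$. Its Cartan projection direction is $\tau=\diag(1,0,\dots,0,-1)$, for which $\Xi(\tau)=\{\alpha_1,\alpha_{n-1}\}$. The stabilizer of $[\gamma]$ in $\vis\X$ is the set of $g$ for which $t\mapsto d(g\gamma(t),\gamma(t))$ stays bounded. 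Equivalently, using the standard fact from the proof of Proposition \ref{Prop:CoherentEmbedding}, it is the parabolic
\[ P_\phi=\{g : \lim_{t\to\infty}\exp(-t\phi)\,g\,\exp(t\phi)\ \text{exists}\}. \]
I would compute this limit entrywise. The $(i,j)$ entry of $\exp(-t\phi)g\exp(t\phi)$ is $e^{t(\phi_j-\phi_i)}g_{ij}$, with eigenvalues $\phi_1=1$, $\phi_n=-1$, and all others $0$. The exponent is positive exactly when $(i,j)$ is one of $(i\neq 1, j=1)$, $(i=n, j\ne n)$. So $P_\phi$ consists of the $g$ with $g_{i1}=0$ for $i\ge 2$ and $g_{nj}=0$ for $j\le n-1$. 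These conditions say precisely that $ge_1\in[e_1]$ and $g\langle e_1,\dots,e_{n-1}\rangle=\langle e_1,\dots,e_{n-1}\rangle$, i.e.\ that $g$ stabilizes $f_0$. This gives $\Stab([\gamma_{o,\phi_{o,f_0}}])=\Stab(f_0)$.

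Finally, for general $(f,Q)$ I would choose $h$ with $h\cdot(f_0,o)=(f,Q)$. This is possible because $\PSL(n,\R)$, or $\SL^\pm$ modulo center, acts transitively on pairs consisting of an inner product and a compatible orthonormal pair up to sign. By the equivariance step, $\gamma_{Q,\phi_{Q,f}}=h\gamma_{o,\phi_{o,f_0}}$, so both stabilizers are $h$-conjugates of equal groups. Transitivity needs a brief check, e.g.\ an orthonormal frame extension with a determinant fix by negating a basis vector outside $\langle u,v\rangle$, which requires $n\ge 3$.

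The main obstacle is the identification of the visual stabilizer with the ``limit exists'' parabolic, including the possible need to pass to the identity component in $\PSL$. I would handle this by the boundedness criterion: write $g=kan$ in horospherical (Langlands) form relative to $\phi$, and observe that $d(g\gamma(t),\gamma(t))$ is bounded iff the part of $g$ in the negative root spaces vanishes. This matches the entrywise computation above.
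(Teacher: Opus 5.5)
Your proposal is correct and follows the same route as the paper. You use equivariance, together with transitivity on pairs $(f,Q)$, to reduce to $Q=o$ and $f=([e_1],[e_n]^\perp)$, where $\phi_{o,f}=\diag(1,0,\dots,0,-1)$, and then identify the stabilizer of that ideal point with $\Stab(f)$. The paper only asserts the equivariance and this final identification; your explicit conjugation check, the determinant fix for transitivity, and the entrywise boundedness computation for $\exp(-t\phi)g\exp(t\phi)$ supply the details it omits, and that computation alone already suffices, so the Langlands-decomposition detour is unnecessary.
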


\begin{proof}
The map is evidently $\PSL(n,\R)$-equivariant. Now, it suffices to prove the claim for $o \in \X$ the model basepoint $o =I$, the identity matrix. For our chosen  Weyl chamber $\overline{\mathfrak{a}^+}$, we consider the model line-hyperplane pair
$f= ([e_1], [e_n]^\bot)$, which has 
$\phi_{o,f}= \diag(1,0,0,\dots, 0, -1)$. 
The stabilizer of the geodesic ray $[\gamma_{o, \phi_{o,f}}]$ is the stabilizer of the flag $f$, which completes the proof. 
\end{proof}

Lemma \ref{Lem:LHEmbedding} makes explicit the embedding $\lh{n}\hookrightarrow \vis\X$. 
In short, we take $\tau$ to be:
\begin{align}\label{SpecialTau}
    \tau = \diag(1,0,0,\dots,-1) \in \overline{\mathfrak{a}^+} \subset \T_o\X,
\end{align}
and we identify its $\PSL(n,\R)$-orbit $\mathcal{F}_{\tau}$ in the visual boundary with the flag manifold $\lh{n}$ as in Proposition \ref{Prop:CoherentEmbedding}. Moreover, the lemma provides a compass: given any point $Q\in \X$ and desired flag $f \in \lh{n}$ at infinity, we obtain the unique direction $\phi_{Q,f}$, up to positive scalars, that points towards $f$ in the visual boundary.

\begin{remark}
Going forward, when we regard $f \in \lh{n}$ as an element of $\vis \X$, we shall mean 
under the embedding from Lemma \ref{Lem:LHEmbedding}. Moreover, the only base of pencil we shall consider is $\mathcal{B}_{\tau}(\mathcal{P})$ for $\tau$ from \eqref{SpecialTau}. 
\end{remark}

\subsubsection{Bases of Pencil in $\lh{n}$} 
We now consider bases of pencils in the line-hyperplane space. 

We first describe a sufficient condition for a base of pencil in $\lh{n}$ to be a smooth submanifold. 
The relation between regularity of pencils and smoothness of the associated bases of pencils is described in greater generality in \cite{Dav25}.

\begin{lemma}\label{Lem:SmoothBase}
Let $\mathcal{P}\subset \T_Q\X$ be a $\Delta$-regular pencil. Then 
\begin{enumerate}[noitemsep, label=(\roman*)]
    \item The base of pencil $\mathcal{B}(\mathcal{P}) \subset \lh{n}$ is a closed smooth codimension two submanifold.
    \item Let $\pi: V_2(\R^n) \rightarrow \lh{n}$ be a smooth $(\Z_2\times \Z_2)$-covering map. The lifted base 
    $\pi^{-1}(\mathcal{B}(\mathcal{P})) \subset V_2(\R^n)$ is a closed smooth codimension two submanifold of $V_2(\R^n)$. 
\end{enumerate}
\end{lemma}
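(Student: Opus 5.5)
Working in the Stiefel cover first is cleanest, and part (i) then follows by descending along the covering. Fix $Q$ and identify $V_2^Q(\R^n)$ with orthonormal pairs $(u,v)$; by Lemma \ref{Lem:LHEmbedding} the flag $\pi(u,v)$ is reached from $Q$ exactly by the direction $\phi_{u,v}$. Let $\psi_1,\psi_2$ be a $Q$-orthonormal basis of $\mathcal{P}$, viewed as $Q$-symmetric endomorphisms. The metric on $\T_Q\X$ is a positive multiple of $\tr(AB)$ (in a $Q$-orthonormal frame), and
\[ \tr(\phi_{u,v}\psi) = \langle \psi u,u\rangle_Q - \langle \psi v,v\rangle_Q. \]
So the lifted base is the zero set of the smooth map
\[ F: V_2^Q(\R^n)\to \R^2,\qquad F(u,v) = \big(\langle\psi_1u,u\rangle-\langle \psi_1 v,v\rangle,\ \langle\psi_2u,u\rangle-\langle \psi_2 v,v\rangle\big). \]
This zero set is closed, hence compact, and it is $\Gamma$-invariant since $F$ is even in $u$ and in $v$. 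Thus (i) reduces to (ii) via the local diffeomorphism $\pi$, and (ii) reduces to showing that $0$ is a regular value of $F$.

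To check regularity, note that $F=(F_1,F_2)$ is a pair of functions. Their differentials fail to be linearly independent at a point exactly when some nonzero combination $\psi=a\psi_1+b\psi_2\in\mathcal{P}$ has $F_\psi:=\langle\psi u,u\rangle-\langle\psi v,v\rangle$ critical there. So it suffices to show that for every nonzero $\psi\in\mathcal{P}$, $F_\psi$ has no critical point on its zero set in $V_2$. Compute $dF_\psi$ along the tangent directions of $V_2$:
\begin{itemize}[noitemsep]
\item Rotating $u\mapsto u+tw$, with $w\perp u,v$, gives $2\langle\psi u,w\rangle$.
\item Rotating $v\mapsto v+tw$ gives $-2\langle \psi v,w\rangle$.
\item The rotation in the plane $(u,v)\mapsto (u+tv,\,v-tu)$ gives $4\langle\psi u,v\rangle$.
\end{itemize}
At a critical point, then, $\psi u$ and $\psi v$ lie in $\Span\langle u,v\rangle$ and $\langle \psi u,v\rangle=0$. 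Hence $\psi u=\lambda u$ and $\psi v=\mu v$, i.e.\ $u$ and $v$ are eigenvectors of $\psi$. On the zero set we also have $\lambda-\mu=F_\psi=0$, so $u$ and $v$ are orthogonal eigenvectors with the same eigenvalue. This contradicts Proposition \ref{Prop:DeltaRegularity}, since $\Delta$-regularity of $\mathcal{P}$ means every nonzero $\psi\in\mathcal{P}$ has distinct eigenvalues.

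So $0$ is a regular value, and the lifted base is a closed smooth submanifold of codimension two, which is (ii). By $\Gamma$-invariance it is the full preimage of $\mathcal{B}(\mathcal{P})$, so $\mathcal{B}(\mathcal{P})$ is its image under the covering and is itself a closed smooth codimension-two submanifold, which is (i).

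The main point needing care is the tangent-space bookkeeping: one must confirm that the three families above span $\T_{(u,v)}V_2$, which holds because $\dim \T_{(u,v)}V_2 = 2n-3$ and the families contribute $(n-2)+(n-2)+1$. One must also check that nondegeneracy of the trace pairing makes the orthogonality condition in Definition \ref{Defn:TauBasePencil} coincide with $F=0$. Both checks are routine; the substantive input is $\Delta$-regularity excluding repeated eigenvalues.
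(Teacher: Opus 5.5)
Your argument is correct, but it takes a different route from the paper's proof of this lemma. The paper proves (i) only by citing \cite[Lemma 6.7]{Dav25}, Davalo's general result that bases of regular pencils are smooth, and then says (ii) is an immediate consequence. Your self-contained regular-value computation is nonetheless essentially the argument the paper gives later, in Lemma \ref{Lem:StablyTrivial}.

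There, $X$ is cut out of $\R^{2n}$ by $\langle x,x\rangle-1$, $\langle y,y\rangle-1$, $\langle x,y\rangle$ and $\langle\psi_i x,y\rangle$, written in the rotated coordinates $x=(u+v)/\sqrt{2}$, $y=(u-v)/\sqrt{2}$. In these coordinates your $F_\psi$ becomes exactly $2\langle\psi x,y\rangle$. A linear dependence among the five gradients forces a nonzero element of $\mathcal{P}$ to have the orthogonal eigenvectors $x,y$ with a common eigenvalue, which is precisely your contradiction with $\Delta$-regularity.

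The two routes buy different things. The citation gives generality, covering arbitrary flag manifolds and regularity types. Your direct computation avoids dependence on \cite{Dav25}. If you run it in $\R^{2n}$ as the paper does, it also yields a global normal frame, hence stable triviality $\T X\oplus\varepsilon^5_\R\cong\varepsilon^{2n}_\R$, at no extra cost.

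Two small bookkeeping points.
\begin{itemize}
\item You prove the lifted statement only for the specific covering of Proposition \ref{Prop:LHTopologicalType}, whereas (ii) is stated for an arbitrary smooth $(\Z_2\times\Z_2)$-covering. This is harmless: once (i) is established, the preimage of a closed codimension-two submanifold under any smooth covering map (a local diffeomorphism) is again such a submanifold. That is exactly how the paper deduces (ii).
\item Nondegeneracy of the trace pairing plays no role in identifying orthogonality to $\mathcal{P}$ with $F=0$. Orthogonality is defined by the metric, which is a constant multiple of that pairing.
\end{itemize}
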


\begin{proof}
(i) This follows from \cite[Lemma 6.7]{Dav25} and subsequent remarks. 

(ii) This is an immediate consequence of (i).
\end{proof}

We now provide our first description of the equations for a base of pencil in $\lh{n}$. 

\begin{proposition}[Quadratic Orthogonality] \label{Prop:PencilOrtho_Quadratic}
Let $\mathcal{P} \subset \T_Q\X$ be a pencil. Then a tangent vector $\phi_{u,v} \in \T_Q\X$ of the form \eqref{PointingLH_Simple} is orthogonal to $\mathcal{P}$ if and only if 
\begin{align}\label{OrthoCondition_Quadratic}
 \langle \psi(u),u\rangle_Q =\langle \psi(v),v\rangle_Q, \qquad \forall \psi \in \mathcal{P}.
\end{align}
\end{proposition}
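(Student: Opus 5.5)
The plan is to compute the Riemannian inner product on $\T_Q\X$ explicitly and evaluate it against $\phi_{u,v}$. I will use the standard (up to a positive constant) $\PSL(n,\R)$-invariant metric on $\X$. At a point $Q$, and for $Q$-symmetric endomorphisms $\psi,\phi\in\T_Q\X$, it reads $\langle \psi,\phi\rangle = \tr(\psi\phi)$. This is the Killing form restricted to $\frakp_Q$, up to scaling, and the scaling does not affect orthogonality. So orthogonality of $\phi_{u,v}$ to $\mathcal{P}$ means exactly that $\tr(\psi\,\phi_{u,v})=0$ for all $\psi\in\mathcal{P}$.

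Next I compute this trace in a $Q$-orthonormal basis adapted to $\phi_{u,v}$. Complete $(u,v)\in V_2^Q(\R^n)$ to a $Q$-orthonormal basis $u,v,w_3,\dots,w_n$. By \eqref{PointingLH_Simple}, $\phi_{u,v}$ is diagonal in this basis with entries $(1,-1,0,\dots,0)$. For any endomorphism $A$, the diagonal entries in a $Q$-orthonormal basis $b_i$ are $\langle A b_i,b_i\rangle_Q$, and the trace is their sum. Therefore
\[
\tr(\psi\,\phi_{u,v})=\sum_i\langle \psi\phi_{u,v}b_i,b_i\rangle_Q=\langle \psi u,u\rangle_Q-\langle \psi v,v\rangle_Q .
\]
This expression vanishes precisely when \eqref{OrthoCondition_Quadratic} holds. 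Since it is linear in $\psi$, one could instead check it on a basis of $\mathcal{P}$, but the statement quantifies over all $\psi$ anyway.

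The only point needing care is the identification of the metric. The metric on $\X$ being used must be the $G$-invariant one, which at $Q$ is $\tr(\psi\phi)$; equivalently, it is $\tr(\psi^{*}\phi)$ with adjoint taken relative to $Q$, and the two agree on $Q$-symmetric maps. I would justify this by equivariance. At $o=I$ the metric is the trace form on symmetric matrices, which is the Killing form on $\frakp$ up to scaling. Pushing forward by $g$ conjugates endomorphisms, and trace is conjugation-invariant. So the formula holds at every $Q$, and no real obstacle remains.
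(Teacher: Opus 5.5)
Your proposal is correct and is essentially the paper's argument: the paper also identifies the invariant metric at $Q$ with $\tr(\psi^{*Q}\circ\phi)$ up to a nonzero constant. It then evaluates this trace using that $\phi_{u,v}$ acts by $1$, $-1$, $0$ on $u$, $v$, $\langle u,v\rangle^{\bot_Q}$, which yields $\langle \psi(u),u\rangle_Q-\langle \psi(v),v\rangle_Q$; your equivariance justification of the trace formula at an arbitrary $Q$ is a detail the paper leaves implicit.
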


\begin{proof}
Under the identification of $\X_{\PSL(n,\R)}$ with the model space $\mathcal{M}_n$, the Riemannian metric on $\X$ induced by the Killing form identifies up to a multiplicative constant $c\neq 0$ with the metric $g$ on $\mathcal{M}_n$ given by $g_Q(\phi,\psi) = \tr(\psi^{*Q}\circ \phi)$. 

Hence, for any element $\psi \in \mathcal{P}$ and tangent vector $\phi=\phi_{Q,f}$, written in the form \eqref{PointingLH_Simple},
\[ c\langle \phi_{Q,f}, \psi\rangle_{\X} = \langle (\psi^{*Q} \circ \phi)(u),u\rangle_Q + \langle(\psi^{*Q} \circ \phi)(v),v\rangle_Q
= \langle u,\psi(u)\rangle_Q -\langle v, \psi(v)\rangle_Q .
\]
The claim follows. 
\end{proof}

Let us make a trivial but important remark: the equations \eqref{OrthoCondition_Quadratic} are quadratic in $u$ and $v$. For this reason, it is essential to change perspective in search of linearity. 
Observe that by setting 
$x = \frac{1}{\sqrt{2}}(u+v)$ and $y = \frac{1}{\sqrt{2}}(u-v)$, the tangent vector $\phi_{Q,f}$ from \eqref{PointingLH_Simple} can be equivalently described by  
\begin{align}\label{PointingLH_Better}
    \phi_{Q,f}= \begin{cases}
        x &\mapsto y,\\
        y &\mapsto x, \\
        \langle x,y\rangle^{\bot_Q} &\mapsto 0.
    \end{cases}
\end{align}

This change of variables $(u,v)\mapsto (x,y)$ leads to a different covering map $V_2(\R^n) \rightarrow \lh{n}$. 

\begin{proposition}[Pointing Towards Line-Hyperplane, Variant]\label{Prop:LHEmbedding_Better}
Fix $Q \in \X$. The map $G_Q:V_2^Q(\R^n) \rightarrow \lh{n}$ by $(x,y)\mapsto ([x+y], [x-y]^{\bot_Q})$ 
is a $(\Z_2\times \Z_2)$-covering map. 
Moreover, the tangent vector $\phi_{Q,f} \in \T_Q\X$ points towards the flag $G_Q(f)$. 
\end{proposition}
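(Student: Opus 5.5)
The plan is to reduce the statement to the already-established map $\pi$ from Proposition \ref{Prop:LHTopologicalType} by composing with a diffeomorphism of $V_2^Q(\R^n)$, and then to verify the pointing claim with Lemma \ref{Lem:LHEmbedding}.

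\textbf{Covering property.} Define $L: V_2^Q(\R^n)\to V_2^Q(\R^n)$ by $L(x,y) = \bigl(\tfrac{1}{\sqrt2}(x+y),\tfrac{1}{\sqrt2}(x-y)\bigr)$. If $x,y$ are $Q$-orthonormal, then $\langle x+y,x+y\rangle_Q = 2$, $\langle x-y,x-y\rangle_Q=2$ and $\langle x+y,x-y\rangle_Q = 1-1=0$. Hence $L$ maps $V_2^Q(\R^n)$ to itself, and it is a smooth involution, so a diffeomorphism. Lines and $Q$-orthogonal hyperplanes are insensitive to positive scaling, so
\[ G_Q = \pi\circ L, \qquad \pi(u,v)=([u],[v]^{\bot_Q}). \]
Since $\pi$ is a $(\Z_2\times\Z_2)$-covering with deck group $\Gamma$, the map $G_Q$ is also a $(\Z_2\times \Z_2)$-covering, with deck group $L^{-1}\Gamma L$. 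Concretely, this deck group is generated by the swap $(x,y)\mapsto(y,x)$ and by $(x,y)\mapsto(-y,-x)$, which correspond to $v\mapsto -v$ and $u\mapsto -u$ respectively.

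\textbf{Pointing claim.} Here $f$ should be read as $(x,y)\in V_2^Q$, and $\phi_{Q,f}$ as the endomorphism in \eqref{PointingLH_Better}. Set $(u,v)=L(x,y)$. A direct check on the basis $u,v$ together with $\langle u,v\rangle^{\bot_Q}$ gives:
\begin{itemize}[noitemsep]
    \item $\phi(u) = \tfrac{1}{\sqrt2}(y+x) = u$;
    \item $\phi(v) = \tfrac{1}{\sqrt2}(y-x) = -v$;
    \item $\phi$ vanishes on $\langle u,v\rangle^{\bot_Q}=\langle x,y\rangle^{\bot_Q}$.
\end{itemize}
So $\phi$ coincides with $\phi_{u,v}$ from \eqref{PointingLH_Simple}. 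By Lemma \ref{Lem:LHEmbedding} and the identification $\mathcal{F}_\tau\cong\lh{n}$, this vector points towards $\pi(u,v) = G_Q(x,y)$.

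\textbf{Expected difficulty.} The only subtlety is bookkeeping: making precise that "points towards" refers to the orbit-map identification of Lemma \ref{Lem:LHEmbedding}. Equivariance reduces this to the model case $\phi=\diag(1,0,\dots,0,-1)$ at $o$, which that lemma already handles. No real obstacle is expected.
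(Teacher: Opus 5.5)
Your proof is correct and follows the paper's own reasoning. The paper gives no separate proof; it justifies the proposition by the change of variables $x=\frac{1}{\sqrt2}(u+v)$, $y=\frac{1}{\sqrt2}(u-v)$ relating \eqref{PointingLH_Simple} and \eqref{PointingLH_Better}, which is exactly your factorization $G_Q=\pi\circ L$. Your deck group also agrees with the paper's subsequent remark that $G_Q^{-1}(f)=\{(x,y),(y,x),(-x,-y),(-y,-x)\}$.
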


\begin{remark}
Note that the preimage of a line-hyperplane pair $f=(\ell,H) \in \lh{n}$ under the map $G_Q$ instead obtains the form 
$G_Q^{-1}(f)=\{(x,y),(y,x),(-x,-y),(-y,-x)\}$. 
\end{remark}

We now obtain the desired linear equations for the base of pencil. 

\begin{proposition}[Linear Orthogonality] \label{Prop:PencilOrtho_Linear}
Let $\mathcal{P} \subset \T_Q\X$ be a pencil and $\phi_{Q,f}$ a vector pointing towards $\lh{n}$, written as in \eqref{PointingLH_Better}. Then $\phi_{Q,f}$ is orthogonal to $\mathcal{P}$ if and only if 
\begin{align}\label{OrthoCondition_Linear}
    \langle y, \psi(x) \rangle_Q = 0, \qquad \forall \psi \in \mathcal{P}.
\end{align}
\end{proposition}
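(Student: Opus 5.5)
We reduce to the quadratic criterion of Proposition \ref{Prop:PencilOrtho_Quadratic} and then change variables. Write $\phi_{Q,f}$ as in \eqref{PointingLH_Better}, with $(x,y)$ a $Q$-orthonormal pair, and set
\[ u=\tfrac{1}{\sqrt 2}(x+y), \qquad v=\tfrac{1}{\sqrt 2}(x-y). \]
This inverts the substitution $x=\frac{1}{\sqrt2}(u+v)$, $y=\frac{1}{\sqrt2}(u-v)$. Since $x\perp_Q y$ and both have unit length, the pair $(u,v)$ is again $Q$-orthonormal. Moreover $\phi_{Q,f}$ sends $u\mapsto u$ and $v\mapsto -v$, and it kills $\langle u,v\rangle^{\bot_Q}=\langle x,y\rangle^{\bot_Q}$. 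So $\phi_{Q,f}=\phi_{u,v}$ in the sense of \eqref{PointingLH_Simple}. By Proposition \ref{Prop:PencilOrtho_Quadratic}, orthogonality to $\mathcal{P}$ is therefore equivalent to
\[ \langle \psi(u),u\rangle_Q=\langle \psi(v),v\rangle_Q \qquad \text{for all } \psi\in\mathcal{P}. \]

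Next we expand both sides using bilinearity and the $Q$-self-adjointness of each $\psi\in\mathcal{P}\subset \T_Q\X$, which gives $\langle \psi(x),y\rangle_Q=\langle \psi(y),x\rangle_Q$:
\begin{align*}
2\langle \psi(u),u\rangle_Q &= \langle\psi(x),x\rangle_Q+2\langle \psi(x),y\rangle_Q+\langle\psi(y),y\rangle_Q,\\
2\langle \psi(v),v\rangle_Q &= \langle\psi(x),x\rangle_Q-2\langle \psi(x),y\rangle_Q+\langle\psi(y),y\rangle_Q.
\end{align*}
Subtracting, the difference of the two sides equals $2\langle y,\psi(x)\rangle_Q$. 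Hence the quadratic condition holds if and only if $\langle y,\psi(x)\rangle_Q=0$ for every $\psi\in\mathcal{P}$, which is \eqref{OrthoCondition_Linear}.

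Alternatively, one can compute directly, as in the proof of Proposition \ref{Prop:PencilOrtho_Quadratic}, using the orthonormal basis $x,y$ completed by a basis of $\langle x,y\rangle^{\bot_Q}$. Up to the constant $c$, this gives $\tr(\psi\circ\phi_{Q,f})=\langle\psi(y),x\rangle_Q+\langle\psi(x),y\rangle_Q=2\langle y,\psi(x)\rangle_Q$.

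There is no serious obstacle. The only points needing care are that $(u,v)$ is orthonormal, so that Proposition \ref{Prop:PencilOrtho_Quadratic} applies, and that the self-adjointness of $\psi$ is used to combine the two cross terms.
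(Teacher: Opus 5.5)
Your proof is correct. Your ``alternatively'' paragraph is exactly the paper's argument. The paper evaluates the trace pairing $\tr(\psi^{*Q}\circ\phi_{Q,f})$ directly in a $Q$-orthonormal basis beginning with $x,y$. This gives $\langle y,\psi(x)\rangle_Q+\langle x,\psi(y)\rangle_Q=2\langle x,\psi(y)\rangle_Q$ by self-adjointness, and the paper does not invoke the conclusion of Proposition \ref{Prop:PencilOrtho_Quadratic} at all.

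Your main route instead uses Proposition \ref{Prop:PencilOrtho_Quadratic} as a black box. You check that $\phi_{Q,f}=\phi_{u,v}$ for the $Q$-orthonormal pair $u=\frac{1}{\sqrt2}(x+y)$, $v=\frac{1}{\sqrt2}(x-y)$. You then polarize, so that $\langle\psi(u),u\rangle_Q-\langle\psi(v),v\rangle_Q$ collapses to the cross term $2\langle y,\psi(x)\rangle_Q$.

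The comparison comes down to this. Your reduction takes a few more lines, but it verifies explicitly that the two descriptions \eqref{PointingLH_Simple} and \eqref{PointingLH_Better} of $\phi_{Q,f}$ agree, which the paper only asserts in passing. It also shows the linear condition to be literally the polarization of the quadratic one. The paper's direct computation is shorter and does not depend on the earlier proposition.
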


\begin{proof}
We proceed as in Proposition \ref{Prop:PencilOrtho_Quadratic}. 
Take $\psi \in \mathcal{P}$ and write a tangent vector $\phi =\phi_{Q,f}$ as in \eqref{PointingLH_Better}. We compute the pairing: again for some constant $c\neq 0$, 
\[ c\langle \phi, \psi\rangle_{\X} = \langle (\psi^{*Q} \circ \phi)(x),x\rangle_Q + \langle( \psi^{*Q} \circ \phi)(y),y\rangle_Q
= \langle y,\psi(x)\rangle_Q +\langle x, \psi(y)\rangle_Q = 2\langle x,\psi(y)\rangle_Q.\]
The claim follows. 
\end{proof}

In particular, the \emph{Structure Lemma} \ref{Lem:StructureLemma} we shall prove for the base of pencil of interest requires us to look for pairs $(x,y)$ as in \eqref{PointingLH_Better}, rather than pairs $(u,v)$ as in \eqref{PointingLH_Simple}.

\subsection{Comparison of Domains}\label{Subsec:ComparisonDomains}

We now introduce the \cite{GW12}-construction of a cocompact domain of discontinuity $\Omega_{1,n-1} \subset \lh{n}$ for Hitchin representations $\rho$. We recall the equivalence with the domain of discontinuity introduced in $\S$\ref{Subsec:NearestPointProjection} with Busemann functions. In the case of even ambient dimension $n=2m$, we show the domain $\Omega_{1,2m-1}$ is distinct from another domain in $\lh{2m}$ defined by pullback of a domain in $\RP^{2m-1}$. 

\subsubsection{The Tits Metric Thickening Domain}\label{Sec:TitsDomain}

A key property of Hitchin representations is that they admit \emph{limit maps}. We briefly recall the relevant features. 

The surface group $\pi_1S$, for $S=S_g$ closed of genus $g \geq2$, is hyperbolic and thus has a well-defined \emph{Gromov boundary} $\partial \pi_1S$, which is topologically a circle and carries a natural $\pi_1S$-action. We shall need also the \emph{full flag manifold} $\Flag(\R^n) $ consisting of all full flags 
\[ F^{\bullet} = \big[ F^1 \subset F^2 \subset \cdots \subset F^{n-1} ] ,\]
where $F^i$ is an $i$-plane in $\R^n$. 

\begin{proposition}\label{Prop:HitchinLimitMap}
Let $\rho:\pi_1S\rightarrow \PSL(n,\R)$ be Hitchin. Then there exists a unique $\rho$-equivariant continuous map $\xi^{\Delta}: \partial \pi_1S\rightarrow \Flag(\R^n)$ that is transverse and dynamics-preserving. 
\end{proposition}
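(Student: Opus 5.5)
This result is a standard theorem of Labourie \cite{Lab06} (see also \cite{FG06, GW12}), and the paper will cite it rather than reprove it. If one wants to argue it, the natural route is through Anosov theory. The plan has three steps: show that Hitchin representations are Borel-Anosov, extract the limit map from the Anosov property, and then prove uniqueness.

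\emph{Step 1: the Fuchsian locus.} For a Fuchsian-Hitchin representation $\rho=\iota_{\pr}\circ\rho_0$, the limit map is explicit. Identify $\partial\pi_1S$ with $\RP^1$ via $\rho_0$. Send a point $[w]\in\RP^1$ to the osculating flag of the Veronese curve at $[w]$: its $i$-th term is spanned by $w^{n-1}$ together with its first $i-1$ derivatives in $\mathrm{Sym}^{n-1}\R^2$. This map is $\rho$-equivariant and continuous. Transversality follows because distinct points give osculating flags in general position, which is a Vandermonde-type determinant computation. Dynamics-preservation holds because a hyperbolic element of $\PSL(2,\R)$ maps to an element with distinct real eigenvalues, by Corollary \ref{Cor:PrincipalRegularity}, and the attracting flag of that element is the osculating flag at the attracting fixed point.

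\emph{Step 2: openness and closedness.} Being Borel-Anosov is an open condition in $\chi(S,G)$. It is equivalent to the existence of an equivariant, transverse, dynamics-preserving limit map with uniform contraction, and this persists under small deformations by a stability argument on the flow space. The main obstacle is closedness within $\Hit(S,\PSL(n,\R))$. I would follow Labourie's argument: hyperconvexity of the Frenet curve is preserved under limits, provided one controls the degenerations of the limit curves using irreducibility of limit representations. Since $\Hit(S,G)$ is connected by definition, openness and closedness together show that every Hitchin representation is Borel-Anosov, and the Anosov limit map $\xi^\Delta$ has the required properties.

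\emph{Step 3: uniqueness.} Elements $\gamma\neq 1$ are loxodromic, so their attracting fixed points are dense in $\partial\pi_1S$. Dynamics-preservation forces $\xi^\Delta(\gamma^+)$ to equal the attracting flag of $\rho(\gamma)$. Since $\xi^\Delta$ is continuous and determined on a dense set, it is unique.
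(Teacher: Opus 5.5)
Your proposal is correct and matches the paper, which states this proposition as standard background (Labourie, Fock--Goncharov, with the dynamics-preserving formulation from GGKW) and gives no proof. Your osculating-flag construction on the Fuchsian locus and your uniqueness argument via density of attracting fixed points are the standard ones.

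One imprecision in your existence sketch: the open-and-closed argument must be run on a single subset of $\Hit(S,\PSL(n,\R))$, not on Anosov-openness paired with hyperconvex-closedness. The right set is the representations admitting a hyperconvex Frenet curve, which Labourie shows are Borel--Anosov. What you then need is openness of hyperconvexity itself, which is more delicate than openness of the Anosov condition.
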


We refer to \cite{GGKW17} for the details regarding the \emph{dynamics-preserving} condition. Here, we call two full flags $F_{1}^{\bullet}, F_{2}^{\bullet} \in \Flag(\R^n)$ \emph{transverse} exactly when $F_1^{i} + F_2^{j}=\R^n$ for all indices $i+j=n$. Equivalently, from a more abstract perspective, a pair $(F_1^{\bullet},F_{2}^{\bullet})$ is transverse exactly when it lies in the unique open $\PSL(n,\R)$-orbit in $\Flag(\R^n)\times \Flag(\R^n)$. 

Now, out of the data of this limit map, \cite{GW12} defined the following open cocompact domain of discontinuity $\Omega_{1,n-1}= \Omega_{1,n-1}(\rho)$ for $\rho$, which was reinterpreted later by \cite{KLP18} in a more general framework: 
\begin{align}\label{Omega_Thick}
    \Omega_{1,n-1} = \lh{n} \backslash \bigcup_{x \in \partial \pi_1S} K_{\xi^{\Delta}(x)},
\end{align}
where here the \emph{thickening} $K_{F^{\bullet}} \subset \lh{n}$ of a full flag $F^{\bullet}= (F^i)_{i=1}^{n-1}$ is as follows:  
\[ K_{F^{\bullet} } = \{ (\ell, H) \mid \exists i: \ell \subset F^i \subset H \}.\]
From the \cite{KLP18} perspective, the thickening $K_{F^{\bullet}}$ is exactly the intersection of $\lh{n} \subset \vis\X$ with a $\frac{\pi}{2}$-neighborhood of $F^{\bullet}$ in $\vis\X$ with respect to the Tits angle metric. 

In our context, \cite[Theorem 7.11]{Dav25} says that the domain \eqref{Omega_Busemann} defined via Busemann functions is the same as this domain of discontinuity \eqref{Omega_Thick} defined via limit maps in the Fuchsian-Hitchin case. 

\begin{theorem}[Equivalence of Domains]\label{thm:EquivalentDomains}
Let $\rho: \pi_1S \rightarrow \PSL(n,\R)$ be Fuchsian-Hitchin, $\Omega_f$ be the domain \eqref{Omega_Busemann} and $\Omega_{1,n-1}$ the domain \eqref{Omega_Thick}. Then $\Omega_f=\Omega_{1,n-1}$. 
\end{theorem}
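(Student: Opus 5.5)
We must show that for Fuchsian-Hitchin $\rho$, the Busemann domain $\Omega_f$ coincides with the thickening domain $\Omega_{1,n-1}$. Since both domains are open and the fibers of $\Omega_f$ over $\tilde S$ are the bases of pencils $\mathcal{B}(\mathcal{P}_x)$ (Lemma \ref{Lem:NearestPointProjection}), I would compare them through the Tits-angle description of the thickenings. The plan is to characterize membership in each domain by a pointwise condition on the Busemann function $b_{a,o}\circ f$ along the totally geodesic plane $\Ha^2_{\pr}$.

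\textbf{Asymptotic slopes.} For $a\in\lh{n}$ and a geodesic ray $\gamma$ in $\Ha^2_{\pr}$ going to $x\in\partial\pi_1S$, the slope $\lim_{t\to\infty}(b_{a,o}\circ f\circ\gamma)'(t)$ equals $-\cos\angle_{\mathrm{Tits}}(a,\gamma(\infty))$. Here $\gamma(\infty)$ is a point of $\vis\X$ whose associated full flag is $\xi^{\Delta}(x)$; this uses $\Delta$-regularity (Corollary \ref{Cor:PrincipalRegularity}) and the fact that the limit map of a Fuchsian-Hitchin representation is the boundary map of $\Ha^2_{\pr}$. Because $\Ha^2_{\pr}$ is a regular flat-free totally geodesic plane and $b_{a,o}$ is convex, $b_{a,o}\circ f$ is a convex function on $\Ha^2$. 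A convex function on $\Ha^2$ is proper and bounded below if and only if its asymptotic slope is strictly positive in every boundary direction. By compactness of $\partial\Ha^2$ and continuity of the angle, this holds if and only if $\angle_{\mathrm{Tits}}(a,\xi^{\Delta}(x))>\pi/2$ for all $x$.

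\textbf{Matching the thickening.} Next I would check directly that, for the chosen $\tau=\diag(1,0,\dots,0,-1)$, the condition $\angle_{\mathrm{Tits}}(a,F^\bullet)\le\pi/2$ is exactly $a\in K_{F^\bullet}$. Take $F^\bullet$ in the standard apartment and $a=(\ell,H)$ in relative position $w$. Then $\cos\angle$ is the pairing of $\tau$ with the Weyl-translate of the regular direction $\mu(H_{\pr})$ of $F^\bullet$. Computing, $\langle w\tau,\mu\rangle=\mu_{i}-\mu_{j}$, where $\ell$ is in general position in $F^{i}\setminus F^{i-1}$ and $H$ contains $F^{j-1}$ but not $F^j$. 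This quantity is $\le0$ exactly when some $F^k$ satisfies $\ell\subset F^k\subset H$. Strict positivity of $\mu_i-\mu_{i+1}$ is used here, which again relies on $\Delta$-regularity.

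\textbf{Main obstacle.} The hard step is the asymptotic slope identification, and in particular its uniformity, which is what upgrades positive slopes to properness. I would first try to cite Davalo's general result \cite[Theorem 7.11]{Dav25}, whose hypotheses (a $\Delta$-regular equivariant totally geodesic map) are met by Corollary \ref{Cor:PrincipalRegularity}. Failing that, I would use the $\PSL(2,\R)$-homogeneity of $\Ha^2_{\pr}$ to reduce to a single boundary point and a single ray, where the computation becomes the explicit Weyl-group check above.
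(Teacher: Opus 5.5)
Your argument is correct in outline, but it is not the paper's route: the paper gives no argument of its own and simply invokes \cite[Theorem 7.11]{Dav25}, which is exactly your fallback. Your direct proof has three steps: the asymptotic slope of $b_{a,o}$ along rays of $\Ha^2_{\pr}$ equals $-\cos\angle_{\mathrm{Tits}}$; convexity converts positivity of all slopes into properness plus boundedness below; and a Weyl-group computation in a common apartment identifies the thickening. This buys something the citation does not. It verifies the identification of $K_{F^\bullet}$ with the closed $\pi/2$ Tits-neighbourhood, which the paper only asserts. It also shows this identification holds for every regular type $\mu$, not just $\mu(H_{\pr})$, and it makes visible exactly where $\Delta$-regularity enters. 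The citation buys brevity and Davalo's generality.

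There is one error to fix. In your final check the inequality is reversed. With $\mu_1>\cdots>\mu_n$, let $\ell\subset F^i\setminus F^{i-1}$ and $F^{j-1}\subset H\not\supset F^j$. Some $F^k$ satisfies $\ell\subset F^k\subset H$ iff $i\le k\le j-1$ for some $k$, i.e.\ iff $i<j$, i.e.\ iff $\mu_i-\mu_j>0$. So membership in $K_{F^\bullet}$ corresponds to $\cos\angle>0$, which is consistent with your own description of $K_{F^\bullet}$ as $\{\angle\le\pi/2\}$; it does not correspond to $\mu_i-\mu_j\le 0$. Note also that $i\neq j$ always holds, since $i=j$ would force $\ell\subset H\cap F^i=F^{i-1}$. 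Hence the angle $\pi/2$ never occurs, and the closed and open thickenings agree.

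The step you flag as the main obstacle is in fact routine. Any two points of $\vis\X$ lie in the boundary of a common maximal flat. On that flat $b_{a,o}$ is affine, and since $b_{a,o}$ is $1$-Lipschitz, the slope along any ray asymptotic to one in that flat is $-\cos\angle_{\mathrm{Tits}}$.

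No uniformity is needed either. Suppose $b_{a,o}\circ f\le C$ at points $\gamma_{v_k}(t_k)$ with $t_k\to\infty$. Convexity bounds $b_{a,o}\circ f$ by $\max(C,b_{a,o}(f(p)))$ on each segment $\gamma_{v_k}[0,t_k]$. Passing to a limit direction $v$ gives the same bound on the whole ray $\gamma_v$, which contradicts a positive slope there.
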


The following fact of topological invariance will be useful. 
\begin{corollary}[{{\cite[Theorem 2.10]{AMTW25}}}]
Let $\rho:\pi_1S \rightarrow \PSL(n,\R)$ be any Hitchin representation for $n \geq 4$ and let $\Omega_{\rho}$ be the \cite{GW12} domain in $\mathcal{F}_{1,n-1}$. Then 
\begin{enumerate}[noitemsep, label=(\roman*)]
    \item The diffeomorphism type of the domain $\Omega_{\rho}$ is independent of $\rho$. 
    \item The diffeomorphism type of the quotient $M_{\rho}=\rho(\pi_1S)\backslash  \Omega_{\rho}$ is independent of $\rho$. 
\end{enumerate}
\end{corollary}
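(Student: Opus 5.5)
The plan is to deduce both statements from two standard facts. First, the Hitchin component $\Hit(S,\PSL(n,\R))$ is connected and contains a Fuchsian-Hitchin representation $\rho_0$. Second, cocompact domains of discontinuity for Anosov representations behave well under deformation. I would first treat the domain itself and then the quotient.

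For (i), I would use the thickening description \eqref{Omega_Thick}: $\Omega_\rho$ is the complement in $\lh{n}$ of $K_\rho=\bigcup_{x\in\partial\pi_1S}K_{\xi^\Delta_\rho(x)}$. Hitchin representations are Borel–Anosov, and the limit map $\xi^\Delta_\rho$ depends continuously on $\rho$. Hence the compact sets $K_\rho$ vary continuously in the Hausdorff topology. By the Guichard–Wienhard/KLP stability theory, the family $\{(\rho,a): a\in\Omega_\rho\}$ is open in $\Hit\times\lh{n}$, and the quotients $M_\rho$ form a locally trivial family. Concretely, I would run an Ehresmann-type argument for the total space $\mathcal{M}=\{(\rho,[a])\}\to\Hit$. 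It is a proper submersion, because each $M_\rho$ is a closed manifold and the actions vary smoothly. So all fibres $M_\rho$ are diffeomorphic, which gives (ii) along a path from $\rho_0$.

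To get (i) from this, I would lift the diffeomorphism $M_{\rho_0}\cong M_\rho$, which is produced by the flow of a connection on $\mathcal{M}\to\Hit$. Its lift is a diffeomorphism $\Omega_{\rho_0}\to\Omega_\rho$ intertwining $\rho_0$ and $\rho$. The lift exists because the connection lifts equivariantly to the family of covers $\{\Omega_\rho\}$, whose total space is open in $\Hit\times\lh{n}$.

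The main obstacle is properness and local triviality of the family of quotients, i.e.\ uniform cocompactness of $\Omega_\rho$ near a given $\rho$. For this I would use the uniform control on $K_\rho$. Fix a compact fundamental set $D\subset\Omega_{\rho_0}$. For $\rho$ nearby, $D$ stays disjoint from $K_\rho$ by Hausdorff continuity. Its $\rho(\pi_1S)$-translates still cover $\Omega_\rho$, by the expansion/contraction estimates of \cite{KLP18} near the limit set, which are uniform in $\rho$. I expect the delicate point to be this uniformity, so I would simply invoke \cite[Theorem 2.10]{AMTW25} or Guichard–Wienhard's stability theorem rather than reprove it. As an alternative for (i), I could instead use Theorem \ref{thm:EquivalentDomains} and Lemma \ref{Lem:NearestPointProjection} to identify $\Omega_{\rho_0}$ with a bundle over $\Ha^2$. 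I would then only need that the fibre is invariant under deformation, again by an isotopy of $K_\rho$.
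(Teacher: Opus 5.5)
Your proposal follows essentially the paper's route. The paper gives no independent argument and simply cites \cite[Theorem 2.10]{AMTW25}, which rests on the Guichard--Wienhard deformation theorem (see the proof of \cite[Theorem 9.12]{GW12}); that is the equivariant Ehresmann-type argument along a path of Hitchin representations you sketch, and you defer its only substantive input (proper discontinuity and cocompactness uniformly in $\rho$) to that same source. One caution: your sentence claiming properness ``because each $M_\rho$ is a closed manifold'' is not a justification by itself, since compact fibres do not make $\mathcal{M}\to\Hit$ proper, and you should also work over a path in $\mathrm{Hom}(\pi_1S,\PSL(n,\R))$ rather than over $\Hit$. That step therefore depends genuinely on the cited theorem, as you yourself acknowledge.
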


Thus, it is sufficient to understand the differential topology of the domain for Fuchsian-Hitchin representations, and by Theorem \ref{thm:EquivalentDomains}, we can use $\Omega_f$ in this case. 

We note a similar result for the lifted domain in the Stiefel manifold. 
\begin{corollary}
Fix a covering $q: V_2(\R^n) \rightarrow \mathcal{F}_{1,n-1}$. 
The diffeomorphism type of the lifted domain $\hat{\Omega}_{\rho} = q^{-1}(\Omega_{\rho})$ is independent of the Hitchin representation $\rho: \pi_1S \rightarrow \PSL(n,\R)$. 
\end{corollary}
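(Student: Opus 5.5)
The plan is to deduce this directly from the preceding Corollary (the \cite[Theorem 2.10]{AMTW25} invariance statement) by lifting diffeomorphisms through the covering $q$. Fix two Hitchin representations $\rho_0,\rho_1$. By that Corollary, there is a diffeomorphism $\Phi:\Omega_{\rho_0}\to\Omega_{\rho_1}$. The lifted domains $\hat{\Omega}_{\rho_i}=q^{-1}(\Omega_{\rho_i})$ are $(\Z_2\times\Z_2)$-covers of $\Omega_{\rho_i}$. To lift $\Phi$ to a diffeomorphism $\hat{\Omega}_{\rho_0}\to\hat{\Omega}_{\rho_1}$, it suffices to check that $\Phi$ carries the covering $q|_{\hat\Omega_{\rho_0}}$ to one isomorphic to $q|_{\hat\Omega_{\rho_1}}$. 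Equivalently, for each connected component, $\Phi_*$ should carry the subgroup $q_*\pi_1(\hat\Omega_{\rho_0})$ to the corresponding subgroup for $\rho_1$, and should match the monodromy homomorphisms to $\Z_2\times\Z_2$ up to an automorphism.

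Rather than tracking $\Phi$ abstractly, I would use how such invariance results are actually proven. The domains vary in a continuous family: the Hitchin component is connected, and Guichard--Wienhard/KLP give that the domains $\Omega_\rho$ vary continuously, in fact by an isotopy of $\mathcal{F}_{1,n-1}$ on compact pieces via Ehresmann-type arguments. Concretely, I would choose a path $\rho_t$ in $\Hit(S,\PSL(n,\R))$ and produce an ambient isotopy $h_t$ of $\mathcal{F}_{1,n-1}$ with $h_t(\Omega_{\rho_0})=\Omega_{\rho_t}$. An isotopy of the base is a homotopy through diffeomorphisms starting at the identity. By the homotopy lifting property of the covering $q:V_2(\R^n)\to\mathcal{F}_{1,n-1}$, it lifts uniquely to an isotopy $\hat h_t$ of $V_2(\R^n)$ with $\hat h_0=\mathrm{id}$. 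Each $\hat h_t$ is a diffeomorphism, being a local diffeomorphism covering one with an inverse lift. Then $\hat h_t(q^{-1}\Omega_{\rho_0})=q^{-1}(h_t\Omega_{\rho_0})=\hat\Omega_{\rho_t}$, which gives the claim.

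A cleaner alternative avoids ambient isotopies. Since the cover is the restriction of a fixed global cover $q$, its classifying map $\Omega_\rho\to B(\Z_2\times\Z_2)$ is the restriction of a fixed map on $\mathcal{F}_{1,n-1}$. Hence any diffeomorphism $\Phi$ that is homotopic, within $\mathcal{F}_{1,n-1}$, to the inclusion $\Omega_{\rho_0}\hookrightarrow\mathcal{F}_{1,n-1}$ (followed by the inclusion of $\Omega_{\rho_1}$) pulls back $q|_{\hat\Omega_{\rho_1}}$ to $q|_{\hat\Omega_{\rho_0}}$. It therefore lifts to a diffeomorphism of the covers.

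The main obstacle is justifying that the diffeomorphism supplied by \cite{AMTW25} is of this form. I expect their proof does construct it by a continuous deformation (via the fibration over $\Ha^2$ and cocompactness, so one works on the compact quotient $M_\rho$ and applies Ehresmann), in which case the diffeomorphism is homotopic to the inclusion and the lifting argument goes through. If only an abstract diffeomorphism were available, I would instead redo the deformation argument upstairs. For this I would note that the Busemann-function description of the domain and the nearest point projection of Corollary \ref{Cor:NearestPointProjUpstairs} pull back verbatim along $q$. This gives a family of fibrations $\hat\Omega_{\rho_t}\to\tilde S$ to which the same Ehresmann/cocompactness argument applies directly.
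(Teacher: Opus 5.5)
Your ``cleaner alternative'' is exactly the paper's argument. The proof of \cite[Theorem 9.12]{GW12}, which underlies \cite[Theorem 2.10]{AMTW25}, yields a continuously varying family of diffeomorphisms $\varphi_t:\Omega_{\rho_0}\to\Omega_{\rho_t}$, so $H(x,t)=j_t(\varphi_t(x))$ is a homotopy from the inclusion $j_0$ to $j_1\circ\varphi_1$, and homotopic maps pull back isomorphic covers, giving $\hat{\Omega}_{\rho_0}\cong\varphi_1^*\hat{\Omega}_{\rho_1}\cong\hat{\Omega}_{\rho_1}$. You can drop the ambient-isotopy variant, since it asks for more than the deformation theorem provides (a diffeomorphism of all of $\mathcal{F}_{1,n-1}$ carrying $K_{\rho_0}$ onto $K_{\rho_t}$), and the Busemann-function fallback, since this paper establishes that description only for Fuchsian--Hitchin representations.
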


\begin{proof}
We first recall that, although the statement of \cite[Theorem~2.10]{AMTW25} only asserts the existence of a smooth equivariant diffeomorphism between the domains associated with two Anosov representations in the same connected component, the proof of the underlying deformation theorem gives a local continuous family; see in particular the proof of \cite[Theorem~9.12]{GW12}. We may thus take $\varphi_t:\Omega_0\rightarrow \Omega_t$ to be a continuously varying $(\rho_0,\rho_t)$-equivariant diffeomorphism. 

We now prove the corollary. Let $j_t:\Omega_t\hookrightarrow \mathcal{F}_{1,n-1}$ denote the inclusion. The map
\[
    H:\Omega_0\times[0,1]\longrightarrow \mathcal{F}_{1,n-1},
    \qquad
    H(x,t)=j_t\bigl(\varphi_t(x)\bigr),
\]
is continuous and satisfies
\[
    H(\,\cdot\,,0)=j_0,
    \qquad
    H(\,\cdot\,,1)=j_1\circ\varphi_1.
\]
In other words, the inclusion maps $j_i:\Omega_i \rightarrow \mathcal{F}_{1,n-1}$ are isotopic. Thus, the domains $\hat{\Omega}_i$, which are the total space of the fiber bundles $j_i^*q$, are diffeomorphic.
\end{proof}

\subsubsection{The Pullback Domain}

In the case of ambient dimension $n=2m$, we introduce another domain via limit maps, namely the pullback of a domain of discontinuity in projective space. We quickly clarify that this domain is distinct from the domain $\Omega_{1,n-1}$. \medskip

Unlike in odd ambient dimensions \cite{Ste23}, Hitchin representations $\rho: \pi_1S\rightarrow \PSL(2m,\R)$ admit a cocompact domain of discontinuity in projective space, denoted $\Omega_{1}\subset \RP^{2m-1}$. Under the \cite{KLP18}-framework, this domain can also be described by removing a certain thickening of limit sets. Here, we denote $\xi^m: \partial \pi_1S \rightarrow \Gr_m(\R^{2m})$ as the projection of the limit map $\xi^{\Delta}$ of $\rho$ from Proposition \ref{Prop:HitchinLimitMap} to the Grassmannian $\Gr_m(\R^{2m})$. 

The domain in projective space is then given by: 
\[ \Omega_{1} = \RP^{2m-1} \setminus \bigcup_{x\in \partial \pi_1S} K_{\xi^m(x)},\]
where here the \emph{thickening} $K_{F^m} \subset \RP^{2m-1}$ of an $m$-plane is similar to the prior thickenings: 
\[ K_{F^m} = \{ \ell \in \RP^{2m-1} \mid \ell \subset F^m\}.\]

Let us denote $\pr_1: \lh{2m} \rightarrow \RP^{2m-1}$ as the natural projection $(\ell, H) \mapsto \ell$. By pullback, we obtain an open and cocompact domain of proper discontinuity in $\mathcal{F}_{1,2m-1}$ by 
$\pr_1^{-1}(\Omega_1)$. 

We now show that the two domains $\Omega_{1,n-1}$ and $\pr_1^{-1}(\Omega_1)$ are distinct. While this result is well known to experts, we provide a short elementary and geometric proof for the convenience of the reader (circumventing a discussion of \emph{balanced ideals} from \cite{KLP18}). In particular, the result of \cite{ADL24} about the fibers of $\Omega_1$ for Hitchin representations in projective space does not help us to find the fibers of $\Omega_{1,2m-1}$ in this case.

We proceed with an argument using only transversality that immediately implies the desired result. 

\begin{proposition}
Let $m \geq 2$ be an integer and $\xi:\sphere^1 \rightarrow \Flag(\R^{2m})$ be a continuous and transverse map. Then the open domains $\pr_1^{-1}(\Omega_1(\xi))$ and $\Omega_{1,n-1}(\xi)$ are not equal. 
\end{proposition}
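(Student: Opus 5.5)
We will exhibit an explicit line-hyperplane pair $(\ell,H)$ that lies in one domain but not the other, using only the transversality of $\xi$. Recall that $\pr_1^{-1}(\Omega_1)$ consists of pairs $(\ell,H)$ with $\ell\not\subset \xi^m(t)$ for every $t$. By contrast, $\Omega_{1,n-1}$ consists of pairs such that, for every $t$ and every $i$, we do not have $\ell\subset \xi^i(t)\subset H$. The plan is to find a pair in $\Omega_{1,n-1}$ whose line sits inside some $\xi^m(t_0)$. Such a pair is then excluded from the pullback domain. We will choose $\ell$ inside a low-dimensional piece of the flag $\xi(t_0)$ so that the hyperplane $H$ has plenty of room.

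\textbf{Step 1.} Fix $t_0\in\sphere^1$ and take $\ell=\xi^1(t_0)$. This line lies in $\xi^m(t_0)$ because $m\ge 1$, so $(\ell,H)\notin \pr_1^{-1}(\Omega_1)$ for any choice of $H$.

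\textbf{Step 2.} We must choose $H\supset \ell$ so that no triple $(t,i)$ satisfies $\ell\subset\xi^i(t)\subset H$. There are two cases.

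For $t=t_0$, every $\xi^i(t_0)$ contains $\ell$. The condition therefore reduces to requiring that $H$ contain none of the subspaces $\xi^i(t_0)$, which is equivalent to $\xi^1(t_0)\not\subset H$. But $\ell=\xi^1(t_0)\subset H$ by construction, so this fails. Hence $\ell=\xi^1(t_0)$ is a bad choice.

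We therefore modify the plan and take $\ell\subset\xi^m(t_0)$ generic, that is, $\ell\not\subset \xi^{m-1}(t_0)$. Then $\ell\subset\xi^i(t_0)$ exactly when $i\ge m$. For $t=t_0$ it suffices to have $\xi^{2m-1}(t_0)\not\subset H$ (this handles $i=2m-1$), and more generally $\xi^i(t_0)\not\subset H$ for $i\ge m$. Since these subspaces are nested, the single condition $\xi^m(t_0)\not\subset H$ implies all of them.

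For $t\neq t_0$, transversality shows $\xi^m(t)\cap\xi^m(t_0)=0$. It follows that $\ell\subset\xi^i(t)$ forces $i\ge m+1$, because $\dim(\xi^i(t)\cap \xi^m(t_0))=i-m$ for $i\ge m$ under transversality. In that case, $\ell$ lies in the $(i-m)$-dimensional space $\xi^i(t)\cap\xi^m(t_0)$. We then need $H$ to avoid containing each such $\xi^i(t)$. Since the spaces are nested, we just need $\xi^{i_{\min}(t)}(t)\not\subset H$, where $i_{\min}(t)$ is the least index with $\ell\subset\xi^i(t)$.

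\textbf{Step 3 (main obstacle).} We need a single hyperplane $H\ni\ell$ that avoids containing an entire $\sphere^1$-family of subspaces. This is a dimension count. The hyperplanes containing $\ell$ form an $\RP^{2m-2}$. The set of hyperplanes containing a given subspace of dimension $d\ge 2$ is a projective subspace of dimension $2m-1-d\le 2m-3$. The union over $t\in\sphere^1$ is therefore the image of a continuous one-parameter family of sets of dimension at most $2m-3$, so it has covering dimension at most $2m-2$. This might fill $\RP^{2m-2}$, so the count must be sharpened.

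We use that $i_{\min}(t)\ge m+1\ge 3$ for $t\ne t_0$, which gives dimension at most $2m-4$ for each $t$ and a union of dimension at most $2m-3<2m-2$. For $t=t_0$ we have $d=m\ge2$, a single set of dimension $2m-1-m$. Strictly, the count should use only those $t$ with $\ell\subset\xi^{i}(t)$, which is fine since it only shrinks the bad set.

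Continuity of the relevant subspaces in $t$ is delicate because $i_{\min}$ jumps. To handle this, we stratify by $i$ and bound each closed set $\{(t,H): \xi^i(t)\subset H\}$ separately, each of dimension at most $1+(2m-1-i)$ with $i\ge3$. The union of these finitely many closed sets has empty interior, so a valid $H$ exists. Such an $H$ gives $(\ell,H)\in\Omega_{1,n-1}\setminus\pr_1^{-1}(\Omega_1)$, proving the two domains differ.
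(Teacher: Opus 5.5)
Your argument is correct in outline, but it proves the opposite non-inclusion from the paper. The paper shows $\pr_1^{-1}(\Omega_1)\not\subset\Omega_{1,n-1}$. It takes $\ell\in\mathbb{P}(\xi^{m+1}(x_0))\setminus\mathbb{P}(\xi^{m}(x_0))$ and any $H\supset\xi^{m+1}(x_0)$, so $(\ell,H)\notin\Omega_{1,n-1}$ is automatic. The only work is to find such an $\ell$ outside the projective thickening, i.e.\ to see that the one-parameter family of points $\mathbb{P}(\xi^{m+1}(x_0)\cap\xi^m(x))$, $x\neq x_0$, cannot fill an open subset of $\mathbb{P}(\xi^{m+1}(x_0))\cong\RP^m$.

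You instead show $\Omega_{1,n-1}\not\subset\pr_1^{-1}(\Omega_1)$. Exclusion from the pullback domain is automatic once $\ell\subset\xi^m(t_0)$, and the work moves to choosing $H$. That requires a dimension count in the space $\RP^{2m-2}$ of hyperplanes through $\ell$, against a one-parameter family of projective subspaces of dimension up to $m-2$ plus one copy of $\RP^{m-1}$ from $t_0$, with bookkeeping for the jumping index $i_{\min}(t)$. The paper's route is shorter because its bad set is just a curve of points. Yours is heavier, but combined with the paper's it shows that neither domain contains the other, as expected for two distinct balanced thickenings. Your initial choice $\ell=\xi^1(t_0)$ in Step 1 should simply be dropped in favor of $\ell\subset\xi^m(t_0)$, $\ell\not\subset\xi^{m-1}(t_0)$.

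One point in Step 3 needs a real justification. You invoke the principle that the union of a continuous one-parameter family of sets of dimension at most $k$ has dimension at most $k+1$. This is false for merely continuous families: a Peano curve is a one-parameter family of points filling a square. Likewise, bounding the dimension of the incidence set in $\sphere^1\times\RP^{2m-2}$ does not by itself control its projection to $\RP^{2m-2}$.

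What rescues the argument is transversality. For $i\ge m$ and $t\neq s$ one has $\xi^i(t)+\xi^i(s)\supseteq\xi^m(t)+\xi^m(s)=\R^{2m}$, so no hyperplane contains both. Hence the projection of the compact set $Z_i=\{(t,H):\ell\subset\xi^i(t)\subset H\}$ to $\RP^{2m-2}$ is injective, and therefore a homeomorphism onto its image. That image is a compact set of covering dimension at most $1+(2m-1-i)\le m-1<2m-2$, so it is nowhere dense. Finitely many such sets ($m+1\le i\le 2m-1$), together with $\{H\supset\xi^m(t_0)\}\cong\RP^{m-1}$, therefore cannot cover $\RP^{2m-2}$.

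For this you must stratify only over $i\ge m+1$ and over $t$ with $\ell\subset\xi^i(t)$, not over all $i\ge 3$ and all $t$. For $i<m$ the disjointness fails, and those sets are irrelevant anyway. The paper's assertion that its bad set is ``one-dimensional'' tacitly relies on the same kind of injectivity: distinct $x$ give distinct lines because $\xi^m(x)\cap\xi^m(y)=0$.
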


\begin{proof}
Fix any point $x_0 \in \sphere^1$. Let us define 
$A : = \mathbb{P}(\xi^{m+1}(x_0))\setminus \mathbb{P}(\xi^{m}(x_0))$. We first note that if $A \cap \Omega_1 \neq \emptyset $, then the proof is complete. Indeed, if $\ell \in A\cap \Omega_1$, then for
any hyperplane $H$ satisfying $\xi^{m+1}(x_0) \subset H$, we have $(\ell, H) \in \pr_1^{-1}(\Omega_1(\xi))$, yet $(\ell, H) \notin \Omega_{1,n-1}(\xi)$. 

Now, we prove that $A \cap \Omega_1 \neq \emptyset $. 
For any point $x \in \sphere^1$, note that 
$\xi^m(x_0) \cap \xi^{m}(x) =\{0\}$ by transversality. 
Hence, $\dim (\xi^{m+1}(x_0) \cap \xi^{m}(x_0))= 1$. 
Let us denote $K_1 := \RP^{2m-1}\setminus \Omega_1(\xi)$ as the thickening of $\mathrm{im}(\xi^m)$ in $\RP^{2m-1}$. 
Thus, 
\[ A \cap K_1  = \bigcup_{x \in \sphere^1 \setminus \{x_0\}} \mathbb{P}\big(\xi^{m+1}(x_0)\cap \xi^{m}(x)\big)\]
is one-dimensional. Hence, $A\not \subset K_1$, which means $A\cap \Omega_1 \neq \emptyset$.  
\end{proof}

\begin{corollary}[Comparison of Domains]
Let $\rho:\pi_1S \rightarrow \PSL(2m,\R)$ be $\Delta$-Anosov. Then the domains $\pr_1^{-1}(\Omega_{1}(\rho))$ and $\Omega_{1,n-1}(\rho)$ are distinct. In particular, this holds for $\rho$ Hitchin. 
\end{corollary}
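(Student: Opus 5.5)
The corollary is almost immediate from the preceding proposition. The plan is to reduce it to that transversality statement by exhibiting an appropriate limit map.

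Let $\rho:\pi_1S\rightarrow \PSL(2m,\R)$ be $\Delta$-Anosov. By the definition of $\Delta$-Anosov representations (see \cite{GW12, GGKW17}), $\rho$ admits a continuous, $\rho$-equivariant, transverse, dynamics-preserving boundary map $\xi^{\Delta}:\partial\pi_1S\rightarrow \Flag(\R^{2m})$. Since $\partial \pi_1S$ is homeomorphic to $\sphere^1$, composing with such a homeomorphism gives a continuous transverse map $\xi:\sphere^1\rightarrow \Flag(\R^{2m})$.

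The next step is to confirm that both domains in the corollary are exactly the domains built from $\xi$ in the proposition. The thickening description \eqref{Omega_Thick} defines $\Omega_{1,n-1}(\rho)$ as $\Omega_{1,n-1}(\xi)$. Likewise, the thickening of $\xi^m$ in $\RP^{2m-1}$ defines $\Omega_1(\rho)$ as $\Omega_1(\xi)$. These are formal identifications, since $\Delta$-Anosov representations of surface groups into $\PSL(2m,\R)$ yield the same \cite{KLP18}-type thickening domains in terms of $\xi^{\Delta}$. With this in hand, the proposition applies for every $m\geq 2$ and shows that $\pr_1^{-1}(\Omega_1(\rho))\neq\Omega_{1,n-1}(\rho)$.

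For the final clause, recall that Hitchin representations are $\Delta$-Anosov (Labourie \cite{Lab06}). Proposition \ref{Prop:HitchinLimitMap} supplies exactly the required transverse limit map, so the Hitchin case follows.

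The only step needing care is the identification of the domains with the $\xi$-defined sets. That is a matter of unwinding definitions rather than a genuine obstacle. The one point to keep in mind is that the proposition uses only transversality of $\xi$, not equivariance, so no further input is needed.
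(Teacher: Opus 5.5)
Your proposal is correct and matches the paper's argument: the paper gives no separate proof and treats the corollary as an immediate consequence of the preceding transversality proposition, applied to the transverse limit map $\xi^{\Delta}$ with $\partial\pi_1S\cong\sphere^1$. You are also right that only transversality is used and that the standing assumption $m\geq 2$ is needed; for $m=1$ the two domains coincide.
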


\section{Geometry of the Fiber}\label{Sec:FiberGeometry}

In this section, we begin our study of the domain of discontinuity $\Omega_{\rho} \subset \lh{n}$ for Hitchin representations $\rho$. By topological invariance, we may consider only the case of Fuchsian-Hitchin representations. We first lift the domain to the Stiefel manifold $V_{2}(\R^n)$. Corollary \ref{Cor:NearestPointProjUpstairs} tells us that the lift $\hat{\Omega}$ fibers over $\Ha^2$ with fiber the base of pencil $\hat{\mathcal{B}}(\mathcal{P}) \subset V_2(\R^n)$, where $\mathcal{P} = \T_{x}\Ha^{2}_{pr}$ is the tangent pencil of a principal $\PSL(2,\R)$-subgroup. 
In this section, we prove the following facts about $\hat{\mathcal{B}}(\mathcal{P})$ that will be useful for everything to follow: 
\begin{itemize}[noitemsep]
    \item $\T \hat{\mathcal{B}}(\mathcal{P})$ is stably trivial. 
    \item $\hat{\mathcal{B}}(\mathcal{P})$ is highly connected. 
    \item $H_*(\hat{\mathcal{B}}(\mathcal{P}),\Z)$ is torsion-free.
\end{itemize}
The key to everything is the ``almost-fibration'' introduced in $\S$\ref{Sec:StructureLemma}.

\subsection{The Structure Lemma}\label{Sec:StructureLemma}

In this section, we prove a key lemma that shows the base of pencil $\hat{\mathcal{B}}(\mathcal{P})$ in $V_2(\R^n)$ almost fibers over the sphere $\sphere^{n-1}$. In particular, the base of pencil is naturally the union of two honest fiber bundles: one with base a disjoint union of $r(n)$ circles $S_{sing} \subseteq \sphere^{n-1}$ and one over the complementary set $S_{gen} =\sphere^{n-1}\setminus S_{sing}$. 

After introducing the Structure Lemma, we must perform a tedious calculation of the number of connected components $r(n)$ of the singular locus. \medskip

Now, here is the main lemma of interest. For the result, recall the $(\Z_2 \times \Z_2)$-covering map $G_Q: V_2(\R^n) \rightarrow \lh{n}$ from Proposition \ref{Prop:LHEmbedding_Better}.

\begin{lemma}[Structure Lemma]\label{Lem:StructureLemma}
Suppose $n \geq 4$. Let $\mathcal{P}\subset \T_Q\X$ be the $\Delta$-regular pencil 
$\mathcal{P} = \T_Q\Ha^2_{\pr}$ and define the lifted base of pencil $\hat{\mathcal{B}}(\mathcal{P}) :=G_Q^{-1}(\mathcal{B}(\mathcal{P}))$ in $V_2(\R^n)$. 

The projection $p:\hat{\mathcal{B}}(\mathcal{P})\rightarrow \sphere^{n-1}$ given by $(x,y)\mapsto x$ is a smooth surjective map. The total space $\hat{\mathcal{B}}(\mathcal{P})$ and base $\sphere^{n-1}$ each split into two pieces as 
\[ \hat{\mathcal{B}}(\mathcal{P}) = \hat{\mathcal{B}}(\mathcal{P})_{gen} \sqcup \hat{\mathcal{B}}(\mathcal{P})_{sing} \;\;\text{and}\;\;\;
    \sphere^{n-1} = S_{gen}\sqcup S_{sing},\]
where $\hat{\mathcal{B}}(\mathcal{P})_{gen} = p^{-1}(S_{gen})$ and $\hat{\mathcal{B}}(\mathcal{P})_{sing} = p^{-1}(S_{sing})$, 
such that the following hold: 
\begin{enumerate}[noitemsep, label=(\roman*)]
    \item $S_{sing}$ is a disjoint union of $r(n) \geq 1$ circles, 
    \item $p:\hat{\mathcal{B}}(\mathcal{P})_{gen} \rightarrow S_{gen}$ is an orientable $\sphere^{n-4}$-fiber bundle.  
    \item $p:\hat{\mathcal{B}}(\mathcal{P})_{sing} \rightarrow S_{sing}$ is an $\sphere^{n-3}$-fiber bundle. 
\end{enumerate}
\end{lemma}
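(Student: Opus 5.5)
The plan is to read everything off the linear orthogonality equations of Proposition \ref{Prop:PencilOrtho_Linear}. Fix a basis $\psi_1,\psi_2$ of $\mathcal{P}=\T_Q\Ha^2_{\pr}$, for instance $\psi_1=H$ and $\psi_2=E+F$ for an $\sllie_2$-triple of the principal subalgebra, written as $Q$-symmetric endomorphisms. By Proposition \ref{Prop:PencilOrtho_Linear} and the description of $G_Q$ in Proposition \ref{Prop:LHEmbedding_Better}, a $Q$-orthonormal pair $(x,y)$ lies in $\hat{\mathcal{B}}(\mathcal{P})$ if and only if $y\perp_Q \psi_1(x)$ and $y\perp_Q\psi_2(x)$. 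Set $W_x:=\Span\langle x,\psi_1(x),\psi_2(x)\rangle$. Then
\[ p^{-1}(x)=\{\,y \in W_x^{\perp_Q} \mid \langle y,y\rangle_Q=1\,\}, \]
the unit sphere in $W_x^{\perp}$. Since $\dim W_x\le 3$ and $n\ge 4$, every fiber is nonempty, so $p$ is surjective. Smoothness of $p$ is automatic, because $p$ is the restriction of the projection $V_2(\R^n)\to\sphere^{n-1}$ to the smooth submanifold $\hat{\mathcal{B}}(\mathcal{P})$ from Lemma \ref{Lem:SmoothBase}. I will define $S_{gen}:=\{x \mid \dim W_x=3\}$ and $S_{sing}:=\{x\mid \dim W_x\le 2\}$.

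The first key step is to show that $\dim W_x\ge 2$ always. Otherwise $x$ is a common eigenvector of $\psi_1$ and $\psi_2$, hence of $[\psi_1,\psi_2]$, which is a nonzero multiple of $E-F$. Since $H,E+F,E-F$ span the principal $\sllie_2$, the line $[x]$ would be invariant under an irreducible representation of dimension $n\ge 2$, which is impossible. The same argument shows that no $x$ is a common eigenvector of two linearly independent elements of $\mathcal{P}$.

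Consequently, $x\in S_{sing}$ exactly when $x$ is an eigenvector of some $\psi_\theta=\cos\theta\,\psi_1+\sin\theta\,\psi_2$, and the relevant $\theta$ is unique modulo $\pi$. By Corollary \ref{Cor:PrincipalRegularity} and Proposition \ref{Prop:DeltaRegularity}, every $\psi_\theta$ with $\theta\in\R/\pi\Z$ has $n$ distinct real eigenvalues. Hence its unit eigenvectors form $2n$ points depending smoothly on $\theta$, by simplicity of the eigenvalues and the implicit function theorem. These points assemble into a smooth $2n$-sheeted covering $E\to\R/\pi\Z$. The map $E\to\sphere^{n-1}$ is an immersion, since the $\theta$-derivative is nonzero; I would check this by differentiating $\psi_\theta x=\lambda x$. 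It is also injective by the uniqueness of $\theta$ above. As $E$ is compact, its image $S_{sing}$ is an embedded closed $1$-manifold, that is, a disjoint union of $r(n)\ge 1$ circles. This proves (i). Determining $r(n)$ exactly amounts to tracking the monodromy of the ordered eigenvectors as $\theta$ runs from $0$ to $\pi$, where $\psi_\pi=-\psi_0$ reverses the eigenvalue order. I defer that count.

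For (ii), over the open set $S_{gen}$ the three sections $x,\psi_1(x),\psi_2(x)$ are pointwise independent. They therefore frame a smooth oriented rank-$3$ subbundle $W$ of the trivial bundle $\varepsilon^n$. Its $Q$-orthogonal complement $W^\perp$ is a smooth rank-$(n-3)$ bundle, and it is orientable because $W\oplus W^\perp$ is trivial and $W$ is oriented. The set $\hat{\mathcal{B}}(\mathcal{P})_{gen}$ is exactly the unit sphere bundle of $W^\perp$, which gives an orientable $\sphere^{n-4}$-bundle.

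For (iii), along $S_{sing}$, parametrized by $\theta$ through the covering $E$, we have $W_x=\Span\langle x,\psi_{\theta+\pi/2}(x)\rangle$. Here $\psi_{\theta+\pi/2}(x)\notin\R x$ by the common-eigenvector argument. This is a smooth rank-$2$ bundle over the circles, and $\hat{\mathcal{B}}(\mathcal{P})_{sing}$ is the unit sphere bundle of its rank-$(n-2)$ complement, hence an $\sphere^{n-3}$-bundle.

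I expect the main obstacle to be the singular locus rather than the bundle statements. The delicate points are the uniform exclusion of common eigenvectors, which drives both the dimension bound and the injectivity, and verifying that $E\to\sphere^{n-1}$ is an immersion, which rests on eigenvalue simplicity from $\Delta$-regularity. The exact value of $r(n)$ is then a separate and tedious computation.
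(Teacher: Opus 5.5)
Your proposal is correct and follows the paper's proof essentially step by step. The fiber over $x$ is the unit sphere of $\langle x,\psi_1(x),\psi_2(x)\rangle^{\perp}$, which is the paper's $\sphere(\mathcal{R}^\perp)$; the absence of common eigenlines forces the dimension bound; and $S_{sing}$ is the image of the $2n$-sheeted eigenvector cover of $\mathbb{P}(\mathcal{P})$.

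Your variations are all sound and slightly tighten the paper's argument:
\begin{itemize}
\item You exclude common eigenlines by the commutator and irreducibility argument.
\item Your immersion check makes $S_{sing}$ a smoothly embedded union of circles, not merely one homeomorphic to circles.
\item You get orientability of $\mathcal{R}^\perp|_{S_{gen}}$ directly from the global frame $x,\psi_1(x),\psi_2(x)$, rather than from triviality of $\T\sphere^{n-1}|_{S_{gen}}$.
\end{itemize}
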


\begin{proof}
All orthogonal projections going forward are made with respect to $Q \in \X$. In particular, for $x \in \sphere^{n-1}$, we have an orthogonal projection $\pi_{x^\bot}: \R^n \rightarrow x^\bot \cong\T_x\sphere^{n-1}$. 

The proof hinges upon defining a splitting 
of $\T \sphere^{n-1}$ by   
\[ \T \sphere^{n-1} = \mathcal{R} \oplus \mathcal{R}^\bot, \]
where 
\[ \mathcal{R}_x := \pi_{x^\bot}(\mathcal{P}(x))=\{\pi_{x^\bot}(\psi(x)) \mid  \psi \in \mathcal{P} \}.\]
We then define $\mathcal{R}^\bot$ as the orthogonal complement of $\mathcal{R}$ in $x^\bot$. While each of $\mathcal{R}, \mathcal{R}^\bot$ are not vector bundles over the whole sphere $\sphere^{n-1}$, as their dimensions are non-constant, we shall see they \emph{are} vector bundles over smaller loci $S_{sing}$ and $S_{gen}$, to be defined shortly.

Here is the key observation: the base of pencil $\hat{\mathcal{B}}(\mathcal{P})$ identifies with the following space 
\[ \sphere(\mathcal{R}^\bot) = \{(x,y) \in \sphere^{n-1}\times \sphere^{n-1} \mid y \in \mathcal{R}^\bot|_{x}\}. \] 
Indeed, a pair $(x,y) \in V_2^Q(\R^n)$ gives $\phi_{x,y} \in \T_Q\X$ as in \eqref{PointingLH_Better}. By definition $(x,y) \in \hat{\mathcal{B}}(\mathcal{P})$ if and only if $\phi_{x,y} \in \mathcal{B}(\mathcal{P})$, which happens if and only if 
$(x,y) \in \sphere(\mathcal{R}^\bot)$ by Proposition \ref{Prop:PencilOrtho_Linear}. Now, write $\mathcal{P} = \Span \{\psi_1,\psi_2\}$ and we make the following claim. \medskip 

\textbf{Claim 1}: For any $x \in \sphere^{n-1}$,  we have $\dim \Span \{\psi_1(x),\psi_2(x), x\} \geq 2$. \medskip 

Suppose for contradiction that $x \in \sphere^{n-1}$ satisfies
$\dim \Span\{ \psi_1(x), \psi_2(x),x\}=1$. Note that this means $\Span \{\psi_1(x), \psi_2(x) \} \subseteq \R\{x\}$, so that $\R\{x\}$ is a mutual eigenline of $\psi_1$ and $\psi_2$. Observe that for $n \geq 4$, the pencil $\mathcal{P} = \T_o\Ha^2_{\pr}$ has no such eigenline. Indeed, for $\{E,F,H\}$ an $\sllie_2$-triple of the principal $\sllie_2\R$, the pencil $\mathcal{P}$ is $\mathcal{P} = \Span \langle E+F, H \rangle$, the matrix $H$ is diagonal with distinct eigenvalues, and $E+F$ has no eigenlines in the standard basis. $\square_{claim}$ \medskip 

Hence, we consider 
\begin{align}
    S_{gen} &\coloneq \{ x \in \sphere^{n-1} \mid \dim \mathcal{R}|_x = 2\}, \\
    S_{sing} & \coloneq \{x \in \sphere^{n-1}\mid \dim \mathcal{R}|_x=1\},
\end{align}
which yields a partition $\sphere^{n-1} = S_{gen} \sqcup S_{sing}$. 
Note that for $x \in S_{gen}$, we have $p^{-1}(x) \cong \sphere^{n-4}$ and for $x \in S_{sing}$, we have $p^{-1}(x)\cong \sphere^{n-3}$, a sphere of one higher dimension. 
Next, we make a small observation: 
$x \in S_{sing}$ if and only if $x$ is a unit eigenvector of some (nonzero) $\psi \in \mathcal{P}$. 
Indeed, $\dim \mathcal{P}(x)/\R\{x\}=\dim \mathcal{R}|_x=1$ means $\psi(x) \in \R\{x\}$ for some $\psi \in \mathcal{P}\setminus \{0\}$. \medskip 

We now prove (i). Observe that since the pencil $\mathcal{P}$ has no common eigenlines, if $\psi_1, \psi_2 \in \mathcal{P}$ are linearly independent and $E_i$ are any nonzero eigenvectors of $\psi_i$, for $i \in \{1,2\}$, then $E_1 \neq E_2$. We now define the following space of unit tangent vector--eigenvector pairs
\[ \hat{\mathbb{E}}(\mathcal{P}) := \{ (\psi, x) \in \sphere(\mathcal{P})\times \sphere^{n-1} \mid \exists \lambda \in \R,\, \psi x =\lambda x \}\]
There are natural projections $\pr_1: \hat{\mathbb{E}}(\mathcal{P}) \rightarrow \sphere(\mathcal{P})$ and $\pr_2:\hat{\mathbb{E}}(\mathcal{P})\rightarrow \sphere^{n-1}$ to each factor. Note that $S_{sing}$ is exactly the image of $\pr_2$. Now, since each element $\psi \in \mathcal{P}$ is diagonalizable with distinct eigenvalues, the map $\pr_1$ defines a $2n$-fold cover of the circle $\sphere(\mathcal{P})\cong \sphere^1$. Thus,  $\hat{\mathbb{E}}(\mathcal{P})$ is a disjoint union of circles. On the other hand, by the distinctness of eigenvectors, for any $x \in S_{sing}$, we see 
$\pr_2^{-1}(x) = \{ (\psi,x), (-\psi,x)\}$ for some $\psi \in \sphere(\mathcal{P})$. Hence, consider
$\hat{\mathbb{E}}(\mathcal{P})/(-\id, \id)$, which is a disjoint union of some nonzero number $r(n)$ of circles, and note that  
$\pr_2$ descends to a homeomorphism $\pr_2:\hat{\mathbb{E}}(\mathcal{P})/(-\id, \id) \rightarrow S_{sing}$. The claim (i) follows. \medskip 

We now head towards a proof of (iii). To this end, we make the following claim: \medskip 

\textbf{Claim 3}: For $n \geq 4$, the restriction $\mathcal{R}^\bot \rightarrow S_{sing}$ defines a vector bundle of rank $n-2$. \medskip 

Note that $\T\sphere^{n-1}|_{S_{sing}} \cong S_{sing} \times \R^{n-1}$ is a trivial vector bundle. Let us decompose the singular locus as $S_{sing} = \bigsqcup_{i=1}^{r(n)} C_i$, where each component $C_i$ is a circle. Fix any index $1\leq i\leq r(n)$. We consider the restriction of $\mathcal{R} \rightarrow S_{sing}$ to $C_i$, which defines a real line bundle over $C_i$. It follows that $\mathcal{R}^\bot \rightarrow S_{sing}$ is a vector bundle of rank $n-2$ and thus $\sphere(\mathcal{R}^\bot) \rightarrow S_{sing}$ is an $\sphere^{n-3}$-bundle, whose total space identifies with $\hat{\mathcal{B}}(\mathcal{P})_{sing}$. Thus, the claim and (iii) hold. \medskip 

We now prove (ii). To this end, we make the following claim: \medskip

\textbf{Claim 2}: $\mathcal{R}^\bot|_{S_{gen}} \rightarrow S_{gen}$ is a stably trivial vector bundle of rank $n-3$. Hence, the restriction $\sphere(\mathcal{R}^\bot)|_{S_{gen}} \rightarrow S_{gen}$ is an orientable $\sphere^{n-4}$-fiber bundle.  \medskip 

We observe that the rank two vector bundle $\mathcal{R} \rightarrow S_{gen}$ is trivial. Indeed, from any basis $(\psi_1, \psi_2)$ for $\mathcal{P}$, we obtain sections $s_i \in \Omega^0(S_{gen}, \mathcal{R}|_{S_{gen}})$ by $s_i(u) = \pi_{u^{\perp}}(\psi_i(u))$. These sections are globally linearly independent by definition of $S_{gen}$. Thus, $\mathcal{R}^\bot|_{S_{gen}} \rightarrow S_{gen}$ is a rank $n-3$ vector bundle. However, $\T\sphere^{n-1}|_{S_{gen}}$ is trivial, since $S_{sing} \neq \emptyset$. Thus, denoting by $\varepsilon_\R^i$ a trivial vector bundle of rank $i$ (over $S_{gen}$), we see 
\[ \varepsilon^{n-1}_{\R} \cong \T\sphere^{n-1}|_{S_{gen}} \cong \mathcal{R}\oplus \mathcal{R}^\bot \cong \varepsilon^2_{\R} \oplus \mathcal{R}^\bot, \]
so $\mathcal{R}^\bot$ is stably trivial. As the orthogonal complement of an orientable sub-bundle of an orientable bundle, $\mathcal{R}^\bot$ is orientable. Hence, $\sphere(\mathcal{R}^\bot)|_{S_{gen}} \rightarrow S_{gen}$ is an orientable $\sphere^{n-4}$-bundle. 
\end{proof}

We now proceed towards counting the number $r(n)$ of singular circles in Lemma \ref{Lem:StructureLemma}. 
To this end, we perform a small deformation to simplify the pencil. 
The desired deformation of pencils has a special structure. Thus, we recall that a \emph{Jacobi matrix} is a square matrix whose nonzero entries appear only on the sub-diagonal, main diagonal, and super-diagonal. 
We recall an elementary linear algebra fact about such matrices. 

\begin{lemma}[Symmetric Jacobi matrices are $\Delta$-regular]\label{lem:JacobiMatrices} A symmetric Jacobi matrix $T$ with nonzero off-diagonal entries is diagonalizable with distinct eigenvalues.
\end{lemma}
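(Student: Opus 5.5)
Since $T$ is real symmetric, the spectral theorem already gives diagonalizability over $\R$ with real eigenvalues. What remains is that every eigenspace is one-dimensional. I would show this by proving that any eigenvector of $T$ is determined up to scale by its first coordinate, using the tridiagonal structure to build it recursively.

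Write $T$ with diagonal entries $a_1,\dots,a_n$ and off-diagonal entries $b_1,\dots,b_{n-1}$, all $b_i\neq 0$, where $T_{i,i+1}=T_{i+1,i}=b_i$. Let $\lambda$ be an eigenvalue and $v=(v_1,\dots,v_n)$ a corresponding eigenvector. The first row of $(T-\lambda)v=0$ reads
\[ (a_1-\lambda)v_1+b_1v_2=0, \]
and since $b_1\neq 0$ this determines $v_2$ from $v_1$. For $2\le i\le n-1$, the $i$-th row reads
\[ b_{i-1}v_{i-1}+(a_i-\lambda)v_i+b_iv_{i+1}=0, \]
and since $b_i\neq 0$ this determines $v_{i+1}$ from $v_{i-1}$ and $v_i$. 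By induction, $v$ is a linear function of $v_1$ alone.

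If $v_1=0$, the recursion forces $v_2=\dots=v_n=0$, so $v=0$. Hence the map $v\mapsto v_1$ is injective on the eigenspace $\ker(T-\lambda)$, and therefore $\dim\ker(T-\lambda)\le 1$.

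Because $T$ is diagonalizable, the geometric multiplicity of each eigenvalue equals its algebraic multiplicity. Both are therefore $1$, so the $n$ eigenvalues are pairwise distinct. Phrased as in Proposition~\ref{Prop:DeltaRegularity}, $T$ is $\Delta$-regular.

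There is no substantial obstacle here. The only point needing care is the bookkeeping at the two ends of the recursion: the first row determines $v_2$, and the last row is the one left unused (it is what pins down $\lambda$). An equivalent route is to observe that deleting the first row and last column of $T-\lambda$ leaves a lower-triangular $(n-1)\times(n-1)$ minor with diagonal $b_1,\dots,b_{n-1}$, whose determinant is nonzero; hence $\operatorname{rank}(T-\lambda)\ge n-1$ for every $\lambda$.
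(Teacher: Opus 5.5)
Your argument is correct and is essentially the paper's proof: both get diagonalizability from the spectral theorem, then use that, with all $b_i \neq 0$, the three-term recursion forces an eigenvector with vanishing first coordinate to be zero. The paper phrases this as a contradiction via a combination $u = c_1 v + c_2 w$ of two independent eigenvectors with $u_1 = 0$, while you phrase it as injectivity of $v \mapsto v_1$ on $\ker(T-\lambda)$. One small slip in your aside: deleting the first row and last column of $T-\lambda$ leaves an upper-triangular (not lower-triangular) minor, but its determinant is still $b_1 \cdots b_{n-1} \neq 0$, so the rank argument stands.
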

\begin{proof} By the spectral theorem, $T$ is diagonalizable. Thus, we need only prove that all eigenvalues have multiplicity one. Suppose, for contradiction, that $\lambda$ is an eigenvalue with multiplicity at least $2$. Then take two linearly independent $\lambda$-eigenvectors $v$ and $w$. One can then find a linear combination $u=c_1v+c_2w$ such that $u=(u_1, \dots, u_n)$ has vanishing first entry $u_1$. Now, by hypothesis the matrix $T$ obtains the following form
\[ T= \begin{pmatrix} a_1 & b_1 &  & & &\\
                      b_1 & a_2 & b_2& & &\\
                       & b_2 & a_3 & \ddots& & \\
                     & & \ddots & \ddots & &\\
                     & & & & &b_{n-1}\\
                     & & & & b_{n-1}&a_n\\
                      
\end{pmatrix},\]
where the entries $b_i$ are nonzero for $1\leq i\leq n-1$. 
The eigenvector equation $Tu=\lambda u$ gives the following system of linear equations:
\[
    \begin{cases}
        a_1u_1+b_1u_2=\lambda u_1, \\
        b_{k-1}u_{k-1}+a_ku_k+b_ku_{k+1}=\lambda u_k, \ \ \ \ \;(2 \leq k \leq n-1) \\
        b_{n-1}u_{n-1}+a_nu_n=\lambda u_n \ .
    \end{cases}
\]
We can easily observe that $u_1=0$ implies $u_k=0$ for all $k\in \{1, \dots, n\}$, since $b_k$ is always nonzero. This implies that $v$ and $w$ are linearly dependent,  contradicting our assumption. Thus, all eigenvalues have multiplicity one and the proof is complete.
\end{proof}

We now deform through pencils of symmetric Jacobi matrices to ensure regularity. 

\begin{lemma}[Deformation of Pencils]\label{lem:def_pencil}
Let $n \geq 3$ be an integer. Define 
\begin{align}
    \psi_1 &= \frac{1}{2} \diag ( n-1,\, n-3,\,\dots,\, -n+3,\, -n+1 ) \\
    \psi_2 &= J_n+J_n^T \label{psi2},
\end{align}
where $J_n$ is the lower triangular nilpotent $n\times n$-Jordan block. Then there is a $\Delta$-regular homotopy $(\mathcal{P}_t)_{t \in [0,1]}$ of pencils in $\T_I\X$ between $\mathcal{P}_0= \Span\{\psi_1,\psi_2\}$ and $\mathcal{P} = \T_{I}\Ha^2_{\pr}$. 
\end{lemma}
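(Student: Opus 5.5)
The plan is to put the principal $\sllie_2\R$ in a normal form adapted to the basepoint $I$. In that form both pencils share the diagonal generator $\psi_1$. We then interpolate only the second generator, moving within symmetric Jacobi matrices whose off-diagonal entries stay nonzero, and read off $\Delta$-regularity from Lemma~\ref{lem:JacobiMatrices}.

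\textbf{Step 1: normal form of $\T_I\Ha^2_{\pr}$.} Up to conjugation, which the statement implicitly fixes by requiring $\Ha^2_{\pr}$ to pass through $I$, we take the principal $\sllie_2$-triple to be
\[ H = \diag(n-1, n-3, \dots, -n+1), \qquad E = \sum_{i=1}^{n-1} c_i\, e_{i+1}e_i^T, \qquad F = E^T, \qquad c_i = \sqrt{i(n-i)}. \]
We check $[H,E]=2E$, $[H,F]=-2F$ and $[E,F]=H$ directly; this is the standard computation with the weights of the irreducible $n$-dimensional representation. The subalgebra $\mathfrak{s}=\Span\{E,F,H\}$ is stable under the Cartan involution $X \mapsto -X^T$ of the basepoint $I$. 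Hence the corresponding $\Ha^2_{\pr}$ passes through $I$, and
\[ \T_I\Ha^2_{\pr} = \mathfrak{s} \cap \frakp_I = \Span\{H, E+F\}. \]
This agrees with the description used in Claim~1 of the Structure Lemma~\ref{Lem:StructureLemma}. Note that $H = 2\psi_1$, and $E+F$ is the symmetric Jacobi matrix with zero diagonal and off-diagonal entries $c_i>0$.

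\textbf{Step 2: the homotopy.} For $t\in[0,1]$, let $\psi_2(t)$ be the symmetric Jacobi matrix with zero diagonal and off-diagonal entries $b_i(t) = (1-t) + t c_i$, and set $\mathcal{P}_t = \Span\{\psi_1, \psi_2(t)\}$. Then $\psi_2(0) = J_n + J_n^T = \psi_2$ and $\psi_2(1) = E+F$, so $\mathcal{P}_0$ is the given pencil and $\mathcal{P}_1 = \T_I\Ha^2_{\pr}$. Since $\psi_1$ is diagonal and nonzero while $\psi_2(t)$ has zero diagonal and is nonzero, the two are linearly independent. Thus each $\mathcal{P}_t$ is genuinely a $2$-plane in $\T_I\X$, i.e.\ a pencil, varying continuously in $t$.

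\textbf{Step 3: $\Delta$-regularity.} Take a nonzero element $a\psi_1 + b\psi_2(t)$ of $\mathcal{P}_t$.
\begin{itemize}[noitemsep]
    \item If $b\neq 0$, it is a symmetric Jacobi matrix with off-diagonal entries $b\, b_i(t)\neq 0$, because each $b_i(t)$ is a convex combination of the positive numbers $1$ and $c_i$. Lemma~\ref{lem:JacobiMatrices} then gives distinct eigenvalues.
    \item If $b=0$, it is $a\psi_1$ with $a \neq 0$, whose diagonal entries are distinct.
\end{itemize}
In both cases Proposition~\ref{Prop:DeltaRegularity} shows the element is $\Delta$-regular, so every $\mathcal{P}_t$ is $\Delta$-regular.

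\textbf{Main obstacle.} The only real care needed is in Step 1: verifying the explicit $\sllie_2$-triple, and justifying that the principal $\Ha^2_{\pr}$ through $I$ is, up to an element of $\SO(n)$, the one whose triple satisfies $F=E^T$. The latter follows because conjugating the triple into this form by an element of $\SO(n)$ (the stabilizer of $I$) fixes the basepoint, and the argument above is invariant under such conjugation. Steps 2 and 3 are then immediate from the earlier lemmas.
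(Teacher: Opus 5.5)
Your proof is correct and follows the paper's argument. Both put $\T_I\Ha^2_{\pr}$ in the normal form $\Span\{\psi_1, E+F\}$, where $E+F$ is a zero-diagonal symmetric Jacobi matrix with positive off-diagonal entries, then interpolate linearly in the second generator and invoke Lemma~\ref{lem:JacobiMatrices}; you are slightly more careful than the paper in handling the multiples of $\psi_1$ separately. One cosmetic slip: with $E=\sum_i c_i\, e_{i+1}e_i^T$ lower triangular and $H$ decreasing, one actually gets $[H,E]=-2E$ and $[E,F]=-H$, so your $E$ and $F$ are interchanged (take $E=\sum_i c_i\, e_i e_{i+1}^T$ instead), but this leaves $\Span\{H,E+F\}$ and the rest of the argument unchanged.
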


\begin{proof}
Recall that $\mathcal{P} = \Span \{H, E+F\}$ relative to the principal $\mathfrak{sl}_2$-triple $\{E,F,H\}$. 
We use the basis $\phi_1 = H$ and $\phi_2=E+F$ for $\mathcal{P}$. By well-known $\sllie_2$-representation theory of the principal $\mathfrak{sl}_2$, we can take $\phi_1 =\psi_1$ and $\phi_2$ to be a symmetric Jacobi matrix with positive off-diagonal entries and zero diagonal entries. The result  follows by taking a linear homotopy $\mathcal{P}_t = \Span \{\psi_1, \psi_2(t)\}$ such that $\psi_2(0) =\psi_2$, and $\psi_2(1) = \phi_2$. Hence, $\psi_2(t)$ is a Jacobi matrix with positive off-diagonal entries for all time. By construction, $\mathcal{P}_t \subset \T_I\X$ and by Lemma \ref{lem:JacobiMatrices}, the pencil is $\Delta$-regular for all time $t \in [0,1]$. 
\end{proof}

We now consider the eigenvalues and eigenvectors of the particular Jacobi matrix $\psi_2$ from \eqref{psi2}, which will be essential for the count of $r(n)$ in the proof of Lemma \ref{lem:number_circles}.

\begin{lemma}[Eigenvalues$+$Eigenvectors of $\psi_2$]\label{lem:eigenvectors_psi2} Let $\psi_{2}=J_n+J_{n}^{T}$ where $J_{n}$ is the nilpotent $n \times n$-Jordan block. Then $\psi_{2}$ has $n$ distinct eigenvalues $2 > \lambda_{1} > \lambda_{2} > \dots > \lambda_{n}>-2$, where 
\[
    \lambda_{k}=2\cos\left(\frac{k\pi}{n+1}\right) \ .
\]
The eigenline relative to $\lambda_k$ is spanned by the unit eigenvector $v^{(k)} =(v^{(k)}_1, \dots, v^{(k)}_n)$, where
\[
    v^{(k)}_{j}=\sqrt{\frac{2}{n+1}}\sin\left(\frac{jk\pi}{n+1}\right), \qquad 1\leq j\leq n.
\]
\end{lemma}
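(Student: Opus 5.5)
The plan is to verify directly that each $v^{(k)}$ is an eigenvector of $\psi_2$ with eigenvalue $\lambda_k$. We then count and order the eigenvalues, and finally check the normalization.

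First we compute the action of $\psi_2$. Since $\psi_2 = J_n + J_n^T$ has ones exactly on the sub- and super-diagonal, for any vector $w$ we have
\[ (\psi_2 w)_j = w_{j-1} + w_{j+1}, \qquad 1 \leq j \leq n, \]
with the convention $w_0 = w_{n+1} = 0$.

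Next we verify the eigenvector equation. Set $\theta_k = \frac{k\pi}{n+1}$ and consider $w_j = \sin(j\theta_k)$. This formula automatically gives $w_0 = \sin 0 = 0$ and $w_{n+1} = \sin(k\pi) = 0$, so the boundary rows need no separate treatment. The sum-to-product identity
\[ \sin((j-1)\theta) + \sin((j+1)\theta) = 2\cos\theta \, \sin(j\theta) \]
then gives $\psi_2 w = 2\cos(\theta_k)\, w$ for all $1 \leq j \leq n$ at once. The vector $w$ is nonzero, since for instance $w_1 = \sin\theta_k \neq 0$ when $1 \leq k \leq n$.

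Then we handle distinctness and ordering. The angles $\theta_k$ for $k = 1, \dots, n$ lie strictly inside $(0,\pi)$, and cosine is strictly decreasing there. Hence $2 > \lambda_1 > \lambda_2 > \cdots > \lambda_n > -2$, which gives $n$ distinct eigenvalues of the $n \times n$ matrix $\psi_2$. This is consistent with Lemma \ref{lem:JacobiMatrices}. It follows that these are all the eigenvalues, each eigenspace is a line, and that line is spanned by $w$.

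Finally we normalize. We need
\[ \sum_{j=1}^n \sin^2(j\theta_k) = \frac{n+1}{2}. \]
To see this, write $\sin^2 = \frac{1}{2}(1 - \cos(2j\theta_k))$. Extending the sum to $j = 0, \dots, n$ changes nothing, since the $j=0$ term of $\sin^2$ vanishes. The sum $\sum_{j=0}^{n} \cos\bigl(\frac{2jk\pi}{n+1}\bigr)$ is the real part of a full sum of $(n+1)$-th roots of unity raised to the power $k$. Because $0 < k < n+1$, this sum vanishes. Therefore the sum of squares equals $\frac{1}{2}(n+1) - \frac{1}{2}\cdot 0 = \frac{n+1}{2}$, and the prefactor $\sqrt{2/(n+1)}$ makes $v^{(k)}$ a unit vector.

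The only mildly delicate point is this normalization sum, in particular keeping track of the index range and the $j=0$ term. The remaining steps are routine.
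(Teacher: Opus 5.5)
Your proof is correct, and it runs in the opposite direction from the paper's. The paper \emph{derives} the spectrum:
\begin{itemize}
\item it first proves the a priori bound $|\lambda|<2$ with a Rayleigh-quotient estimate, so that every eigenvalue can be written as $2\cos\theta$ with $\theta\in(0,\pi)$;
\item it then writes the general real solution of $v_{j-1}+v_{j+1}=\lambda v_j$ as $A\cos(j\theta)+B\sin(j\theta)$;
\item the boundary conditions $v_0=v_{n+1}=0$ force $A=0$ and $\sin((n+1)\theta)=0$;
\item distinctness of the eigenvalues is taken from Lemma \ref{lem:JacobiMatrices}.
\end{itemize}
You instead take the stated formulas as an ansatz and verify them with the sum-to-product identity. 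The boundary rows are handled automatically by $\sin 0=\sin k\pi=0$. You get completeness from a counting argument: strict monotonicity of cosine on $(0,\pi)$ gives $n$ distinct eigenvalues, which exhaust the spectrum of an $n\times n$ matrix and force each eigenspace to be a line. This makes the a priori bound, the general solution of the recurrence, and the appeal to Lemma \ref{lem:JacobiMatrices} all unnecessary. Your roots-of-unity computation also checks the normalization constant $\sqrt{2/(n+1)}$, which the paper only asserts. What the paper's route gains is an explanation of where the formulas come from, since it would find them without knowing them in advance. Yours relies on the answer being supplied in the statement, which here it is.
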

\begin{proof} Since $\psi_{2}$ is a symmetric Jacobi matrix, it has $n$ distinct eigenvalues by Lemma \ref{lem:JacobiMatrices}. Let $v=(v_{1}, \dots, v_{n})^{T}$ be a nonzero eigenvector for $\psi_{2}$ relative to an eigenvalue $\lambda \in \R$. We start by showing the a priori bound $|\lambda|<2$:
\begin{align*}
    |\lambda|=\frac{|v^{T}\psi_{2}v|}{\|v\|^{2}}&=\frac{1}{\|v\|^{2}}\left|\sum_{j=1}^{n-1}2v_{j}v_{j+1}\right| \leq \frac{1}{\|v\|^{2}}\sum_{j=1}^{n-1}|2v_{j}v_{j+1}| \\
    &\leq \frac{1}{\|v\|^{2}}\sum_{j=1}^{n-1} (v_{j}^{2}+v_{j+1}^{2}) = 2 -\frac{v_{1}^{2}+v_{n}^{2}}{\|v\|^{2}} \leq 2 .
\end{align*}
Equality holds only if $v_{1}=v_{n}=0$ and $2|v_{j}v_{j+1}|=v_{j}^{2}+v_{j+1}^{2}$ for $1\leq j\leq n-1$, which easily implies that $v=0$, against our assumptions. 

We now proceed with the explicit computation of eigenvalues and eigenvectors of $\psi_{2}$. The equation $\psi_{2}v=\lambda v$ yields the following recurrence relation between the components of $v$:
\[ 
    v_{j-1}+v_{j+1}=\lambda v_{j}, \qquad 1\leq j \leq n,
\]
where, by convention, $v_{0}=v_{n+1}=0$. Looking for nonzero solutions of the form $v_{j}=r^{j}$, we obtain that $r$ must satisfy $r^{2}-\lambda r+1=0$. Since $|\lambda|<2$, we can write $\lambda=2\cos(\theta)$ for some $\theta  \in (0,\pi)$ so that $r=e^{\pm i\theta}$. Now, we know $\psi_2$ has an orthogonal basis of real eigenvectors. The real solutions $v=(v_1,\dots, v_n)^T$ of the recurrence are thus of the form 
\[
    v_{j}=A\cos(j\theta)+B\sin(j\theta), 
\]
for some constants $A,B \in \mathbb{R}$ to be determined from the initial conditions $v_{0}=v_{n+1}=0$:
\begin{align*}
    0&=v_{0}=A\cos(0)+B\sin(0)=A \\
    0&=v_{n+1}=B\sin((n+1)\theta).
\end{align*}
Thus, $\theta \in \frac{\pi}{n+1}\Z$ since $B=0$ would imply $v=0$. 
We deduce that the eigenvalues of $\psi_{2}$ are 
\[
    \lambda_{k}=2\cos\left(\frac{k \pi}{n+1}\right), \qquad 1 \leq k\leq n.
\]
Note that $\lambda_1,\dots, \lambda_n$ are distinct. A unit $\lambda_{k}$-eigenvector $v^{(k)} = (v^{(k)}_1, v^{(k)}_2,\dots, v^{(k)}_n)$ is given by  
\[
    v_{j}^{(k)}= \sqrt{\frac{2}{n+1}}\sin\left(\frac{jk\pi}{n+1}\right) \ .
\]
\end{proof}

We will not invoke the precise formula for the eigenvectors of $\psi_2$ later. Instead, we will only need the following symmetries of the $\psi_2$-eigenvectors, which follow from the explicit descriptions. 

\begin{lemma}[Symmetries of Eigenvectors]\label{lem:tau-symmetry} Let $\tau:\mathbb{R}^{n} \rightarrow \mathbb{R}^{n}$ be the involution $\tau(x_{1}, \dots, x_{n})=(x_{n}, \dots, x_{1})$. The eigenline of $\psi_{2}$ relative to the $k^{th}$ largest eigenvalue $\lambda_{k}$ is spanned by a vector $v^{(k)}$ that is $\tau$-symmetric if $k$ is odd and $\tau$-anti-symmetric if $k$ is even.
\end{lemma}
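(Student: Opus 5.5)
There are two natural routes to Lemma \ref{lem:tau-symmetry}: one via the explicit formula in Lemma \ref{lem:eigenvectors_psi2}, and a structural one via commutation with $\tau$. I will use the explicit formula as the main argument and run the structural argument alongside as a consistency check.

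\textbf{Main argument: compare $v^{(k)}_{n+1-j}$ with $v^{(k)}_j$.} Take the unit eigenvector $v^{(k)}$ from Lemma \ref{lem:eigenvectors_psi2}. Then
\[ v^{(k)}_{n+1-j} = \sqrt{\tfrac{2}{n+1}}\sin\Big(k\pi - \tfrac{jk\pi}{n+1}\Big). \]
Applying the identity $\sin(k\pi-\alpha) = (-1)^{k+1}\sin\alpha$ gives
\[ v^{(k)}_{n+1-j} = (-1)^{k+1} v^{(k)}_j . \]
Hence $\tau v^{(k)} = (-1)^{k+1}v^{(k)}$. This says $v^{(k)}$ is $\tau$-symmetric for $k$ odd and $\tau$-anti-symmetric for $k$ even, which is exactly the statement.

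\textbf{Structural cross-check.} Let $P$ be the permutation matrix of $\tau$. Then $P J_n P = J_n^T$, so $P$ commutes with $\psi_2$. Since $\psi_2$ has simple spectrum (Lemma \ref{lem:JacobiMatrices}), $P$ preserves each eigenline, and therefore $\tau v^{(k)} = \pm v^{(k)}$.

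\textbf{Determining the sign without the formula.} The sign can be fixed by counting sign changes. By the Sturm/oscillation property of Jacobi matrices, the eigenvector for the $k$-th largest eigenvalue has $k-1$ sign changes.
\begin{itemize}
\item A $\tau$-symmetric vector has an even number of sign changes.
\item A $\tau$-anti-symmetric vector has an odd number of sign changes.
\end{itemize}
These parities match the claimed pattern in $k$. The only care needed here is with zero entries, which the explicit formula makes routine; this is why I treat the explicit computation as the primary argument.

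I expect no real obstacle; the only care point is the sign bookkeeping $(-1)^{k+1}$.
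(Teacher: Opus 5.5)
Your proposal is correct and follows essentially the same route as the paper. The paper also observes that $\tau$ commutes with $\psi_2$, so $\tau v^{(k)} = \pm v^{(k)}$, and then fixes the sign from the explicit formula via $\sin(k\pi - \alpha) = (-1)^{k+1}\sin\alpha$; your sign-change cross-check is a valid extra but is not needed.
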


\begin{proof} First note that since $\tau\psi_{2}=\psi_{2}\tau$, the involution $\tau$ preserves the eigenlines of $\psi_2$. 
Since $\tau$ is an isometry, it also sends unit vectors to unit vectors. Now, let $v^{(k)}$ be the unit eigenvectors of $\psi_2$ from Lemma \ref{lem:eigenvectors_psi2}. Then $\tau v^{(k)}= \varepsilon_{k} v^{(k)}$ for $\varepsilon_{k} \in \{-1,1\}$. We determine the correct sign from the explicit formulas for $v^{(k)} = (v^{(k)}_j)_{j=1}^n$: up to the (positive) multiplicative constant $\sqrt{2/(n+1)}$, we have
\begin{align*}
    (\tau v^{(k)})_{j}=v^{(k)}_{n-j+1} &= \sin \left( \frac{(n-j+1)k\pi}{n+1} \right) = \sin\left(k\pi -\frac{jk\pi}{n+1} \right) \\
    &= - \cos(k\pi)\sin\left(\frac{jk\pi}{n+1} \right) = (-1)^{k+1}v^{(k)}_{j}.
\end{align*}
The claim follows.
\end{proof}

Equipped with these symmetries, we can deduce the number of singular circles. 
\begin{lemma}[Singular Circle Count]\label{lem:number_circles} 
Let $n\geq 4$ be an integer. 
The singular locus $S_{sing} \subset \sphere^{n-1}$ from Lemma \ref{Lem:StructureLemma} is the disjoint union of $r(n)$ circles, where 
\[ r(n)=\begin{cases} \frac{n}{2} & n \in \{0,2\} \bmod 4 \\
                n+1 & n \equiv 1 \bmod 4\\
                n   & n\equiv 3 \bmod 4
\end{cases} \] 
\end{lemma}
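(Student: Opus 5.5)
The plan is to identify $S_{sing}$ with the mapping torus of an explicit permutation of the finite set of unit eigenvectors of one element of the pencil. The number $r(n)$ is then the number of cycles of that permutation.

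\textbf{Step 1: reduce to the Jacobi pencil.} By the proof of Lemma \ref{Lem:StructureLemma}, $S_{sing}\cong \hat{\mathbb{E}}(\mathcal{P})/(-\id,\id)$. Here $\hat{\mathbb{E}}(\mathcal{P})\to\sphere(\mathcal{P})$ is a $2n$-fold covering whenever $\mathcal{P}$ is $\Delta$-regular. Along the $\Delta$-regular homotopy $\mathcal{P}_t$ of Lemma \ref{lem:def_pencil}, these coverings, together with the free involution, vary continuously, so the number of components is constant in $t$. We may therefore compute with $\mathcal{P}_0=\Span\{\psi_1,\psi_2\}$. Parametrize $\sphere(\mathcal{P}_0)$ by $\psi(\theta)=\cos\theta\,\psi_1+\sin\theta\,\psi_2$ (after normalizing). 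The relation $(\psi,x)\sim(-\psi,x)$ says that $S_{sing}$ is obtained from the $2n$ unit-eigenvector curves over $\theta\in[0,\pi]$ by identifying the fiber over $\theta=\pi$ (where $\psi=-\psi_1$) with the fiber over $\theta=0$. Hence $S_{sing}$ is the mapping torus of the permutation $\sigma$ of $\{\pm e_1,\dots,\pm e_n\}$ defined by continuing each unit eigenvector of $\psi_1$ along $[0,\pi]$. Consequently $r(n)$ equals the number of cycles of $\sigma$.

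\textbf{Step 2: compute $\sigma$ via the symmetry $\tau$.} By $\Delta$-regularity, the eigenvalues of $\psi(\theta)$ stay distinct, so the $k$-th largest eigenline continues to the $k$-th largest eigenline. The $k$-th largest eigenline of $\psi_1$ is $\R e_k$, and the $k$-th largest eigenline of $-\psi_1$ is $\R e_{n+1-k}$. To pin down signs, use the reversal $\tau$ of Lemma \ref{lem:tau-symmetry}, which satisfies $\tau\psi_1\tau=-\psi_1$ and $\tau\psi_2\tau=\psi_2$. Thus $\tau\psi(\theta)\tau=\psi(\pi-\theta)$.

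First suppose the continuation of $e_k$ over $[0,\pi/2]$ ends at $c_kv^{(k)}$ with $c_k=\pm1$. Applying $\tau$ to this path and reversing it gives the continuation over $[\pi/2,\pi]$, which runs from $c_k\tau v^{(k)}=c_k\varepsilon_k v^{(k)}$ to $c_k\tau e_k=c_ke_{n+1-k}$, where $\varepsilon_k=(-1)^{k+1}$. Concatenating the two halves, we obtain
\[
\sigma(\pm e_k)=\pm\varepsilon_k\, e_{n+1-k}.
\]
The unknown sign $c_k$ cancels, which is the key point.

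\textbf{Step 3: count cycles.} From Step 2, $\sigma^2(e_k)=\varepsilon_k\varepsilon_{n+1-k}e_k=(-1)^{n+1}e_k$.

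For $k\neq n+1-k$, the four points $\{\pm e_k,\pm e_{n+1-k}\}$ form one $4$-cycle if $n$ is even, and two $2$-cycles if $n$ is odd. When $n$ is odd, the middle index $k=(n+1)/2$ gives $\sigma(e_k)=(-1)^{(n+3)/2}e_k$. This yields two fixed points if $n\equiv1\bmod4$, and one $2$-cycle if $n\equiv3\bmod 4$. Summing the contributions:
\begin{itemize}[noitemsep]
\item for $n$ even, $r(n)=n/2$;
\item for $n\equiv 1\bmod 4$, $r(n)=2\cdot\frac{n-1}{2}+2=n+1$;
\item for $n\equiv 3\bmod 4$, $r(n)=(n-1)+1=n$.
\end{itemize}

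\textbf{Main obstacle.} The delicate step is Step 2, namely the sign bookkeeping. The $\tau$-conjugation trick avoids tracking eigenvectors explicitly, but one must check carefully that the reversed, $\tau$-conjugated path is indeed the continuation of the $k$-th eigenline. One must also justify that the invariance in Step 1 holds for the quotient by $(-\id,\id)$ and not only for the covering.
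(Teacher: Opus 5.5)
Your proposal is correct and follows essentially the paper's route: you deform to the Jacobi pencil $\Span\{\psi_1,\psi_2\}$, use the reversal $\tau$ to obtain the monodromy $e_k\mapsto\varepsilon_k e_{n+1-k}$, and count components case by case; counting the cycles of $\sigma$ is a tidier packaging of the paper's Cases 1, 2, 3(i) and 3(ii). One small slip: if $x$ is your continuation of $e_k$ over $[0,\pi/2]$, the reflected path $\theta\mapsto\tau x(\pi-\theta)$ ends at $\tau e_k=e_{n+1-k}$, not $c_k e_{n+1-k}$, and it is exactly this that makes $c_k$ cancel once you multiply by $\varepsilon_k$ to match $c_k v^{(k)}$ at $\theta=\pi/2$, giving $\sigma(e_k)=\varepsilon_k e_{n+1-k}$.
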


\begin{proof}
By Lemma \ref{lem:def_pencil}, we have a homotopy of $\Delta$-regular pencils from $\mathcal{P}=\T_I\Ha^2_{\pr}$ to $\mathcal{P}_0=\Span \{\psi_1,\psi_2\}$. We once again consider the unit tangent vector-unit eigenvector space 
\[ \hat{\mathbb{E}}(\sphere(\mathcal{P})) = \{(\psi, x) \in \sphere(\mathcal{P})\times \sphere^{n-1} \mid \exists \lambda, \psi x = \lambda x\} . \]
Define $\hat{\mathbb{E}}(\sphere(\mathcal{P}_0))$ analogously. Due to the $\Delta$-regular homotopy, we conclude $\hat{\mathbb{E}}(\sphere(\mathcal{P}))$ and $\hat{\mathbb{E}}(\sphere(\mathcal{P}_0))$ are homeomorphic. Now, by the proof of Lemma \ref{Lem:StructureLemma}, the space 
\[\hat{\mathbb{E}}(\mathbb{P}(\mathcal{P})) :=\hat{\mathbb{E}}(\sphere(\mathcal{P}))/(-\id, \id)\] 
is a $(2n)$-fold cover of $\mathbb{P}(\mathcal{P})\cong \sphere^1$ and hence a union of circles. Moreover, the singular locus $S_{sing}$ is homeomorphic to $\hat{\mathbb{E}}(\mathbb{P}(\mathcal{P}))$. In particular,  $|\pi_0(S_{sing})| = |\pi_0(\hat{\mathbb{E}}(\mathbb{P}(\mathcal{P}_0)))|$. \medskip

We now introduce some notation. The matrix $\psi_1$ is diagonal, so we can use it to label the fibers of the covering $\pi: \hat{\mathbb{E}}(\mathbb{P}(\mathcal{P}_0))\rightarrow \mathbb{P}(\mathcal{P}_0)$. 
Indeed, we define the circle $C_i^{\pm}$ to be the connected component of $\hat{\mathbb{E}}(\mathbb{P}(\mathcal{P}_0))$ containing the pair $( [\psi_1], \pm  e_i)$. Note that these labels are \emph{not} necessarily distinct, and our whole task is to precisely understand the redundancy.

Now we begin the task of determining which circles $C_{i}^{\pm}$ actually coincide, to reveal the number of connected components of $\hat{\mathbb{E}}(\mathbb{P}(\mathcal{P}_0))$. We make the following observation: 
let $\gamma:[-1,0]\rightarrow \hat{\mathbb{E}}(\sphere(\mathcal{P}_0))$ be a continuous path such that $\pi \circ \gamma(-1) = \psi_1$ and $\pi \circ \gamma(0) =\psi_2$. Let us write $\gamma(t) = (\psi(t), v(t))$. Then, by Lemma \ref{lem:tau-symmetry}, we see that $\gamma$ can be extended to a continuous path $\hat{\gamma}:[-1,1]\rightarrow \hat{\mathbb{E}}(\sphere(\mathcal{P}_0))$ as follows: 

\[\hat{\gamma}(t) = \begin{cases} 
\gamma(t), & -1 \leq t\leq 0,\\
(\tau \psi(-t) \tau , \varepsilon\, \tau v(-t) \big) & \,\;\;0 < t\leq 1.
\end{cases}\] 
where $\varepsilon \in \{+,-\}$. Here, the sign of $\varepsilon$ is determined by symmetries. Indeed, for $t < 0$, the eigenvectors $v(t)$ are each eigenvectors with respect to the $k^{th}$-largest eigenvalue $\lambda_k(t)$ of $\psi(t)$ for some constant $k$. Considering $t=0$, Lemma \ref{lem:tau-symmetry} implies $\varepsilon =1$ if $k$ is odd and $\varepsilon = -1$ if $k$ is even. The next observation is that two circles $C_{i}^{\sigma_i}$ and $C_j^{\sigma_j}$ can only have the opportunity to coincide if $j = n+1-i$. Indeed, $\hat{\gamma}(t)$ has $v(t)$ a $\lambda_k(t)$-eigenvector of $\psi(t)$. Now, $\psi(1)= - \psi_1$, and the eigenline of the $k^{th}$ largest eigenvalue of $-\psi_1$ is spanned by $e_{n+1-k}$. 

We can use the double covering 
$\hat{\mathbb{E}}(\sphere(\mathcal{P}_0)) \rightarrow \hat{\mathbb{E}}(\mathbb{P}(\mathcal{P}_0))$ to study connected components downstairs. 
Every closed loop in $\hat{\mathbb{E}}(\mathbb{P}(\mathcal{P}_0))$ obtains one of the two mutually exclusive forms: 
\begin{enumerate}[noitemsep,label=(\alph*)]
    \item a closed loop in $\hat{\mathbb{E}}(\sphere(\mathcal{P}_0))$ that projects down to a loop in $\hat{\mathbb{E}}(\mathbb{P}(\mathcal{P}_0))$.
    \item an open segment in $\hat{\mathbb{E}}(\sphere(\mathcal{P}_0))$ lifting a path from $\psi_1$ to $-\psi_1$ but projecting to a closed loop in $\hat{\mathbb{E}}(\mathbb{P}(\mathcal{P}_0))$.
\end{enumerate}

We now show every connected component of $\hat{\mathbb{E}}(\mathbb{P}(\mathcal{P}_0))$ contains precisely 1, 2 or 4 branches of the covering $\pi$. The symmetries of $\tau$ from Lemma \ref{lem:tau-symmetry} tell us exactly what we see. To explain this key point, we introduce one more piece of notation. An injective path $\overline{\gamma}_t:[0,1]\rightarrow  \sphere(\mathcal{P}_0)$ from $\psi_1$ to $-\psi_1$ passing through $\psi_2$ lifts uniquely to a path $\gamma_t:[-1,1] \rightarrow  \hat{\mathbb{E}}(\sphere(\mathcal{P}_0))$ such that $\gamma(-1) =(\psi_1, e_k)$. This path 
satisfies $\gamma(1) = (-\psi_1, \varepsilon_k\, e_{n+1-k})$, where $\varepsilon_k \in \{-1,+1\}$. As noted above, $\varepsilon_k=+1$ when $k$ is odd and $\varepsilon_k=-1$ when $k$ is even. 
The possibilities for the distinctness of the connected components $C_k^+, C_k^-, C_{n+1-k}^+, C_{n+1-k}^-$ of $\hat{\mathbb{E}}(\mathbb{P}(\mathcal{P}_0))$ are entirely governed by the signs of $\varepsilon_i$'s as follows. 
\begin{itemize}
    \item \textbf{Case 1: $\bm{\varepsilon_k\varepsilon_{n+1-k} =1}$ and $\bm{n+1-k\neq k}$.} In this case, we achieve a closed loop in $\hat{\mathbb{E}}(\sphere(\mathcal{P}_0))$ by traveling from $(\psi_1, e_k)$ to $(-\psi_1, \varepsilon_k e_{n+1-k})$ to $(\psi_1, \varepsilon_k\varepsilon_{n+1-k}e_k)$. In particular, this means $C_{k}^+=C_{n+1-k}^{\varepsilon_k}$ and $C_{k}^-=C_{n+1-k}^{-\varepsilon_k}$. 
    Thus, we found in total two connected components of $\hat{\mathbb{E}}(\mathbb{P}(\mathcal{P}_0))$. This case is of type (a). 
    \item \textbf{Case 2: $\bm{\varepsilon_k\varepsilon_{n+1-k} =-1}$.} In this case, all four circles $C_{k}^{+}, C_{k}^-, C_{n+1-k}^{+}, C_{n+1-k}^{-}$ coincide and comprise one connected component of $\hat{\mathbb{E}}(\mathbb{P}(\mathcal{P}_0))$. 
    Indeed, 
    \[ (\psi_1,e_k), (-\psi_1, \varepsilon_{k} e_{n+1-k}),
    (\psi_1,\varepsilon_{n+1-k} \varepsilon_ke_k), (-\psi_1, -\varepsilon_{k} e_{n+1-k})\] 
    all lie on the same connected component of $\hat{\mathbb{E}}(\sphere(\mathcal{P}_0))$ and hence also for $\hat{\mathbb{E}}(\mathbb{P}(\mathcal{P}_0))$. This case is of type (a).
    
     \item \textbf{Case 3: $\bm{k=n+1-k}$.} Thus, $\varepsilon_k\varepsilon_{n+1-k}=1$. 
    This case is, of course, exceptional. Here, $n$ must be odd and of the form $n=2j+1$ with $k=j+1$. In fact,
     we must distinguish two further sub-cases.
     \begin{itemize}[label=$\circ$]
         \item \textbf{Case 3(i): $\mathbf{n \equiv 1 \,\textbf{mod} \, 4}$}.  Thus, $j$ is even, $k=j+1$ is odd and $\varepsilon_k=+1$. Thus, 
     we see an open loop upstairs in $\hat{\mathbb{E}}(\sphere(\mathcal{P}))$ from $(\psi_1, e_k)$ to $(-\psi_1,  e_k)$ that closes up downstairs. Hence, the branches $C_k^+$ and $C_k^-$ comprise different connected component of $\hat{\mathbb{E}}(\mathbb{P}(\mathcal{P}_0))$. This case is of type (b).
        \item \textbf{Case 3(ii): $\bm{n \equiv 3 \,\textbf{mod}\, 4}$}.  Thus, $j$ is odd, $k=j+1$ is even and $\varepsilon_k=-1$. Thus, 
     we see a closed loop upstairs from $(\psi_1, e_k)$ to $(-\psi_1, -e_k)$ and back to $(\psi_1, e_k)$. 
     Hence, $C_k^+$ and $C_k^-$ comprise the same connected component of $\hat{\mathbb{E}}(\mathbb{P}(\mathcal{P}_0))$. This case is of type (a).
     \end{itemize}
\end{itemize}

\begin{figure}[ht]
\centering
\resizebox{\textwidth}{!}{
  \begin{tikzpicture}[scale=1]
  \coordinate (A) at (28.26, 20.76);
  \coordinate (B) at (33.26, 20.76);
  \coordinate (C) at (33.26, 17.76);
  \coordinate (D) at (28.26, 17.76);
  \coordinate (E) at (17.00, 19.50);
  \coordinate (F) at (23.00, 19.50);
  \coordinate (G) at (19.00, 21.90);
  \coordinate (H) at (21.00, 21.90);
  \coordinate (I) at (18.75, 17.00);
  \coordinate (J) at (21.25, 17.25);
  \coordinate (K) at (38.75, 19.50);
  \coordinate (L) at (38.75, 19.50);
  \coordinate (M) at (36.98, 19.52);

  \draw (28.26,17.76) rectangle (33.26,20.76);
  \draw[-Stealth] (E) .. controls (19.00, 21.90) and (21.00, 21.90) .. (F);
  \draw[] (E) .. controls (18.75, 17.00) and (21.25, 17.25) .. (F);
  \draw (K) circle (1.82);
  \draw[-Stealth] (A) -- (B);
  \draw[-Stealth] (B) -- (C);
  \draw[-Stealth] (C) -- (D);
  \draw[-Stealth] (D) -- (A);
  \fill[black] (A) circle (4pt);
  \node[above, black, font=\LARGE] at (A) {$(\psi_1,e_{k})$};
  \fill[black] (B) circle (4pt);
  \node[above, black, font=\LARGE] at (B) {$(-\psi_1,\varepsilon_k e_{n+1-k})$};
  \fill[black] (C) circle (4pt);
  \node[below, black, font=\LARGE] at (C) {$(\psi_1,-e_{k})$};
  \fill[black] (D) circle (4pt);
  \node[below, black, font=\LARGE] at (D) {$(-\psi_1,-\varepsilon_{k}e_{n+1-k})$};
  \fill[black] (E) circle (4pt);
  \node[left, black, font=\LARGE] at (E) {$(\psi_1,e_{k})$};
  \fill[black] (F) circle (4pt);
  \node[right, black, font=\LARGE] at (F) {$(-\psi_1,\varepsilon_{k}e_{n+1-k})$};
  \fill[black] (M) circle (4pt);
  \node[right, black, font=\LARGE] at (M) {$(\,[\psi_1]\,,e_{k})$};
\end{tikzpicture}
}
\caption{Decomposition of the singular circles into branches $C_{k}^{\pm}$. The left figure shows Case 1, the middle figure Case 2, each drawn upstairs, and the right figure Case 3(i), drawn downstairs. Case 3(ii) looks just like Case 1. }
\label{Fig:Branches}
\end{figure}

All that remains is to assemble the result. \medskip 

Suppose $n$ is even. For any index $1\leq k\leq n$, we have that $k$ and $n+1-k$ have different parity mod 2. Thus, 
$\varepsilon_k \varepsilon_{n+1-k} =-1$ always occurs, and Case 2 transpires for every pair of indices $(k, n+1-k)$. We conclude that the $2n$-fold cover $\hat{\mathbb{E}}(\mathbb{P}(\mathcal{P}_0))$ of $\mathbb{P}(\mathcal{P}_0)\cong \sphere^1$ has $\frac{n}{2}$ connected components. \medskip 

Now, suppose $n \equiv 1 \bmod 4$. Let us write $n=4j+1$. For any index $k \neq 2j+1$, we see that the pair of indices $(k, n+1-k)$ leads to Case 1, as $k$ and $n+1-k$ have the same parity mod 2. This leads to $n-1$ connected components. Now, for the middle value $k=2j+1$, Case 3(i) occurs and $\varepsilon_{j+1}=1$. Thus, the circles $C_{j+1}^+$ and $C_{j+1}^-$ comprise different connected components of $\hat{\mathbb{E}}(\mathbb{P}(\mathcal{P}))$. In total, we obtain $n+1$ connected components. \medskip 

Finally, suppose $n \equiv 3 \bmod 4$. Again, for any index $k$ other than the middle index, we see the pair of indices $(k, n+1-k)$ land in Case 1. Once more, this provides $n-1$ connected components. On the other hand, we see a difference for the circles $C_{k}^+, C_{k}^-$ when $k = \frac{n+1}{2}$ is the middle index. Here, we land in Case 3(ii) so that $C_k^+ =C_k^-$ are the same. We obtain $n$ connected components in total in this case.   
\end{proof}

The proof of the Lemma \ref{lem:number_circles} has implications for the orientability of the sphere bundle $X_{sing}\rightarrow S_{sing}$, which depends on the connected component of $S_{sing}$ in some cases.  

\begin{corollary}[Orientability of the Singular Fibration]\label{cor:orientability_singular}
Let $p: \hat{\mathcal{B}}(\mathcal{P})\rightarrow\sphere^{n-1}$ be the map from Lemma \ref{Lem:StructureLemma} and $C \subset S_{sing}$ be a singular circle. Then the $\sphere^{n-3}$-bundle $p:p^{-1}(C) \rightarrow C$ is trivial exactly when $C$ falls within Case 1, 2, or 3(ii) in Lemma \ref{lem:number_circles}. In particular, 
\begin{enumerate}[noitemsep,label=(\roman*)]
    \item the fiber bundle $p:X_{sing} \rightarrow S_{sing}$ is trivial when $n \in \{0,2,3\} \bmod 4$, 
    \item the fiber bundle $p^{-1}(C) \rightarrow C$ is non-trivial for exactly two circles $C$ when $n \equiv 1 \bmod 4$. 
\end{enumerate}
\end{corollary}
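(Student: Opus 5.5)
The plan is to reduce triviality of the $\sphere^{n-3}$-bundle over a singular circle $C$ to orientability of the rank $(n-2)$ vector bundle $\mathcal{R}^\bot|_C$. Over a circle, a sphere bundle coming from a vector bundle is trivial exactly when that vector bundle is orientable, since $\pi_0(\GL(n-2,\R))=\Z_2$. From the proof of the Structure Lemma \ref{Lem:StructureLemma}, the tangent bundle satisfies $\T\sphere^{n-1}|_C \cong \varepsilon^{n-1}_\R$ and splits as $\mathcal{R}|_C\oplus \mathcal{R}^\bot|_C$. Hence $w_1(\mathcal{R}^\bot|_C)=w_1(\mathcal{R}|_C)$, and triviality of $p^{-1}(C)\to C$ is equivalent to triviality of the real line bundle $\mathcal{R}|_C$.

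Next I would identify $\mathcal{R}|_C$ concretely. A point of $C$ is a unit eigenvector $x$ of some $\psi\in\sphere(\mathcal{P})$, determined up to sign, as in the homeomorphism $\hat{\mathbb{E}}(\mathbb{P}(\mathcal{P}))\cong S_{sing}$. Pick $\psi'\in\mathcal{P}$ independent of $\psi$. Then $\mathcal{R}_x$ is spanned by $\pi_{x^\bot}(\psi'(x))$, which is nonzero by Claim 1. Over $\hat{\mathbb{E}}(\mathbb{P}(\mathcal{P}))$ a point is $([\psi],x)$, and the choice of $\psi'$ can be made continuously. Take $\psi'$ to be the rotation of $\psi$ by $\pi/2$ in $\mathcal{P}$ with respect to a fixed inner product; this depends on the lift $\psi$ of $[\psi]$. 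The section $x\mapsto \pi_{x^\bot}(\psi'(x))$ is therefore well defined and nonvanishing, up to the sign ambiguity $\psi\mapsto-\psi$.

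The line bundle $\mathcal{R}|_C$ is thus trivial exactly when the loop $C$ lifts to a closed loop in $\hat{\mathbb{E}}(\sphere(\mathcal{P}))$, which is type (a). It is nontrivial exactly when going once around $C$ forces $\psi\mapsto-\psi$, which is type (b). To justify the latter, note that in type (b) the section flips sign: it is $\pi_{x^\bot}(\psi'(x))$ at the start and $\pi_{x^\bot}(-\psi'(x))$ at the end, with the same $x$. Since these are the only two continuous nonvanishing choices, the monodromy of $\mathcal{R}|_C$ is $-1$. The step I expect to be the main obstacle is making this rigorous: one needs that no other continuous nonvanishing section exists, i.e. that the monodromy of the line bundle is genuinely computed by this section. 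This holds because $\mathcal{R}_x$ is one-dimensional and the section never vanishes. I would also pass along the $\Delta$-regular homotopy of Lemma \ref{lem:def_pencil}, which varies the singular loci isotopically, so that it suffices to work with $\mathcal{P}_0$.

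Finally I would read off the cases from the proof of Lemma \ref{lem:number_circles}. Cases 1, 2 and 3(ii) are of type (a), so the bundle over the corresponding circle is trivial. Case 3(i) is of type (b), which occurs exactly for the two circles $C_k^+\neq C_k^-$ with $k=\frac{n+1}{2}$ when $n\equiv1\bmod4$. This gives (i) and (ii).
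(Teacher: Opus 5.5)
Your proposal is correct and follows the paper's proof. You reduce to orientability of the line bundle $\mathcal{R}|_C$ via the trivial $\T\sphere^{n-1}|_C$, use the section $u\mapsto \pi_{u^\perp}\big(J\psi(u)\,u\big)$ on the type (a) circles, and detect the sign flip $\psi\mapsto-\psi$ on the two Case 3(i) circles of type (b). The only difference is cosmetic: for type (b) you read off the monodromy $-1$ directly from this section along a parametrization, whereas the paper packages the same mechanism through the splitting $C\times\mathcal{P}=E\oplus E^\perp$ and the isomorphism $E^\perp\cong\mathcal{R}|_C$.
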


\begin{proof}
\textbf{Cases 1, 2, 3(ii).} These cases correspond to case (a) in Lemma \ref{lem:number_circles}. 
We reason as in the Structure Lemma \ref{Lem:StructureLemma}, splitting once again $\T\sphere^{n-1} = \mathcal{R} \oplus \mathcal{R}^\bot$. 
For $x \in S_{sing}$, we have $\dim\mathcal{R}_x=1$ by definition. Now, the bundle $\T\sphere^{n-1}|_{S_{sing}}$ is trivial and hence orientable. For any circle $C \subset S_{sing}$, the vector bundle $\mathcal{R}^\bot|_{C} \rightarrow C$ is orientable if and only if the sphere bundle $p^{-1}(C) \rightarrow C$ is orientable. 
Since $\T\sphere^{n-1}$ is orientable, $\mathcal{R}|_{C} \rightarrow C$ is orientable if and only if $\mathcal{R}^{\bot}|_C \rightarrow C$ is orientable. 
For a circle $C$ of type (a), we can show that $\mathcal{R}|_{C} \rightarrow C$ is a trivial line bundle as follows. Denote $J:\mathcal{P}\rightarrow \mathcal{P}$ as an arbitrary almost-complex structure. By hypothesis of type (a), we can define a continuous map $\psi: C \rightarrow \sphere(\mathcal{P})$ 
such that $x$ is an eigenvector of $\psi(x)$. 
We then define a non-vanishing section $s:C \rightarrow \mathcal{R}$ by $s(u) = \pi_{u^\bot}\big(J(\psi(u))(u)\big)$. Since $\psi(x)$ has $x$ as an eigenvector, then $J(\psi(u)) \in \mathcal{P}$ does not have $u$ as an eigenvector, and hence $J(\psi(u))(u)$ has a non-vanishing projection to $u^\bot$ and hence to $\mathcal{R}|_{u}$. The non-vanishing section $s$ verifies that the line bundle $\mathcal{R}|_{C} \rightarrow C$ is trivial. \medskip 

\textbf{Case 3(i).} Now, we suppose $C \subset S_{sing}$ is a circle as in Case 3(i) of Lemma \ref{lem:number_circles}, in which case we are of type (b) from that lemma. This means that there is no continuous map $\psi: C \rightarrow \sphere(\mathcal{P})$ such that $\psi(x)$ has $x$ an eigenvector. We need to explain how this condition manifests as the non-orientability of the line bundle $\mathcal{R}|_{C} \rightarrow C$, contrary to the previous case. 

Now, let us introduce another splitting. Consider the trivial rank two vector bundle $\underline{\mathcal{P}} = C \times \mathcal{P}$ over $C$. We split $\underline{\mathcal{P}}$ into a sum of line bundles as follows. 
Let $E \rightarrow C$ be the eigenline of the position vector: $E|_{x} = \{ \psi \in \mathcal{P} \mid \psi(x)\in \R\{x\}\}$. Place an arbitrary Euclidean inner product on $\mathcal{P}$ and denote $E^\bot$ as the orthogonal complement such that $\underline{\mathcal{P}} = E \oplus E^\bot$. Then since $\underline{\mathcal{P}}$ is trivial, $E$ is orientable if and only if $E^\bot$ is orientable. 
However, observe that the vector bundle map $E^\bot \rightarrow \mathcal{R}|_{C}$ by $(x,\psi)\mapsto (x,\pi_{x^\bot}(\psi(x)))$ is non-vanishing by definition of $E^\bot$ and hence defines an isomorphism. Thus, $\mathcal{R}$ is non-orientable if and only if $E$ is non-orientable. However, the non-existence of a continuous map $\psi: C \rightarrow \sphere(\mathcal{P})$ such that $\psi(x) \in \R\{x\}$ implies the non-existence of a non-vanishing section $\psi \in \Omega^0(C,E)$. Hence, the line bundle $E \rightarrow C$ is non-trivial and thus non-orientable. Consequently, $\mathcal{R}|_{C} \rightarrow C$ is also non-orientable. We conclude that $\sphere(\mathcal{R}^{\bot})|_{C} \rightarrow C$ is also non-orientable. Again identifying $p^{-1}(C)\rightarrow C$ and $ \sphere(\mathcal{R}^\bot)|_{C}\rightarrow C$ as fiber bundles over $C$, the proof is complete. 
\end{proof}

We now write the singular locus $S_{sing}$ as the disjoint union of $S_{sing}^{o}$ and $S_{sing}^{no}$, the union of all singular circles for which the fibration discussed in Corollary \ref{cor:orientability_singular} is orientable (hence trivial) or non-orientable, respectively. In particular, $S_{sing}^{no}=\emptyset$ when $n \in \{0,2,3\} \bmod 4$ and is homeomorphic to $\sphere^{1} \sqcup \sphere^{1}$ when $n \equiv 1 \bmod 4$.

\subsection{Homotopy Type of \texorpdfstring{$S_{gen}$}{Sgen}}
\label{Sec:SgenHomology}

In this subsection, we study the base $S_{gen}$ of the fibration $\sphere^{n-3} \rightarrow X_{gen} \rightarrow S_{gen}$. By Lemma \ref{Lem:StructureLemma} and Lemma \ref{lem:number_circles}, we know $S_{gen} =\sphere^{n-1} \setminus \bigsqcup_{i=1}^{r(n)}C_i$, where $C_i \cong \sphere^1$ is a circle. We compute the homology of $S_{gen}$ and conclude its homotopy type.

To begin, we use \emph{Alexander duality} \cite[Corollary 3.45]{Hat01} to compute the reduced cohomology of $S_{gen}$. Namely, if $K \subset \sphere^{n-1}$ is a compact, proper, locally contractible subset, then the cohomology of the complement of $K$ and the cohomology of $K$ are related by: 
\[ \widetilde{H}^i(\sphere^{n-1} \setminus K, \Z) = \widetilde{H}^{n-2-i}(K, \Z). \]
Presently, $K = \bigsqcup_{i=1}^{r(n)}\sphere^1$. 
We find the cohomology groups of $S_{gen}=\sphere^{n-1}\setminus K$ are given by 
\[ \widetilde{H}^i(S_{gen},\Z) = \begin{cases} 
\Z^{r(n)} & i= n-3\\
\Z^{r(n)-1} & i=n-2\\
0  & \text{else}.\\
\end{cases}
\]
In particular, all reduced cohomology groups vanish except two in consecutive dimensions. The universal coefficient theorem shows that $H^i(S_{gen},\Z)  \cong H_i(S_{gen},\Z)$. 
By examining minimal cell structures, one can show that $S_{gen}$ not only has the same homology as  
\[ Y \coloneq \bigg(\bigvee_{i=1}^{r(n)}\sphere^{n-3} \bigg) \vee \bigg(\bigvee_{i=1}^{r(n)-1} \sphere^{n-2} \bigg), \] 
but actually the two spaces are homotopy equivalent  \cite[Example 4C.2]{Hat01}.  

\subsubsection{Homological Generators for \texorpdfstring{$S_{gen}$}{Sgen}}\label{Sec:SgenHomologyGenerators}
Now, let $n \geq 5$, so that the connected components of $S_{sing}$ are a priori forbidden to link in $\sphere^{n-1}$. We apply Mayer-Vietoris to find generators for the homology of $S_{gen}$. We shall keep the notation as above: $K = \bigsqcup_{i=1}^{r} C_i$ is a disjoint union of $r=r(n)$ circles in $\sphere^{n-1}$, called $C_i$. Let us make the following choices: 
\begin{itemize}
    \item $A = S_{gen} = \sphere^{n-1}\setminus K$ 
    \item $B$ is a regular neighborhood of $K$, homotopy equivalent to $K$. 
    \item Hence, $A \cap B$ is a union of punctured regular neighborhoods of $C_i$. In particular, $A \cap B \cong \bigsqcup_{i=1}^k (C_i \times S_i)$, where $S_i \cong \sphere^{n-3}$ is found geometrically by exponentiating a normal $(n-3)$-sphere in the normal space to $\T_{x_i}C_i$ for some $x_i \in C_i$. 
\end{itemize}
Here, we use homology with $\Z$ coefficients. Mayer-Vietoris gives us the exact sequence
\[ \cdots \longrightarrow H_{i+1}(\sphere^{n-1}) \longrightarrow H_{i}(A \cap B) \longrightarrow H_i(A) \oplus H_i(B) \longrightarrow H_i(\sphere^{n-1}) \longrightarrow \cdots .\]
For the index $i= n-3$, we conclude the middle map 
$H_{i}(A \cap B) \stackrel{(j_*,k_*)}{\longrightarrow} H_i(A) \oplus H_i(B)$ is an isomorphism, where $j: A \cap B \rightarrow A $ and $k: A \cap B \rightarrow B$ are the inclusion maps. We conclude that the linking spheres 
$[S_i] \in H_{n-3}(S_{gen},\Z) \cong \Z^r$ form generators. 
In degree $i=n-2$, if we follow Mayer-Vietoris as above, this time we see 
\[ 0 \longrightarrow H_{n-1}(\sphere^{n-1}) \longrightarrow H_{n-2}(A \cap B) \longrightarrow H_{n-2}(A) \oplus H_{n-2}(B) \longrightarrow H_{n-2}(\sphere^{n-1})=0\]
Using $H_{n-1}(\sphere^{n-1})\cong\Z$, $H_{n-2}(A\cap B) \cong\Z^r$, $H_{n-2}(B) = 0$, we learn that $A \cap B \cong \bigsqcup_{i=1}^r (\sphere^1\times \sphere^{n-3})$ has 
inclusion $k: A \cap B \rightarrow S_{gen}=A$ such that $k_*:H_{n-2}(A \cap B) \rightarrow H_{n-2}(A)$ surjects with kernel $\Z$. Therefore, $H_{n-2}(S_{gen})\cong \Z^{r(n)-1}$ is generated (not freely) by the spherical cylinders $P_i=\sphere^{1}\times \sphere^{n-3}$ given by the boundary of a tubular neighborhood $N_{i}$ of $C_{i}$ inside $\sphere^{n-1}$. 

For later, we describe $r(n)-1$ smoothly embedded spheres $Q_{i} \subset S_{gen}$ that freely generate $H_{n-2}(S_{gen})$. To this end, define $Q_{i}$ as a smoothly embedded $(n-2)$-sphere in $\sphere^{n-1}$ that separates $C_{i}$ from the singular circles $(C_{j})_{j \neq i}$. 
If we choose the spherical cylinders $P_{i} =\partial N_i$ to be sufficiently close to $C_{i}$ by making the tubular neighborhood small enough, we can also arrange $Q_i$ to separate $P_{i}$ from $(P_{j})_{j \neq i}$. Up to changing orientations, each $Q_i$ is homologous to $P_{i}$ since $Q_{i} \cup P_{i}$ is the boundary of the $(n-1)$-dimensional submanifold of $S_{gen}$ obtained by removing the tubular neighborhood $N_{i}$ from the disk $D_{i} \subset \sphere^{n-1}$ bounded by $Q_{i}$ and containing $C_{i}$. 
Note that only $r(n)-1$ of these spheres are needed to generate $H_{n-2}(S_{gen})$.

\subsection{Stable Triviality}
We now exhibit the lifted base of pencil $X=\hat{\mathcal{B}}(\mathcal{P})$ as a regular level set in $\R^{2n}$. This re-proves in concrete fashion the smoothness of $X$ and verifies that its tangent bundle is stably trivial. 

\begin{lemma}[Stable Triviality]\label{Lem:StablyTrivial}
Suppose $n \geq 4$ is an integer. 
Let $\mathcal{P}$ be the $\Delta$-regular pencil $ \T_Q\Ha^2$. Then the manifold $X=\hat{\mathcal{B}}(\mathcal{P})$ is stably trivial. In fact, $\T X \oplus \varepsilon^5_\R \cong \varepsilon^{2n}_{\R}$.
\end{lemma}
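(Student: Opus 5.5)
We exhibit $X=\hat{\mathcal{B}}(\mathcal{P})$ as the common zero set of five functions on $\R^{2n}=\R^n\times\R^n$ and check that their differentials are linearly independent along $X$. Write $\mathcal{P}=\Span\{\psi_1,\psi_2\}$ with $\psi_i$ $Q$-symmetric, and set $\langle\cdot,\cdot\rangle=\langle\cdot,\cdot\rangle_Q$. Define $F=(F_1,\dots,F_5):\R^{2n}\to\R^5$ by
\begin{align*}
F_1(x,y)&=\langle x,x\rangle-1, & F_2(x,y)&=\langle y,y\rangle-1, & F_3(x,y)&=\langle x,y\rangle,\\
F_4(x,y)&=\langle y,\psi_1(x)\rangle, & F_5(x,y)&=\langle y,\psi_2(x)\rangle. &&
\end{align*}
By Proposition \ref{Prop:PencilOrtho_Linear} and the identification $X=\sphere(\mathcal{R}^\bot)$ in the proof of Lemma \ref{Lem:StructureLemma}, we have $X=F^{-1}(0)$.

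If $0$ is a regular value of $F$ on $X$, then the normal bundle of $X$ in $\R^{2n}$ is trivialized by the gradients $\nabla F_1,\dots,\nabla F_5$. This gives $\T X\oplus\varepsilon^5_\R\cong \T\R^{2n}|_X\cong\varepsilon^{2n}_\R$, which is the claim, and it also re-proves smoothness.

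The gradients at $(x,y)$, computed using the symmetry of $\psi_i$, are
\begin{align*}
\nabla F_1&=(2x,0), & \nabla F_2&=(0,2y), & \nabla F_3&=(y,x),\\
\nabla F_4&=(\psi_1 y,\psi_1 x), & \nabla F_5&=(\psi_2 y,\psi_2 x). &&
\end{align*}
Suppose $\sum_i c_i\nabla F_i=0$ at a point of $X$, and put $\psi=c_4\psi_1+c_5\psi_2\in\mathcal{P}$. The two components give
\[2c_1x+c_3y+\psi(y)=0,\qquad 2c_2y+c_3x+\psi(x)=0.\]
Each equation is then paired with $x$ and with $y$. The constraints at hand are $\langle x,x\rangle=\langle y,y\rangle=1$, $\langle x,y\rangle=0$ and $\langle y,\phi(x)\rangle=0$ for all $\phi\in\mathcal{P}$. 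Pairing the second equation with $y$ gives $2c_2=0$. Pairing the first equation with $x$ gives $2c_1=0$, since $\langle x,\psi y\rangle=\langle y,\psi x\rangle=0$. Hence $\psi(x)=-c_3x$ and $\psi(y)=-c_3y$.

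So, if $(c_3,c_4,c_5)\neq 0$, then either $\psi\neq 0$ and $x,y$ are two independent eigenvectors of $\psi$ with the same eigenvalue, or $\psi=0$, which forces $c_3=0$. The second alternative is excluded immediately, and the first contradicts the $\Delta$-regularity of $\mathcal{P}$ (Proposition \ref{Prop:DeltaRegularity}, Corollary \ref{Cor:PrincipalRegularity}), since every nonzero $\psi\in\mathcal{P}$ has simple eigenvalues. Therefore all $c_i=0$.

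The main point to watch is this last step. Regularity must hold uniformly, including over the singular locus, where $x$ itself is an eigenvector of some $\psi$. The argument above handles this case because it never needs $x$ to be a non-eigenvector; it only needs that $x$ and $y$ cannot \emph{both} be eigenvectors of the same $\psi$ with a common eigenvalue, and simplicity of the spectrum rules that out.
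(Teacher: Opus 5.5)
Your proposal is correct and matches the paper's proof essentially step for step. It uses the same five defining functions on $\R^{2n}$ and the same gradients, and it pairs with $x$ and $y$ in the same way, reducing to two orthonormal eigenvectors of one $\psi\in\mathcal{P}$ with a common eigenvalue, which $\Delta$-regularity rules out. Your explicit appeal to $Q$-symmetry and to the constraint $\langle y,\phi(x)\rangle=0$ when pairing spells out a step the paper leaves implicit.
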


\begin{proof}
We construct $X$ as a regular level set in $\R^{2n}$. 
Define $F:\R^{2n}\rightarrow \R^5$ by 
\[ F(x,y) = \big(\langle x,x\rangle -1, \langle y,y\rangle-1, \langle x,y\rangle, \langle \psi_1(x), y\rangle, \langle \psi_2(x),y\rangle \big).\]
Observe that $X = F^{-1}(0)$.

Write $F=(F_i)_{i=1}^5$. We will verify that $(\nabla F_i)_{i=1}^5$ are linearly independent, which implies that $(\nabla F_i)_{i=1}^5$ comprise a global non-vanishing frame of the normal bundle $\N X$, realizing a splitting $\varepsilon^{2n}_{\R} \cong\T\R^{2n}|_{X} \cong \T X \oplus \varepsilon^5_{\R}$.

Now, fix a point $(x,y) \in F^{-1}(0)$. Observe that the gradients $\nabla F_i$, written as elements of $\R^{2n} = \R^n\oplus \R^n$, are as follows: 
\begin{itemize}[noitemsep]
    \item $\nabla F_1(x,y) = 2( x, 0),$
    \item $\nabla F_2(x,y)= 2(0, {y})$
    \item $\nabla F_3(x,y)= ({y}, {x})$
    \item $\nabla F_4(x,y)= (\psi_1({y}), \psi_1({x})),$
    \item $\nabla F_5(x,y)= (\psi_2({y}), \psi_2({x})) $.
\end{itemize}
We will show that $(\nabla F_i(x,y))_{i=1}^5$ are linearly independent. 

Let us write $w_i \coloneqq \nabla F_i(x,y)$ and $w_i = (w_i^1, w_i^2)\in \R^{n}\oplus \R^{n}$. 
Suppose that $\sum_{i=1}^5 c_i w_i = 0$, for some scalars $(c_i)_{i=1}^5 \in \R^5$. Observe that 
$0=\sum_{i=1}^5 \langle w_i^1, x\rangle = 2c_1$
and symmetrically $0=\sum_{i=1}^5 \langle w_i^2, y\rangle = 2c_2$. Hence, $c_1=c_2=0$. 

The remaining equation asserts that for $\lambda = -c_3$, we have 
\[ \begin{cases} 
(c_4\psi_1+c_5\psi_2)(y) = \lambda y \\
(c_4\psi_1+c_5\psi_2)(x) = \lambda x \\
\end{cases}.
\]
This equation declares that the endomorphism $c_4\psi_1+c_5\psi_2 \in \mathcal{P}$ has a pair $(x,y)$ of orthogonal eigenvectors with the same eigenvalue $\lambda$. Proposition \ref{Prop:DeltaRegularity} then says $c_4\psi_1+c_5\psi_2 = 0$, which implies $c_4=c_5=0$ and hence $c_3=0$. The proof is then complete. 
\end{proof}

\subsection{Euler Class of \texorpdfstring{$\mathcal{R}^\bot \rightarrow S_{gen}$}{Rbot--> Sgen}}

Recall from the Structure Lemma that $X_{gen} \rightarrow S_{gen}$ is identified with the $\sphere^{n-4}$-bundle $\sphere(\mathcal{R}^\bot)|_{S_{gen}}\rightarrow S_{gen}$. 
We now study the rank $(n-3)$-vector bundle $\mathcal{R}^\bot|_{S_{gen}} \rightarrow S_{gen}$ over the generic locus $S_{gen}$. In particular, we determine the Euler class $e(X_{gen}) \in H^{n-3}(S_{gen})$ by considering the restriction of the bundle $\mathcal{R}^\bot$ to certain sub-spheres $S_i$ of codimension two in $S_{gen}$ that generate $H_{n-3}(S_{gen})$, as explained in $\S$\ref{Sec:SgenHomologyGenerators}. 

\begin{theorem}[Euler Class of $X_{gen}\rightarrow S_{gen}$]\label{thm:EulerClassXgen}
Let $n\geq 5$ be an integer and $X = \hat{\mathcal{B}}(\mathcal{P})$ for $\mathcal{P} =\T_Q\Ha^2_{pr}$ as in Lemma \ref{Lem:StructureLemma}. 
For each singular circle $C_i$ of $S_{sing}$, let $S_i \subset S_{gen}$ be a smooth $(n-3)$-sphere with linking number $1$ with $C_{i}$. Then
\begin{enumerate}[label=(\roman*)]
    \item The vector bundle $\mathcal{R}^\bot|_{S_i}\rightarrow S_i$ is isomorphic to the tangent bundle $\T S_i$. 
    \item The Euler class $e(X_{gen}) \in H^{n-3}(S_{gen})$ of $X_{gen}$ is as follows:
    \begin{itemize}
        \item When $n$ is even, $e(X_{gen})=0$. 
        \item When $n$ is odd, $e(X_{gen})=(\pm 2, \pm 2, \dots, \pm 2) $, in the basis $([S_i])_{i=1}^{r(n)}$. 
    \end{itemize}
\end{enumerate} 
\end{theorem}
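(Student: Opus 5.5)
We reduce (ii) to (i). Because $H_{n-3}(S_{gen},\Z)$ is free on the linking spheres $[S_i]$ (\S\ref{Sec:SgenHomologyGenerators}) and $H_{n-4}$ vanishes, the universal coefficient theorem identifies $H^{n-3}(S_{gen},\Z)$ with $\Hom(H_{n-3}(S_{gen}),\Z)$. The Euler class is therefore determined by the numbers
\[
\langle e(\mathcal{R}^\bot), [S_i]\rangle = \langle e(\mathcal{R}^\bot|_{S_i}), [S_i]\rangle .
\]
Granting (i), this number equals $\pm\chi(\sphere^{n-3})$, which is $0$ for $n$ even and $\pm 2$ for $n$ odd. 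The sign depends only on the chosen orientations of $\mathcal{R}^\bot$ and of $S_i$. So the real content is (i).

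For (i), we choose $S_i$ concretely as a small normal sphere around a point $x_0\in C_i$. Near $x_0$ we take a chart of $\sphere^{n-1}$ so that $C_i$ is a line and $S_i=\{x_0+\epsilon w : w\in \sphere(\N_{x_0}C_i)\}$, where $\N_{x_0}C_i\cong\R^{n-2}$. The homotopy class of $\mathcal{R}^\bot|_{S_i}$ does not depend on the small sphere chosen, since any two such spheres are isotopic in $S_{gen}$.

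At $x_0$, pick $\psi_0\in\mathcal{P}$ with $\psi_0x_0=\lambda x_0$ and a complementary $\psi_1\in\mathcal{P}$. Then $\mathcal{R}_{x_0}$ is the line spanned by $\pi_{x_0^\bot}\psi_1(x_0)$. The tangent line $\T_{x_0}C_i$ is found by differentiating the eigen-equation: along $C_i$ we have $(\psi_0+t\psi_1)(x(t))=\lambda(t)x(t)$, so $x'(0)$ solves $(\psi_0-\lambda)x'=-\pi\psi_1(x_0)$ modulo $x_0$.

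On $S_i$, to first order in $\epsilon$, the two sections are
\[
s_0(x_0+\epsilon w)=\pi\psi_0(x_0+\epsilon w)\approx \epsilon\,\pi(\psi_0-\lambda)w,
\qquad
s_1\approx \pi\psi_1(x_0).
\]
We then use the model $L := \pi_{x_0^\bot}\circ(\psi_0-\lambda)$ restricted to $x_0^\bot$. This map is invertible by $\Delta$-regularity, since $\lambda$ is a simple eigenvalue. In these terms $\mathcal{R}_{x}\approx\Span\{\epsilon Lw,\ \pi\psi_1x_0\}$, and $L$ sends $\T_{x_0}C_i$ onto the direction of $\pi\psi_1x_0$.

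Thus, up to the linear isomorphism $L^{-1}$ and a homotopy letting $\epsilon\to0$, the rank-two bundle $\mathcal{R}$ over $S_i$ becomes $\Span\{w,\ \T_{x_0}C_i\}\subset x_0^\bot$. Its orthogonal complement is then the complement of $w$ in $\N_{x_0}C_i$, which is precisely $\T_wS_i$. Transporting by $L$ (isotopic through isomorphisms to an orthogonal map) gives $\mathcal{R}^\bot|_{S_i}\cong \T S_i$.

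The main obstacle is making the first-order approximation rigorous. We need a homotopy of rank-two subbundles from $\mathcal{R}|_{S_i}$ to $L(\Span\{w,\T_{x_0}C_i\})$ that stays rank two throughout. We will write $\mathcal{R}=\Span\{\epsilon^{-1}s_0,\ s_1\}$ and show that the $O(\epsilon)$ error terms cannot destroy linear independence, using the invertibility of $L$ and the fact that $Lw\notin\R\,\pi\psi_1x_0$ for $w\perp \T_{x_0}C_i$. A secondary point is to track the orientation conventions carefully enough to justify the $\pm$ signs, although the statement only claims the values $\pm2$.
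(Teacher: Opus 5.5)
Your proposal is correct and follows essentially the same route as the paper. Both use the same small normal sphere around $x_0\in C_i$ and differentiate the eigen-equation along $C_i$ to get $L_0=\psi-\lambda_0\Id$. Both pass to the $\epsilon\to 0$ limit of the spanning frame $\{u,\ \epsilon^{-1}s_0,\ s_1\}$ through a constant-rank family, justified by compactness together with $L_0w\notin\Span(x_0,\psi_1x_0)$; this is the paper's lemma on convergence of orthogonal complements. Both then identify the limiting complement with $\T S_i$ via $L_0$, and obtain (ii) by evaluating the Euler class on the $[S_i]$.

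The only cosmetic difference is at the last step. Because $L_0$ is symmetric, the orthogonal complement of $L_0(V)$ in $x_0^\bot$ is exactly $L_0^{-1}(V^\bot)$, which is how the paper sees directly that $L_0$ carries the limit bundle onto $\T_w\sphere(\N_{x_0}C_i)$. This makes your isotopy of $L$ to an orthogonal map unnecessary, though it is valid.
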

\begin{proof} (i) 
Let $x_{0} \in C_{i}$. A sphere $S_{i} \subset S_{gen}$ that is a generator of $H_{n-3}(S_{gen},\Z)\cong \Z^{r(n)}$ can be obtained as follows. Let 
$\exp:\T_{x_0}\sphere^{n-1} \rightarrow \sphere^{n-1}$ be the Riemannian exponential map. 
Form the orthogonal splitting 
$\T_{x_0}\sphere^{n-1} = \T_{x_0}C_i \oplus \N_{x_0}C_i$ and let 
\[ Y_i = \{ y \in \N_{x_0}C_i :  ||y|| = \epsilon \} \]
be a sphere of small radius $\epsilon > 0$ centered at $0$ in the normal space. 
Then define $S_i \coloneq \exp( Y_i)$. By construction, $S_i$ has linking number 1 with $C_i$. 

Let us determine a tangential direction spanning $\T_{x_{0}}C_i$. Recall $S_{sing} \cong \hat{\mathbb{E}}(\mathbb{P}(\mathcal{P}))$ from Lemma \ref{Lem:StructureLemma}. 
Now, let $x(t)$ be a smooth parameterization of an arc of $C_{i}$ around $x_0$ such that $x(0)=x_{0}$, and such that 
$\pr_1 \circ x(t) = [\psi+tJ\psi] \in \mathbb{P}(\mathcal{P})$, where $\pr_1: S_{sing} \rightarrow \mathbb{P}(\mathcal{P})$ is the natural projection and $J$ is a complex structure on $\mathcal{P}$. 
That is, $x(t)$ is a unit eigenvector of $\psi(t):=\psi+tJ\psi$ relative to some eigenvalue $\lambda(t)$, meaning
\begin{align}\label{MovingEigenvector}
    \psi(t)x(t)= \lambda(t)x(t). 
\end{align}
Note that here we made a non-canonical choice of a representative $\psi \in \sphere(\mathcal{P})$ such that $x_{0}$ is an eigenvector for $\psi$. For the purposes of this proof, it is enough to notice that a smooth choice can be made in a small arc around $x_{0}$ and a different choice only changes the sign of $\psi(t)$ and $\lambda(t)$.

Differentiating \eqref{MovingEigenvector} at $t= 0$, we get 
\[ (\psi-\lambda_0\Id) \dot{x}=\dot{\lambda}x_0-(J\psi) x_0.\]
Now, define $L_0 \coloneqq \psi-\lambda_0\Id$
so that the tangent vector $\dot{x}$ satisfies the linear equation 
\begin{equation}\label{eq:linear}
L_0\dot{x}=\dot{\lambda}x_{0}-(J\psi) x_{0}.
\end{equation}
By definition $x_0$ is an eigenvector of $\psi$ with respect to eigenvalue $\lambda_0$. By Lemma \ref{lem:eigenvectors_psi2}, the eigenvalue $\lambda_0$ of $\psi$ has algebraic multiplicity one, meaning $\ker(L_0) = \R \{x_0\}$.
The matrix $L_0$ is symmetric, so the spectral theorem implies equation \eqref{eq:linear} has a solution if and only if the right-hand side is orthogonal to $x_{0}$.
Hence, $\dot{\lambda}=\langle x_{0}, (J\psi) x_{0} \rangle$. 

Substituting this relation in equation \eqref{eq:linear} we obtain 
\[
    L_0\dot{x}=\langle x_{0}, (J\psi) x_{0} \rangle x_{0}-(J\psi) x_{0}=-\pi_{x_0^\bot}((J\psi) x_{0}),
\]
where $\pi_{x_0^\bot}:\R^n \rightarrow x_0^\bot$ denotes the orthogonal projection. 
Thus, $\dot{x} \in x_0^\bot=\T_{x_0}\sphere^{n-1}$ is the unique solution to equation \eqref{eq:linear} in the orthogonal complement of $x_{0}$. \medskip 

We now consider the restriction of the fibration $X_{gen}\rightarrow S_{gen}$ to the sphere $S_i$. Recall that $\epsilon >0$ is currently fixed. With respect to this choice, every point $u \in S_i$ obtains the form $u= \frac{x_0+ f(\epsilon)w}{||x_0+f(\epsilon)w||}$ for some $w \in \N_{x_0}C_i$ of unit 
norm, where $f(\epsilon) >0$ is independent of $u$. In this way, the map 
\begin{align*}
    \sphere(\N_{x_0}C_i) &\longrightarrow S_i, \qquad 
    w \longmapsto \frac{x_0+ f(\epsilon)w}{||x_0+f(\epsilon)w||}
\end{align*} 
is a homeomorphism. 
For later, we note that as we change our choice of $\epsilon$, this constant $f(\epsilon)$ defines a continuous function $f:\R_{>0} \rightarrow \R_{>0}$ such that $\lim_{\epsilon \rightarrow 0^+}f(\epsilon)=0$. 

Recall the rank $(n-3)$ vector bundle $\mathcal{R}^\bot|_{S_{gen}} \rightarrow S_{gen}$ with fiber at $u \in S_{gen}$ given by the vectors in  $\R^n$ orthogonal to $u$, $\psi u$ and $(J\psi) u$. For notational simplicity, we write $E:=\mathcal{R}^\bot$ for this proof. Hence, the fiber of $X_{gen}$ over $u \in S_{gen}$ is $\sphere(E|_u)$. 
Writing $u=x_{0}+f(\epsilon)w$, observe that 
\begin{align*}
    \Span (u, (J\psi) u, \psi u)=\Span(u, (J\psi) u, L_0u) = \Span(u, (J\psi) u, L_0w).
\end{align*}
Hence, we find 
\[ E_u = \Span(u, (J\psi) u, L_0w)^\bot.\]

Following the first step of the proof, for each $\epsilon \in (0,1]$, we can consider $S_i(\epsilon)=\exp(Y_i(\epsilon))$. Under the obvious homothety, these spheres $S_i(\epsilon)$ are each canonically identified with $\sphere(\N_{x_0}C_i) \cong \sphere^{n-3}$. We now write $W =\N_{x_0}C_i$ for simplicity. 
In this way, by restriction of $E \rightarrow S_{gen}$, we can think of the vector bundles $E|_{S_i(\epsilon)} \rightarrow S_i(\epsilon)$ as defining a smooth family $E^{\epsilon}$ of vector bundles over $\sphere^{n-3}\cong \sphere(W)$. 

The crux of the proof is to examine the result as $\epsilon \rightarrow 0$. We claim in the limit, the bundles $E^{\epsilon} \rightarrow \sphere(W)$ converge to the rank $n-3$ vector bundle $E^0 \rightarrow \sphere(W)$ with fiber at $w$ given by
\[
    E^{0}|_{w}:=\Span(x_{0}, (J\psi)x_{0}, L_0w)^{\perp}=\Span(x_{0}, L_0\dot{x}, L_0w)^{\perp}  \ .
\]
We first explain why $E^0$ has fibers of dimension $n-3$. First, since $x_0$ is a $\psi$-eigenvector, Proposition \ref{Prop:DeltaRegularity} on $\Delta$-regularity implies $(J\psi)x_0$ and $x_0$ are linearly independent.  
Next, we claim $L_0w$ is not in the plane $\Span (x_{0},(J\psi)x_{0})$. Suppose otherwise, so that there are constants $\alpha, \beta \in \mathbb{R}$ such that 
\[
    L_0w=\alpha x_{0}-\beta(J\psi)x_{0}.
\]
This equation has a solution for $(\alpha,\beta)\neq (0,0)$ if and only if the right-hand side is orthogonal to the kernel of $L_0 $, which is spanned by $x_{0}$. Hence $\alpha=\beta \langle (J\psi) x_{0}, x_{0} \rangle$ and, comparing the equation above with equation \eqref{eq:linear}, we would have that $w=\beta\dot{x}$, contradicting the definition of $W$. Hence, $E^{0} \rightarrow \sphere(W)$ defines a rank $(n-3)$ vector bundle. 

To see the isomorphism $E^0 \cong E^{\epsilon}$ of vector bundles over $\sphere(W)$, we shall apply the following lemma, which essentially just uses that here we have a 1-parameter family of vector bundles. 

\begin{lemma}[Convergence of Orthogonal Complements]\label{Lem:OrthogonalBundleConvergence}
Let $M$ be a smooth manifold and $f:M\times [0,1]\rightarrow (\sphere^{n-1})^k$ be a smooth map such that for all $t \in [0,1]$ and all $x \in M$, the map $f_t:M\rightarrow (\sphere^{n-1})^k$ has the property that $f_t(x)$ is a tuple of linearly independent elements. 

Form the vector subbundle $E_{f_t} \rightarrow M$ of the trivial bundle $\varepsilon^n_{\R} \rightarrow M$ with fiber 
$E_{f_t}|_x=\Span_{\R} f_t(x)$ as well as the bundle splitting 
$\varepsilon^n_{\R}=E_{f_t} \oplus E_{f_t}^\bot$. Then $E_{f_0}^\bot \cong E_{f_{t}}^\bot$ for all $t \in [0,1]$. 
\end{lemma}

\emph{Proof of Lemma \ref{Lem:OrthogonalBundleConvergence}}:
Note that $f$ induces a smooth map
$\Gr_k(f): M \times [0,1] \rightarrow \Gr_k(\R^n)$ 
by $\Gr_k(f_t) = \Span (f_t)$. 
Let $\bot: \Gr_{k}(\R^{n})\rightarrow \Gr_{n-k}(\R^n)$ be the orthogonal complement map. We then obtain a smooth map 
$\Gr_{n-k}(f): M\times [0,1] \rightarrow \Gr_{n-k}(\R^n)$ by $\Gr_{n-k}(f)\coloneqq \bot \circ \Gr_k(f)$. Note that $E_{f_t}^{\bot} \cong \Gr_{n-k}(f)^*(\mathscr{T}^{n-k})$, where $\mathscr{T}^{n-k} \rightarrow \Gr_{n-k}(\R^n)$ is the tautological bundle. That is, $\mathscr{T}^{n-k}$ has fiber at $P \in \Gr_{n-k}(\R^n)$ given by $\mathscr{T}^{n-k}|_{P}=P$. 

Finally, we may conclude. We see that $\Gr_{n-k}(f)$ exhibits a smooth homotopy between $\Gr_{n-k}(f_0)$ and $\Gr_{n-k}(f_{t})$ for any $t \in [0,1]$ and hence 
the pullback bundles $\Gr_{n-k}(f_0)^*(\mathscr{T}^{n-k})$ and $\Gr_{n-k}(f_t)^*(\mathscr{T}^{n-k})$ are smoothly isomorphic as vector bundles 
$\square_{Lemma}$. \medskip 

We can now apply Lemma \ref{Lem:OrthogonalBundleConvergence} in the proof of Theorem \ref{thm:EulerClassXgen}. 

Let $\eta > 0$ be a small positive number. We define a map 
 $\phi_{\epsilon}: \sphere(W) \times [0, 1] \rightarrow (\sphere^{n-1})^3$, where $\phi_{\epsilon}(w)$ is the triple $(u+f(\epsilon)w, (J\psi)(u+f(\epsilon)w), L_0w)$ re-normalized to unit length. The map $\phi_{\epsilon}$ is smooth. By definition, 
$E^{\epsilon} = E|_{S_i(\epsilon)}= E_{\phi_{\epsilon}}^{\bot}$ in the notation of
Lemma \ref{Lem:OrthogonalBundleConvergence}. We then conclude that $E^0 \cong E^{\epsilon}$ for all $\epsilon \in [0,1]$.  

Finally, we show that the limiting vector bundle $E^0 \rightarrow \sphere(W) \cong \sphere^{n-3}$ is isomorphic to the tangent bundle $\T \sphere^{n-3}$. 
In fact, we claim the linear map induced by $L_0$ on the fibers of $E^{0}$ provides an isomorphism between $E^{0}$ and $\T \sphere(W)$. Recall that $\ker(L_0)=\R\{x_0\}$ and $\ker(L_0)^\bot = \mathrm{image}(L_0)$. Hence, for $ w \in \sphere(W)$, 
\begin{align*}
   L_0(E^{0}|_{w}) &= \{ L_{0}y \  | \ y\in \R^n, \langle y, x_{0} \rangle = \langle y, L_0\dot{x}\rangle= \langle y, L_0w \rangle =0 \} \\
    &= \{ L_0y \ | \ y \in \R^n, \langle y, x_{0} \rangle = \langle L_0y, \dot{x}\rangle= \langle L_{0}y, w \rangle =0 \} \\
    &= \{ z \in \ker(L_0)^{\perp} \ | \ \langle z, \dot{x}\rangle= \langle z, w \rangle =0 \} \\ 
    &= \{ z \in \mathbb{R}^{n} \ | \ \ \langle z, x_{0} \rangle = \langle z, \dot{x}\rangle= \langle z, w \rangle=0 \} = \T_{w} \sphere(W) \,.
\end{align*}
We conclude that $\T\sphere(W) \cong E^0 \cong E^{\epsilon}$ as rank $n-3$ vector bundles over $\sphere(W)$, proving (i). 
\medskip 

(ii) The statement about the Euler class follows from naturality \cite{MS74}. Let $j_i: S_i \hookrightarrow S_{gen}$ be the inclusion map inducing the generators $[S_i] \in H_{n-3}(S_{gen},\Z)$. We determined in (i) that $e_i:=e(X_{gen}|_{S_i} \rightarrow S_i) =e(\T\sphere^{n-3}\rightarrow \sphere^{n-3})= \pm 2$, depending on orientation. Now, the total Euler class $\mathbf{e}(\mathcal{R}^\bot \rightarrow S_{gen}) \in H^{n-3}(S_{gen}) \cong \Z^r$ is related to $e_i$ by naturality via
$e_i = j_i^*(\mathbf{e})$. Hence, (i) implies $\mathbf{e}(X_{gen}) = (\pm 2, \pm 2,\dots, \pm2)$ in the basis $([S_i])_{i=1}^{r(n)}$. \end{proof}

\begin{corollary}[Even Case: Product Homology]\label{Cor:TrivialEulerClass} Let $n\geq 6$ be an even integer. Then $H^*(X_{gen}, \Z) \cong H^*(S_{gen} \times \sphere^{n-3},\Z)$ as graded $\Z$-modules. 
\end{corollary}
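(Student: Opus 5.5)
The plan is to read the statement exactly as the paper sets it up at the start of $\S$\ref{Sec:SgenHomology}, namely for the fibration $\sphere^{n-3}\rightarrow X_{gen}\rightarrow S_{gen}$, and to run the Gysin sequence of that sphere bundle. First I need to realize this $\sphere^{n-3}$-fibration as the unit sphere bundle of an oriented vector bundle. By Claim 2 in the proof of the Structure Lemma \ref{Lem:StructureLemma}, $\mathcal{R}^\bot|_{S_{gen}}$ is an oriented, stably trivial bundle of rank $n-3$. I would therefore take $V:=\mathcal{R}^\bot|_{S_{gen}}\oplus\varepsilon^1_{\R}$, which is oriented of rank $n-2$. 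Its unit sphere bundle $\sphere(V)\rightarrow S_{gen}$ has fiber $\sphere^{n-3}$ and is the fiberwise suspension of $\sphere(\mathcal{R}^\bot)|_{S_{gen}}$; this is the object I identify with the $\sphere^{n-3}$-fibration in the statement.

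The second step is to show that the Euler class vanishes. The class $e(V)\in H^{n-2}(S_{gen},\Z)$ is zero because $V$ has the nowhere-vanishing section $(0,1)$. For even $n$ this is consistent with Theorem \ref{thm:EulerClassXgen}(ii): there $e(\mathcal{R}^\bot)=0$ in $H^{n-3}$. (This also holds a priori, since an oriented bundle of odd rank has $2$-torsion Euler class and $H^{n-3}(S_{gen})\cong\Z^{r(n)}$ is free.) I use evenness of $n$ only to guarantee that the sphere bundle is untwisted in every degree, so that the comparison with the product goes through uniformly.

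Third, I apply the Gysin sequence
\[ \cdots\rightarrow H^{k-(n-2)}(S_{gen})\xrightarrow{\ \cup e\ } H^{k}(S_{gen})\xrightarrow{\ p^*\ } H^k(X_{gen})\rightarrow H^{k-(n-3)}(S_{gen})\xrightarrow{\ \cup e\ }\cdots \]
With $e=0$ it breaks into short exact sequences
\[ 0\rightarrow H^k(S_{gen})\rightarrow H^k(X_{gen})\rightarrow H^{k-n+3}(S_{gen})\rightarrow 0. \]
By $\S$\ref{Sec:SgenHomology}, $H^*(S_{gen},\Z)$ is free: it is $\Z$ in degree $0$, $\Z^{r(n)}$ in degree $n-3$, $\Z^{r(n)-1}$ in degree $n-2$, and zero otherwise. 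Hence every such sequence splits, giving $H^k(X_{gen})\cong H^k(S_{gen})\oplus H^{k-n+3}(S_{gen})$. By the Künneth formula with free groups, this is exactly $H^k(S_{gen}\times\sphere^{n-3})$. Alternatively, Leray--Hirsch applies directly: the integral class restricting to a fiber generator exists because the fiberwise suspension has two disjoint sections $(0,\pm1)$, which give a Thom-type class. Summing over $k$ yields the claimed isomorphism of graded $\Z$-modules.

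The main obstacle I expect is the identification in the first step. The Structure Lemma produces $X_{gen}=\sphere(\mathcal{R}^\bot)|_{S_{gen}}$ with fiber $\sphere^{n-4}$, whereas the statement concerns an $\sphere^{n-3}$-fibration. I must therefore justify carefully that the bundle whose cohomology is being compared is the $\sphere^{n-3}$-bundle $\sphere(\mathcal{R}^\bot\oplus\varepsilon^1_{\R})$ named in $\S$\ref{Sec:SgenHomology}, and check its orientability. Orientability follows from that of $\mathcal{R}^\bot$ (Claim 2) together with $\T\sphere^{n-1}|_{S_{gen}}$ being trivial. Once that identification is fixed, the rest is the routine Gysin and Künneth bookkeeping above, made easy because $H^*(S_{gen})$ is free and concentrated in degrees $0$, $n-3$, and $n-2$.
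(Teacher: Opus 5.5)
Your Step 1 is where the argument breaks. In the paper, $X_{gen}$ is by definition $p^{-1}(S_{gen})$, and the Structure Lemma \ref{Lem:StructureLemma}(ii) identifies it with the $\sphere^{n-4}$-bundle $\sphere(\mathcal{R}^\bot)|_{S_{gen}}$. The space $\sphere(\mathcal{R}^\bot\oplus\varepsilon^1_{\R})$ you work with is the fiberwise suspension of $X_{gen}$, which is a different space; it even has dimension $2n-4$ rather than $2n-5$. So your computation $H^*(\sphere(\mathcal{R}^\bot\oplus\varepsilon^1_{\R}))\cong H^*(S_{gen}\times\sphere^{n-3})$ is correct but says nothing about $X_{gen}$. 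In fact the literal claim for $X_{gen}$ with $\sphere^{n-3}$ is false. The Gysin sequence of the actual bundle gives $H^{n-4}(X_{gen})\cong H^0(S_{gen})\cong\Z$, whereas $H^{n-4}(S_{gen}\times\sphere^{n-3})=0$, since $2\le n-4<n-3$. Compare Lemma \ref{Lem:CohomologyGenericLocus}(ii), where the paper records $H_{n-4}(X_{gen})\cong\Z$ and $H_{n-3}(X_{gen})\cong\Z^{r(n)}$. The exponent $n-3$ in the statement, and in the first line of $\S$\ref{Sec:SgenHomology}, is a misprint for $n-4$. You spotted the mismatch but resolved it by changing the space instead of the exponent. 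A symptom of this is that the evenness of $n$ plays no role in your version, since $e(\mathcal{R}^\bot\oplus\varepsilon^1_{\R})=0$ for every $n$.

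The repair is to run your Steps 2--3 on the genuine bundle $\sphere^{n-4}\rightarrow X_{gen}\rightarrow S_{gen}$, which is what the paper's one-line proof does. Its Euler class is $e(\mathcal{R}^\bot|_{S_{gen}})\in H^{n-3}(S_{gen},\Z)$, and this vanishes for $n$ even. You can cite Theorem \ref{thm:EulerClassXgen}(ii), or use the observation you put in a parenthesis: $\mathcal{R}^\bot$ has odd rank $n-3$, so $2e=0$, and $H^{n-3}(S_{gen},\Z)\cong\Z^{r(n)}$ is torsion-free. The Gysin sequence then breaks into short exact sequences $0\rightarrow H^k(S_{gen})\rightarrow H^k(X_{gen})\rightarrow H^{k-n+4}(S_{gen})\rightarrow 0$. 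These split because $H^*(S_{gen})$ is free, giving $H^*(X_{gen})\cong H^*(S_{gen}\times\sphere^{n-4})$. This is the form the paper actually uses later. Evenness genuinely matters here: for $n$ odd the Euler class is $(\pm2,\dots,\pm2)\neq 0$.
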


\begin{proof} 
The result follows from the Gysin sequence and the vanishing of the Euler class. 
\end{proof}

\subsection{High Connectivity}

Let $n\geq 4$. In this section, we show that the base of pencil $X = \hat{\mathcal{B}}(\mathcal{P})$ is highly connected. 

To prove the connectedness of $X$, we shall work with a homotopy equivalent space. 
Recall that for a Fuchsian-Hitchin representation $\rho_0: \pi_1S \rightarrow \PSL(n,\R)$, we have cocompact domain of discontinuity $\Omega = \Omega_{\rho_0} \subset \mathcal{F}_{1,n-1}$ given by \eqref{Omega_Thick}. Pulling back $\Omega$ by the $\Z_2\times\Z_2$-covering map $V_2(\R^n) \rightarrow \mathcal{F}_{1,n-1}$, we obtain an open domain $\hat{\Omega}$ in the Stiefel manifold $V_2(\R^n)$.  
In Corollary \ref{Cor:NearestPointProjUpstairs}, we saw that $\hat{\Omega}$ fibers over $\Ha^2$ with fiber $\hat{\mathfrak{F}}$ diffeomorphic to $X$. Hence, we have a homotopy equivalence $\hat{\Omega}\simeq X$. Thus, it suffices to prove that $\hat{\Omega}$ is highly connected. 

Recall from $\S$\ref{Sec:TitsDomain} that $\hat{\Omega}$ is the complement of the thickening $\hat{K} \subset V_2(\R^n)$ of the limit set $\sphere^1\cong\Lambda \subset \Flag(\R^n)$ of $\rho_0$.
To conclude the high connectivity $\hat{\Omega}$, we will need an upper bound on the cohomological dimension of the thickening $\hat{K}$, which the following proposition provides. We state the result in $V_2(\R^n)$, but the analogous result also holds in $\mathcal{F}_{1,n-1}$. 

\begin{proposition}[Thickening is CW]\label{Prop:ThickeningCW}
Choose any full flag $F^{\bullet}\in \Flag(\R^n)$. Then 
\begin{enumerate}[label=(\roman*)]
    \item The thickening $\hat{K}_{F^{\bullet}}$ admits the structure of an $(n-2)$-dimensional CW complex. 
    \item Let $\xi:\sphere^1 \rightarrow \Flag(\R^n)$ be a continuous, transverse map and define $\Lambda = \im(\xi)$. 
    Then $\hat{K}_{\Lambda}$ admits the structure of an $(n-1)$-dimensional CW complex.
\end{enumerate}
\end{proposition}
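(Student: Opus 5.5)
Throughout, for a full flag $F^{\bullet}$ and $1\le i\le n-1$, write
\[ K^i_{F^\bullet}=\{(\ell,H)\mid \ell\subset F^i\subset H\}\cong \mathbb{P}(F^i)\times \mathbb{P}\big((\R^n/F^i)^*\big)\cong \RP^{i-1}\times\RP^{n-1-i}, \]
so that $K_{F^\bullet}=\bigcup_i K^i_{F^\bullet}$. The plan is to get part (i) from Bruhat/Schubert cells of $\lh{n}$ and to build part (ii) from a CW structure on the ``incidence space'' over $\sphere^1$ and then push it down to $\hat K_\Lambda$; the last step is the one I expect to be hard.

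\textbf{Part (i).} Let $B=\Stab(F^\bullet)$ be the Borel subgroup. The flag manifold $\lh{n}$ has a Bruhat decomposition into finitely many $B$-orbits, each a cell. The thickening $K_{F^\bullet}$ is closed and $B$-invariant, so it is a union of these orbits. Concretely, the orbit of $(\ell,H)$ is determined by the pair of indices $a=\min\{j\mid \ell\subset F^j\}$ and $b=\max\{j\mid F^j\subset H\}$. Then $K_{F^\bullet}$ is exactly the union of the cells with $a\le b$. The cell indexed by $(a,b)$ is a product of affine charts of $\mathbb{P}(F^a)$ and of $\mathbb{P}((\R^n/F^b)^*)$, of dimension $(a-1)+(n-1-b)$. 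This is at most $n-2$, with equality exactly when $a=b$. The closure of each cell is a union of cells indexed by $(a',b')$ with $a'\le a$ and $b'\ge b$, and the Schubert closures give the attaching maps. Hence $K_{F^\bullet}$ is a finite $(n-2)$-dimensional subcomplex. Each cell is contractible, so it lifts homeomorphically to four cells under the covering $V_2(\R^n)\to\lh{n}$. This gives $\hat K_{F^\bullet}$ an $(n-2)$-dimensional CW structure.

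\textbf{Part (ii), setup.} Consider the incidence space
\[ \hat E=\{(t,a)\in \sphere^1\times V_2(\R^n)\mid a\in \hat K_{\xi(t)}\} \]
with its projection $\Pi:\hat E\to \hat K_\Lambda$. Since $\PSL(n,\R)$ acts transitively on flags, I will lift $\xi$ over an interval to a continuous path $g(t)$ with $\xi(t)=g(t)F^\bullet$. This exhibits $\hat E\to\sphere^1$ as a locally trivial bundle with fiber $\hat K_{F^\bullet}$. Its monodromy can be taken in $B$, or in a lift of $B$ to $V_2$. Because $B$ preserves the cell structure from (i), I will choose the gluing so that cells map to cells. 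Then $\hat E$ is an $(n-1)$-dimensional CW complex, with cells equal to (point or arc in $\sphere^1$) $\times$ (cell of $\hat K_{F^\bullet}$).

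\textbf{Part (ii), descending along $\Pi$.} Transversality controls the fibers of $\Pi$. For $s\neq t$, a point of $K^i_{\xi(t)}\cap K^j_{\xi(s)}$ needs $\ell\subset\xi^i(t)\cap\xi^j(s)$ and $\xi^i(t)+\xi^j(s)\subset H$. Transversality makes these spaces of the expected dimension, which is nonzero only when $i+j>n$ and $i+j<n$ respectively. This rules out $i=j$ at the level of the top strata, and more generally forces the double-point set of $\Pi$ to lie over lower cells. The plan is then:
\begin{enumerate}[noitemsep, label=(\alph*)]
\item Show $\Pi$ is finite-to-one. Given $(\ell,H)$, the set of $t$ with $\ell\subset\xi^i(t)\subset H$ is closed, and transversality forbids two of its points from being close in a way incompatible with $\dim(\xi^i(t)\cap\xi^{n-i}(s))=0$.
\item Stratify $\hat K_\Lambda$ by the multiplicity of $\Pi$ and by the index data $(a,b)$.
\item Refine the CW structure on $\hat E$ so that the double-point locus is a subcomplex, subdividing the $\sphere^1$-direction at the relevant parameters. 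Then $\hat K_\Lambda$ is the quotient of a CW complex by a cellular identification, hence a CW complex of dimension at most $n-1$.
\end{enumerate}

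\textbf{Main obstacle.} Steps (a) and (c) are where I expect the real difficulty, because $\xi$ is only continuous. The double-point locus of $\Pi$ need not be a finite union of nice subsets in the $\sphere^1$-direction. If the direct cellular refinement fails, I would first try to reduce to a model. I would use the structure of the incidence geometry, namely that $\Pi$ restricted to each stratum $\sphere^1\times(\text{cell})$ is injective off a closed set determined by transversality. Via a homeomorphism of $\sphere^1$-families, I would then show that $\hat K_\Lambda$ is homeomorphic to $\hat K_{\Lambda_0}$ for the Veronese curve $\Lambda_0$. For $\Lambda_0$ the identifications are real-algebraic, so a semialgebraic triangulation gives a CW structure of dimension $n-1$.
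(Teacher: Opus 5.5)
\textbf{Gap in part (ii).} You write down the right incidence conditions but draw the wrong conclusion from them. For $s\neq t$, a point $(\ell,H)\in K^i_{\xi(t)}\cap K^j_{\xi(s)}$ needs two things:
\begin{enumerate}[noitemsep, label=(\arabic*)]
\item $\ell\subset\xi^i(t)\cap\xi^j(s)$, and by transversality this intersection is nonzero only if $i+j>n$;
\item $\xi^i(t)+\xi^j(s)\subset H$, and this sum is a proper subspace only if $i+j<n$.
\end{enumerate}
These two requirements are mutually exclusive. So the intersection is empty for \emph{every} pair $(i,j)$, not just for $i=j$. In other words, thickenings of transverse flags are disjoint, and so are their preimages in $V_2(\R^n)$.

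Consequently $\Pi:\hat E\to\hat K_\Lambda$ has no double points at all. It is a continuous bijection from the compact space $\hat E$ (which is closed in $\sphere^1\times V_2(\R^n)$) onto a Hausdorff space, hence a homeomorphism. The $(n-1)$-dimensional CW structure on $\hat E$ then transfers verbatim. This is precisely the paper's proof of (ii).

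As written, however, your proposal asserts a nonempty double-point locus ``over lower cells.'' It then offers steps (a)--(c) and a Veronese fallback, and none of these is an actual argument. Step (c) needs the identification to be cellular, which you never establish for a merely continuous $\xi$. The fallback homeomorphism $\hat K_\Lambda\cong\hat K_{\Lambda_0}$ is asserted with no mechanism. So (ii) is not proved by the proposal, even though the missing step is one line from your own computation.

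\textbf{Remarks on the rest.} Your part (i) is correct, and it differs from the paper's route. The paper writes $\hat K_{F^\bullet}=\bigcup_i\hat K_{F^i}$ with $\hat K_{F^i}\cong\sphere^{i-1}\times\sphere^{n-i-1}$, and glues product CW structures inductively along the successive intersections.

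Your Schubert cells are indexed by $a\le b$, have dimension $(a-1)+(n-1-b)$, and are lifted to the four-sheeted cover. They are more explicit, and they buy something in (ii). Each cell is invariant under the stabilizer of $F^\bullet$, so the monodromy of $\hat E\to\sphere^1$ is cellular. Hence $\hat E$ is genuinely a mapping-torus CW complex. The paper passes over this point when it says that a bundle with CW base and CW fiber has CW total space.

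To make your monodromy claim precise, note that $\PSL(n,\R)$ does not act on $V_2(\R^n)$. Instead, take $F^\bullet$ to be the standard flag and lift $\xi$ (over a parametrizing interval) through the finite covering $\SO(n)\to\Flag(\R^n)$. The monodromy then lies in the group of diagonal sign matrices in $\SO(n)$. This group is contained in the Borel subgroup and acts on $V_2(\R^n)$ compatibly with the covering $(u,v)\mapsto([u],[v]^\perp)$.
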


\begin{proof}
(i) It is instructive to break $\hat{K}_{F^{\bullet}}$ into pieces. Write $F^{\bullet} =\big[F^1 \subset F^2 \subset \cdots \subset F^{n-1}]$. Then we may define the thickening of an $i$-plane $F^i$ as follows: 
\[ \hat{K}_{F^i} = \{ (u,v) \in V_2(\R^n) \mid \langle u\rangle \subseteq F^i \subseteq \langle v\rangle^\bot \}.\]
By construction, the thickening $\hat{K}_{F^{\bullet}}$ of the full flag $F^{\bullet}$ is the union of the thickenings of its component subspaces: 
\[ \hat{K}_{F^{\bullet}} = \bigcup_{i=1}^{n-1} \hat{K}_{F^{i}}.\]
The topology of each piece $\hat{K}_{F^i}$ is easily identifiable: $\hat{K}_{F^i} \cong \sphere^{i-1}\times \sphere^{n-i-1}$. However, we can describe $\hat{K}_{F^i}$ more geometrically relative to a choice of background Euclidean metric $g$ on $\R^n$, which we fix once-and-for-all for the following discussion. 
In particular, $\hat{K}_{F^i}$ is canonically identified with $\sphere(F^i) \times \sphere( (F^i)^{\bot_g})$. 
Using these identifications, we make a small observation. Note that for any indices $1\leq i <j\leq n-2$, we have 
\begin{align}\label{NestedThickenings}
    \hat{K}_{F^i} \cap \hat{K}_{F^{j+1}} \subseteq \hat{K}_{F^j} \cap \hat{K}_{F^{j+1}}. 
\end{align}
Moreover, one easily sees that 
\begin{align}\label{ThickeningIntersection}
     \hat{K}_{F^i} \cap \hat{K}_{F^{i+1}}\cong \sphere^{i-1} \times \sphere^{n-i-2}. 
\end{align}
With this observation regarding intersections in hand, we can inductively place a CW structure on $\hat{K}_{F^{\bullet}}$. Indeed, $\hat{K}_{F^1} \cong \sphere^0 \times \sphere^{n-2}$ has its usual product CW structure. We then build a CW structure on $\hat{K}({j+1}) = \bigcup_{i=1}^{j+1} \hat{K}_{F^{i}}$ from that of $\hat{K}(j)=\bigcup_{i=1}^j\hat{K}_{F^i}$ as follows. We first place the usual product CW structure on $\hat{K}_{F^{j+1}}$. By \eqref{NestedThickenings}, \eqref{ThickeningIntersection}, $\hat{K}(j)$ and $\hat{K}_{F^{j+1}}$ intersect exactly along $\sphere(F^j)\times \sphere((F^j)^\bot)$. 
Take a refinement of the CW structures on $\hat{K}(j)$ and $\hat{K}_{F^{j+1}}$ such that the  intersection becomes a subcomplex of each space. Upon gluing, we obtain a CW structure on $\hat{K}(j+1)$. At the end of this inductive process, we obtain an $(n-2)$-dimensional CW structure on $\hat{K}_{F^{\bullet}} = \hat{K}(n-1)$. 
\medskip 

(ii) Recall that full flags $F_1^{\bullet}, F_2^{\bullet}$ are \emph{transverse} when 
for any indices $1\leq i,j\leq n$, the intersection $F_1^i \cap F_2^j$ has the minimum possible dimension, namely $\max \{0, i+j-n\}$. Using transversality, one shows directly that $F_1^{\bullet} \pitchfork F_2^{\bullet}$ implies $\hat{K}_{F_1^{\bullet}} \cap \hat{K}_{F_2^{\bullet}} = \emptyset$. 

Now, let $\xi: \sphere^1 \rightarrow \Flag(\R^n)$ be a continuous, transverse map. We obtain a fiber bundle $E\rightarrow \sphere^1$ with fiber $E_{x} = \hat{K}_{\xi(x)}$. That is to say, $E = \{ (x, \bm{u}) \in \sphere^1 \times V_2(\R^n) \mid \bm{u} \in \hat{K}_{\xi(x)} \}$. There is a continuous injective map $\iota: E \rightarrow V_2(\R^n)$ by $\iota(x,\bm{u}) = \bm{u}$. Hence, $\iota$ can be viewed as a homeomorphism between $E$ and $ \hat{K}_{\Lambda}$. Since the fiber $\hat{K} = \hat{K}_{F^{\bullet}}$ and the base $\sphere^1$ of $E$ each admit the structure of CW complex, the total space also admits a CW structure of dimension $n-1 = \dim(\sphere^1)+ \dim(\hat{K})$. 
\end{proof}

We shall obtain the connectedness of $\hat{\Omega}$ from the following proposition. 

\begin{proposition}[Homology of Complement]\label{Prop:HomologyComplement}
Suppose that $M$ is a closed $n$-dimensional smooth manifold and $A$ is a closed subset with the homotopy type of an $(n-j)$-dimensional CW complex. Then for $k\leq j-2$, 
\[ H_{k}(M \setminus A) \cong H_{k}(M). \]
\end{proposition}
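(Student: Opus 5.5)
The plan is to combine Alexander--Lefschetz (Poincar\'e--Alexander--Lefschetz) duality with the long exact sequence of the pair $(M, M\setminus A)$. Throughout we may assume $M$ is connected; otherwise we argue one component at a time. We first treat the case where $M$ is orientable, using $\Z$ coefficients, and then explain how to handle the non-orientable case.

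\textbf{Step 1: duality.} For a closed subset $A$ of a closed $n$-manifold, duality gives
\[ H_k(M, M\setminus A) \cong \check{H}^{n-k}(A), \]
where $\check{H}$ denotes \v{C}ech cohomology. We want to replace \v{C}ech cohomology by singular cohomology of $A$. This requires $A$ to be reasonably nice, for instance a Euclidean neighborhood retract. In our application $A$ is the thickening $\hat{K}_\Lambda$, which is a compact CW complex embedded in a manifold, so it is an ENR. For that reason I would either add the ENR hypothesis implicitly or note that it holds whenever the CW structure is realized inside $M$. Since $A$ has the homotopy type of an $(n-j)$-dimensional CW complex, $H^{i}(A)=0$ for $i>n-j$. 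Hence $H_k(M,M\setminus A)=0$ whenever $n-k>n-j$, that is, whenever $k<j$.

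\textbf{Step 2: the long exact sequence.} The long exact sequence of the pair contains
\[ H_{k+1}(M,M\setminus A)\to H_k(M\setminus A)\to H_k(M)\to H_k(M,M\setminus A). \]
For $k\le j-2$ we have both $k+1\le j-1<j$ and $k<j$, so the two outer groups vanish by Step 1. Therefore the inclusion $M\setminus A\hookrightarrow M$ induces an isomorphism on $H_k$ for every $k\le j-2$. (The same argument even gives surjectivity at $k=j-1$.)

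\textbf{Step 3: the non-orientable case.} If $M$ is not orientable, duality must be taken with coefficients twisted by the orientation local system. The vanishing of $H^{i}(A;\mathcal{L})$ for $i>n-j$ still holds for any local system $\mathcal{L}$ on an $(n-j)$-dimensional CW complex, because cellular cochains with local coefficients vanish above the dimension. Running Steps 1 and 2 with these coefficients gives the same conclusion. In our application $M=V_2(\R^n)$ is orientable, so this step is only needed for the general statement.

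\textbf{Main obstacle.} The main technical point is the passage from \v{C}ech to singular cohomology of $A$, which requires local niceness of $A$ (ENR, or $A$ a subcomplex of a triangulation of $M$). For the thickenings of Proposition~\ref{Prop:ThickeningCW} this follows because they are finite unions of embedded smooth submanifolds $\sphere^{i-1}\times\sphere^{n-i-1}$ swept out along a circle. Such a set is a compact semi-algebraic-type set, and hence is triangulable compatibly with $M$.
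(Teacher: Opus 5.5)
Your proof is correct and is essentially the paper's argument: Poincar\'e--Alexander--Lefschetz duality $H_k(M,M\setminus A)\cong \check{H}^{n-k}(A)$ kills the two outer terms of the long exact sequence of the pair $(M,M\setminus A)$ when $k\le j-2$. The ENR hypothesis you worry about is not needed: $A$ is compact and homotopy equivalent to a CW complex, so its \v{C}ech and singular cohomology agree, which is how the paper's footnote handles the point. Your Step 3 with twisted coefficients also covers the non-orientable case, which the paper's appeal to Bredon's duality tacitly excludes.
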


\begin{proof}
We shall use Poincar\'e-Alexander-Lefschetz duality \cite[Chapter VI, Theorem 8.3]{Bre93}. Given our assumptions on $M$, this theorem furnishes us the following isomorphism of (singular) homology and cohomology groups\footnote{To be more precise, the theorem uses \v{C}ech cohomology for $A$. However, since $A$ is homotopy equivalent to a CW complex, singular cohomology with integer coefficients and \v{C}ech cohomology agree.}, using the coefficient ring $\Z$: 

\[ H_{k}(M, M\setminus A) \cong H^{n-k}(A).\]
Now, by the long-exact sequence of the pair $(M, M\setminus A)$ in homology, we have
\[
    H^{n-k-1}(A)\cong H_{k+1}(M,M\setminus A) \rightarrow H_{k}(M\setminus A) \rightarrow H_{k}(M) \rightarrow H_{k}(M, M\setminus A)\cong H^{n-k}(A).
\]
Thus, $H_k(M) \cong H_k(M\setminus A)$ for $k \leq j-2$. 
\end{proof}

\begin{corollary}\label{cor:homology-vanishes}
Suppose that $n \geq 5$. Then $X = \hat{\mathcal{B}}(\mathcal{P})$ has $\tilde{H}_k(X, \Z)=0$ for $0 \leq k \leq n-4$. 
\end{corollary}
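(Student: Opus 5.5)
We plan to combine Proposition \ref{Prop:ThickeningCW}(ii) with Proposition \ref{Prop:HomologyComplement}, applied in the Stiefel manifold, and then transfer the result to $X$ through the homotopy equivalence $\hat{\Omega}\simeq X$. First we identify the thickening. The complement $\hat{K} = V_2(\R^n)\setminus \hat{\Omega}$ is the preimage under the $(\Z_2\times\Z_2)$-cover of $\bigcup_{x\in\partial\pi_1 S} K_{\xi^{\Delta}(x)}$. It therefore coincides with $\hat{K}_{\Lambda}=\bigcup_{x}\hat{K}_{\xi^{\Delta}(x)}$, where $\Lambda=\im(\xi^{\Delta})$. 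We may use the covering $G_Q$ or the covering $\pi$; the thickening is defined in terms of $(\langle u\rangle,\langle v\rangle^\bot)$, and any two coverings differ by a diffeomorphism of $V_2(\R^n)$, so the homotopy type is unaffected. Since $\xi^{\Delta}$ is continuous and transverse by Proposition \ref{Prop:HitchinLimitMap}, Proposition \ref{Prop:ThickeningCW}(ii) shows that $\hat{K}$ is a closed subset (compact, being the image of a compact bundle) with the structure of an $(n-1)$-dimensional CW complex.

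Next we apply Proposition \ref{Prop:HomologyComplement}. Take $M=V_2(\R^n)$, which is closed of dimension $N=2n-3$, and $A=\hat{K}$, whose CW dimension is $n-1 = N-j$ with $j=n-2$. This gives $H_k(\hat{\Omega})\cong H_k(V_2(\R^n))$ for all $k\leq j-2=n-4$. The isomorphism should be induced by inclusion; this is visible from the long exact sequence in the proof, but here only the abstract isomorphism of groups is needed.

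We then recall that $V_2(\R^n)$ is $(n-3)$-connected. This follows from the fibration $\sphere^{n-2}\to V_2(\R^n)\to\sphere^{n-1}$ and the long exact sequence of homotopy groups. By Hurewicz, $\tilde H_k(V_2(\R^n),\Z)=0$ for $k\leq n-3$, and in particular for $k\leq n-4$. Connectedness in degree $0$ also follows, since $\hat\Omega$ has $H_0\cong\Z$.

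Finally, Corollary \ref{Cor:NearestPointProjUpstairs} shows that $\hat{\Omega}\to\tilde S\cong\Ha^2$ is a fiber bundle over a contractible base with fiber $X=\hat{\mathcal B}(\mathcal P)$. Hence $X\simeq\hat\Omega$, and we conclude $\tilde H_k(X,\Z)=0$ for $0\le k\le n-4$.

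The main point needing care is the hypothesis check in Proposition \ref{Prop:HomologyComplement}. We must confirm that $\hat K$ really is the full lifted thickening, that it is closed, and that its CW dimension is exactly $n-1$ rather than larger, so that the degree bound $j-2=n-4$ comes out correctly. The hypothesis $n\geq5$ only ensures that the range of degrees is nontrivial beyond $k=0$.
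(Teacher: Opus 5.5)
Your proposal is correct and follows the paper's proof essentially verbatim. You apply Proposition~\ref{Prop:HomologyComplement} with $M=V_2(\R^n)$ (of dimension $2n-3$) and $A=\hat{K}_{\Lambda}$ (an $(n-1)$-dimensional CW complex by Proposition~\ref{Prop:ThickeningCW}), so $j=n-2$ and $H_k(\hat{\Omega})\cong H_k(V_2(\R^n))$ for $k\le n-4$; you then conclude using the $(n-3)$-connectivity of the Stiefel manifold and $X\simeq\hat{\Omega}$. Your additional checks (that $\hat{K}_{\Lambda}$ is compact, hence closed, and that the choice of $(\Z_2\times\Z_2)$-covering only changes things by a diffeomorphism of $V_2(\R^n)$) are valid details the paper leaves implicit.
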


\begin{proof}
Since $X$ is homotopy equivalent to $\hat{\Omega}$, we have $H_{*}(\hat{\Omega}) \cong H_*(X)$. 
Proposition \ref{Prop:HomologyComplement} applied with $M= V_2(\R^n)$ and $A = \hat{K}_{\Lambda}$ yields $H_{k}(\hat{\Omega}) \cong H_k(M)$ for $k \leq n-4$. Since the Stiefel manifold $V_2(\R^n)$ is $(n-3)$-connected, we conclude that $\tilde{H}_k(\hat{\Omega})=0$ for $0 \leq k \leq n-4$. 
\end{proof}

We now verify that $X$ is simply connected to enable an application of Hurewicz's theorem. 
\begin{lemma}[Simple Connectivity]\label{lem:simplyconnected}
Let $n \geq 5$. Then the manifold $X$ is simply connected. 
\end{lemma}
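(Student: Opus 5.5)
We will prove that $\pi_1(X)=0$ for $n\geq 5$ through the homotopy equivalence $X\simeq\hat{\Omega}=V_2(\R^n)\setminus\hat{K}_{\Lambda}$, using general position rather than homology. Write $M=V_2(\R^n)$, a closed manifold of dimension $2n-3$. Since $M$ is $(n-3)$-connected and $n\geq 5$, it is simply connected. The strategy is the usual one: any loop, and any null-homotopy of a loop, in $M$ can be pushed off $\hat{K}_\Lambda$, because $\hat{K}_\Lambda$ has small dimension compared to $M$.

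First we record the dimension of the obstacle. By the proof of Proposition \ref{Prop:ThickeningCW}, $\hat{K}_\Lambda$ is a finite union of sets homeomorphic to $\sphere^1\times\hat{K}_{F^i}$. Each of these is the image of a compact smooth manifold $\{(x,\bm u): \bm u\in \hat K_{\xi(x)^i}\}$ of dimension $n-1$ under the injective map $\iota$. For the Fuchsian--Hitchin representation, the limit map $\xi$ is smooth: it is the orbit map of the principal $\PSL(2,\R)$ on the Veronese curve. Consequently $\hat{K}_\Lambda$ is a finite union of images of smooth maps of $(n-1)$-manifolds. In other words, it is a closed subset of $M$ of codimension $(2n-3)-(n-1)=n-2\geq 3$ in the smooth sense.

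Next we run the transversality argument. Take a loop $\gamma:\sphere^1\to\hat{\Omega}$. Since $\pi_1(M)=0$, it extends to a map $D:\mathbb{D}^2\to M$. Perturb $D$ rel boundary to a smooth map. The boundary already lies in the open set $\hat\Omega$, so only a small perturbation is needed and it does not disturb $\partial D$. By the multijet/parametric transversality theorem, $D$ can be chosen transverse to each of the finitely many smooth maps $\sphere^1\times \hat K_{F^i}\to M$. Transversality of maps whose source dimensions satisfy $2+(n-1)<2n-3$ means the images are disjoint; this inequality is exactly $n>4$. Hence $D$ lands in $\hat{\Omega}$, so $\gamma$ is null-homotopic in $\hat{\Omega}$. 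Connectivity of $\hat\Omega$ follows in the same way, using paths (source dimension $1$), and it also follows from Corollary \ref{cor:homology-vanishes}.

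The main obstacle is justifying the smoothness (or at least a finite stratification) of $\hat{K}_\Lambda$ so that the transversality argument applies. If the smoothness of the limit curve is unavailable, we will instead use the Fuchsian--Hitchin model, which is all that matters by the topological invariance corollaries, where $\xi$ is real-analytic. Failing that, a fallback is an Alexander-duality style argument. Poincar\'e--Lefschetz duality as in Proposition \ref{Prop:HomologyComplement} gives $H_k(M,\hat\Omega)\cong H^{2n-3-k}(\hat{K}_\Lambda)=0$ for $k\leq n-3$. By the relative Hurewicz theorem, this would give $\pi_1$-surjectivity data only together with a simple-connectivity-at-infinity argument near $\hat K_\Lambda$, so the transversality route is preferred.
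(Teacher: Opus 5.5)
Your argument is correct, and it takes a genuinely different route from the paper. The paper proves $\pi_1(X)=0$ intrinsically on $X$, using the almost-fibration. It builds $X$ up as $X_l=X_{l-1}\cup A_l$, starting from $X_{gen}$ and adding neighborhoods of the singular fibers $p^{-1}(C_i)$, and applies Seifert--van Kampen inductively. This requires knowing $\pi_1(X_{gen})$, which is trivial for $n\geq 6$ but equals $\Z_2$ for $n=5$. In that case the Euler class computation of Theorem \ref{thm:EulerClassXgen} is needed to see that $\pi_1(X_{gen}\cap A_i)\cong\Z\oplus\Z_2$ maps onto both sides by the two projections, so the generator dies once the first singular circle is added. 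The paper uses $\hat\Omega$ only for the homological vanishing (Proposition \ref{Prop:HomologyComplement}, via duality).

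You instead work entirely in the ambient Stiefel manifold by general position. This is legitimate because $X$ is by definition the Fuchsian--Hitchin fiber, so $\Lambda$ is the real-analytic curve of osculating flags of the Veronese curve. Hence $\hat K_\Lambda$ is a finite union of smooth images of compact $(n-1)$-manifolds, and your inequality $2+(n-1)<2n-3$ is exactly $n\geq 5$.

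What your route buys is that it is shorter, uniform in $n\geq 5$, and independent of Lemma \ref{Lem:StructureLemma} and the Euler class. It also gives more: pushing off spheres $\sphere^k$ and null-homotopies $\mathbb{D}^{k+1}$ for $k\leq n-4$ shows directly that $\hat\Omega$ is $(n-4)$-connected. That bypasses Corollary \ref{cor:homology-vanishes} and the Hurewicz step in Corollary \ref{cor:highly-connected}. The price is that you need a smooth (or at least tame) structure on $\hat K_\Lambda$. The paper's duality argument only uses that $\hat K_\Lambda$ has the homotopy type of a finite CW complex, so it would survive for a merely continuous transverse limit curve.

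Your final ``fallback'' paragraph is unnecessary and, as you note yourself, homological duality alone does not control $\pi_1$ of the complement, so you can drop it. One small inaccuracy: the pieces $\{(x,\bm u):\bm u\in\hat K_{\xi^i(x)}\}$ are $\hat K_{F^i}$-bundles over $\sphere^1$, not necessarily products. For instance, $\xi^1$ is non-orientable along the Veronese loop when $n$ is even. This is irrelevant to the transversality argument.
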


\begin{proof}
We again denote $p: X \rightarrow \sphere^{n-1}$ as the almost fibration. 

Label the singular circles $S_{sing} = \bigsqcup_{i=1}^{r(n)} C_i$. It will be convenient to define 
\[X_l \coloneq p^{-1}\big(S_{gen} \sqcup \bigsqcup_{i=1}^l C_i\big),\] 
for any integer $0\leq l\leq r(n)$. That is, $X_l$ is obtained by adding to $X_{gen}$ the total space over the first $l$ singular circles, hence $X= X_{r(n)}$. We denote by $A_i$ a small regular neighborhood of $p^{-1}(C_i)$ in $X$ satisfying $A_i \cap X_{sing} = A_i$. 
Recall the fibration $\sphere^{n-4} \rightarrow X_{gen}\rightarrow S_{gen}$.

Now, when $n \geq 6$, the space $X_{gen}$ is simply connected, as witnessed by the long exact sequence of homotopy groups. In this case, we notice that for any index $1\leq i \leq r(n)$, the inclusion map $k: X_{gen} \cap A_i\rightarrow A_i$ induces an isomorphism 
$k_*: \pi_1(X_{gen}\cap A_i)\rightarrow \pi_1(A_i)$ of the cyclic groups
$\pi_1(X_{gen} \cap A_i) \cong \Z \cong \pi_1(A_i)$. 

In the case $n=5$, things are similar, but $X_{gen}$ is no longer simply connected. By Theorem \ref{thm:EulerClassXgen}, in this case, the Euler class of $e(X_{gen}) \in H^2(S_{gen},\Z)$ is $\mathbf{2}=(2,2,\dots, 2)$ in an appropriate basis for homology. One then finds $\pi_1(X_{gen})\cong \Z_2$. On the other hand, for any index $i$, we have $\pi_1(X_{gen} \cap A_i) \cong \Z\oplus \Z_2$. 
Here, the induced homomorphisms $j_*:\pi_1(X_{gen}\cap A_i) \rightarrow \pi_1(X_{gen})$ and $k_*:\pi_1(X_{gen}\cap A_i) \rightarrow \pi_1(A_i)$ are just the projections onto each factor of the domain. 

Let $l \geq 1$ and $n \geq 5$ be arbitrary. Consider the open cover $X_l = X_{l-1} \cup A_l$, with connected intersection $X_{l-1} \cap A_l = X_{gen} \cap A_l$, a punctured regular neighborhood of $p^{-1}(C_l)$. 
An inductive application of Seifert-Van Kampen easily shows $X_1, X_2,\dots,X_{r(n)}=X$ are all simply connected. 
\end{proof}

\begin{corollary}\label{cor:highly-connected}
Let $n\geq 5$. Then $X = \hat{\mathcal{B}}(\mathcal{P})$ is $(n-4)$-connected. 
\end{corollary}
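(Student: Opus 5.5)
The plan is to combine the two preceding results, Corollary \ref{cor:homology-vanishes} and Lemma \ref{lem:simplyconnected}, through the Hurewicz theorem. The claim is that $\pi_k(X)=0$ for $0\le k\le n-4$.

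First, $X$ is connected. Indeed, $\tilde H_0(X,\Z)=0$ by Corollary \ref{cor:homology-vanishes}, which applies since $n-4\ge 1$ when $n\ge 5$. Lemma \ref{lem:simplyconnected} then gives $\pi_1(X)=0$. When $n=5$ we have $n-4=1$, so these two facts already show that $X$ is $1$-connected, and the $n=5$ case is finished.

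For $n\ge 6$ we induct on $k$. Suppose $X$ is $(k-1)$-connected for some $2\le k\le n-4$. Because $X$ is simply connected, the absolute Hurewicz theorem applies and gives an isomorphism $\pi_k(X)\cong H_k(X,\Z)$. Corollary \ref{cor:homology-vanishes} says $H_k(X,\Z)=0$ in this range, so $\pi_k(X)=0$ and $X$ is $k$-connected. Running the induction up to $k=n-4$ shows that $X$ is $(n-4)$-connected.

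The only real input is the simple connectivity from Lemma \ref{lem:simplyconnected}. That is exactly the hypothesis the homology computation cannot supply and the Hurewicz theorem needs, so once it is in hand the argument is routine. The one further check is that $X$ is a closed manifold, hence has the homotopy type of a CW complex; this is not needed for Hurewicz itself. The vanishing of the homology of $X$ was obtained through the homotopy equivalence $X\simeq\hat{\Omega}$, and it is used here directly.
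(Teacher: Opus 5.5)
Your proposal is correct and follows the paper's own argument: the case $n=5$ comes directly from Lemma \ref{lem:simplyconnected}, and for $n\geq 6$ the paper likewise combines simple connectivity with the homology vanishing of Corollary \ref{cor:homology-vanishes} via the Hurewicz theorem. Your inductive write-up simply spells out that Hurewicz step.
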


\begin{proof}
When $n=5$, the claim holds by Lemma \ref{lem:simplyconnected}. Otherwise, $n\geq 6$, and the claim follows by Hurewicz's theorem, Lemma \ref{lem:simplyconnected}, and Corollary \ref{cor:homology-vanishes}.
\end{proof}

\subsection{Computation of Homology}\label{Sec:HomologyX}

Let $X $ be the total space of our almost-fibration $p: X \rightarrow \sphere^{n-1}$. We wish to apply Mayer-Vietoris to compute the homology of $X$. We shall do so using the open covering $X = A \cup B$, where $A$ and $B$ are as follows:
\begin{itemize}
    \item $A$ is a regular neighborhood of $X_{sing} =p^{-1}(S_{sing})$ in $X$. 
    \item $B = X_{gen}$, an $\sphere^{n-4}$-bundle over $S_{gen}$.
\end{itemize}
Note that $A \cap B$ is homotopy equivalent to an $\sphere^{n-4}$-bundle over $ \bigsqcup_{i=1}^{r(n)} (\sphere^1\times \sphere^{n-3})$. 
We handle the general case $n\geq 6$ in this section and treat $n=5$ separately in $\S$\ref{Sec:n=5}. \medskip 

We can use the Gysin sequences of $X_{gen}\rightarrow S_{gen}$ and $X_{gen}|_{S_i}\rightarrow S_i$ to compute the homology of $B$ and of $A \cap B$. 
Here, we state only the relevant homology groups.

\begin{lemma}[Homology of Intersection]\label{Lem:HomologyIntersection} 
Let $n \geq 6$. For indices $2 \leq k\leq n-2$, the homology groups of $A \cap B$ are as follows: 
\begin{enumerate}[label=(\roman*)]
    \item When $n$ is odd, \begin{align}\label{HomologyOfIntersection}
    H_k(A \cap B, \Z) = \begin{cases}
        0 &  2\leq k \leq n-5,\\
        \Z_{2}^{r(n)} & k \in \{n-4, n-3\},\\
        0 & k=n-2.\\
    \end{cases}
    \end{align}
    \item When $n$ is even, the only changes are $H_k(A \cap B,\Z) = \Z^{2r(n)}$ for $k = n-3$ and $H_{k}(A\cap B, \Z)\cong \Z^{r(n)}$ for $k \in \{n-2,n-4\}$.
\end{enumerate}
\end{lemma}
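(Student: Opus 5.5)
The plan is to reduce $A\cap B$ to a disjoint union of sphere bundles over the spherical cylinders and then run Gysin and Künneth on each piece. By construction, $A\cap B$ deformation retracts onto $p^{-1}\big(\bigsqcup_{i=1}^{r(n)} P_i\big)$, where $P_i=\partial N_i\cong C_i\times S_i\cong \sphere^1\times\sphere^{n-3}$ is the boundary of a small tubular neighborhood of the singular circle $C_i$. Since homology is additive over components, it suffices to compute $H_k$ of one piece $E_i:=p^{-1}(P_i)$. Each $E_i$ is the restriction to $P_i$ of the orientable $\sphere^{n-4}$-bundle $\sphere(\mathcal{R}^\bot)\to S_{gen}$, so the Gysin sequence applies, and $n-4\ge 2$ because $n\ge 6$.

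The key input is the Euler class $e_i=e(\mathcal{R}^\bot|_{P_i})\in H^{n-3}(P_i)\cong\Z$, where $H^{n-3}(P_i)$ is generated by the dual of $[S_i]$ (the fiber sphere of $P_i\to C_i$). By naturality and Theorem \ref{thm:EulerClassXgen}, $e_i$ evaluates on $[S_i]$ to $\pm 2$ when $n$ is odd and to $0$ when $n$ is even, because the restriction of $\mathcal{R}^\bot$ to $S_i$ is $\T\sphere^{n-3}$.

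The Gysin sequence
\[ \cdots\to H^{k-n+3}(P_i)\xrightarrow{\cup e_i} H^{k}(P_i)\to H^{k}(E_i)\to H^{k-n+4}(P_i)\xrightarrow{\cup e_i}\cdots \]
then computes the cohomology, using $H^*(P_i)=\Z$ in degrees $0,1,n-3,n-2$. The cup product with $e_i$ is nonzero only on $H^0\to H^{n-3}$ and on $H^1\to H^{n-2}$. The second map is multiplication by $\pm 2$ as well, since $e_i$ is pulled back from the factor $\sphere^{n-3}$ and cupping with the $H^1$ generator gives the fundamental class up to sign.

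In the odd case this gives $H^{n-3}(E_i)\cong\Z_2$ and $H^{n-2}(E_i)\cong\Z_2$. All other cohomology in degrees $2\le k\le n-2$ vanishes, which requires checking that the kernels of $\cup e_i$ on $H^0$ and $H^1$ are zero; this holds since $n-4+1<n-3$ fails to create overlap. The universal coefficient theorem then converts this torsion down one degree, giving $H_{n-4}\cong H_{n-3}\cong\Z_2$ and $H_{n-2}=0$ for each $i$. In the even case $E_i\simeq P_i\times\sphere^{n-4}$ cohomologically, and Künneth gives ranks $1,2,1$ in degrees $n-4,n-3,n-2$, with nothing else in the range $2\le k\le n-5$. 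Summing over the $r(n)$ components yields both parts of the statement.

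The main obstacle is justifying that $\cup e_i\colon H^1(P_i)\to H^{n-2}(P_i)$ is multiplication by $\pm2$. This needs the fact that $\mathcal{R}^\bot|_{P_i}$ is pulled back, up to isomorphism, from $S_i$, or at least that $e_i$ is $\pm2$ times the pullback of the orientation class of $S_i$. Here $H^{n-3}(P_i)\cong\Z$ is detected by $S_i$ alone, so $e_i=\pm2\,\pi^*[S_i]^*$, and the Künneth ring structure of $\sphere^1\times\sphere^{n-3}$ gives the claim. A secondary point is that the bundles over the non-orientable circles for $n\equiv1 \bmod 4$ do not matter, because only $\mathcal{R}^\bot$ over $S_{gen}$ enters.
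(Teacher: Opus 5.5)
Your proposal is correct and is essentially the paper's proof. Both run a Gysin sequence on each component over $\sphere^1\times\sphere^{n-3}$, with Euler class $\pm 2$ for odd $n$ and $0$ for even $n$ coming from Theorem \ref{thm:EulerClassXgen}, and then apply the universal coefficient theorem. Your K\"unneth argument that $\cup e_i\colon H^1\to H^{n-2}$ is also multiplication by $\pm 2$ supplies a step the paper leaves implicit.

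The one omission is in deducing $H_{n-2}(E_i)=0$ from the universal coefficient theorem, which requires $H^{n-1}(E_i)$ to be torsion-free. This follows from the Gysin sequence, since $H^{n-1}(P_i)=H^{3}(P_i)=0$ for odd $n\geq 7$, and the paper checks it explicitly. Alternatively, Poincar\'e duality on the closed orientable $(2n-6)$-manifold $E_i$ gives $H_{n-2}(E_i)\cong H^{n-4}(E_i)=0$.
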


\begin{proof} 
We compute via cohomology, then translate back. Recall that for any topological space $Y$ with the homotopy type of a finite CW complex, the groups $H_k(Y,\Z)$ and $H^k(Y,\Z)$ have the same rank and their torsion subgroups $T_k,T^k$ are related by $T_k =T^{k+1}$. \medskip  

(ii) The claim follows from Theorem \ref{thm:EulerClassXgen}, by the same reasoning as in Corollary \ref{Cor:TrivialEulerClass}. \medskip 

(i) The Gysin sequence shows $H^k(A \cap B) \cong H^k(\bigsqcup_{i=1}^{r(n)}\sphere^1\times 
\sphere^{n-3})=0$ for indices $0\leq k\leq n-5$. 
To compute the remaining cohomology groups, it suffices to compute on each connected component, then take $r(n)$ direct sums. Thus, we set $U_i \subset \sphere^{n-1}$ to be a punctured regular neighborhood of $C_i \subset S_{sing}$ and $E_i = p^{-1}(U_i)$ to be the $\sphere^{n-4}$-bundle over $U_i$.
Note that $U_i$ is homotopy equivalent to $\sphere^1\times \sphere^{n-3}$. 
By Theorem \ref{thm:EulerClassXgen}, the Gysin sequence furnishes the following exact sequence: 
\begin{center}
\begin{tikzcd}[scale=2, row sep=2.5em]
      &
    0 \arrow{r} &
    H^{n-4}(E_i) 
    \arrow[dll, rounded corners=8pt, to path={ 
        -- ([xshift=3ex]\tikztostart.east) 
        |- ([yshift=2ex]\tikztotarget.north) 
        -- (\tikztotarget) 
    }] & \\
    |[label=below:{\substack{\cong \\ \\ \mathbb{Z}}}]| H^0(U_i) \arrow{r}{\cdot 2} &
    |[label=below:{\substack{\cong \\ \\ \mathbb{Z}}}]| H^{n-3}(U_i) \arrow{r}{p^*} &
    H^{n-3}(E_i) 
   \arrow[dll, rounded corners=8pt, to path={ 
        -- ([xshift=3ex]\tikztostart.east) 
       |- ([yshift=2ex]\tikztotarget.north) 
        -- (\tikztotarget) 
    }] \\
    |[label=below:{\substack{\cong \\ \\ \mathbb{Z}}}]| H^1(U_i) \arrow{r}{\cdot 2} &
    |[label=below:{\substack{\cong \\ \\ \mathbb{Z}}}]| H^{n-2}(U_i) \arrow{r}{p^*} &
    H^{n-2}(E_i) 
    \arrow[dll, rounded corners=8pt, to path={ 
        -- ([xshift=3ex]\tikztostart.east) 
        |- ([yshift=2ex]\tikztotarget.north) 
        -- (\tikztotarget) 
    }] & \\
    H^2(U_i) =0.
\end{tikzcd}
\end{center}
A very straightforward diagram chase shows $H^{n-4}(E_i)=0$ and $H^{n-2}(E_i)\cong \Z_2 \cong H^{n-3}(E_i)$. 
A similar calculation with the Gysin exact sequence verifies $H^{n-1}(E_i)=0$. 
\end{proof}

Next, we compute the relevant homology of the generic locus $X_{gen}$. 
\begin{lemma}[Homology of Generic Locus]\label{Lem:CohomologyGenericLocus}
Let $X_{gen}$ be the generic locus of $X = \hat{\mathcal{B}}(\mathcal{P})$. For indices $1 \leq k \leq n-2$, the homology of $X_{gen}$ is as follows: 
\begin{enumerate}[label=(\roman*)]
    \item For $n\geq 7$ odd,
\begin{align}\label{CohomologyGenericLocus}
    H_k(X_{gen},\Z) = \begin{cases}
        0 &  1\leq k \leq n-5,\\
        \Z_2 & k= n-4,\\
        \Z^{r(n)-1} & k \in \{ n-3,n-2\}, \\
    \end{cases}
\end{align}
\item For $n \geq 6$ even, the only changes are  $H_{n-3}(X_{gen},\Z)\cong \Z^{r(n)}$ and $H_{n-4}(X_{gen},\Z)\cong \Z$.
\end{enumerate}
\end{lemma}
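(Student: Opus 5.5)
We will compute the cohomology of $X_{gen}$ with the Gysin sequence of the oriented sphere bundle $\sphere^{n-4}\rightarrow X_{gen}\rightarrow S_{gen}$, and then convert to homology. We use the homotopy equivalence $S_{gen}\simeq \bigvee^{r(n)}\sphere^{n-3}\vee\bigvee^{r(n)-1}\sphere^{n-2}$ from $\S$\ref{Sec:SgenHomology}. Thus $H^j(S_{gen})$ is $\Z$ for $j=0$, $\Z^{r(n)}$ for $j=n-3$, $\Z^{r(n)-1}$ for $j=n-2$, and $0$ otherwise. The Euler class $e=e(X_{gen})\in H^{n-3}(S_{gen})$ is known from Theorem \ref{thm:EulerClassXgen}. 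The relevant Gysin sequence is
\[ \cdots\rightarrow H^{k-n+3}(S_{gen})\xrightarrow{\smile e} H^{k}(S_{gen})\xrightarrow{p^*} H^k(X_{gen})\rightarrow H^{k-n+4}(S_{gen})\xrightarrow{\smile e}H^{k+1}(S_{gen})\rightarrow\cdots \]
The hypothesis $n\ge 6$ gives $n-4\geq 2$. Hence in the range $0\le k\le n-1$ the only possibly nonzero instance of $\smile e$ is $H^0(S_{gen})\rightarrow H^{n-3}(S_{gen})$, $1\mapsto e$. The next instance would land in $H^{2n-6}$, which vanishes in our range because $2n-6>n-2$.

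First, for $1\le k\le n-5$ both $H^k(S_{gen})$ and $H^{k-n+4}(S_{gen})$ vanish, so $H^k(X_{gen})=0$. In degree $k=n-4$ the sequence reduces to $0\rightarrow H^{n-4}(X_{gen})\rightarrow H^0(S_{gen})\xrightarrow{\smile e}H^{n-3}(S_{gen})$. In degree $n-3$ we get
\[ H^0(S_{gen})\xrightarrow{\smile e} H^{n-3}(S_{gen})\rightarrow H^{n-3}(X_{gen})\rightarrow H^1(S_{gen})=0. \]
Degrees $n-2$ and $n-1$ follow similarly. There $H^{n-2}(X_{gen})\cong H^{n-2}(S_{gen})\cong\Z^{r(n)-1}$, because $H^{2}(S_{gen})=H^1(S_{gen})=0$ for $n\ge6$. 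Also $H^{n-1}(X_{gen})=0$.

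We then split into cases according to the parity of $n$.
\begin{itemize}
\item For $n$ even, $e=0$. This gives $H^{n-4}(X_{gen})\cong\Z$ and $H^{n-3}(X_{gen})\cong\Z^{r(n)}$, all free. This is consistent with Corollary \ref{Cor:TrivialEulerClass}.
\item For $n$ odd, $e=(\pm2,\dots,\pm2)$ is nonzero, so $H^{n-4}(X_{gen})=0$. Moreover $H^{n-3}(X_{gen})\cong\Z^{r(n)}/\langle(\pm2,\dots,\pm2)\rangle$. Changing the basis signs, this is $\Z^{r(n)}/\langle 2(1,\dots,1)\rangle$. Since $(1,\dots,1)$ is primitive, it extends to a basis, and the quotient is $\Z^{r(n)-1}\oplus\Z_2$.
\end{itemize}

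Finally, we translate to homology, as in the proof of Lemma \ref{Lem:HomologyIntersection}, using that $X_{gen}$ has the homotopy type of a finite CW complex (a sphere bundle over a finite complex). The free ranks agree with those in cohomology, and the torsion shifts down by one degree, $T_k=T^{k+1}$. For $n$ odd this moves the $\Z_2$ from $H^{n-3}$ to $H_{n-4}$. The resulting groups are $H_{n-4}(X_{gen})=\Z_2$, $H_{n-3}(X_{gen})=\Z^{r(n)-1}$ and $H_{n-2}(X_{gen})=\Z^{r(n)-1}$; the last holds because $H^{n-1}(X_{gen})=0$ contributes no torsion. For $n$ even everything is free, giving $H_{n-4}\cong\Z$, $H_{n-3}\cong\Z^{r(n)}$ and $H_{n-2}\cong\Z^{r(n)-1}$.

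The main point requiring care is the cokernel computation for $n$ odd. It requires the Euler class to be exactly $\pm 2$ on every generator $[S_i]$, with no other entries, so that the cokernel is $\Z^{r(n)-1}\oplus\Z_2$ rather than some other finite group. This is supplied by Theorem \ref{thm:EulerClassXgen}(ii). We also need the vanishing of $H^{n-4}(S_{gen})$, $H^1(S_{gen})$ and $H^2(S_{gen})$ to isolate this single map, and this is where the hypothesis $n\ge 6$ (respectively $n\ge7$ odd) enters.
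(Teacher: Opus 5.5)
Your proof is the same as the paper's: you run the Gysin sequence in cohomology over $S_{gen}\simeq\bigvee^{r(n)}\sphere^{n-3}\vee\bigvee^{r(n)-1}\sphere^{n-2}$, use Theorem \ref{thm:EulerClassXgen} to determine the one relevant map $\smile e\colon H^0(S_{gen})\to H^{n-3}(S_{gen})$, and pass to homology via the torsion shift $T_k=T^{k+1}$. One harmless slip: for $n=6$ the Gysin sequence gives $H^{n-1}(X_{gen})\cong H^{3}(S_{gen})\cong\Z^{r(6)}$ rather than $0$, but this group is free, so your conclusion that $H_{n-2}(X_{gen})$ is torsion-free still holds.
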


\begin{proof}
We shall compute in cohomology with $\Z$-coefficients as in Lemma \ref{Lem:HomologyIntersection}. Recall from $\S$\ref{Sec:SgenHomology} that
$S_{gen}$ is homotopy equivalent to $\big(\bigvee_{i=1}^{r(n)}\sphere^{n-3}\big) \vee \big(\bigvee_{i=1}^{r(n)-1}\sphere^{n-2} \big)$.

Let $n \geq 6$ be any integer. For the indices $1\leq k\leq n-5$, the Gysin sequence shows $H^{k}(X_{gen})\cong H^k(S_{gen})=0$. Similarly, using the Gysin sequence one easily finds that 
$H^{n-2}(X_{gen})\cong H^{n-2}(S_{gen})\cong \Z^{r(n)-1}$. 

Next, we consider the middle cohomology groups for indices $k \in \{n-4,n-3\}$. The Gysin sequence provides the following exact sequence:

\[ 0 {\longrightarrow} H^{n-4}(X_{gen}) \stackrel{\partial}{\longrightarrow} H^0(S_{gen}) \stackrel{\cup e}{\longrightarrow} H^{n-3}(S_{gen}) \stackrel{p^*}{\longrightarrow} H^{n-3}(X_{gen}) \longrightarrow 0.\]
Finally, we conclude the desired result in cases. \medskip 

(i) If $n$ is odd, we see immediately that $H^{n-4}(X_{gen})= 0$ since the displayed map $\cup e$ is injective. Now, Theorem \ref{thm:EulerClassXgen}, shows $e(X_{gen}) =(2,2,\dots, 2) \in H^{n-3}(S_{gen}, \Z)\cong \Z^{r(n)}$, for appropriate choice of generators. Exactness then gives 
\[ H^{n-3}(X_{gen})\cong \coker(\cup e)  \cong \Z^{r(n)-1}\oplus \Z_{2} .\] 

(ii) If $n$ is even, then the Euler class $e(X_{gen})$ vanishes by Theorem \ref{thm:EulerClassXgen}. Thus, in this case, $H^{n-4}(X_{gen})\cong \Z$ and $H^{n-3}(X_{gen})\cong \Z^{r(n)}$. 
\end{proof}

Before we compute the homology $H_*(X)$, we shall introduce another useful lemma. A crucial step in all cases, even and odd, will be to understand how certain generators of $H_{n-2}(A \cap B)$ include in $H_{n-2}(A)$. The next technical lemma will aid in this endeavor.

Let us introduce some notation for the lemma. Denote $(A\cap B)_i$ as the connected component of $A \cap B$ intersecting a neighborhood of $C_i$. We obtain spherical cylinders $P_i \cong \sphere^1 \times \sphere^{n-4}$ in $(A \cap B)_i$, 
by letting $\gamma_i$ be a homotopically nontrivial circle in a punctured regular neighborhood of the singular circle $C_i$ and 
defining $P_i:= p^{-1}(C_i)$. Note that $P_i$ is a product by orientability of $p: X_{gen} \rightarrow S_{gen}$. 
We shall trade out $P_i$ for a partner $P_i'$ in $A$ that is homologically equivalent and instead contained in $X_{sing}$.

\begin{lemma}[Replacing Spherical Cylinders]\label{Lem:Replace}
Let $n \geq 5$. Let $C_i$ be any singular circle. Then $P_i$ is homologous in $A$ to a trivial $\sphere^{n-4}$-fiber sub-bundle $P_i'$ of $\sphere^{n-3}\rightarrow p^{-1}(C_i) \rightarrow C_i$. 
\end{lemma}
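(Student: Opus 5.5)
The plan is to push $P_i$ onto the singular circle by a deformation retraction of $A$, and then show that the limiting cylinder is a trivial sub-bundle of $p^{-1}(C_i)$.

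First, I make the local model explicit. Let $U_i\subset\sphere^{n-1}$ be a closed tubular neighborhood of $C_i$ with projection $r:U_i\to C_i$. Take $\gamma_i\subset U_i\setminus C_i$ to be a push-off of $C_i$, say $\gamma_i(t)=\exp_{c(t)}(\epsilon\, \nu(t))$, where $c(t)$ parametrizes $C_i$ and $\nu$ is a unit normal field along $C_i$. Such a field exists because the normal bundle of $C_i$ in $\sphere^{n-1}$ is trivial, being orientable of rank $\ge 3$ over a circle. Over $\gamma_i$, the bundle $P_i=\sphere(\mathcal{R}^\bot)|_{\gamma_i}$ is trivial. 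For $s\in[0,\epsilon]$, set $\gamma_i^s(t)=\exp_{c(t)}(s\,\nu(t))$ and form the family of rank $(n-3)$ subspaces $\mathcal{R}^\bot|_{\gamma_i^s(t)}$.

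The key step is to show that, as $s\to 0$, these subspaces converge to an $(n-3)$-dimensional subspace $V(t)\subset\mathcal{R}^\bot|_{c(t)}$, the latter having rank $n-2$. I would repeat the limiting argument from the proof of Theorem~\ref{thm:EulerClassXgen}(i). At $u=c(t)+s\nu+O(s^2)$ we have $\mathcal{R}^\bot_u=\Span(u,(J\psi)u,L_0\nu)^\bot$ up to $O(s)$ errors, where $\psi=\psi(t)$ has $c(t)$ as an eigenvector. In the limit this gives
\[ V(t)=\Span\big(c(t),(J\psi)c(t),L_0\nu(t)\big)^\bot. \]
This is $(n-3)$-dimensional by the same non-degeneracy argument as before, since $\nu\perp\dot c$. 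It lies in $\mathcal{R}^\bot|_{c(t)}$ because $\mathcal{R}|_{c(t)}$ is spanned by $\pi_{c^\bot}(J\psi\, c)$. Continuity in $(s,t)\in[0,\epsilon]\times\sphere^1$ then follows as in Lemma~\ref{Lem:OrthogonalBundleConvergence}. One wrinkle is the possible sign ambiguity of $\psi(t)$ around $C_i$ in Case 3(i). Replacing $\psi$ by $-\psi$ does not change $V(t)$, so $V$ is globally well defined.

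Given this, the set $W=\{(\gamma_i^s(t),y): y\in\sphere(\mathcal{R}^\bot|_{\gamma_i^s(t)})\}$ for $s>0$, together with $\sphere(V(t))$ at $s=0$, is a continuous family of $\sphere^{n-4}$-bundles over circles inside $A$. It yields a homotopy (a map $[0,\epsilon]\times\sphere^1\times\sphere^{n-4}\to A$) from $P_i$ to $P_i':=\sphere(V)\to C_i$. In particular, $[P_i]=[P_i']$ in $H_{n-2}(A)$.

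It remains to check that $P_i'$ is a trivial bundle. Its fiber at $c(t)$ is $V(t)$, and $V$ is homotopic, via the family above, to the trivial bundle $\mathcal{R}^\bot|_{\gamma_i}$; alternatively one can see it as an orientable rank-$(n-3)$ bundle over a circle. Since $V\oplus\Span\{c,(J\psi)c,L_0\nu\}$ is trivial and the three spanning sections are globally defined up to a consistent sign, $V$ is orientable and hence trivial over $\sphere^1$.

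I expect the main obstacle to be the uniform convergence as $s\to0$. The naive spanning set $\{u,\psi u,(J\psi)u\}$ degenerates at $s=0$, so the second-order rescaling $\psi u=\lambda_0u+sL_0\nu+\dots$ must be handled carefully and continuously in $t$, including near points where the choice of $\psi$ must be flipped.
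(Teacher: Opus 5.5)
Your proposal is correct. The degeneration step is the same as the paper's: your limit $V(t)=\Span(c,(J\psi)c,L_0\nu)^\bot$ is exactly the paper's $(W_x\oplus L_\eta)^\bot$, since $L_\eta$ is spanned by $(\psi-\lambda)\eta=L_0\eta$. Your non-degeneracy argument (borrowed from Theorem~\ref{thm:EulerClassXgen}) and the remark that $\psi\mapsto-\psi$ leaves $V$ unchanged make precise what the paper leaves as ``interpreting this convergence appropriately.''

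In fact the obstacle you anticipate disappears. On the round sphere $\gamma_i^s(t)=\cos(s)c+\sin(s)\nu$ and $L_0c=0$, so $L_0\gamma_i^s=\sin(s)L_0\nu$. Hence, for $s>0$, exactly and with no error terms,
$\mathcal{R}^\bot_{\gamma_i^s(t)}=\Span(\gamma_i^s,(J\psi)\gamma_i^s,L_0\nu)^\bot$.
The three spanning vectors are continuous in $(s,t)$ and linearly independent on the whole closed annulus.

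Where you genuinely differ from the paper is the triviality of $P_i'$. The paper identifies $L_\eta$ with the eigenline bundle $\mathbb{E}(\mathcal{P})$, which gives $w_1(L_\eta)=w_1(\mathcal{R}|_{C_i})=w_1(\mathcal{R}^\bot|_{C_i})$. It then runs a two-case orientability check to show that $L_\eta^\bot\cap\mathcal{R}^\bot$ is orientable. You instead observe that $V$ is the $s=0$ end of a vector bundle over the annulus $[0,\epsilon]\times\sphere^1$. Its $s=\epsilon$ end is $\mathcal{R}^\bot|_{\gamma_i}$, which is trivial because $\mathcal{R}^\bot|_{S_{gen}}$ is orientable, and homotopy invariance finishes the argument. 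Your route is shorter and case-free.

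What the paper's route buys is an explicit identification of the normal line of $P_i'$ in $p^{-1}(C_i)$ with $L_\eta\cong\mathbb{E}(\mathcal{P})$, which is what Corollary~\ref{cor:spherical_inclusion}(ii) uses. Your route recovers this anyway: that normal line is the complementary line of $V$ in $\mathcal{R}^\bot|_{C_i}$, so its $w_1$ equals $w_1(\mathcal{R}^\bot|_{C_i})+w_1(V)=w_1(\mathcal{R}^\bot|_{C_i})$.

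Two small corrections:
\begin{enumerate}
\item $P_i\cong\sphere^1\times\sphere^{n-4}$ has dimension $n-3$, so the conclusion is $[P_i]=[P_i']$ in $H_{n-3}(A)$, not $H_{n-2}(A)$.
\item In your alternative orientability argument, the reason is not a ``consistent sign'' on all three sections; that would give determinant $-1$ and a non-orientable bundle. The point is that $(J\psi)c$ and $L_0\nu$ flip together while $c$ does not, so the monodromy of the $3$-frame is $\diag(1,-1,-1)\in\SO(3)$.
\end{enumerate}
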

\begin{proof}
To produce the submanifold $P_i'$, we consider the behavior of the bundle $X_{gen} \rightarrow S_{gen}$ as points in the base approach the singular locus. 
Let $u_0 \in C_i$ be a point on the singular circle $C_i$. By definition, there is a triple $(\lambda,c_{1},c_{2}) \in \R^3$ such that $\lambda \neq0$, $\bm{c} \neq \bm{0}$, and 
\[ -\lambda u_{0}+(c_1\psi_{1}+c_{2}\psi_{2})u_{0}=0.\]
Let $\eta \in \R^n$ be a vector which is not in the kernel of $\lambda\Id+c_{1}\psi_{1}+c_{2}\psi_{2}$. 
For any vector $v \in \sphere^{n-1}$, we shall denote $W_v:= \Span \{ v, \psi_1(v), \psi_2(v)\}$. 
Now, suppose we approach $u_{0}$ along the path $u(t)=\frac{u_{0}+t\eta}{||u_0+t\eta||}$. Observe that the $3$-dimensional spaces 
$W_{u(t)}$ converges as $t\to 0$ to $W_{u(0)} \oplus L_{\eta}$, where $L_{\eta}$ is the line spanned by $-\lambda\eta+c_{1}\psi_{1}\eta+c_{2}\psi_{2}\eta$. Now, the fiber of $X_{gen} \rightarrow S_{gen}$ at $u(t)$ is $ \sphere\big( W_{u(t)}^\bot\big)$, the unit sphere in the orthogonal complement of $W_{u(t)}$ in $\R^n$. 
The above reasoning implies $W_{u(t)}^\bot$ converges to $(W_{u(0)} \oplus L_{\eta})^\bot$ in $\Gr_{n-3}(\R^n)$ as $t\rightarrow 0$. 
Interpreting this convergence appropriately, we can produce $P_i'$. Indeed, let $\eta$ now be a unit normal vector field to $C_i$ in $\sphere^{n-1}$. For $\varepsilon >0$ sufficiently small, we construct $\gamma_i$ as the image of the map $C_i\rightarrow \sphere^{n-1}$ given by $x \mapsto \exp_{x}(\varepsilon \eta(x))$ and define $P_i := p^{-1}(\gamma_i)$. By the geometric convergence described in the previous paragraph, the submanifold $P_i$ is homologous in $A$ to the submanifold $P_i'$, built as a sphere bundle $\sphere^{n-4} \rightarrow P_i' \rightarrow C_i$, where $P_i'$ has fiber 
\[P_i'|_{x} = \sphere((W_x \oplus L_{\eta(x)})^\bot) .\] 

To finish the proof, we must show that $P_i'$ is a trivial fiber bundle over $C_i \cong \sphere^1$. Recall the splitting $\T\sphere^{n-1} =\mathcal{R} \oplus \mathcal{R}^\bot$ from Lemma \ref{Lem:StructureLemma}, where 
$\mathcal{R}|_x = \pi_{x^\bot}(\mathcal{P}(x))$. Recall that $\mathcal{R}|_{C_i}$ and $\mathcal{R}^\bot|_{C_i}$ are vector bundles of ranks $1$ and $n-2$, respectively, over the circle $C_i$. 
In this notation, $P_i'$ is the sphere bundle of the vector bundle $(L_{\eta}^\bot \cap \mathcal{R}^\bot)|_{C_i}$. 
Thus, the proof is complete if the vector bundle $L_{\eta}^\bot \cap \mathcal{R}^\bot \rightarrow C_i$ is orientable. To this end, we shall introduce a few auxiliary bundles. 
Inspired by the proof of Lemma \ref{lem:number_circles}, we consider the line bundle $\mathbb{E}(\mathcal{P}) \rightarrow C_i$ with fiber at $x$ given by $\mathbb{E}(\mathcal{P})|_x = \{ \psi \in \mathcal{P} \mid \exists \lambda \in \R , \psi(x) = \lambda x\}$. 
We may define a vector bundle morphism $\phi:\mathbb{E}(\mathcal{P}) \rightarrow L_{\eta}$ by $(x,\psi)\mapsto (x, -\lambda\eta + \psi(x))$. As a non-vanishing map, $\phi$ is an isomorphism. 
Now, we consider the trivial bundle $\underline{\mathcal{P}} \rightarrow C_i$ given by $\underline{\mathcal{P}} = C_i\times \mathcal{P}$. Equip $\underline{\mathcal{P}}$ with an arbitrary Euclidean metric and split  
$\underline{\mathcal{P}} = \mathbb{E}(\mathcal{P})\oplus \mathbb{E}(\mathcal{P})^\bot$. Recall the isomorphism $\mathbb{E}(\mathcal{P})^\bot \cong \mathcal{R}|_{C_i}$
from the proof of Corollary \ref{cor:orientability_singular}. Now, since $\underline{\mathcal{P}}$ is trivial, the line bundle $\mathcal{R}|_{C_i} \rightarrow C_i$ is orientable if and only if $L_{\eta}\rightarrow C_i$ is orientable. Using this equivalence, we can now verify triviality of the fiber bundle $P_i' \rightarrow C_i$.\medskip 

\textbf{Case 1: $\mathcal{R}^\bot \rightarrow C_i$ is orientable.} 
In this case, $\mathcal{R}|_{C_i} \rightarrow C_i$ is orientable. 
Hence, $L_{\eta} \rightarrow C_i$ is orientable and thus its orthogonal complement in $\mathcal{R}^\bot$ is orientable. \medskip 

\textbf{Case 2: $\mathcal{R}^\bot \rightarrow C_i$ is non-orientable}. In this case, $\mathcal{R}|_{C_i} \rightarrow C_i$ is non-orientable and thus $L_{\eta} \rightarrow C_i$ is non-orientable. Thus, its orthogonal complement in $L_{\eta}^\bot \subset \mathcal{R}^\bot$ is orientable. \medskip 

In all cases, $\sphere^{n-4} \rightarrow P_i' \rightarrow C_i$ is orientable and thus trivial.
\end{proof}

We can now understand the consequences for the inclusion of $P_i$ in homology. 
\begin{corollary}[Spherical Cylinders Under Inclusion]\label{cor:spherical_inclusion}
Let $n \geq 5$. Then for $C_i \subset S_{sing}$ a singular circle, 
\begin{enumerate}[label=(\roman*)]
    \item If the fiber bundle $p^{-1}(C_i)\rightarrow C_i$ is orientable, then $i_*: H_{n-3}(A \cap B,\Z) \rightarrow H_{n-3}(A,\Z)$ satisfies $i_{*}([P_{i}])=0$. 
    \item If the fiber bundle $p^{-1}(C_i) \rightarrow C_i$ is non-orientable, then $i_*:H_{n-3}(A \cap B, \Z_2) \rightarrow H_{n-3}(A,\Z_2)$ satisfies $i_{*}([P_{i}])\neq 0$. 
\end{enumerate}
\end{corollary}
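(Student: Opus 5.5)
The plan is to use Lemma \ref{Lem:Replace} to move the class $[P_i]$ from $A\cap B$ into $A$. There it is represented by the sub-sphere bundle $P_i'\subset p^{-1}(C_i)$. Since $A$ is a regular neighborhood of $X_{sing}$, it deformation retracts onto $X_{sing}=\bigsqcup_j p^{-1}(C_j)$. Hence $i_*([P_i]) = [P_i'] \in H_{n-3}(p^{-1}(C_i))$, computed inside the $\sphere^{n-3}$-bundle $p^{-1}(C_i)\to C_i$. The whole question then reduces to understanding the class of an $\sphere^{n-4}$-sub-bundle inside an $\sphere^{n-3}$-bundle over a circle.

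For (i), the bundle $p^{-1}(C_i)\to C_i$ is orientable, hence trivial, so $p^{-1}(C_i)\cong \sphere^1\times\sphere^{n-3}$. By Lemma \ref{Lem:Replace}, $P_i'$ is a trivial sub-bundle, so it is a family of equatorial $\sphere^{n-4}$'s inside the fibers $\sphere^{n-3}$. Each fiber of $P_i'$ bounds a hemisphere $\mathbb{D}^{n-3}$ in the corresponding fiber of $p^{-1}(C_i)$. To show $[P_i']=0$, I will choose these hemispheres continuously around $C_i$. A choice of hemisphere is a choice of unit normal to $\sphere^{n-4}$ inside $\sphere^{n-3}$, that is, a unit vector spanning the line complementary to $L_\eta^\bot\cap\mathcal{R}^\bot$ inside $\mathcal{R}^\bot$. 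In the proof of Lemma \ref{Lem:Replace}, that complementary line bundle is $L_\eta$ projected into $\mathcal{R}^\bot$, and in the orientable case it was shown to be orientable. So a global unit section exists. The resulting $\mathbb{D}^{n-3}$-bundle over $C_i$ is an $(n-2)$-chain with boundary $P_i'$, giving $[P_i']=0$. For $n$ large one could alternatively argue by Künneth: $H_{n-3}(\sphere^1\times\sphere^{n-3})\cong\Z$ is generated by a fiber, while $P_i'\cong \sphere^1\times\sphere^{n-4}$ maps trivially. The bounding-disk argument is cleaner, though.

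For (ii), the bundle is non-orientable, so $p^{-1}(C_i)$ is the mapping torus of a reflection $r$ of $\sphere^{n-3}$, and the same normal line bundle is now non-orientable (Case 2 of Lemma \ref{Lem:Replace}). The plan is to compute $H_{n-3}(p^{-1}(C_i),\Z_2)$ using the Wang sequence: $r_*=\pm1$ is the identity mod 2, so $H_{n-3}(\,\cdot\,;\Z_2)\cong\Z_2$, generated by a fiber $\sphere^{n-3}$. Then I will exhibit a dual class to detect $[P_i']$, namely a closed $2$-dimensional $\Z_2$-cycle, or cohomology class in degree $n-3$, pairing nontrivially with it. I expect the cleanest route is intersection with a section. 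The fixed points of $r$ (the poles along the non-orientable normal direction) give a loop $\sigma$ that double-covers $C_i$: the two poles are swapped around the circle. $P_i'$ is the equator bundle. I will count intersections of $\sigma$ with $P_i'$ mod 2, perturbing $\sigma$ to be transverse. Alternatively, and more robustly, I will compute directly: cutting $p^{-1}(C_i)$ along a fiber, $P_i'$ is homologous mod 2 to the boundary of the hemisphere bundle over an interval. Going once around exchanges the two hemispheres, so the boundary of the hemisphere chain picks up a full fiber $\sphere^{n-3}$. This gives $[P_i'] = [\text{fiber}] \neq 0$ in $\Z_2$-homology.

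The main obstacle is this last step. One must make rigorous that the hemisphere bundle over the cut circle has boundary $P_i'$ plus exactly one full fiber (mod 2), which in turn requires identifying the monodromy precisely as swapping hemispheres. This uses non-orientability of $L_\eta$ from Lemma \ref{Lem:Replace}. The fact that the fiber class is nonzero in $H_{n-3}(A;\Z_2)$ then follows from the Wang sequence.
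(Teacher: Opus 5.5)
Your proof is correct. For (i) it is the paper's argument. Lemma~\ref{Lem:Replace} gives $i_*[P_i]=[P_i']$, and you supply the step the paper calls immediate: $P_i'$ bounds a hemisphere bundle, because its normal line (the projection of $L_\eta$ into $\mathcal{R}^\bot$) is orientable in that case.

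For (ii) you take a genuinely different route.
\begin{itemize}
\item \emph{The paper's route.} It observes that $P_i'$ is a one-sided hypersurface in $X_i=p^{-1}(C_i)$, with normal bundle the pullback of the non-orientable $L_\eta$. It then identifies $H_{n-3}(X_i,\Z_2)\cong H^1(X_i,\Z_2)\cong\Hom(\pi_1X_i,\Z_2)$ via mod-$2$ intersection number. Finally it builds a loop crossing $P_i'$ exactly once, by pushing off a loop with orientation-reversing normal holonomy.
\item \emph{Your route.} You get $H_{n-3}(X_i,\Z_2)\cong\Z_2$, generated by a fiber, from the Wang sequence. You then show at the chain level that the hemisphere bundle over the cut circle has mod-$2$ boundary $P_i'$ plus one full fiber, since the hemispheres are exchanged by the monodromy.
\end{itemize}
Your version identifies $[P_i']$ exactly as the fiber class, which is what one actually uses when computing $\Phi^A_{n-3}$ in Theorem~\ref{thm:hom_odd}, and it avoids duality. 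The paper's version is the general fact that a closed one-sided hypersurface is nonzero in mod-$2$ homology, and it needs no knowledge of $H_*(X_i)$.

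The ``obstacle'' you flag is not really one. Over $x$, the hemispheres are $\{v:\pm\langle v,\nu_x\rangle\ge 0\}$ for a unit vector $\nu_x$ in the normal line. Non-orientability of that line means $\nu$ returns as $-\nu$, so the swap is immediate.

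Drop the route you first called cleanest, because it fails. The loop $\sigma$ traced by the poles $\pm\nu$ is disjoint from $P_i'$, since the poles lie off the equator. Its mod-$2$ intersection number with $P_i'$ is therefore $0$. Indeed $\sigma$ double-covers $C_i$ and vanishes in $H_1(X_i,\Z_2)$. Also, the dual cycle should be $1$-dimensional, not $2$-dimensional, since $\dim X_i=n-2$.

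The loop that does work goes around $C_i$ once. Follow $\nu(t)$ over the cut interval from $\nu$ to $-\nu$, then close up with a meridian arc in the fiber over the base point. That arc meets $P_i'$ exactly once, and this is essentially the paper's argument.
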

\begin{proof} (i) In the orientable case, Lemma \ref{Lem:Replace} immediately implies $i_{*}([P_{i}])=[P_i']= 0$. 

(ii) Suppose that $p^{-1}(C_i)\rightarrow C_i$ is non-orientable. 
Let us denote $X_{i}:= p^{-1}({C_i})$. Since $P_i'\subset X_i$, it suffices to prove that $[P_i']\neq0$ in $H_{n-3}(X_i, \Z_2)$. 
By Lemma \ref{Lem:Replace}, $P_{i}'$ is an embedded codimension one submanifold of $X_i$ with normal bundle isomorphic to the pullback bundle of $L_{\eta}$ under the projection $P_i' \rightarrow C_i$. In particular, the normal bundle of $P_i'$ in $X_i$ is non-orientable and hence non-trivial. 

By Poincar\'e duality and the universal coefficient theorem, we have
\begin{align}\label{eq:PD}
     H_{n-3}(X_i, \Z_{2})\cong H^{1}(X_i,\Z_{2}) \cong\Hom(\pi_{1}(X_i), \Z_{2}).
\end{align}
The class $[P_{i}'] \in H_{n-3}(X_i, \Z_{2})$ corresponds to the homomorphism $\phi_{P_{i}'}:\pi_{1}(X_i)\rightarrow \Z_{2}$ taking the intersection number modulo 2:
\[
   \phi_{P_{i}'}(\, [\gamma]\,)= |\gamma_T \cap P_{i}'| \bmod 2,
\]
where $\gamma_T \in [\gamma]$ is any representative intersecting $P_i'$ transversely. 
Now, let us denote $\mathcal{L}$ for the normal line bundle of $P_i'$ in $X_i$. By non-orientability of $\mathcal{L}$, there is a closed loop $\gamma:[0,1]\rightarrow P_i'$ with orientation-reversing holonomy. 
Let $s$ be a unit section of $\gamma^*\mathcal{L}$. Then define $\hat{\gamma}: [0,1]\rightarrow P_i'$ by $\hat{\gamma}(t) = \exp_{\gamma(t)}( \varepsilon s(t))$. For $\varepsilon >0$ sufficiently small, note that $\hat{\gamma}$ does not intersect $P$. However, we can then construct $\alpha =\hat{\gamma} * \beta$ for a path $\beta$ in a local coordinate neighborhood of $\gamma(0)=\gamma(1)$ from $\hat{\gamma}(0)$ to $\hat{\gamma}(1)$ that crosses $P_i'$ exactly once. Hence, $\phi_{P_i'}([\alpha])\neq0$. By \eqref{eq:PD}, this means $i_*([P_i])\neq0$. 
\end{proof}

We now have enough information to compute the homology of $X$ for $n$ even. 

\begin{theorem}[Homology in Even Case]\label{thm:hom_even}
Let $n \geq 6$ be even. Then the $(n-4)$-connected $(2n-5)$-manifold $X$ has torsion-free integer homology and Betti numbers $b_{n-2}=b_{n-3}=n-1$.
\end{theorem}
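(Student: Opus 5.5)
The plan is to run Mayer--Vietoris on the cover $X=A\cup B$ of $\S$\ref{Sec:HomologyX}, with $A$ a regular neighborhood of $X_{sing}$ and $B=X_{gen}$. Since $n$ is even, $r:=r(n)=n/2$ by Lemma \ref{lem:number_circles}, so the target is $b_{n-3}=b_{n-2}=2r-1=n-1$.

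First, reduce to one degree. By Corollary \ref{cor:highly-connected}, $X$ is $(n-4)$-connected, so its reduced homology lives only in degrees $n-3$, $n-2$ and $2n-5$. Poincaré duality and universal coefficients give the torsion relation
\[
T\big(H_{n-2}(X)\big)\cong T\big(H^{n-3}(X)\big)\cong T\big(H_{n-4}(X)\big)=0 ,
\]
and also $b_{n-2}=b_{n-3}$. Hence it suffices to prove that $H_{n-3}(X,\Z)$ is free of rank $2r-1$.

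Next, record the input groups. By Corollary \ref{cor:orientability_singular}(i), $X_{sing}$ is a trivial $\sphere^{n-3}$-bundle over $r$ circles, so $A\simeq\bigsqcup_{i=1}^r \sphere^1\times\sphere^{n-3}$. This gives $H_{n-4}(A)=0$ and $H_{n-3}(A)=\Z^r$, generated by the fibers $F_i$. By Lemma \ref{Lem:CohomologyGenericLocus}(ii) and Corollary \ref{Cor:TrivialEulerClass}, $H_{n-4}(B)=\Z$ (the fiber class) and $H_{n-3}(B)=\Z^r$, the latter identified with $H_{n-3}(S_{gen})$ via lifts of the linking spheres $S_i$. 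Lifts exist because $\mathcal{R}^\bot|_{S_i}\cong\T\sphere^{n-3}$ by Theorem \ref{thm:EulerClassXgen}(i), and since $n-3$ is odd this bundle admits a nonvanishing section. By Lemma \ref{Lem:HomologyIntersection}(ii), each component $(A\cap B)_i$ has the homology of $\sphere^1\times\sphere^{n-3}\times\sphere^{n-4}$. Thus $H_{n-4}(A\cap B)=\Z^r$ (fibers), and $H_{n-3}(A\cap B)=\Z^{2r}$, generated by the lifted spheres $\hat S_i$ and the spherical cylinders $P_i$.

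Now analyze the relevant segment of the Mayer--Vietoris sequence,
\[
H_{n-3}(A\cap B)\xrightarrow{\;\Phi\;} H_{n-3}(A)\oplus H_{n-3}(B)\longrightarrow H_{n-3}(X)\longrightarrow H_{n-4}(A\cap B)\xrightarrow{\;\Psi\;} H_{n-4}(B).
\]
The map $\Psi\colon\Z^r\to\Z$ sends each fiber to the fiber, so it is surjective with free kernel $\Z^{r-1}$. For $\Phi$, Corollary \ref{cor:spherical_inclusion}(i) gives $[P_i]\mapsto 0$ in $A$. In $B$, $P_i$ is $\gamma_i\times\sphere^{n-4}$, and with zero Euler class this lies in the Künneth piece $H_1(S_{gen})\otimes H_{n-4}(\sphere^{n-4})=0$, so $\Phi([P_i])=0$. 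Each $\hat S_i$ maps to the generator $[S_i]$ in $B$ and to some multiple $d_i[F_i]$ in $A$.

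\textbf{Main obstacle.} The key claim is that $d_i=\pm1$. Granting it, $\operatorname{coker}\Phi\cong(\Z^r\oplus\Z^r)/\operatorname{graph}\cong\Z^r$ is free, so $H_{n-3}(X)$ is an extension of the free group $\Z^{r-1}$ by the free group $\Z^r$. It is therefore free of rank $2r-1=n-1$, as required.

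To prove $d_i=\pm1$, I would shrink $S_i$ toward a point $x_0\in C_i$ as in the proof of Theorem \ref{thm:EulerClassXgen}. In that limit the fiber over $u\in S_i$ converges to the hyperplane $\sphere\big((W_{x_0}\oplus L_w)^\bot\big)$ of the singular fiber $p^{-1}(x_0)\cong\sphere^{n-3}$, in the spirit of Lemma \ref{Lem:Replace}. The section then yields a map $\sphere^{n-3}\to p^{-1}(x_0)$ with value at $w$ orthogonal to $L_0w$. I would identify this map, via the isomorphism $L_0$ used in Theorem \ref{thm:EulerClassXgen}(i), with a unit vector field construction on $\sphere(W)$, and show its degree is $\pm1$. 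If the explicit degree computation is messy, an alternative is to detect $d_i$ by intersecting with a fiber of $A\cap B$, or by a local computation in a model neighborhood where $\psi$ is diagonal.
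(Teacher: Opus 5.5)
Your argument is the paper's own: Mayer--Vietoris on $X=A\cup B$, with $\ker(\Phi_{n-4})\cong\Z^{r-1}$, the cylinders $P_i$ dying in both $A$ and $B$, and the lifted spheres $\hat S_i$ mapping to a basis of $H_{n-3}(B)$, which gives $H_{n-3}(X)\cong\Z^{r}\oplus\Z^{r-1}$. The one correction is that your ``main obstacle'' does not need to be overcome: because $\Phi(P_i)=0$ and $\{j_*[\hat S_i]\}$ is a basis of $H_{n-3}(B)$, the image of $\Phi$ is the graph of some homomorphism $f\colon H_{n-3}(B)\to H_{n-3}(A)$, and the map $(a,b)\mapsto a-f(b)$ identifies $\coker(\Phi)$ with $H_{n-3}(A)\cong\Z^{r}$ whatever the $d_i$ are. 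This is exactly how the paper concludes, so the degree computation is superfluous, even though $d_i=\pm1$ does in fact hold.
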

\begin{proof}

We compute only with $\Z$ coefficients in this proof. 
By Corollary \ref{cor:highly-connected}, $X$ is $(n-4)$-connected. Thus, $H_k(X)$ is nonzero only for $k \in \{0,n-3,n-2,2n-5\}$. Note that, by Poincar\'e duality and the universal coefficient theorem, the group $H_{n-2}(X,\Z)$ is a free $\Z$-module with the same rank as $H_{n-3}(X,\Z)$.  We write $H_{n-3}(X) = \Z^{b_{n-3}}\oplus T_{n-3}$. 

We shall compute $H_{n-3}(X)$ using the Mayer-Vietoris exact sequence: 
\begin{align*}
    \cdots \rightarrow  H_{s+1}(X) \xrightarrow{\partial_{s+1}} H_{s}(A\cap B) \xrightarrow[\Phi_{s}]{(i_{*},j_{*})} H_{s}(A) \oplus H_{s}(B) \xrightarrow{k_{*}-l_{*}} H_{s}(X) \xrightarrow{\partial_{s}} H_{s-1}(A\cap B) \rightarrow \cdots \ 
\end{align*}
We can extract the following short exact sequence:
\begin{align}\label{eq:HomologyEvenMiddle}
    0 \longrightarrow \coker(\Phi_{n-3}) \longrightarrow H_{n-3}(X) \xrightarrow{\partial_{n-3}} \im(\partial_{n-3}) \longrightarrow 0.
\end{align}  
By exactness, $\im(\partial_{n-3})=\ker(\Phi_{n-4})$. Note that $H_{n-4}(X)=0$ implies $\Phi_{n-4}$ surjects. By Lemmas \ref{Lem:HomologyIntersection}(ii) and \ref{Lem:CohomologyGenericLocus}(ii), we then conclude that $\ker(\Phi_{n-4})\cong \Z^{r(n)-1}$. 
As a consequence, the exact sequence \eqref{eq:HomologyEvenMiddle} must split. 

We need to understand the map
\[
    \Phi_{n-3}:H_{n-3}(A\cap B) \rightarrow H_{n-3}(A) \oplus H_{n-3}(B) \ .
\]
Recall $A \cap B $ is homotopy equivalent to an $\sphere^{n-4}$-bundle over $\bigsqcup_{k=1}^{r(n)}(\sphere^1\times \sphere^{n-3})$ with trivial Euler class. Thus, generators of $H_{n-3}(A\cap B)$ come in two flavors: 
\begin{itemize}[noitemsep]
    \item $r(n)$ spheres $\hat{S_i} \cong \sphere^{n-3}$, constructed as sections of $X_{gen} \rightarrow S_{gen}$ across the spheres $S_i \subset S_{gen}$ generating $H_{n-3}(S_{gen})$,
    \item $r(n)$ spherical cylinders $P_{i}\cong \sphere^{1}\times \sphere^{n-4}$ obtained by $P_i := p^{-1}(\gamma_i)$, where $\gamma_i \subset S_{gen}$ is a circle parallel to the singular circle $C_i$.
\end{itemize} 

By Corollary \ref{cor:spherical_inclusion}, we know that $i_*([P_i])=0$. It is 
also clear that $P_i$ is nullhomologous in $B=S_{gen}$ since the $\sphere^1$-factor of $P_{i}$ corresponds to a loop $\gamma_i $ in the simply connected space $S_{gen}$.
Altogether, we find $P_{i} \in \ker(\Phi_{n-3})$.
Now, $H_{n-3}(B)$ is generated by $[\hat{S}_i]$ for $1\leq i\leq r(n)$. 
Thus, $\coker(\Phi_{n-3})\cong H_{n-3}(A) \cong \Z^{r(n)}$. By \eqref{eq:HomologyEvenMiddle}, we conclude that $H_{n-3}(X,\Z) \cong \Z^{2r(n)-1}$. Lemma \ref{lem:number_circles} provides the relation $2r(n)-1=n-1$, which completes the proof.
\end{proof}

To compute the homology of $X$ in the odd case, we need one more piece of information: the homology of the singular locus $X_{sing}$. In the case $n \equiv 3 \bmod 4$, $X_{sing}$ is simply the disjoint union of $r(n)$ copies of $\sphere^{1}\times \sphere^{n-3}$, so its homology is easily computed, just as in the case of $n$ even. However, when $n \equiv 1 \bmod 4$, we need to be more careful because the fibration $X_{sing} \rightarrow S_{sing}$ is non-trivial, since $S_{sing}^{no}\neq \emptyset$. For the latter, we have the following lemma. 

\begin{lemma}[Homology of Singular Locus, Exceptional Case]\label{lem:homology_singular}
Let $n\geq 5$ be congruent to $1 \bmod 4$. The integer homology of $X_{sing}$ is as follows: 
\[ H_{k}(X_{sing},\Z) = \begin{cases} \Z^{r(n)} & k\in \{0,1\},\\ 
                            0 & 2\leq k\leq n-4,\\
                         \Z^{r(n)-2} \oplus \Z_2^2 & k = n-3, \\
                            \Z^{r(n)-2} & k = n-2 .\\
\end{cases}
\]
\end{lemma}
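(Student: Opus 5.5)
Since $X_{sing} = p^{-1}(S_{sing})$ is a disjoint union over the $r(n)$ singular circles, we will compute the homology one circle at a time and then add the results. By Corollary \ref{cor:orientability_singular}(ii), when $n \equiv 1 \bmod 4$ exactly two circles carry a non-orientable $\sphere^{n-3}$-bundle. The remaining $r(n)-2$ circles carry trivial bundles, so each of those components is $\sphere^1\times\sphere^{n-3}$. By Künneth, each trivial component contributes $\Z$ in degrees $0,1,n-3,n-2$ and nothing elsewhere. So the whole lemma reduces to computing the homology of the total space $N$ of a non-orientable $\sphere^{n-3}$-bundle over $\sphere^1$. We then need to show
\[ H_0(N)\cong H_1(N)\cong\Z,\qquad H_{n-3}(N)\cong \Z_2,\qquad H_{n-2}(N)=0, \]
with all other groups vanishing.

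To compute $H_*(N)$, write $N$ as the mapping torus of a fiber diffeomorphism $h:\sphere^{n-3}\to\sphere^{n-3}$. Here $h$ is the monodromy of the bundle $\sphere(\mathcal{R}^\bot)|_C$, which is orientation-reversing because $\mathcal{R}^\bot|_C$ is non-orientable. Concretely, the bundle is the sphere bundle of $\mathcal{R}^\bot|_C\cong L\oplus\varepsilon^{n-3}_\R$, where $L$ is the Möbius line bundle: the frame $\T\sphere^{n-1}|_C$ is trivial and $\mathcal{R}|_C$ is Möbius, so $\mathcal{R}^\bot|_C \cong \mathcal{R}|_C\oplus\varepsilon^{n-3}$. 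We may therefore take $h$ to be a reflection, which acts by $-1$ on $H_{n-3}(\sphere^{n-3})$. The Wang sequence (equivalently, the Mayer--Vietoris sequence for the mapping torus) gives
\[ 0\to H_k(\sphere^{n-3})\xrightarrow{\;h_*-\id\;} H_k(\sphere^{n-3}) \to H_k(N)\to H_{k-1}(\sphere^{n-3})\xrightarrow{\;h_*-\id\;} H_{k-1}(\sphere^{n-3}). \]
In degree $k=n-3$ the map $h_*-\id$ is multiplication by $-2$. Hence its cokernel is $\Z_2$, and its kernel in degree $n-2$ is $0$.
For $n\ge 5$ the degrees $n-3\ge 2$ and $1$ do not interfere, so the sequence produces exactly the groups listed above. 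In degrees $0$ and $1$ we get $\Z$ each, because $h_*=\id$ on $H_0$.

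Summing over components gives the stated result. In degrees $0$ and $1$ we get $\Z^{r(n)}$. In degree $n-3$ we get $\Z^{r(n)-2}\oplus\Z_2^2$, and in degree $n-2$ we get $\Z^{r(n)-2}$. All groups in degrees $2,\dots,n-4$ vanish. The only step needing care is identifying the monodromy as orientation-reversing on the fiber sphere, which is exactly the content of Corollary \ref{cor:orientability_singular}. Everything after that is a routine Wang sequence computation.
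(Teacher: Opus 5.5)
Your proof is correct and follows essentially the same route as the paper. Both arguments split off the $r(n)-2$ trivial components $\sphere^1\times\sphere^{n-3}$ and compute the twisted $\sphere^{n-3}$-bundle over $\sphere^1$ as the mapping torus of an orientation-reversing map via Wang's sequence. The only difference is inessential: the paper gets $H_{n-2}=0$ from non-orientability, while you read it off from the injectivity of multiplication by $-2$. One small slip: drop the leading $0\to$ in your displayed sequence. The Wang sequence is long exact and $h_*-\id$ vanishes on $H_0(\sphere^{n-3})$, a fact you in fact use correctly afterward.
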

\begin{proof} Recall that $X_{sing}\cong(S_{sing}^{o}\times \sphere^{n-3}) \sqcup (S_{sing}^{no} \tilde{\times} ~ \sphere^{n-3})$, with the factor $\sphere^{n-3}$ coming from the fiber $\sphere(\mathcal{R}^\bot) \rightarrow C_i$, and $S_{sing}^{o}\cong\bigsqcup_{i=1}^{r(n)-2}\sphere^{1}$ and $S_{sing}^{no}\cong\sphere^{1} \sqcup \sphere^{1}$. Here, $\tilde{\times}$ denotes the non-trivial twisted product. 
It is thus sufficient to prove that 
\[ 
    H_{k}(\sphere^{1} \tilde{\times} ~ \sphere^{n-3},\Z) = \begin{cases} \Z & k \in \{0,1\},\\ 
                            0 & 2\leq k\leq n-4,\\
                         \Z_2 & k = n-3 ,\\
                            0 & k = n-2. \\
\end{cases}
\]
Let $E=\sphere^{1} \tilde{\times} ~ \sphere^{n-3}$. It is clear that $H_{0}(E)=\Z$ by connectedness and $H_{n-2}(E)=0$ because it is a non-orientable $(n-2)$-manifold. To compute the remaining homology, we realize $E$ as the mapping torus of an orientation-reversing homeomorphism $f:\sphere^{n-3} \rightarrow \sphere^{n-3}$.

\emph{Wang's exact sequence} for this fibration over $\sphere^{1}$ is (see \cite[Example 2.48]{Hat01})
\[
    \cdots \longrightarrow H_k(\sphere^{n-3}) \xrightarrow{\,\id-f_*\,} H_k(\sphere^{n-3}) \longrightarrow H_k(E)
    \longrightarrow H_{k-1}(\sphere^{n-3}) \xrightarrow{\,\id-f_*\,} H_{k-1}(\sphere^{n-3}) \rightarrow \cdots .
\]
This immediately yields $H_{k}(E)=0$ for $2 \leq k \leq n-4$. 
Since $f_*=\id$ on $H_0(\sphere^{n-3})$, it follows that $H_1(E) \cong \Z$. As $f$ reverses orientation, $f_*=-\id$ on $H_{n-3}(\sphere^{n-3})$ and 
\begin{align*}
H_{n-3}(E)
& \cong \coker\bigl(\id-f_*: H_{n-3}(\sphere^{n-3}) \rightarrow H_{n-3}(\sphere^{n-3})\bigr) \cong \mathbb Z_2.
\end{align*}
\end{proof}

We now compute the homology of $X$ in the odd case.

\begin{theorem}[Homology in Odd Case]\label{thm:hom_odd}
Let $n \geq 7$ be odd. The $(n-4)$-connected manifold $X$ has torsion-free integer homology and Betti numbers $b_{n-2}=b_{n-3}=2n-1$. 
\end{theorem}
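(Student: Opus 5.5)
The plan follows the even case (Theorem \ref{thm:hom_even}): a Mayer--Vietoris computation for the cover $X = A \cup B$, with $A$ a regular neighborhood of $X_{sing}$ and $B = X_{gen}$, now fed with the odd-case inputs. By Corollary \ref{cor:highly-connected}, $X$ is $(n-4)$-connected, so only $H_{n-3}(X)$ and $H_{n-2}(X)$ are unknown. By Poincar\'e duality and universal coefficients, $H_{n-2}(X)$ is free of rank $b_{n-3}$, and the torsion of $H_{n-3}(X)$ equals the torsion of $H^{n-2}(X) \cong H_{n-3}(X)$. So it suffices to show $H_{n-3}(X) \cong \Z^{2n-1}$.

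The inputs are as follows.
\begin{itemize}
\item Lemma \ref{Lem:HomologyIntersection}(i) gives $H_{n-4}(A\cap B) \cong H_{n-3}(A\cap B) \cong \Z_2^{r(n)}$ and $H_{n-2}(A\cap B) = 0$.
\item Lemma \ref{Lem:CohomologyGenericLocus}(i) gives $H_{n-4}(B) \cong \Z_2$ and $H_{n-3}(B) \cong H_{n-2}(B) \cong \Z^{r(n)-1}$.
\item $H_*(A) = H_*(X_{sing})$. For $n \equiv 3 \bmod 4$ this is the homology of $r(n)$ copies of $\sphere^1\times\sphere^{n-3}$. For $n \equiv 1 \bmod 4$ it is given by Lemma \ref{lem:homology_singular}.
\end{itemize}
The relevant segment of the Mayer--Vietoris sequence is
\[ 0=H_{n-2}(A\cap B)\to H_{n-2}(A)\oplus H_{n-2}(B)\to H_{n-2}(X)\to H_{n-3}(A\cap B)\xrightarrow{\Phi_{n-3}} H_{n-3}(A)\oplus H_{n-3}(B)\to H_{n-3}(X)\to H_{n-4}(A\cap B)\xrightarrow{\Phi_{n-4}} H_{n-4}(A)\oplus H_{n-4}(B)\to 0, \]
where surjectivity at the right end uses $H_{n-4}(X) = 0$.

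\textbf{The free part.} For $n \equiv 3 \bmod 4$, we have $H_{n-2}(A)\oplus H_{n-2}(B) \cong \Z^{r(n)} \oplus \Z^{r(n)-1} = \Z^{2n-1}$, using $r(n)=n$. This injects into $H_{n-2}(X)$ with finite cokernel, since $H_{n-3}(A\cap B)$ is $2$-torsion, so $b_{n-2} = 2n-1$. For $n \equiv 1 \bmod 4$, the same count gives $(r(n)-2)+(r(n)-1) = 2n-1$, using $r(n)=n+1$. This settles the Betti numbers. It remains to prove that $H_{n-3}(X)$ is torsion-free.

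\textbf{The torsion.} The torsion of $H_{n-3}(X)$ comes from two places: the cokernel of $\Phi_{n-3}$ and the kernel of $\Phi_{n-4}$.

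For the cokernel, $H_{n-3}(B)$ is free, and $H_{n-3}(A)$ is free except for the $\Z_2^2$ in the case $n \equiv 1 \bmod 4$. Each $\Z_2$ summand of $H_{n-3}(A\cap B)$ is represented, per component, either by the spherical cylinder $P_i$ or by the mod-$2$ section class $\hat S_i$ coming from the Gysin computation. I will identify which. On orientable circles, Corollary \ref{cor:spherical_inclusion}(i) gives $i_*[P_i]=0$. On the two non-orientable circles, Corollary \ref{cor:spherical_inclusion}(ii) says $P_i$ maps nontrivially mod $2$, and I will upgrade this to show that $P_i$ hits the $\Z_2$ torsion generator of $H_{n-3}(\sphere^1\tilde\times\sphere^{n-3})$. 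Indeed, $P_i'$ is a trivial $\sphere^{n-4}$-subbundle, and the Wang sequence identifies the torsion class with the image of the fiber class, to which $P_i'$ is related. This kills the torsion of $H_{n-3}(A)$ in the cokernel. Beyond that, $\coker(\Phi_{n-3})$ has torsion only if some $\Z_2$ image lands on a non-divisible free class. This cannot happen: a torsion element maps to zero in the free groups $H_{n-3}(B)$ and $H_{n-3}(A)^{\mathrm{free}}$.

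For the kernel, the last step is to show $\Phi_{n-4}$ is injective on $\Z_2^{r(n)}$. This is automatic if the extension with the free cokernel splits. Then $H_{n-3}(X)$ has torsion at most the $2$-torsion of $\ker \Phi_{n-4}$, and this must vanish by the duality consistency $T_{n-3}(X) \cong T^{n-2}(X) = 0$. The latter holds because $H_{n-2}(X)$ is free and $H_{n-3}(X)$ has been computed up to torsion.

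I expect the main obstacle to be this torsion bookkeeping, that is, showing that the $\Z_2^{r(n)}$ classes in degrees $n-4$ and $n-3$ map injectively or are killed. If the direct argument stalls, I would fall back on a mod-$2$ Mayer--Vietoris count. Compute $\dim_{\Z_2} H_{n-3}(X;\Z_2)$ and compare it with $b_{n-3} = 2n-1$. Equality forces torsion-freeness, since any torsion in $H_{n-3}(X)$ or $H_{n-4}(X)=0$ would increase the mod-$2$ rank.
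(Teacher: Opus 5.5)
Your Betti number count is correct, and so is your proof that $\coker(\Phi_{n-3})$ is free. For $n\equiv 1 \bmod 4$, the integral upgrade of Corollary~\ref{cor:spherical_inclusion}(ii) is immediate by naturality of reduction mod $2$: since $H_{n-4}(\sphere^1\tilde{\times}\sphere^{n-3};\Z)=0$, reduction $H_{n-3}(\,\cdot\,;\Z)\cong\Z_2\to H_{n-3}(\,\cdot\,;\Z_2)$ is an isomorphism. This is slightly sharper than the paper's $\Z_2^s\oplus\Z_4^t$ bound.

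So $T_{n-3}(X)$ embeds in $\ker(\Phi_{n-4})$, and the gap is in killing it. All three of your steps there fail.
\begin{itemize}
\item $\Phi_{n-4}\colon \Z_2^{r(n)}\to H_{n-4}(A)\oplus H_{n-4}(B)=0\oplus\Z_2$ cannot be injective. By your own inputs its kernel is $\Z_2^{r(n)-1}$.
\item A splitting of $0\to\coker(\Phi_{n-3})\to H_{n-3}(X)\to\ker(\Phi_{n-4})\to 0$ is the bad case, not the good one. It would give $H_{n-3}(X)\cong\Z^{2n-1}\oplus\Z_2^{r(n)-1}$. The theorem says this extension is as non-split as possible.
\item The duality step is circular. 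By universal coefficients $T^{n-2}(X)\cong\mathrm{Ext}(H_{n-3}(X),\Z)\cong T_{n-3}(X)$, and Poincar\'e duality gives $H^{n-2}(X)\cong H_{n-3}(X)$. Freeness of $H_{n-2}(X)$ only gives $T^{n-1}(X)=0$. Middle-dimensional torsion of a closed odd-dimensional manifold is self-dual, so no integral rank bookkeeping in this Mayer--Vietoris sequence can decide the extension.
\end{itemize}

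Your fallback is the right idea and is exactly the paper's argument. But it is the whole content of the proof, and you have not carried it out. Here is what it requires.
\begin{itemize}
\item Since $T_{n-3}(X)\cong\Z_2^s$ and $H_{n-4}(X)=0$, you have $\dim_{\Z_2}H_{n-3}(X;\Z_2)=2n-1+s$. Compare this with the mod-$2$ sequence $0\to\coker(\Phi^{\Z_2}_{n-3})\to H_{n-3}(X;\Z_2)\to\ker(\Phi^{\Z_2}_{n-4})\to 0$.
\item $\ker(\Phi^{\Z_2}_{n-4})\cong\Z_2^{r(n)-1}$, because $H_{n-4}(X;\Z_2)=0$ makes $\Z_2^{r(n)}\to\Z_2$ onto.
\item The cokernel needs geometric input. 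Since $A\cap B$ and $B$ are (up to homotopy) sphere bundles with even Euler class, their mod-$2$ homology is that of the products and $p_*$ is onto. Hence $H_{n-3}(A\cap B;\Z_2)\cong\Z_2^{2r(n)}$ is spanned by lifts of the linking spheres $S_i$ together with the cylinders $P_i$.
\item The $S_i$-lifts map onto $H_{n-3}(B;\Z_2)\cong\Z_2^{r(n)}$.
\item The $P_i$ die in $B$, because $\gamma_i$ bounds a disk in $S_{gen}$. They are nonzero in $H_{n-3}(A;\Z_2)$ exactly for $i\in I^{no}$.
\item Therefore $\rank\,\Phi^{\Z_2}_{n-3}\geq r(n)$ when $n\equiv 3\bmod 4$, and $\geq r(n)+2$ when $n\equiv 1\bmod 4$. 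In both cases $\dim H_{n-3}(X;\Z_2)\leq 2n-1$, which forces $s=0$.
\end{itemize}
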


\begin{proof}
Throughout the proof we again use $\Z$ coefficients for homology, unless otherwise specified. We shall write $H_{k}(X,\Z)=\Z^{b_k}\oplus T_k$, with $T_k$ purely torsion. 

The space $X$ is a closed orientable $(2n-5)$-manifold. Note that orientability follows from Lemma \ref{Lem:StablyTrivial}. 
Hence, $H_{2n-5}(X)=\Z$. By Corollary \ref{cor:highly-connected}, $X$ is $(n-4)$-connected. All that remains is to compute $H_{k}(X)$ for $k \in \{n-2,n-3\}$. Again, by the universal coefficient theorem and Poincar\'e Duality, $T_{n-2}=T_{n-4}=0$. Thus, we need only compute $H_{n-3}(X)$. 

We now implement the strategy outlined, applying Mayer-Vietoris with $A$ a regular neighborhood of $X_{sing}$ and $B = X_{gen}$. Note that the homology of $A$ depends on the congruence class of $n \bmod 4$. Consequently, we treat these two cases separately. \medskip

\textbf{Case 1: $\bm{n \equiv 3 \,\textbf{mod}\, 4}.$} In this case, $A$ deformation retracts onto $X_{sing} \cong \bigsqcup_{i=1}^{r(n)}(\sphere^{1} \times \sphere^{n-3})$ by Corollary \ref{cor:orientability_singular}. We know $H_{*}(B)$ and $H_{*}(A \cap B)$ by Lemmas \ref{Lem:HomologyIntersection} and \ref{Lem:CohomologyGenericLocus}. Mayer-Vietoris then provides the following exact sequence:

\begin{center}
\begin{tikzcd}[scale=1.5, row sep=2.5em]
    |[label=below:{\substack{\cong \\ \\ (\Z_{2})^{r(n)}}}]| H_{n-3}(A \cap B) \arrow{r}{(i_*,j_*)} &
    |[label=below:{\substack{\cong \\ \\  \Z^{r(n)} \oplus \Z^{r(n)-1}}}]| H_{n-3}(A)\oplus H_{n-3}(B)\arrow{r}{k_*-l_*} &
    H_{n-3}(X) 
    \arrow[dll, rounded corners=8pt, to path={ 
        -- ([xshift=3ex]\tikztostart.east) 
        |- ([yshift=2ex]\tikztotarget.north) 
        -- (\tikztotarget) 
    }] \\
    |[label=below:{\substack{\cong \\ \\ (\Z_{2})^{r(n)}}}]| H_{n-4}(A \cap B) \arrow{r}{(i_*,j_*)} &
    |[label=below:{\substack{\cong \\ \\ 0 \oplus \Z_{2}}}]| H_{n-4}(A) \oplus H_{n-4}(B) \arrow{r}{k_*-l_*} &
    H_{n-4}(X)=0 
\end{tikzcd}
\end{center}

The above exact sequence immediately implies $b_{n-3}= 2r(n)-1$. Hence, $b_{n-2}=2r(n)-1$ by Poincar\'e duality. 
The crux of the proof is to show the torsion $T_{n-3}$ vanishes, where Poincar\'e duality and the universal coefficient theorem provide no assistance. Here, we examine the relevant short exact sequence:  
\[ 0 \longrightarrow \Z^{2r(n)-1} \longrightarrow H_{n-3}(X,\Z) \longrightarrow \im(\del_{n-3})\longrightarrow 0. \]
In particular, $\im(\del_{n-3})\cong \Z_{2}^{r(n)-1}$ by exactness. Let us write 
\begin{align}\label{MiddleHomologySplitting}
    H_{n-3}(X, \Z) \cong \Z^{2r(n)-1} \oplus T_{n-3}.
\end{align}
Note that  $\partial_{n-3}$ is injective on 
$T_{n-3}$. Hence, $T_{n-3} \cong  (\Z_{2})^{s}$ for $0 \leq s \leq r(n)-1$. We will show that $s=0$ to conclude that $T_{n-3}=0$. To this end, we compute the homology with $\Z_{2}$-coefficients and show that the rank of $H_{n-3}(X,\Z_{2})$ is $2r(n)-1$.
First, we recall how the rank of $H_{n-3}(X,\Z_{2})$ is related to the rank and the $2$-torsion of $H_{n-3}(X,\Z)$ via the universal coefficient theorem. Since $H_{n-4}(X)=0$, we have
\begin{equation}\label{eq:def_s}
    H_{n-3}(X,\Z_{2}) \cong H_{n-3}(X,\Z)\otimes \Z_{2} \cong \Z_{2}^{2r(n)-1+s} .
\end{equation}
On the other hand, we can independently compute $H_{n-3}(X,\Z_{2})$ using Mayer-Vietoris with $\Z_{2}$-coefficients. 
Coefficients in a field force all short exact sequences to split. Hence, 
\[
    H_{n-3}(X,\Z_{2})\cong\coker(\Phi_{n-3})\oplus \ker(\Phi_{n-4}) ,
\]
where $\Phi_i:H_{i}(A\cap B, \Z_2) \rightarrow H_i(A,\Z_2) \oplus H_i(B,\Z_2)$ is the homomorphism in the Mayer-Vietoris sequence with $\Z_2$-coefficients.
Note that $H_{n-3}(A,\Z_{2}) \cong \Z_{2}^{r(n)}$.

Now, for an orientable sphere bundle $\sphere^r \rightarrow E\stackrel{p}{\rightarrow} Y$, with Euler class $e(E) \in 2H^{r+1}(Y,\Z)$, 
the Gysin sequence implies that (non-canonically) $H_*(E,\Z_2) \cong H_*(Y \times \sphere^r,\Z_2)$. We shall use that there exists an isomorphism 
\[\phi=(\alpha,\beta):H_*(E,\Z_2) \rightarrow H_*(Y,\Z_2) \oplus H_{*-r} (Y,\Z_2) \cong H_*(Y \times \sphere^r, \Z_2)\]
such that $\alpha=p_*$. With such an isomorphism fixed, we have a canonical and geometrically well-behaved lift of classes in $H_*(Y,\Z_2)$ to $H_*(E,\Z_2)$. We can push this discussion a bit further. Given an orientation-preserving morphism $f:E_1 \rightarrow E_2$ of orientable $\sphere^r$-bundles with even Euler classes lifting a map $\overline{f}: Y_1\rightarrow Y_2$, we obtain the commutative diagram: 
\[
\begin{tikzcd}
H_*(Y_1 \times \sphere^{r},\Z_2) \arrow[r, "\phi_1^{-1}"] 
& 
H_{*}(E_1, \Z_2) \arrow[r, "f_*"]\arrow[d,"(p_1)_*"] 
& 
H_*(E_2,\Z_2) \arrow[r, "\phi_2"] \arrow[d, "(p_2)_*"]
& 
H_*(Y_2 \times \sphere^r, \Z_2) \\
H_*(Y_1,\Z_2)\arrow[u, "i_*"]\arrow[r, "\id"] & 
H_*(Y_1,\Z_2)\arrow[r, "\overline{f}_*"] 
& H_*(Y_2,\Z_2) \arrow[r, "\id"] 
& H_*(Y_2,\Z_2) \arrow[u, "i_*"]
\end{tikzcd}
\]
We now apply this reasoning to $E_1= A \cap B$ and $E_2 = B$ or $E_2 =A$.

In particular, $H_{n-3}(A\cap B, \Z_{2}) \cong \Z_{2}^{r(n)}\oplus \Z_{2}^{r(n)}$, where the first factor corresponds to the linking spheres $S_{i} \subset p(A \cap B)$ normal to $C_{i}$ and the second factor is generated by the spherical cylinders $P_{i}= p^{-1}(\gamma_i)\cong \sphere^1 \times \sphere^{n-4}$, where $\gamma_{i} \subset \sphere^{n-1}$ is a loop parallel to $C_i$ in a punctured neighborhood of $C_{i}$. Applying the same observation from above, $H_{n-3}(B,\Z_{2})\cong \Z_{2}^{r(n)}$ has generators corresponding to the linking spheres $S_{i}$. With this in mind, we can understand the cokernel of the map
$\Phi_{n-3}: \Z_{2}^{r(n)}\oplus \Z_{2}^{r(n)}\rightarrow \Z_{2}^{r(n)} \oplus \Z_{2}^{r(n)} $. Let us write
$\Phi_{n-3}=(\Phi_{n-3}^A, \Phi_{n-3}^B)$. By the correspondence with classes $[S_i]$, we conclude $\Phi_{n-3}^B$ surjects and $\Phi_{n-3}^A([S_i])=0$. 
By Corollary \ref{cor:spherical_inclusion}, the spherical cylinders $P_{i}$ are null-homologous both in $A$ and $B$. 
Hence, $\coker(\Phi_{n-3}) \cong \Z_{2}^{r(n)}$. 

Computing $\ker\big(\Phi_{n-4})$ is similar but easier. The Gysin sequence with $\Z_2$-coefficients implies that $H_{n-4}(A\cap B, \Z_{2})\cong \Z_{2}^{r(n)}$, generated by the $\sphere^{n-4}$-fibers of each component and $H_{n-4}(B, \Z_{2})\cong \Z_{2}$ also generated by a $\sphere^{n-4}$-fiber. One directly finds $H_{n-4}(A, \Z_{2})=0$ using $A \simeq X_{sing}$.
It follows that $\ker(\Phi_{n-4})=\Z_{2}^{r(n)-1}$.
We conclude that $H_{n-3}(X,\Z_{2}) \cong \Z_{2}^{2r(n)-1}$, which means $s=0$ by equation \eqref{eq:def_s}. 
Now, using $2r(n)-1=2n-1$ from Lemma \ref{lem:number_circles}, this case is complete. 
\medskip

\textbf{Case 2: $\bm{n\equiv 1}$ mod $4$.} The strategy in this case is similar to the previous case, however, a crucial difference appears now due to the fact that $S_{sing}^{no}\neq \emptyset$ by Corollary \ref{cor:orientability_singular}. 

We now know the homology of $A$ by Lemma \ref{lem:homology_singular}, whereas $H_{*}(B)$ and $H_{*}(A \cap B)$ remain unchanged from the previous case. The Mayer-Vietoris long exact sequence thus becomes

 \begin{center}
\begin{tikzcd}[scale=1.5, row sep=2.5em]
    |[label=below:{\substack{\cong \\ \\ (\Z_{2})^{r(n)}}}]| H_{n-3}(A \cap B) \arrow{r}{(i_*,j_*)} &
    |[label=below:{\substack{\cong \\ \\  \Z_{2}^{2} \oplus \Z^{r(n)-2} \oplus \Z^{r(n)-1}}}]| H_{n-3}(A)\oplus H_{n-3}(B)\arrow{r}{k_*-l_*} &
    H_{n-3}(X) 
    \arrow[dll, rounded corners=8pt, to path={ 
        -- ([xshift=3ex]\tikztostart.east) 
        |- ([yshift=2ex]\tikztotarget.north) 
        -- (\tikztotarget) 
    }] \\
    |[label=below:{\substack{\cong \\ \\ (\Z_{2})^{r(n)}}}]| H_{n-4}(A \cap B) \arrow{r}{(i_*,j_*)} &
    |[label=below:{\substack{\cong \\ \\ 0 \oplus \Z_{2}}}]| H_{n-4}(A) \oplus H_{n-4}(B) \arrow{r}{k_*-l_*} &
    H_{n-4}(X)=0 .
\end{tikzcd}
\end{center}

Using the fact that the alternate sum of ranks in an exact sequence must be zero, we obtain $b_{n-2} = b_{n-3}= 2r(n)-3=2n-1$ with Lemma \ref{lem:number_circles}. In particular, $b_{n-2}$ is the same as in Case 1. Again, the difficult part is the computation of the torsion subgroup $T_{n-3}$.

Now, $H_{n-3}(A)$ is no longer torsion-free. However, we can reason similarly. Let $H \leq \Z_2^2$ be the image of the torsion subgroup $\Z_2^2$ under $k_*-l_*$. By rank considerations, $k_*-l_*$ maps infinite order elements to infinite order elements in $H_{n-3}$. 
Thus, we can extract from Mayer-Vietoris the fact that the connecting homomorphism $\partial_{n-3}$ injects $T_{n-3}/H $ into $H_{n-4}(A \cap B)$. 
Hence, $T_{n-3} \cong \Z_2^s \oplus \Z_4^t$ for some integers $s,t \geq 0$. 
As before, we will prove that $s=t=0$ by showing that $H_{n-3}(X,\Z_{2})$ has rank $2n-1$. Now, let us once again write 
\[
    H_{n-3}(X,\Z_{2})=\coker(\Phi_{n-3})\oplus \ker(\Phi_{n-4}),
\]
where $\Phi_i:H_{i}(A\cap B, \Z_2) \rightarrow H_i(A,\Z_2) \oplus H_i(B,\Z_2)$ is the homomorphism in Mayer-Vietoris. 

Since $A\cap B$ and $B$ are unchanged from the previous case, we use the same generators $([S_i],[P_i])_{i=1}^{r(n)}$ for $H_{n-3}(A\cap B, \Z_2)$ and $([S_i])_{i=1}^{r(n)}$ for $H_{n-3}(B, \Z_2)$. 
The Gysin sequence shows the $\Z_2$-homology of $A$ remains the same as in Case 1, despite the change in homotopy type. 
Thus, once again $\Phi_{n-3}$ obtains the form $\Phi_{n-3}: \Z_{2}^{r(n)}\oplus \Z_{2}^{r(n)} \rightarrow \Z_{2}^{r(n)} \oplus \Z_{2}^{r(n)} $. 
Let us write $\Phi_{n-3}= (\Phi_{n-3}^A, \Phi_{n-3}^B)$ again. 

By the same reasoning as in the previous case, $\Phi_{n-3}^B$ surjects. 
The key difference is the behavior of the generators $[P_i]$. In particular, Corollary \ref{cor:spherical_inclusion} implies that 
$\Phi_{n-3}^A$ has image $\Z_2^{2}$ in this case, corresponding to the two spherical cylinders $P_i$ built from $C_i \subset S_{sing}^{no}$. 
Thus, $\coker(\Phi_{n-3})\cong \Z_2^{r(n)-2}$. 

The analysis of $\ker\big(\Phi_{n-4}: H_{n-4}(A\cap B, \Z_2) \rightarrow H_{n-4}(A, \Z_{2}) \oplus H_{n-4}(B, \Z_{2})\big)$ remains unchanged from the previous case, so $\ker(\Phi_{n-4})=\Z_{2}^{r(n)-1}$.
Combining everything, we conclude that $H_{n-3}(X,\Z_{2})\cong \Z_{2}^{2r(n)-3}=\Z_{2}^{2n-1}$. Thus, $s=t=0$ and $T_{n-3}=0$. 
\end{proof}

\section{Classifying the Fiber}\label{Sec:FiberTopology}

Recall that $X = \hat{\mathcal{B}}(\mathcal{P})$ is highly connected closed odd-dimensional manifold. In this section, we use Wall's work on surgery theory to determine the homeomorphism type of $X$, and the diffeomorphism type up to connected sum with an exotic sphere, for $n \geq 6$. 

\subsection{Wall's Classification}\label{Sec:WallClassification}

We now recall Wall's classification of highly connected odd-dimensional manifolds \cite{Wal67}. 

Wall considers \emph{almost-closed} manifolds, namely smooth compact manifolds $M$ with boundary $\partial M$ a homotopy sphere. He achieves an honest classification of almost-closed $(m-1)$-connected $(2m+1)$-manifolds: up to diffeomorphism. Now, we are presently interested instead in closed manifolds under these hypotheses, for which Wall obtains only a classification up to connected sum with homotopy $n$-spheres. We shall call two closed $n$-manifolds $M_1, M_2$ \emph{almost diffeomorphic} when there exists a homotopy $n$-sphere $\Sigma$ such that $M_1 \# \Sigma$ is diffeomorphic to $M_2$. We recall that by deep work of Kervaire-Milnor \cite{KM63}, the set $\Theta_n$ of $h$-cobordism classes of closed smooth manifolds homotopy equivalent to $\sphere^n$ is a \emph{finite group}, for $n\geq 5$, under the operation of connected sum. 
By Smale's $h$-cobordism theorem, for $n \geq 5$ we can interpret $\Theta_n$ as the group of smooth structures on the topological manifold $\sphere^n$ \cite{Sma62a}.

Now, let $m \geq 3$. We loosely recall Wall's invariants for the classification, endeavoring to use his original notation here. We will state a more precise version of his theorem under the simplified conditions at hand later in this subsection. 

Denote by $G$ the abelian group $G := H_{m}(M,\Z)$, which by Poincar\'e duality determines the whole homology $H_{*}(M, \Z)$ due to the connectivity hypothesis. 
Also, denote by $G^*$ the torsion subgroup of $H_{m}(M,\Z)$. Then \cite[Theorem 7]{Wal67} asserts that almost diffeomorphism classes of closed $(m-1)$-connected $(2m+1)$-manifolds $M$ are in one-to-one correspondence with (isomorphism classes of) the following system of invariants: 
\begin{itemize}
    \item \textbf{Homological Invariants}
        \begin{itemize}[label=$\circ$]
            \item A finitely generated abelian group $G = H_{m}(M, \Z)$. 
            \item A bilinear form $b:G^*\times G^* \rightarrow \Q/\Z$ that is $(-1)^{m+1}$-symmetric. 
            \item If $m$ is odd, a quadratic form $q: G^* \rightarrow \Q/2\Z$ with associated bilinear form $2b$. 
        \end{itemize}
    \item \textbf{Tangential Invariants}

    \begin{itemize}[label=$\circ$]
        \item A homomorphism $\hat{\alpha}: H_{m}(M,\Z) \rightarrow \pi_{m-1}(\mathbf{SO})$, equivalently $\hat{\alpha} \in H^m(M, \pi_{m-1}(\mathbf{SO}))$. 
        \item When $m \not\in \{0,1\} \bmod 8$, a cohomology class $\hat{\beta} \in H^{m+1}(M, \pi_{m}(\mathbf{SO}))$. 
        \item When $m $ is even and $m\notin \{4,8\}$, a cohomology class $\hat{\phi} \in H^{m+1}(M, \Z_2)$.
        \item \emph{Exceptional Tangential Invariants}  -- defined only if $\alpha(G)\neq 0$ mod 2.
    \end{itemize}
    \end{itemize}
\begin{remark}
The exceptional invariants $\omega \in \Z_2$ and $\omega(f) \in \Z_8$, along with the conditions under which they are defined, are rather technical. Both invariants shall not appear in our case of interest. Thus, we believe it best to relieve the reader of any concern regarding them.\footnote{Should the intrepid reader nonetheless desire these invariants, they can be found across \cite[Lemmas 28, 29]{Wal67} and \cite[Lemma 23]{Wal65}, with a careful eye also on the `exceptional case.'} 
\end{remark}
    
Here, two systems of invariants are isomorphic if there is an isomorphism $\theta:G_1\rightarrow G_2$ respecting the remaining invariants. 
Moreover, the system of invariants is \emph{additive} over connected sums. 
This classification is the culmination of a series of papers \cite{Wal63a, Wal63b, Wal63c, Wal65, Wal66, Wal67}. 

In this section, we recall sufficient details regarding the relevant invariants to allow their study in the present case of interest, namely for $M = \hat{\mathcal{B}}(\mathcal{P})$. 

\begin{remark}[No torsion invariants]
Since $\hat{\mathcal{B}}(\mathcal{P})$ has torsion-free homology over $\Z$, the invariants $b$ and $q$ vanish. Consequently, we do not discuss these invariants any further.
\end{remark}

We now describe the tangential invariants. The invariant $\hat{\alpha}: G \rightarrow \pi_{m-1}(\mathbf{SO})$ is the stable version of an invariant $\alpha: \pi_m(M) \rightarrow \pi_{m-1}(\SO(m))$ that we now define. 
\begin{definition}[The invariant $\alpha$]
Let $M$ be a closed $(m-1)$-connected $(2m+1)$-manifold. 
Each homotopy class $x \in \pi_m(M)$ can be represented by a smooth embedding $\iota_x: \sphere^m \rightarrow M$, unique up to smooth isotopy (see Theorem \ref{thm:HomotopicToEmbedding}). 

Define $\alpha: \pi_m(M) \rightarrow \pi_{m-1}(\SO(m+1))$ by associating to $x$ the clutching function of the normal bundle $[\N{\iota_x}]$. 
\end{definition}

By \cite[Theorem 1]{Wal61}, the map $\alpha$ is a homomorphism. 
Let $\mathbf{S}_*:\pi_{m-1}(\SO(m+1))\rightarrow \pi_{m-1}(\mathbf{SO})$ be the induced map of the stabilization homomorphism $\mathbf{S}: \SO(m+1)\rightarrow \mathbf{SO}$. 
Using that $\pi_{m-1}(\SO(m+1))$ is in the stable range, one easily checks $ \alpha = 0$ exactly when $\mathbf{S}_* \circ \alpha=0$. See Proposition \ref{Prop:StablyTrivaltoTrivial} for the details. 
Now, in the present case of $M=\hat{\mathcal{B}}(\mathcal{P})$, we have $\mathbf{S}_* \circ \alpha =0$ due to Lemma \ref{Lem:StablyTrivial}. This vanishing simplifies the discussion enormously: many of Wall's invariants also vanish as a consequence. 
In particular, the invariants $\omega$ and $\omega(f)$ are defined only if $\alpha \neq 0$ (see \cite[page 277]{Wal67}, namely the remark on the `exceptional case'). Thus, these invariants are not defined for $\hat{\mathcal{B}}(\mathcal{P})$, justifying our earlier remark.

\subsubsection{Defining $S\beta$} 

What remains is for us to introduce the tangential invariants $( \hat{\beta},\hat{\phi})$, and their purpose, along with the relation between $\alpha$ and $\hat{\alpha}$. We explain the former point only in the case that $\alpha=0$. Furthermore, we show the pair $\hat{\beta},\hat{\phi}$ are equivalent data to another auxiliary invariant $S\beta$. Along the way, we describe the relation between $\alpha$ and $\hat{\alpha}$.

We would like to introduce the invariant $\beta$ from which $\hat{\beta}$ and $S\beta$ are built. 
To do so, we first recall a transversality result of Haefliger. 

\begin{theorem}[Homotopic to Embedding {\cite{Hae62}}]\label{thm:HomotopicToEmbedding}
Let $M^m$ be a smooth $m$-manifold 
and $f: \sphere^{s}\rightarrow M^{m}$ be continuous. Then $f$ is homotopic to a smooth embedding if $2m\geq 3s+3$ and $M$ is $(2s-m+1)$-connected. Moreover, if $2m>3s+3$ and $M$ is $(2s-m+2)$-connected, then any two such embeddings are smoothly isotopic.
\end{theorem}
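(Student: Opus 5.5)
This is Haefliger's embedding theorem, which the paper quotes from \cite{Hae62}. The plan is therefore to reduce to his metastable-range results rather than reprove them from scratch. First replace $f$ by a smooth map; by Whitney approximation this changes nothing up to homotopy. Next, general position: since $2m \geq 3s+3 > 2s$, the map can be perturbed to an immersion whose only singularities are transverse double points. When $m \geq 2s+1$ it is already an embedding after perturbation, so the substantive range is $\tfrac{3}{2}(s+1) \leq m \leq 2s$.

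The double-point set of a generic map is a manifold of dimension $2s-m$. It sits in $\sphere^s \times \sphere^s$ minus the diagonal, with the free involution swapping factors. Haefliger's approach is to study the equivariant map $\sphere^s\times\sphere^s\setminus\Delta \to M\times M\setminus\Delta_M$ induced by $f$. In the metastable range, $f$ is homotopic to an embedding exactly when this equivariant map can be deformed, equivariantly, to one missing the diagonal. The obstruction theory for that deformation is run through the cells of $(\sphere^s\times\sphere^s)/\Z_2$ relative to the diagonal. The relevant obstructions live in equivariant cohomology with coefficients in homotopy groups of the pair $(M\times M, M\times M\setminus\Delta_M)$.

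Locally, near the diagonal, this pair looks like $\R^m$ versus $\R^m \setminus 0$, which is $(m-1)$-connected. Globally, the hypothesis that $M$ is $(2s-m+1)$-connected is exactly what makes the homotopy groups of the pair vanish in every dimension up to $2s$. That range is the dimension of the complex carrying the obstructions, so all of them vanish. Concretely, the plan is to run the Whitney-trick version. Pair the double points, and use the connectivity of $M$ to find Whitney disks of dimension $2s-m+1$. The inequality $2m \geq 3s+3$ is what lets these disks be embedded disjointly from $f(\sphere^s)$ by general position. Then cancel the double points.

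For the isotopy statement, apply the same argument one dimension up. Take two embeddings $f_0,f_1$ and the homotopy $F:\sphere^s\times I\to M\times I$ between them. Run the argument with $s$ replaced by $s+1$, relative to the boundary. This turns $F$ into a concordance, i.e.\ a level-preserving embedding on the ends. The stronger inequality $2m>3s+3$ and the connectivity $2s-m+2$ are precisely the hypotheses obtained from the first part with $(s,m)$ replaced by $(s+1,m+1)$. Finally, invoke Hudson's theorem that concordance implies isotopy in codimension at least three; here the codimension is $m-s\geq 3$ since $m>\tfrac32 s+1$.

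The main obstacle is the global obstruction step. This is the part where connectivity enters: showing the equivariant obstructions vanish, or equivalently making the Whitney-trick cancellation work when $M$ is not Euclidean. In practice I would simply cite \cite{Hae62} for this step. For the paper's application only the dimension check matters. There we have $s=m_W$ and $\dim M=2m_W+1$, with $m_W$ the middle-dimension parameter. The inequality $2(2m_W+1)\geq 3m_W+3$ holds for $m_W\geq1$, and the connectivity requirement is $(m_W-1)$-connected, which matches the hypothesis on $M$.
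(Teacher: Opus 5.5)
The paper gives no proof of this statement; it simply quotes Haefliger \cite{Hae62}. So deferring the substantive step to that reference, as you ultimately do, is exactly the paper's approach. The trouble is the outline you wrap around the citation: several of its steps are false and would fail if offered as a proof.

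First, general position turns a map $\sphere^s\to M^m$ into an immersion only when $m\geq 2s$. In the range $\tfrac32(s+1)\le m<2s$ that you call substantive, a generic map has a singular set of dimension $2s-m-1\geq 0$, which need not be empty. This is exactly the range of the paper's main use, $\sphere^{m_W+1}\to M^{2m_W+1}$. So you cannot reduce to a self-transverse immersion there. Moreover, for $m<2s$ the double-point locus is a $(2s-m)$-manifold, not a set of points to be paired.

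Second, the claim that $(2s-m+1)$-connectivity of $M$ makes $\pi_j(M\times M, M\times M\setminus\Delta_M)$ vanish for $j\le 2s$ is wrong. The normal disk to the diagonal gives $\pi_m\cong H_m\cong H_0(M)\cong \Z$, by relative Hurewicz and the Thom isomorphism ($M$ is simply connected here). Already for $M=\R^m$ these groups are $\pi_{j-1}(\sphere^{m-1})$. Since $m\le 2s$, the degree-$m$ obstruction has nonzero coefficients. It is essentially the self-intersection class of $f$, and it is removed by local modifications of $f$, not because coefficient groups vanish. For $m=2s$, you add local kinks, which is allowed because only a homotopy is required, and then apply the Whitney trick.

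Third, your dimension count for the Whitney disks is off by one. The filling has dimension $2s-m+2$, bounded by a $(2s-m+1)$-cycle on $f(\sphere^s)$ (two arcs when $m=2s$). This is exactly where the hypotheses enter: $(2s-m+1)$-connectivity of $M$ fills that cycle, and $(2s-m+2)+s<m$ is precisely $2m\geq 3s+3$. With your count, $2m\geq 3s+2$ would already suffice, which should have been a warning sign.

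Your reduction of the isotopy clause is fine: run the relative statement with $(s+1,m+1)$ to get a concordance, then use concordance implies isotopy in codimension at least three.

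Finally, your closing dimension check treats the wrong case. For $s=m_W$ in $M^{2m_W+1}$, mere connectedness of $M$ suffices, not $(m_W-1)$-connectivity. The paper's essential use is embedded representatives for $\beta$ and $S\beta$, i.e.\ $s=m_W+1$. There $2(2m_W+1)\geq 3(m_W+2)$ requires $m_W\geq 4$, and the isotopy clause requires $m_W\geq 5$.
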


The invariant $\beta$ is defined just as $\alpha$, but in a dimension higher. 

\begin{definition}[The invariant $\beta$]\label{Defn:Beta}
Let $M^{2m+1}$ be an $(m-1)$-connected smooth manifold. 
Define a function $\beta: \pi_{m+1}(M) \rightarrow \pi_m(\SO(m))$ as follows. Given $[f] \in \pi_{m+1}(M)$, select a smoothly embedded representative $f^{s}$ with normal bundle $\N f^s\rightarrow \sphere^{m+1}$, whose isomorphism type is independent of $f^s$ by Theorem \ref{thm:HomotopicToEmbedding}. Taking the clutching function then gives a well-defined map $\beta([f]) =[\N f^s]$. 
\end{definition}

We can now define $S\beta$ by altering $\beta$ as follows. In particular, the domain of $S\beta$ is different. 

\begin{definition}[The invariant $S\beta$]\label{Defn:SBeta}
Let $S: \SO(m) \rightarrow \SO(m+1)$ denote the reducible inclusion. The invariant $S\beta$ is the following homomorphism 
\[ S\beta: H_{m+1}(M) \rightarrow S_*(\pi_{m}(\SO(m)))\; \; \]
\[ S \beta := S_* \circ \beta \circ h_{m+1}^{-1},\]
where $h_{m+1}: \pi_{m+1}(M) \rightarrow H_{m+1}(M)$ is the Hurewicz map, which is surjective.
\end{definition}

We shall now explain why $S\beta$ is well-defined. We first state a proposition (obtained from \cite{Wal67}), which we shall use here and later. 
\begin{proposition}\label{Prop:HurewiczUpOne}
Let $M^{2m+1}$ be an $(m-1)$-connected manifold for $m\geq 2$. Then 
\[ \pi_{m+1}(M) \cong H_{m+1}(M,\Z) \oplus H_{m}(M,\Z)\otimes \Z_2.\]
\end{proposition}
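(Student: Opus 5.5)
The plan is to compare $\pi_{m+1}(M)$ with $H_{m+1}(M,\Z)$ through a minimal cell structure, and to identify the discrepancy with $\pi_{m+1}$ of a wedge of $m$-spheres. The starting point is that $M^{2m+1}$ is $(m-1)$-connected and closed. Since $m\geq 2$, $M$ is simply connected, so we may pass to a minimal CW structure using homology decompositions (Hatcher, Section 4.H, or Wall's handle decomposition). By Poincar\'e duality and the universal coefficient theorem, the only nonzero reduced homology is in degrees $m$, $m+1$ and $2m+1$. Moreover $H_{m+1}$ is free of the same rank as the free part of $H_m$.

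\textbf{Cell structure.} Write $H_m = \Z^k \oplus T$, so that $H_{m+1}\cong \Z^k$. I will use a CW model with $k+t$ cells in dimension $m$, $k'$ cells in dimension $m+1$ (the $k$ free generators of $H_{m+1}$ plus one cell per cyclic summand of $T$), and then cells of dimension $\geq m+2$. Only the $(m+2)$-skeleton affects $\pi_{m+1}$.

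\textbf{Homotopy of the skeleton.} Set $W = \bigvee \sphere^m$. Since $m\geq 2$, the group $\pi_{m+1}(W) \cong \bigoplus \Z_2$ is generated by the suspended Hopf maps $\eta$ on each wedge summand. One caveat: for $m=2$ this is $\pi_3(\sphere^2\vee\cdots)$, which contains $\Z$'s and Whitehead products. The statement assumes $m\geq 2$, but in the paper's application $m\geq 3$. I will first carry out the argument for $m\geq 3$ and then check the $m=2$ case separately; there one actually uses $H_3$ rather than this computation.

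\textbf{Exact sequence.} Consider the long exact sequence of the pair $(M^{(m+2)}, W)$ together with the relative Hurewicz theorem. Since $W\to M$ is $m$-connected, I expect the sequence
\[ \pi_{m+2}(M,W)\to \pi_{m+1}(W)\to \pi_{m+1}(M)\to \pi_{m+1}(M,W)\to \pi_m(W)\to \pi_m(M)\to 0. \]
The term $\pi_{m+1}(M,W)$ should be free, generated by the $(m+1)$-cells; its image in $\pi_m(W)=\Z^{k+t}$ is given by the attaching degrees. The kernel of that map is $\Z^k\cong H_{m+1}(M)$, which corresponds to the free summand in the statement.

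\textbf{Cokernel term.} The cokernel of $\pi_{m+2}(M,W)\to \pi_{m+1}(W)=\Z_2^{k+t}$ consists of $\Z_2$ per $m$-sphere, modulo the contributions of the $(m+1)$-cells. An $(m+1)$-cell attached with degree $d$ along one sphere kills $\eta$ on that summand exactly when $d$ is odd. Indeed, it contributes the composite $d\cdot\eta$ via $\eta\circ\Sigma(\text{degree } d)$, which equals $d\eta$ in the stable range. The $(m+2)$-cells also need to be accounted for, but their boundary in $\pi_{m+1}(W)$ lands in the image of $\pi_{m+1}(M^{(m+1)},W)$ composed appropriately. The result is that the cokernel is $\Z_2^{k+t}/(\text{odd torsion summands}) \cong H_m\otimes \Z_2$.

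\textbf{Splitting and main obstacle.} The sequence $0\to H_m\otimes\Z_2\to \pi_{m+1}(M)\to H_{m+1}\to 0$ splits because $H_{m+1}$ is free. The hard step is verifying that the $(m+2)$-cells impose no further relations, i.e.\ that the map $\pi_{m+2}(M,W)\to\pi_{m+1}(W)$ is not larger than predicted. I would handle this by citing Wall's computation in \cite{Wal67} (or Hatcher's exact sequence of J.H.C. Whitehead, $H_{m+2}\to\Gamma_{m+1}\to\pi_{m+1}\to H_{m+1}\to 0$ with $\Gamma_{m+1}=H_m\otimes\Z_2$). It would then remain to show that the first map vanishes. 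I would argue this using $H_{m+2}(M)=0$ for $m\geq 3$ (since $m+2<2m$), together with injectivity of $\Gamma$ in the Whitehead sequence; for $m=2$ the relevant group is $H_4=0$ as well, since $4<5$.
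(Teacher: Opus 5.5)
For $m\geq 3$ your argument is sound. Your treatment of $m=2$ is wrong, and it cannot be repaired, because the proposition is false for $m=2$.

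In Whitehead's sequence $H_{m+2}\to\Gamma_{m+1}\to\pi_{m+1}\to H_{m+1}\to 0$, the identification $\Gamma_{m+1}(M)\cong H_m(M)\otimes\Z_2$ requires $m\geq 3$. For a simply connected space one has instead $\Gamma_3=\Gamma(H_2)$, Whitehead's quadratic functor, with $\Gamma(\Z)\cong\Z$. This is exactly the $\Z$'s and Whitehead products in $\pi_3(\bigvee\sphere^2)$ that you flagged in your caveat. So $H_4=0$ only yields $0\to\Gamma(H_2)\to\pi_3(M)\to H_3\to 0$, not the claimed formula.

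Concretely, $M=\sphere^2\times\sphere^3$ is a closed simply connected $5$-manifold with $\pi_3(M)\cong\Z\oplus\Z$. By contrast, $H_3(M)\oplus H_2(M)\otimes\Z_2\cong\Z\oplus\Z_2$. The paper's own proof breaks at the same point, since it asserts $\pi_{m+1}(\sphere^m)\cong\Z_2$. However, the proposition is only ever invoked with $m\geq 3$. The right move is therefore to restrict the statement to $m\geq 3$, not to claim that $m=2$ "checks out."

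For $m\geq 3$, your route differs from the paper's and is more complete. The paper writes down $0\to H_m\to\pi_{m+1}\to H_{m+1}\to 0$ with first map $x\mapsto h_m^{-1}(x)\circ\xi$. It checks only that these composites lie in $\ker h_{m+1}$, then splits using freeness of $H_{m+1}$. It never shows that $\ker h_{m+1}$ is exactly the image of composition with $\xi$. Also, its first map is not injective in general, since it factors through $H_m\otimes\Z_2$. Your minimal cell structure with the exact sequence of $(M,W)$ supplies exactly that missing identification. So does citing Whitehead's sequence directly with $\Gamma_{m+1}\cong H_m\otimes\Z_2$ and $H_{m+2}(M)\cong H^{m-1}(M)=0$.

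One correction to your framing of the \emph{main obstacle}: the minimal structure has no $(m+2)$-cells at all. This is because $H_{m+2}=0$, $H_{m+1}$ is free, and the top cell has dimension $2m+1>m+2$. What actually needs justification is that $\pi_{m+2}(M^{(m+1)},W)$ is generated by the $\eta$-composites of the characteristic maps of the $(m+1)$-cells. Then the image of $\partial$ in $\pi_{m+1}(W)\cong\Z_2^{k+t}$ is the mod $2$ reduction of the cellular boundary. This follows from Blakers--Massey, which gives $\pi_{m+2}(M^{(m+1)},W)\cong\pi_{m+2}(\bigvee\sphere^{m+1})$ for $m+2\leq 2m-1$. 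That is again exactly where the hypothesis $m\geq 3$ enters.
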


\begin{proof}
Consider the short exact sequence:
\[ 0 \longrightarrow H_{m}(M, \Z) \xrightarrow{\xi\circ h_{m}^{-1}} \pi_{m+1}(M) \xrightarrow{h_{m+1}} H_{m+1}(M,\Z) \longrightarrow 0,  \]
where $\xi \in \pi_{m+1}(\sphere^{m})\cong \Z_2$ is a generator and $h_i:\pi_i(M)\rightarrow H_i(M,\Z)$ is the Hurewicz homomorphism. 
Note that $h_m$ is an isomorphism and $h_{m+1}$ is surjective by Hurewicz's theorem. Exactness follows since any suspended element $f \circ \xi \in \pi_{m+1}(M)$, for $f \in \pi_{m}(M)$, is in the kernel of $h_m$. 
By Poincar\'e duality and the universal coefficient theorem, $H_{m+1}(M,\Z)$ is torsion-free and thus the above sequence splits. 
\end{proof}

Now, we verify the definition of $S\beta$. 

\begin{proposition}
The map $S\beta$ is well-defined and is a homomorphism. 
\end{proposition}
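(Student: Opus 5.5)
The aim is to show that $S_*\circ\beta$ vanishes on the kernel of $h_{m+1}$ and is additive. Then $S\beta=S_*\circ\beta\circ h_{m+1}^{-1}$ is independent of the choice of preimage and is a homomorphism.

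\emph{Step 1: $\beta$ is defined.} I first note that $\beta$ itself is well-defined on $\pi_{m+1}(M)$. Take $s=m+1$ and ambient dimension $2m+1$ in Theorem \ref{thm:HomotopicToEmbedding}. Existence requires $2(2m+1)\geq 3(m+1)+3$, i.e.\ $m\geq 4$, together with $M$ being $2$-connected. Uniqueness requires the strict inequality and $3$-connectivity. These hold for $m$ large. For small $m$, and in general, I would instead appeal to Wall's own treatment \cite[Theorem 1]{Wal61}, \cite{Wal67}, as the paper does for $\alpha$: Wall shows that every class in $\pi_{m+1}(M)$ is represented by an embedding whose normal bundle is determined up to the stated ambiguity. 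I would cite that $\beta$ is well-defined there and is a homomorphism into $\pi_m(\SO(m))$. Additivity comes from representing $x+y$ by an embedded connected sum of disjoint embedded representatives of $x$ and $y$; the normal bundle of the connected sum is then the clutching sum.

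\emph{Step 2: reduce to the kernel.} Once $S_*\circ\beta$ is a homomorphism, it suffices to check that it kills $\ker h_{m+1}$. By Proposition \ref{Prop:HurewiczUpOne}, this kernel is the image of $H_m(M)\cong\pi_m(M)$ under composition with the generator $\xi\in\pi_{m+1}(\sphere^m)$. So fix $f\in\pi_m(M)$, represented by an embedding $\iota_f:\sphere^m\hookrightarrow M$ with normal bundle $\N\iota_f$, a rank $m+1$ bundle clutched by $\alpha(f)$. I will represent $f\circ\xi$ inside a tubular neighbourhood $\sphere^m\times\mathbb{D}^{m+1}$, twisted by $\alpha(f)$, and compute its normal bundle there.

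\emph{Step 3: the local computation, which is the main obstacle.} The class $\xi$ is realized by an embedding $\sphere^{m+1}\hookrightarrow \sphere^m\times\R^{m+1}$ lying over the suspended Hopf-type map. The standard model I would try first is the boundary of a small tube $\sphere^{m+1}\subset\R^{m+2}$ pushed into the normal disk bundle via the framing. Its normal bundle in $M$ is then computed from the framing data, and it differs from the trivial bundle only by a term built from $\alpha(f)$ composed with $\xi$. After the stabilization $S_*$, the resulting element of $\pi_m(\SO(m+1))$ should be $J$-related to $\mathbf{S}_*\alpha(f)\circ\eta$. I expect this to vanish in the situation of interest, where $\mathbf{S}_*\alpha=0$, so $S_*\beta(f\circ\xi)=0$. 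In general, the claim should be Wall's lemma that $S\beta$ factors through homology \cite{Wal67}, and I would lean on that citation rather than redoing this delicate clutching computation.
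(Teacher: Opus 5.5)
Your Step 2 (reducing to $\ker h_{m+1}=\pi_m(M)\circ\xi$ via Proposition \ref{Prop:HurewiczUpOne}) matches the paper, and your dimension check for Haefliger's theorem is fine. The additivity claim in Step 1, however, is wrong.

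You assert that $\beta$ itself is a homomorphism, by representing $x+y$ as a connected sum of \emph{disjoint} embedded representatives of $x$ and $y$. Such representatives need not exist. Two $(m+1)$-spheres in general position in $M^{2m+1}$ meet along a closed $1$-manifold, since $2(m+1)-(2m+1)=1$. This framed intersection, which is what Wall's pairing $\mu(x,y)\in\pi_{m+1}(\sphere^m)\cong\Z_2$ records, cannot be removed in general. For instance, take $\sphere^m\times\sphere^{m+1}$ with $m\geq 5$, and let $j$ be the inclusion of the factor $\sphere^m$. Then no embedded representative of $j\circ\xi$ is disjoint from an embedded representative of $[\{p\}\times\sphere^{m+1}]$. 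Indeed, by Haefliger's uniqueness the latter is ambiently isotopic to $\{p\}\times\sphere^{m+1}$, whose complement retracts onto $\sphere^{m+1}$.

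Accordingly, $\beta$ is not additive. Wall's formula \cite[Theorem 1]{Wal63a} reads $\beta(x+y)=\beta(x)+\beta(y)+\partial\mu(x,y)$, where $\partial\colon\pi_{m+1}(\sphere^m)\to\pi_m(\SO(m))$ is the connecting map of $\SO(m)\to\SO(m+1)\to\sphere^m$. This $\partial$ is nonzero in general: for $m\equiv 0\bmod 8$, Table \ref{Table:StableHomotopyGroups} and exactness force $S_*\colon(\Z_2)^3\to(\Z_2)^2$ to be onto, so $\mathrm{im}\,\partial=\ker S_*\cong\Z_2$.

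The missing idea is that additivity holds only \emph{after} stabilizing. Since $S_*\circ\partial=0$ by exactness, $S_*\circ\beta$ is a homomorphism even though $\beta$ is not. This is exactly why the invariant is $S\beta$ rather than $\beta$. In your argument $S_*$ plays no role, and the statement you propose to cite is false in general.

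Step 3 is only a heuristic, and the mechanism is simpler than a $J$-homomorphism computation. What kills $\pi_m(M)\circ\xi$ is $\alpha(g)=0$, which holds in the setting at hand by Proposition \ref{Prop:StablyTrivaltoTrivial}. The paper invokes Wall's description $\beta(g\circ\xi)=F(\alpha(g),\xi)$, with $F$ linear in its first argument because $\xi$ is a suspension \cite{Wal63a}, so that $F(0,\xi)=0$.

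You can also see this directly. If $\alpha(g)=0$, the tubular neighbourhood of $\iota_g$ is a product $\sphere^m\times\mathbb{D}^{m+1}$, which embeds in $\R^{2m+1}$. So $g\circ\xi$ is represented by a sphere $\sphere^{m+1}\subset\R^{2m+1}$, whose once-stabilized normal bundle is its normal bundle in $\R^{2m+2}$. That bundle is trivial by Kervaire's theorem, since the codimension $m+1$ exceeds $(m+2)/2$. Either way, make the dependence on $\alpha=0$ explicit rather than appealing to an unspecified general factorization lemma.
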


While this result is entirely due to Wall, it requires chasing down some definitions across multiple papers, and we believe it is useful to include for readability. 
\begin{proof}

We briefly discuss the homotopy group $\pi_{m+1}(M)$. 
Since $M$ is $(m-1)$-connected, the Hurewicz homomorphism $\pi_{m+1}(M) \rightarrow H_{m+1}(M,\Z)$ is surjective. By Proposition \ref{Prop:HurewiczUpOne},
\begin{align}\label{HurewiczUpOne}
 \pi_{m+1}(M) \cong \Z^{b_{m}} \oplus \Z_2^{b_m}. 
\end{align}
That is, each $(m+1)$-cell contributes a copy of $\Z$, as expected, but we also obtain a copy of $\Z_2$ from each $m$-cell due to the stable homotopy group $\Z_2 \cong \pi_{m+1}(\sphere^m)$. 

We recall one more auxiliary object $\mu$ and its relation to $\beta$ from \cite{Wal63a}. The invariant $\mu$ is of the form $\mu: \pi_{m+1}(M) \times \pi_{m+1}(M) \rightarrow \pi_{m+1}(\sphere^m)$. We shall not even need the definition of this map, but only its type and the fact that it is related to $\beta$ through the following identity by \cite[Theorem 1]{Wal63a}
\begin{align}\label{Wall:AlmostHom}
     \beta(x+y)= \beta(x) + \beta(y)  + \partial \mu(x,y), 
\end{align}
where $\partial: \pi_{m+1}(\sphere^{m}) \rightarrow \pi_m(\SO(m))$ is the connecting homomorphism in the long exact sequence of homotopy groups of the fiber bundle $\SO(m) \rightarrow \SO(m+1) \stackrel{p}{\rightarrow} \sphere^m$: 
\[
    \cdots \longrightarrow \pi_{m+1}(\SO(m))\stackrel{S_{*}}{\longrightarrow} \pi_{m+1}(\SO(m+1)) \stackrel{p_*}{\longrightarrow} \pi_{m+1}(\sphere^{m}) \stackrel{\partial}{\longrightarrow} \pi_{m}(\SO(m)) \longrightarrow \cdots 
\]

In particular, applying $S_{*}$ to \eqref{Wall:AlmostHom}, we obtain that $S_* \circ \beta: \pi_{m+1}(M) \rightarrow S_*(\pi_m(\SO(m)))$ is a homomorphism, using that $S_* \circ \partial =0$ by exactness. 

Let us fix a generator $\xi \in \pi_{m+1}(\sphere^m)$. Then \eqref{HurewiczUpOne} says that the torsion subgroup $T$ of $\pi_{m+1}(M)$ is exactly $\pi_{m}(M)\circ \xi$, meaning every torsion element factors through $\xi$.

Now, if we take $x \in H_{m+1}(M)$, then its preimage under the Hurewicz homomorphism obtains the form $h_{m+1}^{-1}(x) = f+ T$ for some element $f \in \pi_{m+1}(M)$. Thus, any element $\hat{f} \in h_{m+1}^{-1}(x)$ in the preimage obtains the form $\hat{f}=f+g \circ \xi$ for some $g \in \pi_m(M)$. To see that $S\beta$ is well-defined, we must check that $S_* \circ \beta \circ h_{m+1}^{-1}$ kills the torsion subgroup $T$. 
To this end, we observe the following:
\[ \beta(g\circ \xi) = F(\alpha(g), \xi) = 0,\]
where we use that the map $F$ defined in \cite[Lemma 2]{Wal63a} is linear in the first factor since $\xi$ comes from suspension \cite[Lemma 5]{Wal63a}. 
Thus, $T \subset \ker(S\beta)$ and hence
\[ S_* \circ \beta \circ h_{m+1}^{-1}(x) = (S_{*}\circ \beta)(f) \]
is well-defined.
Moreover, $S_* \circ \beta \circ h_{m+1}^{-1}$ is a homomorphism since $S_*\circ \beta$ is a homomorphism. 
\end{proof}

Finally, we can define $\hat{\beta}$ as the stabilization of $S\beta$. 
\begin{definition}[The stable invariants $\hat{\alpha}, \hat{\beta}$]\label{Defn:StableInvariants}
Let $\mathbf{S}_*: \pi_m(\SO(m+1))\rightarrow \pi_m(\mathbf{SO})$ denote the induced map of the stabilization homomorphism. Then define $\hat{\beta} := \mathbf{S}_* \circ S\beta$. 
Analogously, $\hat{\alpha} := \mathbf{S}_*\circ \alpha \circ h_m^{-1}$, where here $\mathbf{S}_*: \pi_{m-1}(\SO(m+1))\rightarrow \pi_{m-1}(\mathbf{SO})$.  
\end{definition}
Note that by Hurewicz, $h_m:\pi_m(M)\rightarrow H_m(M,\Z)$ is an isomorphism and thus $\hat{\alpha}$ is immediately well-defined and a homomorphism. 
Also, by the universal coefficient theorem, we can view $\hat{\beta} \in H^{m+1}(M, \pi_m(\mathbf{SO}))$, as the originally advertised type of $\hat{\beta}$. 

\begin{table}[ht]
    \centering
    \begin{tabular}{|c|c|c|c|c|c|c|c|c|}
        \hline 
        $m \bmod 8$ & 0& 1&2&3&4&5&6&7 \\ \hline
        $\pi_{m}(\mathbf{SO})$ & $\Z_2$ &$\Z_2$ & 0& $\Z$ & 0 & 0&0&$\Z$  \\ \hline 
        $\pi_{m}(\SO(m))$ & $(\Z_2)^3$ &$\Z_2\oplus \Z_2$ & $\Z_4$ & $\Z$ & $\Z_2\oplus \Z_2$ & $\Z_2$&$\Z_4$&$\Z$  \\ \hline 
        $\pi_{m}(\SO(m+1))$ & $(\Z_2)^2$ & $\Z \oplus \Z_2$ & $\Z_2$ & $\Z\oplus \Z$ & $\Z_2$ & $\Z$&$\Z_2$&$\Z\oplus \Z$  \\ \hline
    \end{tabular} 
    \medskip
    \caption{Row 1: Stable homotopy groups $\pi_m(\mathbf{SO})$, depending only on $m \bmod8 $ by Bott periodicity. Rows 2 and 3 show unstable homotopy groups $\pi_m(\SO(m))$ and $\pi_{m}(\SO(m+1))$ for $m\geq 8$, from \cite{Ker60}. Note that $\hat{\beta}$ takes values in the first row, $\beta$ in the second, and $S\beta$ in the third. }
    \label{Table:StableHomotopyGroups}
\end{table}

\begin{table}[ht]
    \centering
    \begin{tabular}{|c|c|c|c|c|c|c|c|c|}
        \hline 
        $m$ &3&4&5&6&7 \\ \hline
        $\pi_{m}(\SO(m))$ & $\Z$ & $\Z_2\oplus \Z_2$ & $\Z_2$ & 0& $\Z$  \\ \hline 
        $\pi_{m}(\SO(m+1))$ & $\Z \oplus \Z$& $\Z_2$ & $\Z$& $0$ & $\Z\oplus \Z$ \\ \hline
    \end{tabular} 
    \medskip
    \caption{Low-dimensional homotopy groups \cite{Ker60}.}
    \label{Table:HomotopyGroupsLowDimensions}
\end{table}

Next, we explain that $\hat{\phi}$, the remaining invariant, can simply be ignored. To this end, the following result summarizes the discrepancy between $\beta, \hat{\beta}, S\beta$ across all the cases, and defines $\hat{\phi}$ (cf. \cite[Proposition 4]{Wal65}). In particular, $S\beta$ is equivalent to the pair $(\hat{\beta},\hat{\phi})$. 

\begin{proposition}[Comparison of $\beta$'s] \label{Prop:CompareBetas}
Let $m \geq 4$ be a positive integer. Then 
\begin{enumerate}[label=(\roman*)]
    \item If $m\neq 6$ is even, the map $\mathbf{S}_*: \pi_m(\SO(m+1)) \rightarrow \pi_m(\mathbf{SO})$ has kernel $\Z_2$. Thus, there is a 
    homomorphism $\phi: \pi_m(\SO(m+1)) \rightarrow \Z_2$ such that 
    \[ (\mathbf{S}_*, \phi): \pi_m(\SO(m+1)) \rightarrow  \pi_m(\mathbf{SO}) \oplus \Z_2 \] is an isomorphism. 
    Define $\hat{\phi} = \phi\circ S\beta$. 
    Then $S_*:\pi_m(\SO(m))\rightarrow \pi_m(\SO(m+1))$ surjects and consequently $(\hat{\beta}, \hat{\phi})$ is equivalent to $S\beta$,  
    \item If $m$ is odd or $m=6$,  
    the stabilization map $\mathbf{S}_*:S_*\pi_m(\SO(m+1)) \rightarrow \pi_m(\mathbf{SO})$ is an isomorphism. 
    Hence, $\hat{\beta}$ is equivalent to $S\beta$. 
\end{enumerate}
\end{proposition}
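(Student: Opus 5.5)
The plan is to work entirely with the two long exact sequences of homotopy groups attached to the fibrations $\SO(m)\to\SO(m+1)\to\sphere^m$ and $\SO(m+1)\to\mathbf{SO}\to\mathbf{SO}/\SO(m+1)$, the second truncated to the stable range. From the first sequence we read off
\[ \pi_{m}(\SO(m)) \xrightarrow{S_*} \pi_m(\SO(m+1)) \xrightarrow{p_*} \pi_m(\sphere^m)\cong\Z \xrightarrow{\partial} \pi_{m-1}(\SO(m)). \]
The composite $\partial$ sends a generator to the clutching class of $\T\sphere^m$. That class has Euler number $2$ when $m$ is even and $0$ when $m$ is odd. Hence $\partial$ is injective for $m$ even, and then $S_*$ surjects onto $\pi_m(\SO(m+1))$; for $m$ odd, $\partial$ has image of order at most two. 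For stabilization, $\pi_m(\SO(m+1))\to\pi_m(\SO(m+2))$ is already stable, i.e. isomorphic to $\pi_m(\mathbf{SO})$. It is surjective with kernel the image of $\pi_{m+1}(\sphere^{m+1})\cong\Z$ under the boundary map, which is generated by the clutching class $[\T\sphere^{m+1}]$.

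For (i), take $m$ even with $m\neq 6$. We have $S_*$ surjective, so $S\beta$ takes values in all of $\pi_m(\SO(m+1))$. The kernel of $\mathbf{S}_*$ is the cyclic group generated by $[\T\sphere^{m+1}]$. This class has order exactly two when $\sphere^{m+1}$ is not parallelizable, i.e. $m+1\notin\{1,3,7\}$, and here $m+1$ is odd and $\neq 7$. The order-two claim uses that $\T\sphere^{m+1}\oplus\varepsilon^1$ is trivial together with the Kervaire-type fact that twice the class dies. We then check against Table \ref{Table:StableHomotopyGroups}, where rows three and one differ by exactly a $\Z_2$ for $m$ even. Since $\pi_m(\SO(m+1))$ is a finite abelian group with $\ker\mathbf{S}_*\cong\Z_2$, we need a retraction $\phi$ onto this $\Z_2$ to get the splitting. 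For that we must check that the extension splits, which Table \ref{Table:StableHomotopyGroups} guarantees: $\pi_m(\mathbf{SO})\in\{0,\Z_2\}$ and $\pi_m(\SO(m+1))\in\{\Z_2,\Z_2^2\}$, never $\Z_4$. Then $(\mathbf{S}_*,\phi)$ is an isomorphism, and composing with $S\beta$ shows that $S\beta$ and $(\hat\beta,\hat\phi)$ determine each other.

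For (ii), take $m$ odd, or $m=6$. In the odd case $m+1$ is even, so $[\T\sphere^{m+1}]$ has infinite order; indeed it maps to $\pm2$ under $\pi_m(\SO(m+1))\to\pi_m(\sphere^m)$, by the Euler characteristic. Thus $\ker\mathbf{S}_*\cong\Z$ is generated by $[\T\sphere^{m+1}]$, and it suffices to show $\ker\mathbf{S}_*\cap S_*\pi_m(\SO(m))=0$ and $\mathbf{S}_*(S_*\pi_m(\SO(m)))=\pi_m(\mathbf{SO})$. For the intersection, the image of $S_*$ equals $\ker p_*$, while $p_*[\T\sphere^{m+1}]=\pm2\neq0$; moreover no nonzero multiple of this class lies in $\ker p_*$. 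For surjectivity, $\pi_m(\mathbf{SO})$ is $\Z$, $\Z_2$ or $0$. Since $p_*$ of the image of $\pi_m(\SO(m+1))$ contains $2\Z$, the classes of $\ker p_*$ together with $[\T\sphere^{m+1}]$ generate a subgroup of index at most $2$. We then use that the tangent class hits a generator of $\pi_m(\sphere^m)$ only up to the $\pm2$ factor, and check via Table \ref{Table:StableHomotopyGroups} that $S_*\pi_m(\SO(m))$ surjects stably. For $m=6$ everything is $0$ by Table \ref{Table:HomotopyGroupsLowDimensions}, so the claim is trivial.

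The main obstacle is the surjectivity in (ii) together with the splitting in (i). Both require genuine knowledge of the unstable groups $\pi_m(\SO(m))$ and $\pi_m(\SO(m+1))$ and of the maps between them, not just exactness. I would resolve both by citing Kervaire's computations \cite{Ker60}, as tabulated in Tables \ref{Table:StableHomotopyGroups}--\ref{Table:HomotopyGroupsLowDimensions}, and \cite[Proposition 4]{Wal65}, reducing to a case check modulo $8$.
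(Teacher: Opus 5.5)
Part (i) and the injectivity half of (ii) are correct. Identifying the boundary maps with the clutching classes of $\T\sphere^m$ and $\T\sphere^{m+1}$ and using Euler numbers is more structural than the paper's argument, which reads everything off the tables (for instance, it gets surjectivity of $S_*$ from finiteness of $\pi_m(\SO(m+1))$).

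The gap is the surjectivity half of (ii). You correctly reduce it to showing that $\ker p_*+\langle\tau\rangle$, with $\tau=[\T\sphere^{m+1}]$, is all of $\pi_m(\SO(m+1))$, and you note its index is $[\mathrm{im}\,p_*:2\Z]\le 2$. You then ``check via Table~\ref{Table:StableHomotopyGroups}''. But the table records only isomorphism types. For $m\equiv 3,7 \pmod 8$, both $S_*\pi_m(\SO(m))$ and $\pi_m(\mathbf{SO})$ are $\Z$, so the table cannot tell an isomorphism from multiplication by $2$. The missing input is the exact value of $\mathrm{im}\,p_*=\ker\partial$. Since $\partial\iota_m=[\T\sphere^m]$ is nonzero precisely when $\sphere^m$ is not parallelizable, for odd $m\ge 5$ with $m\neq 7$ you get $\mathrm{im}\,p_*=2\Z=p_*\langle\tau\rangle$. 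Hence the index is $1$ and $\mathbf{S}_*(\ker p_*)=\pi_m(\mathbf{SO})$, with no table needed.

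At $m=7$ the step cannot be repaired, because the claimed isomorphism is false there. Since $\sphere^7$ is parallelizable, $p_*\colon\pi_7(\SO(8))\to\pi_7(\sphere^7)$ is split surjective. So $\pi_7(\SO(8))=\ker p_*\oplus\Z$ with $\ker p_*=S_*\pi_7(\SO(7))\cong\Z$, and $\tau=(a,\pm 2)$ in this splitting. Bott periodicity gives $\pi_7(\mathbf{SO})=\pi_7(\SO(8))/\langle\tau\rangle\cong\Z$, which forces $a$ to be odd. Then $\ker p_*$ maps onto $2\Z$; this is the octonionic analogue of $\pi_3(\SO(3))\to\pi_3(\mathbf{SO})$ being multiplication by $2$. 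So $S_*\pi_7(\SO(7))\to\pi_7(\mathbf{SO})$ is injective with image of index $2$.

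What holds for every odd $m$, and what your intersection argument already proves, is injectivity of $\mathbf{S}_*$ on $S_*\pi_m(\SO(m))$. That suffices for ``$\hat\beta$ is equivalent to $S\beta$'' and for its use in Theorem~\ref{thm:GlobalTopologyEven}, where $n=10$ gives $m=7$. The paper's own proof also just asserts this surjectivity from the table, so it does not supply the missing step either.
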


\begin{proof} Let $m \geq 4$. The proof is merely a case-by-case analysis   
using Table \ref{Table:StableHomotopyGroups}.

(i) For $m \equiv \{0,2,4,6\}$ and $m\neq 6$, the first part of the result immediately follows by the fact that the stabilization map $\mathbf{S}_*:\pi_m(\SO(m+1)) \rightarrow \pi_m(\mathbf{SO})$ surjects.
Now, by the table, $\pi_m(\SO(m))$ and $\pi_{m+1}(\SO(m+1))$ are each  finite groups. Then considering the long exact sequence of homotopy groups of the fiber bundle $\SO(m)\rightarrow \SO(m+1)\rightarrow \sphere^m$, we see: 
\[ \pi_m(\SO(m))\xrightarrow{S_*} \pi_m(\SO(m+1))\xrightarrow{p_*} \pi_m(\sphere^m) \cong \Z, \]
which implies $S_*$ surjects by exactness. Then $S_*\pi_m(\SO(m))=\pi_{m}(\SO(m+1))$ and hence the pair $(\hat{\beta},\hat{\phi})$ are equivalent data to the invariant $S\beta$. 

(ii) Now, let $m \geq 7$ be odd. The result similarly follows from the fact that the homomorphisms  $\pi_m(\SO(m))\rightarrow \pi_m(\mathbf{SO})$ and $S_*\pi_m(\SO(m))\rightarrow \pi_m(\mathbf{SO})$ surject.
\end{proof}

We now summarize the situation with the following theorem, clarifying exactly the two invariants needed for Wall's classification in the present case.
\begin{theorem}[Simplified Classification {\cite{Wal67, Wil72}\footnote{Wall's classification excludes the cases $m=3$ and $m=7$. However, Wilkens extended Wall's results in these two cases \cite{Wil72}, and his extensions apply in our setting since $H_{m}(X,\Z)$ is torsion-free.}}]\label{thm:WallSimplified}
Let $m\geq3$ be an integer and $M$ be a closed $(m-1)$-connected $(2m+1)$-manifold such that $\alpha=0$. Assume that $M$ has torsion-free homology. Then $M$ is classified up to almost diffeomorphism by the following:
  \begin{itemize}
    \item The Betti number $b_{m} = \rank(H_{m}(M, \Z))$, 
    \item The homomorphism $S\beta: H_{m+1}(M,\Z) \rightarrow \pi_m(\SO(m+1))$. 
\end{itemize}  
\end{theorem}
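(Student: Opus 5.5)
This statement is a specialization of Wall's classification \cite[Theorem 7]{Wal67} (with Wilkens' extension \cite{Wil72} for $m\in\{3,7\}$). The plan is to start from the full system of invariants recalled in $\S$\ref{Sec:WallClassification} and show that, under the hypotheses $\alpha=0$ and torsion-free homology, everything collapses to the pair $(b_m, S\beta)$. Concretely, an almost-diffeomorphism class corresponds to an isomorphism class of systems $(G,b,q,\hat{\alpha},\hat{\beta},\hat{\phi},\omega,\omega(f))$. We will go through these invariants in turn.

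\textbf{Homological invariants.} Torsion-freeness gives $G^*=0$, so $b$ and $q$ are forms on the zero group and carry no information. The group $G\cong\Z^{b_m}$ is determined up to isomorphism by $b_m$. Poincar\'e duality together with $(m-1)$-connectivity gives $H_{m+1}(M,\Z)\cong\Z^{b_m}$ as well, so the domain of $S\beta$ is free of the same rank.

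\textbf{Tangential invariants.} Since $\alpha=0$, we get $\hat{\alpha}=\mathbf{S}_*\circ\alpha\circ h_m^{-1}=0$. The exceptional invariants $\omega,\omega(f)$ are then undefined, by the remark on the exceptional case in \cite[p.~277]{Wal67}. For the rest we invoke Proposition~\ref{Prop:CompareBetas}. If $m$ is odd or $m=6$, the invariant $\hat{\phi}$ is not part of the system and $\hat{\beta}$ is equivalent to $S\beta$. If $m\ge4$ is even with $m\ne6$, the pair $(\hat{\beta},\hat{\phi})$ is equivalent to $S\beta$ via the isomorphism $(\mathbf{S}_*,\phi)$. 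In the residue classes $m\equiv 0,1\bmod 8$ where $\hat{\beta}$ is omitted from Wall's list, we must check that $S\beta$ still records exactly the invariants Wall does use. We will do this by reading Table~\ref{Table:StableHomotopyGroups} case by case, and we expect it to be the fiddly point. The case $m=3$ is handled by Wilkens, whose extra invariants reduce similarly because $H_3$ is torsion-free and $\alpha=0$. Finally, an isomorphism of systems is an isomorphism $\theta\colon G_1\to G_2$ compatible with the remaining data. Transporting it through Poincar\'e duality to $H_{m+1}$, it becomes an isomorphism $H_{m+1}(M_1)\to H_{m+1}(M_2)$ intertwining the two maps $S\beta$.

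\textbf{Main obstacle.} The hardest step is the bookkeeping. We must verify that in every residue of $m$ Wall's (and Wilkens') list, evaluated under $\alpha=0$, amounts exactly to $S\beta$ up to change of basis, with no residual exceptional invariant. We will handle this by tracking the definitions through \cite[Proposition~4]{Wal65} and \cite[Lemmas~28, 29]{Wal67}.
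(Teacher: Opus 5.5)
Your proposal is correct and uses the same reduction to Wall's Theorem~7 as the paper, but the two write-ups make complementary parts explicit.

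The bookkeeping you carry out is done in the paper in the text preceding the theorem: the forms $b,q$ vanish by torsion-freeness, $\hat\alpha=0$ and the exceptional invariants are absent since $\alpha=0$, and Proposition~\ref{Prop:CompareBetas} trades $(\hat\beta,\hat\phi)$ or $\hat\beta$ for $S\beta$. The paper's proof itself is devoted to the step you take directly from the summary in \S\ref{Sec:WallClassification}. Wall's Theorem~7 is an honest diffeomorphism classification of \emph{almost closed} manifolds. So one removes a small ball from each $M_i$, applies Wall to $M_i\setminus B$ (which has the same invariants), and reglues the disks, obtaining $M_1\#\Sigma\cong M_2$ for some $\Sigma\in\Theta_{2m+1}$. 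A complete write-up should include this passage rather than quoting the closed, almost-diffeomorphism form as a black box.

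On the other hand, the step you flag as the main obstacle is lighter than you expect. The data $b_m$ and $S\beta$ are visibly invariant under almost diffeomorphism. Completeness therefore only requires that each invariant actually present in Wall's list be determined by $(b_m,S\beta)$; they need not match exactly. In the residues $m\equiv 0,1 \bmod 8$ where $\hat\beta$ is absent, what remains is $\hat\phi=\phi\circ S\beta$ (when listed) or nothing at all. So no case-by-case reading of the tables is needed there.
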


\begin{proof}
Let $M_1, M_2$ be two closed manifolds with the same invariants. 
Let us consider $M_{i}'=M_{i} \setminus B_i^{2n-5}$, where $B_i^{2n-5}$ is a smooth open ball contained entirely in the domain of a local smooth chart of $M_{i}$. Then the manifolds with boundary $M_i'$ are almost closed and have the same system of invariants. Applying \cite[Theorem 7]{Wal67}, we conclude $M_1'$ and $M_2'$ are diffeomorphic. 
Gluing back the original disks from $M_1'$ and $M_2'$ to their respective boundary spheres, we obtain a diffeomorphism between $M_1 \#\Sigma$ and $M_2$, for some homotopy sphere $\Sigma \in \Theta_{2m+1}$. 
\end{proof}

Note that we can only conclude that $M_1$ and $M_2$ share the same almost diffeomorphism type because different ways of filling in boundary spheres with disks can produce non-diffeomorphic manifolds.  

\begin{remark}[Invariants of Homotopy Spheres]
Let $\Sigma \in \Theta_{2m+1}$ be a homotopy sphere. Then $b_m=0$, $\T\Sigma$ is stably trivial, and $S\beta=0$. That is, $\Sigma$ corresponds to the trivial system of invariants and hence are invisible to Wall's technology. Since Wall's invariants are additive over connected sums, for any $(m-1)$-connected closed manifold $M^{2m+1}$, both $M$ and $M \# \Sigma$ have the same system of invariants. 
\end{remark}

We now provide an example of Theorem \ref{thm:WallSimplified} that distinguishes two familiar spaces.

\begin{example}[Distinguishing Sphere Bundles]
Let $m \geq 4$ be even, $m \neq 6$. Consider the highly connected closed manifolds $M_1:=V_2(\R^{m+2})\cong \T^1\sphere^{m+1}$ and $M_0:=\sphere^{m+1}\times \sphere^m$. Each manifold $M_i$ is parallelizable \cite{Sut64} (hence $\alpha=0$) and satisfies $H_m(M_i,\Z)\cong \Z$. In fact, these two smooth manifolds are distinguished by Theorem \ref{thm:WallSimplified}, and provide perhaps the simplest such application.

We now briefly explain why $S\beta_{M_0}=0$ and $S\beta_{M_1}\neq 0$, so that $S\beta$ forbids the two to be almost diffeomorphic.
Note that $\Sigma_0 := \sphere^{m+1}\times \{*\}$ generates $H_{m+1}(M_0)$. One easily finds that the normal bundle $\N \Sigma_0\cong \varepsilon_\R^{m}$ is trivial. Thus, $S\beta_{M_0}([\Sigma_0])=0$, meaning $S\beta_{M_0}=0$. Now, a smoothly embedded sphere $\Sigma_1$ generating $H_{m+1}(M_1)$ is given by 
\[ \Sigma_1 = \{(x,J(x)) \in M_1 \mid x \in \sphere^{m+1} \},\]
where $J$ is any orthogonal complex structure on $\R^{m+2}$. In other words, $\Sigma_1$ is the graph of a section of the unit tangent bundle.
Note the canonical identification \[ \N_{x} \Sigma_1 \simeq \{y \in \R^{m+2} \mid y\in \Span \{x, J(x)\}^\bot \}.\]
Thus, 
$\N \Sigma_1 \oplus \varepsilon_1^{\R} \cong \T \sphere^{m+1}$. We conclude 
$S\beta_{M_1}([\Sigma_1])= [\T \sphere^{m+1}]\neq 0$, meaning $S\beta_{M_1}\neq0$.
\end{example}

\subsection{Simplified Invariants for \texorpdfstring{$\hat{\mathcal{B}}(\mathcal{P})$}{B(P)}}

In this section, we record the general information we know about Wall's invariants for $X=\hat{\mathcal{B}}(\mathcal{P})$ in all cases. Here, we denote the ambient dimension by $n$, so that $X \subset V_2(\R^n)$ and  $\dim(X)=2n-5$. Hence, $m=n-3$ here. 

\subsubsection{Vanishing of Stable Tangential Invariants}

We now show that $\hat{\alpha} = \hat{\beta} =0$ for $X = \hat{\mathcal{B}}(\mathcal{P})$. 

The following basic result says  $\hat{\alpha}=0$ implies that $\alpha=0$ in our situation. 
\begin{proposition}\label{Prop:StablyTrivaltoTrivial}
Let $m \geq 2$. 
If $f: \sphere^{m}\rightarrow M^{2m+1}$ is an immersion and $M$ is stably parallelizable, then $\N f$ is trivial. 
\end{proposition}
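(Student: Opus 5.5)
Throughout, $\T M\oplus\varepsilon^k_\R$ is trivial for some $k$. The plan is to prove first that $\N f$ is stably trivial, and then to destabilize using the fact that its rank $m+1$ is large compared with the dimension $m$ of the base sphere.

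\emph{Step 1: stable triviality.} Along the immersion we have the splitting $f^*\T M\cong \T\sphere^m\oplus \N f$. Adding trivial summands, $f^*\T M\oplus\varepsilon^k_\R$ is trivial because $\T M$ is stably trivial. The tangent bundle $\T\sphere^m$ is also stably trivial, since $\T\sphere^m\oplus\varepsilon^1_\R\cong\varepsilon^{m+1}_\R$. Combining these,
\[\N f\oplus\varepsilon^{m+1+k}_\R\cong \varepsilon^{2m+1+k}_\R,\]
so $\N f$ is stably trivial. In terms of clutching functions, the class $[\N f]\in\pi_{m-1}(\SO(m+1))$ lies in the kernel of the stabilization map $\mathbf{S}_*:\pi_{m-1}(\SO(m+1))\to\pi_{m-1}(\mathbf{SO})$.

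\emph{Step 2: destabilization.} Consider the fibration $\SO(j)\to\SO(j+1)\to\sphere^j$. Its long exact sequence shows that $\pi_i(\SO(j))\to\pi_i(\SO(j+1))$ is an isomorphism for $i\le j-2$. For our rank, $i=m-1$ and $j=m+1$, so $i\le j-2$ holds. Hence every stabilization
\[\pi_{m-1}(\SO(m+1))\to\pi_{m-1}(\SO(m+2))\to\cdots\to\pi_{m-1}(\mathbf{SO})\]
is an isomorphism, and the kernel of $\mathbf{S}_*$ is trivial. Therefore $[\N f]=0$.

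\emph{Step 3: conclusion.} The vector bundle $\N f$ has rank $m+1$ over $\sphere^m$, so it is classified by its clutching class in $\pi_{m-1}(\SO(m+1))$. Orientability is automatic for $m\ge2$, since $\sphere^m$ is simply connected. Since that class vanishes by Step 2, $\N f$ is trivial.

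Equivalently, one can phrase Step 3 via stable-range cancellation. A stably trivial bundle of rank $r$ over a CW complex of dimension $d<r$ is trivial, because the map $BO(r)\to BO$ is $r$-connected. Here $d=m<m+1=r$.

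No step is a serious obstacle. The only point needing care is the index bookkeeping in the stable range. We must check that $i=m-1\le (m+1)-2$, which holds for all $m$, so the hypothesis $m\ge2$ serves only to guarantee orientability and simple connectivity of $\sphere^m$. We must also be careful to use the rank $m+1$ of the normal bundle, not the rank $m$ of $\T\sphere^m$.
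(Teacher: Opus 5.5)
Your proof is correct and follows the same route as the paper. You first deduce that $\N f$ is stably trivial from the stable triviality of $\T M$ (and of $\T\sphere^m$); then the fibrations $\SO(k-1)\to\SO(k)\to\sphere^{k-1}$ show that the stabilization maps out of $\pi_{m-1}(\SO(m+1))$ are injective, so the clutching class of $\N f$ vanishes. One harmless slip: the right-hand side of your Step 1 display should be $\varepsilon^{2m+2+k}_\R$, since $\N f$ has rank $m+1$.
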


\begin{proof}
Let $M$ and $f: \sphere^{m}\rightarrow M$ be as in the hypotheses. 
Note that $M$ is orientable since $\T M$ is stably trivial. 
The normal bundle $\N f \rightarrow \sphere^m$ is of rank $m+1$ and hence classified by its clutching functions $[\N f] \in \pi_{m-1}(\SO(m+1))$. Now, $\N f$ is stably trivial since $\T M$ is. 
Hence, for a sufficiently large positive integer $j$, the stabilization $(S_{j})_*: \pi_{m-1}(\SO(m+1)) \rightarrow \pi_{m-1}(\SO(m+j))$ pushes $\N f$ forward to the identity element. 
However, for $j \geq 1$, the map $(S_{j})_*$ is injective, as seen by inductively considering the long-exact sequence of homotopy groups of the fiber bundles $\SO(k-1) \rightarrow \SO(k)\rightarrow \sphere^{k-1}$. Thus, $\N f$ itself is trivial. 
\end{proof}

\begin{corollary}
The invariant $\alpha \in \Hom\big(\pi_m(X), \pi_{m-1}(\SO(m+1)) \big)$ is trivial for $X = \hat{\mathcal{B}}(\mathcal{P})$. 
\end{corollary}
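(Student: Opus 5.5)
The plan is to apply Proposition \ref{Prop:StablyTrivaltoTrivial} directly with $M = X = \hat{\mathcal{B}}(\mathcal{P})$ and $m = n-3$, so that $\dim X = 2m+1 = 2n-5$. For $n \geq 5$, $X$ is $(m-1)$-connected by Corollary \ref{cor:highly-connected}, so $\alpha$ is defined on $\pi_m(X)$.

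First, take any class $x \in \pi_m(X)$. By Haefliger's Theorem \ref{thm:HomotopicToEmbedding}, $x$ is represented by a smooth embedding $\iota_x: \sphere^m \hookrightarrow X$. Here $s = m$ and the ambient dimension is $2m+1$. The hypothesis $2(2m+1) \geq 3m+3$ holds since $m \geq 1$, and the connectivity requirement $(2m-(2m+1)+1) = 0$-connected holds trivially. An embedding is in particular an immersion.

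Second, Lemma \ref{Lem:StablyTrivial} gives $\T X \oplus \varepsilon^5_\R \cong \varepsilon^{2n}_\R$, so $X$ is stably parallelizable. Since $m \geq 2$, Proposition \ref{Prop:StablyTrivaltoTrivial} then says the normal bundle $\N \iota_x$ is trivial. Its clutching class $[\N\iota_x] \in \pi_{m-1}(\SO(m+1))$ is therefore the identity element, so $\alpha(x) = 0$ for every $x$, and $\alpha$ is the zero homomorphism.

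There is no real obstacle in this argument. The only points needing care are bookkeeping: checking that $m = n-3 \geq 2$, which holds for $n \geq 5$ (the range in which Wall's setup is used), and noting that the well-definedness of $\alpha$ on the chosen embedding is irrelevant here, since every representative has a trivial normal bundle.
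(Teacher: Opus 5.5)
Your argument is correct and follows the same route as the paper: the corollary comes from the stable parallelizability of $X$ (Lemma \ref{Lem:StablyTrivial}) combined with Proposition \ref{Prop:StablyTrivaltoTrivial}, applied to the Haefliger-embedded representatives that define $\alpha$. One small correction to your bookkeeping: the paper invokes Wall's classification only for $n\geq 6$ (i.e.\ $m\geq 3$) and handles $n=5$ via Smale, though your argument holds for all $n\geq 5$ regardless.
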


By similar reasoning, we easily obtain the following result, whose proof we omit.  

\begin{proposition}\label{Prop:StableNormal}
Suppose that $M^{2m+1}$ is a closed manifold with stably trivial tangent bundle. If $f:\sphere^j \rightarrow M$ is a smooth immersion, the normal bundle $\N f$ is stably trivial.
\end{proposition}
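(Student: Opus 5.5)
The plan is to follow the argument of Proposition \ref{Prop:StablyTrivaltoTrivial}, without the final destabilization step. Let $f:\sphere^j \rightarrow M$ be a smooth immersion. The first step is to pull back the tangent bundle along $f$. The immersion gives a splitting of vector bundles over $\sphere^j$:
\[ f^*\T M \cong \T\sphere^j \oplus \N f. \]
Since $\T M \oplus \varepsilon^k_{\R} \cong \varepsilon^{2m+1+k}_{\R}$ for some $k$, and pullback commutes with Whitney sums and sends trivial bundles to trivial bundles, we get $f^*\T M \oplus \varepsilon^k_{\R} \cong \varepsilon^{2m+1+k}_{\R}$. Therefore
\[ \T\sphere^j \oplus \N f \oplus \varepsilon^k_{\R} \cong \varepsilon^{2m+1+k}_{\R}. \]

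The second step is to absorb the tangent bundle of the sphere. The standard embedding $\sphere^j \subset \R^{j+1}$ has trivial normal line bundle, so $\T\sphere^j \oplus \varepsilon^1_{\R} \cong \varepsilon^{j+1}_{\R}$. Adding one more trivial line to both sides of the previous display gives
\[ \N f \oplus \varepsilon^{j+1+k}_{\R} \cong \varepsilon^{2m+2+k}_{\R}. \]
This says exactly that $\N f$ is stably trivial.

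There is no real obstacle; the result is a formal consequence of naturality of pullback and of the Whitney sum. The only point requiring care is that $f$ need only be an immersion rather than an embedding. This does not matter, because $\N f$ is defined as the quotient bundle $f^*\T M/\T\sphere^j$ over the source $\sphere^j$, and the splitting in the first step holds for any immersion once a metric is chosen. Unlike Proposition \ref{Prop:StablyTrivaltoTrivial}, we do not try to deduce honest triviality here. The rank $2m+1-j$ of $\N f$ need not lie in the stable range, so stabilization may fail to be injective on $\pi_{j-1}$; this is precisely why the invariant $S\beta$ can be nonzero.
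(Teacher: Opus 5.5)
Your proposal is correct and is the standard argument the paper has in mind when it omits this proof as following ``by similar reasoning'' to Proposition \ref{Prop:StablyTrivaltoTrivial}: split $f^*\T M \cong \T\sphere^j \oplus \N f$, then absorb $\T\sphere^j$ using $\T\sphere^j \oplus \varepsilon^1_{\R} \cong \varepsilon^{j+1}_{\R}$. The paper's proof of Proposition \ref{Prop:StablyTrivaltoTrivial} simply asserts the stable triviality of $\N f$; your computation supplies that step.
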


We learn that the stable tangential invariants $\hat{\alpha}, \hat{\beta}$ vanish in our case. 

\begin{corollary}[Trivial Stable Tangential Invariants]\label{Cor:StableTangentialInvariants}
The invariants $\hat{\alpha} \in H^{m}(X, \pi_{m-1}(\mathbf{SO}))$ and $\hat{\beta} \in H^{m+1}(X, \pi_{m}(\mathbf{SO}))$ are both trivial for $X = \hat{\mathcal{B}}(\mathcal{P})$. 
\end{corollary}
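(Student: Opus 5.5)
The plan is to read the vanishing off the definitions, with stable triviality of $\T X$ (Lemma \ref{Lem:StablyTrivial}) as the only geometric input. Take $m=n-3$, so that $X$ is a closed $(m-1)$-connected $(2m+1)$-manifold by Corollary \ref{cor:highly-connected}, and recall from Definition \ref{Defn:StableInvariants} that $\hat{\alpha}=\mathbf{S}_*\circ\alpha\circ h_m^{-1}$ and $\hat{\beta}=\mathbf{S}_*\circ S\beta$. It therefore suffices to show that, for every generator of the relevant homotopy group, the stabilized normal bundle of an embedded sphere representative is trivial.

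\textbf{The invariant $\hat{\alpha}$.} By Theorem \ref{thm:HomotopicToEmbedding}, each $x\in\pi_m(X)$ is represented by a smooth embedding $\iota_x:\sphere^m\hookrightarrow X$. Proposition \ref{Prop:StablyTrivaltoTrivial} then shows that $\N\iota_x$ is already trivial, since $X$ is stably parallelizable. Hence $\alpha=0$, and therefore $\hat{\alpha}=0$. Since $h_m$ is an isomorphism and the universal coefficient theorem identifies $\Hom(H_m(X),\pi_{m-1}(\mathbf{SO}))$ with $H^m(X,\pi_{m-1}(\mathbf{SO}))$ (torsion-free homology), this is exactly the statement that $\hat{\alpha}$ vanishes as a cohomology class.

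\textbf{The invariant $\hat{\beta}$.} Choose $\sigma\in H_{m+1}(X)$ and lift it through the surjective Hurewicz map to $f\in\pi_{m+1}(X)$. The dimension and connectivity hypotheses of Theorem \ref{thm:HomotopicToEmbedding} hold with $s=m+1$ in the $(2m+1)$-manifold $X$ for $m\ge 3$, which is the range where Theorem \ref{thm:WallSimplified} is used. So $f$ is represented by an embedding $f^s$, with normal bundle of rank $m$ whose clutching class is $\beta(f)\in\pi_m(\SO(m))$.

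Now apply Proposition \ref{Prop:StableNormal}, or argue directly: $\T\sphere^{m+1}\oplus\N f^s\cong (f^s)^*\T X$ is stably trivial, and $\T\sphere^{m+1}$ is stably trivial, so $\N f^s$ is stably trivial. Stable triviality of a bundle over $\sphere^{m+1}$ means its clutching class maps to zero in $\pi_m(\mathbf{SO})$. Since $\mathbf{S}_*\circ S_*$ is the stabilization of $\pi_m(\SO(m))$, we get
\[
\hat{\beta}(\sigma)=\mathbf{S}_*\bigl(S_*\beta(f)\bigr)=0 .
\]
The result does not depend on the choice of lift $f$, because $S\beta$ is well-defined. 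Universal coefficients again identify $\hat{\beta}$ with a class in $H^{m+1}(X,\pi_m(\mathbf{SO}))$, and that class is zero.

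\textbf{Main obstacle.} The only delicate point is making sure the stabilization maps compose correctly: the rank-$m$ normal bundle passes first through $S_*$ to rank $m+1$ and then to $\mathbf{SO}$. One also has to check the stable triviality of $\N f^s$, which is just the cancellation in $K$-theory, $[\T\sphere^{m+1}]+[\N f^s]=[(f^s)^*\T X]=0$ stably. Neither step should cause real difficulty. The point of this corollary is that the unstable $S\beta$ is not covered by it, and it is $S\beta$ that carries the real difficulty.
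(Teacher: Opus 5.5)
Your proposal is correct and follows the paper's proof, which is the one-line argument combining stable triviality of $\T X$ (Lemma~\ref{Lem:StablyTrivial}) with Propositions~\ref{Prop:StablyTrivaltoTrivial} and~\ref{Prop:StableNormal}. Your identification of $\mathbf{S}_*\circ S_*$ as the full stabilization, and the cancellation $[\T\sphere^{m+1}]+[\N f^s]=[(f^s)^*\T X]$, just spell out what the paper leaves implicit. One small slip: with $s=m+1$, the dimension hypothesis $2(2m+1)\geq 3(m+1)+3$ of Theorem~\ref{thm:HomotopicToEmbedding} requires $m\geq 4$, not $m\geq 3$. This is harmless, because embedded representatives are already built into the definition of $\beta$ that the statement presupposes, and your stable-triviality argument works for whichever embedded representative is used.
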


\begin{proof}
Recall that $\T X$ is stably trivial by Lemma \ref{Lem:StablyTrivial}. 
The result then follows from Propositions \ref{Prop:StablyTrivaltoTrivial} and \ref{Prop:StableNormal}. 
\end{proof}

\subsection{Classification, \texorpdfstring{$n$}{n} even}\label{Sec:TopologyEven}

In this case, $m= n-3$ is odd and the techniques of Wall allow us to classify $X$ easily. 

\begin{theorem}[Fiber Classification, Even Case]\label{thm:GlobalTopologyEven}
Let $n\geq 6$ be even and $\mathcal{P} = \T_Q\Ha^2_{\pr}$. Then the base of pencil $X=\hat{\mathcal{B}}(\mathcal{P}) \subset V_2(\R^n)$ is almost diffeomorphic to $\#_{i=1}^{n-1} (\sphere^{n-3}\times \sphere^{n-2})$.
\end{theorem}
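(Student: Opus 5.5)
Since $n$ is even, $m=n-3$ is odd and at least $3$. The plan is to apply Theorem \ref{thm:WallSimplified} directly: compute both of its invariants for $X$ and compare them with those of $Y:=\#_{i=1}^{n-1}(\sphere^{n-3}\times \sphere^{n-2})$.

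The hypotheses are already in place. By Corollary \ref{cor:highly-connected}, $X$ is a closed $(n-4)$-connected $(2n-5)$-manifold. Theorem \ref{thm:hom_even} gives that $X$ has torsion-free homology with $b_{n-3}=n-1$. Lemma \ref{Lem:StablyTrivial} gives stable triviality, and Proposition \ref{Prop:StablyTrivaltoTrivial} then yields $\alpha=0$. For $Y$, each summand $\sphere^{m}\times\sphere^{m+1}$ is stably parallelizable and its spheres have trivial normal bundles, so by additivity $Y$ has $b_m=n-1$, torsion-free homology, $\alpha=0$ and $S\beta=0$. It therefore suffices to show $S\beta_X=0$.

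Because $m$ is odd and $m\ne 6$, Proposition \ref{Prop:CompareBetas}(ii) says that $\mathbf{S}_*$ restricted to $S_*\pi_m(\SO(m))$ is an isomorphism onto $\pi_m(\mathbf{SO})$. Hence $S\beta$ is determined by $\hat\beta=\mathbf{S}_*\circ S\beta$. By Corollary \ref{Cor:StableTangentialInvariants}, $\hat\beta=0$, since every embedded $(m+1)$-sphere in the stably parallelizable $X$ has stably trivial normal bundle (Proposition \ref{Prop:StableNormal}). Since $S\beta$ takes values in $S_*\pi_m(\SO(m))$, injectivity gives $S\beta_X=0$.

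With matching invariants, Theorem \ref{thm:WallSimplified} (via Wilkens \cite{Wil72} when $m\in\{3,7\}$) gives $X\#\Sigma\cong Y$ for some homotopy sphere $\Sigma$, which is the claimed almost diffeomorphism.

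The main point needing care is the comparison step, that is, Proposition \ref{Prop:CompareBetas}(ii) for small odd $m$. For $m=3$ and $m=7$ one must use Table \ref{Table:HomotopyGroupsLowDimensions}: $\pi_m(\SO(m))\cong\Z$ maps onto its image in $\pi_m(\SO(m+1))\cong\Z\oplus\Z$, and that image must map isomorphically onto $\pi_m(\mathbf{SO})\cong\Z$. I expect to confirm this with the standard fact that the tangent bundle class of $\sphere^{m+1}$ generates the kernel of $\mathbf{S}_*$ and is not in $S_*\pi_m(\SO(m))$ for $m+1\ne 2,4,8$. I also need that $\pi_m(\SO(m))\to\pi_m(\mathbf{SO})$ is injective here (infinite cyclic with finite kernel, hence injective). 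If this step fails, the fallback is the direct geometric computation of $S\beta$ on explicit spheres, as outlined in the introduction.
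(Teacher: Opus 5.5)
Your proposal is correct and is essentially the paper's proof: Theorem \ref{thm:hom_even} gives torsion-free homology with $b_{n-3}=n-1$, stable triviality gives $\alpha=0$ and $\hat\beta=0$ (Corollary \ref{Cor:StableTangentialInvariants}), Proposition \ref{Prop:CompareBetas}(ii) upgrades this to $S\beta=0$, and Theorem \ref{thm:WallSimplified} concludes. One correction to your small-$m$ check: for $m\in\{3,7\}$ the map $S_*\pi_m(\SO(m))\to\pi_m(\mathbf{SO})$ is not an isomorphism but is injective with image of index two. Only injectivity is used, and it holds for every odd $m$: $\ker\mathbf{S}_*$ is generated by $[\T\sphere^{m+1}]$, which has Euler number $2\neq 0$, while $S_*\pi_m(\SO(m))=\ker\big(p_*\colon\pi_m(\SO(m+1))\to\pi_m(\sphere^m)\big)$ is the Euler-number-zero subgroup; the condition $m+1\neq 2,4,8$ matters only for surjectivity.
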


\begin{proof}
Note that by Proposition \ref{Prop:CompareBetas}(ii) and Corollary \ref{Cor:StableTangentialInvariants}, the invariant $S\beta$ vanishes for $X$. The result then follows from Theorem \ref{thm:WallSimplified} and Theorem \ref{thm:hom_even}. 
\end{proof}

By topological invariance, we conclude our first main result. 
\begin{corollary}
Let $n \geq 6$ be even, $\rho:\pi_1S \rightarrow \PSL(n,\R)$ be a Hitchin representation, and $\hat{\Omega}_{\rho} \subset V_2(\R^n)$ be the universal cover of the domain $\Omega_{\rho} \subset \mathcal{F}_{1,n-1}$. There is a smooth fiber bundle projection $\hat{\Omega}_{\rho} \rightarrow \Ha^2$ with fiber almost diffeomorphic to 
$\#_{i=1}^{n-1} (\sphere^{n-3}\times \sphere^{n-2})$. 
\end{corollary}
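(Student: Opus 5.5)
The corollary follows by transporting Theorem \ref{thm:GlobalTopologyEven} from the Fuchsian-Hitchin base of pencil to an arbitrary Hitchin $\rho$, using the topological invariance results of \S\ref{Subsec:ComparisonDomains}.

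\textbf{Step 1: the Fuchsian-Hitchin case.} Fix a Fuchsian-Hitchin $\rho_0=\iota_{\pr}\circ\rho_0'$ with equivariant totally geodesic uniformization $f:\tilde S\to\Ha^2_{\pr}$. By Theorem \ref{thm:EquivalentDomains}, $\Omega_{\rho_0}=\Omega_f$. Corollary \ref{Cor:NearestPointProjUpstairs} then gives a smooth fiber bundle $\pi\circ q:\hat\Omega_{\rho_0}\to\tilde S\cong\Ha^2$ whose fiber over $x$ is $q^{-1}(\mathcal{B}(\mathcal{P}_x))$, with $\mathcal{P}_x=\T_{f(x)}\Ha^2_{\pr}$. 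I take $q=G_Q$, the covering of Proposition \ref{Prop:LHEmbedding_Better} with $Q=f(x)$. With this choice the fiber is exactly $X=\hat{\mathcal{B}}(\mathcal{P})$. Any two $\mathbb{Z}_2\times\mathbb{Z}_2$-coverings $V_2(\R^n)\to\mathcal{F}_{1,n-1}$ coming from Propositions \ref{Prop:LHTopologicalType} and \ref{Prop:LHEmbedding_Better} differ by the diffeomorphism $(u,v)\mapsto(x,y)$ of $V_2(\R^n)$, so the choice of covering does not affect the diffeomorphism type of the lift.

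\textbf{Step 2: identify the fiber.} Theorem \ref{thm:GlobalTopologyEven} says $X$ is almost diffeomorphic to $\#_{i=1}^{n-1}(\sphere^{n-3}\times\sphere^{n-2})$. This settles the Fuchsian-Hitchin case.

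\textbf{Step 3: arbitrary Hitchin $\rho$.} The corollary following Theorem \ref{thm:EquivalentDomains} shows that $\hat\Omega_\rho$ is diffeomorphic to $\hat\Omega_{\rho_0}$. Composing this diffeomorphism with the bundle projection of Step 1 gives a smooth fiber bundle $\hat\Omega_\rho\to\Ha^2$ with the same fiber. The projection need not be $\rho$-equivariant, but the statement only asks for a bundle projection, so this is enough.

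\textbf{Step 4: the universal cover claim.} For $n\ge6$, Corollary \ref{cor:highly-connected} shows $X$ is simply connected. Since $\Ha^2$ is contractible, $\hat\Omega_\rho\simeq X$ is simply connected. It is a connected 4-fold cover of $\Omega_\rho$, hence its universal cover.

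The only point needing care is Step 3: verifying that the cited invariance corollary genuinely yields a diffeomorphism of the lifted domains, rather than only of $\Omega_\rho$. Its proof handles this via the isotopy of inclusions, and I would simply invoke it.
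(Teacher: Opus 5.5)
Your proposal is correct and takes the same route as the paper. The paper deduces this corollary in one line ("by topological invariance") from Theorem~\ref{thm:GlobalTopologyEven}, combined with the fibration of Corollary~\ref{Cor:NearestPointProjUpstairs}, Theorem~\ref{thm:EquivalentDomains} in the Fuchsian-Hitchin case, and the invariance of the lifted domain $\hat{\Omega}_{\rho}$; your Steps 1--4 spell this out, including the check via Corollary~\ref{cor:highly-connected} that $\hat{\Omega}_{\rho}$ is indeed the universal cover.
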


\subsection{Classification, \texorpdfstring{$n$}{n} odd}\label{Sec:TopologyOdd}

We now classify $X = \hat{\mathcal{B}}(\mathcal{P})$ up to connected sum with homotopy spheres in the case that $n\geq 7$ is odd. Unlike in the even case, the hardest work is yet to come here: we must compute the invariant $S\beta$. We show that $S\beta=0$ and thereby deduce that $X$ is once more almost diffeomorphic to a connected sum of products of spheres.

\subsubsection{Generators for $H_{n-2}(X)$.}\label{Subsec:MiddleHomologyGenerators}

We now describe a set of redundant generators for $H_{n-2}(X)$. We will see dichotomy for these homology classes emerge: those induced by inclusion of $X_{gen}$ and those that are not. For the latter classes, the computation of $S\beta$ shall be considerably more difficult.

We again use the open covering $X=A \cup B$, where $A$ a regular neighborhood of $X_{sing}$ and $B=X_{gen}$. 
Recall that $A$ is homotopy equivalent to $X_{sing} \cong (S_{sing}^{o} \times \sphere^{n-3}) \sqcup (S_{sing}^{no} \tilde{\times} ~ \sphere^{n-3})$, and $S_{sing}=S_{sing}^o\sqcup S_{sing}^{no}$ partitions the singular circles according to the orientability or non-orientability of the restriction of $\mathcal{R}^\bot$ to the given circle. 
The space $A \cap B$ is homotopy equivalent to an orientable $\sphere^{n-4}$-bundle over the base $V=\bigsqcup_{i=1}^{r(n)} (\sphere^1\times \sphere^{n-3})$ with Euler class $(\pm2,\pm2,\cdots, \pm2) \in H^{n-3}(V,\Z)$ by Theorem \ref{thm:EulerClassXgen}.

Now, Mayer-Vietoris provides the following short-exact sequence: 
\begin{align}\label{MV-441}
    0=H_{n-2}(A\cap B) \longrightarrow H_{n-2}(A) \oplus H_{n-2}(B) \xrightarrow{\Psi_{n-2}} H_{n-2}(X) \xrightarrow{\partial_{n-2}}  \ker(\Phi_{n-3}) \longrightarrow 0 
\end{align}
Our first task will be to understand the image of $\Psi_{n-2}$. 
To this end, we introduce a definition that we shall frequently reference. 
\begin{definition}[Spherical cylinders]
Let $P_i \subset A \cap B$ be the embedded spherical cylinder $P_i\cong \sphere^1\times \sphere^{n-4}$ given by taking a circle $\gamma_i \subset p(A \cap B)$ homotopic to $C_i$ and defining $P_i := p^{-1}(\gamma_i)$. 
\end{definition}

Now, let us partition the index set $I = \{1,2,\dots, r(n)\}$ of the singular circles as $I=I^{o}\sqcup I^{no}$, where $i \in I^o$ exactly when $C_i \subset S_{sing}^o$. 

The following result describes geometrically the cokernel of $\Psi_{n-2}$. 

\begin{lemma}[Classifying $\mathrm{coker}(\Psi_{n-2})$]\label{Lem:ImagePsi}
The image of $\partial_{n-2}$ is freely generated by the classes $\{[P_i] \}_{i \in I^o}$. 
\end{lemma}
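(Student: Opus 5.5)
By exactness of \eqref{MV-441}, the image of $\partial_{n-2}$ equals $\ker(\Phi_{n-3})$, where
\[\Phi_{n-3}=(i_*,j_*):H_{n-3}(A\cap B,\Z)\rightarrow H_{n-3}(A,\Z)\oplus H_{n-3}(B,\Z).\]
So the plan is to compute this kernel explicitly in terms of the geometric generators of $H_{n-3}(A\cap B)$ used in the proof of Theorem \ref{thm:hom_odd}.

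\textbf{Step 1: generators of the source.} By Lemma \ref{Lem:HomologyIntersection}(i), $H_{n-3}(A\cap B,\Z)\cong\Z_2^{r(n)}$. Each component $(A\cap B)_i$ is an $\sphere^{n-4}$-bundle over $\sphere^1\times\sphere^{n-3}$ with Euler class $\pm2$ on the $\sphere^{n-3}$ factor. The Gysin sequence in homology identifies $H_{n-3}((A\cap B)_i)$ as follows:
\begin{itemize}
\item it equals $H_1\otimes H_{n-4}$-type classes modulo the Euler class relation;
\item the spherical cylinder $P_i=p^{-1}(\gamma_i)$ represents a class of order two;
\item the would-be section classes over $S_i$ do not exist integrally, since the Euler class is nonzero.
\end{itemize}
Hence I will check that $[P_i]$ generates the $i$-th $\Z_2$ summand. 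The check is to compare with the $\Z_2$-computation in Theorem \ref{thm:hom_odd}: the reduction mod 2 of $[P_i]$ is nonzero. So $H_{n-3}(A\cap B,\Z)=\bigoplus_i\Z_2[P_i]$.

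\textbf{Step 2: the map to $B$.} The image $j_*[P_i]\in H_{n-3}(X_{gen})$ should vanish. By Lemma \ref{Lem:CohomologyGenericLocus}(i), $H_{n-3}(X_{gen},\Z)\cong\Z^{r(n)-1}$ is torsion-free, while $[P_i]$ is torsion, so $j_*[P_i]=0$ automatically. Alternatively, $\gamma_i$ bounds a disk in the simply connected $S_{gen}$ over which the bundle is trivial. Thus the kernel is governed entirely by $i_*:H_{n-3}(A\cap B)\rightarrow H_{n-3}(A)$.

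\textbf{Step 3: the map to $A$.} For $i\in I^{o}$, Corollary \ref{cor:spherical_inclusion}(i) gives $i_*[P_i]=0$, so $[P_i]\in\ker\Phi_{n-3}$. For $i\in I^{no}$, Corollary \ref{cor:spherical_inclusion}(ii) says $[P_i]$ is nonzero in $H_{n-3}(A,\Z_2)$. I must upgrade this to integral nonvanishing and rule out combinations. Since $A$ is a disjoint union over $i$, $i_*$ is diagonal with respect to the components, so it suffices to treat each $i$ separately. Integrally, $i_*[P_i]$ lies in $H_{n-3}(\sphere^1\tilde\times\sphere^{n-3})\cong\Z_2$ by Lemma \ref{lem:homology_singular}. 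Its mod-2 reduction is injective on this $\Z_2$, because $H_{n-4}=0$ kills the Tor term in the universal coefficient sequence. Hence $i_*[P_i]\neq0$ integrally. It follows that
\[\ker\Phi_{n-3}=\bigoplus_{i\in I^o}\Z_2[P_i],\]
and the image of $\partial_{n-2}$ is exactly this group, generated by $\{[P_i]\}_{i\in I^o}$. Here ``freely'' should be read as: these classes form a basis of an $\mathbb{F}_2$-vector space, with no relations among them.

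\textbf{Main obstacle.} Step 1 is the main obstacle: pinning down integrally that the cylinders $[P_i]$, rather than some other class, generate the torsion group $H_{n-3}(A\cap B,\Z)$. For this I would use the Gysin/Wang sequence over the $\sphere^1$-factor together with the mod-2 identification already established in Theorem \ref{thm:hom_odd}.
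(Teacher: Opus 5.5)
Your proposal is correct and follows the paper's argument. It identifies $\mathrm{im}(\partial_{n-2})=\ker(\Phi_{n-3})$, uses that $[P_i]$ generates $H_{n-3}((A\cap B)_i,\Z)\cong\Z_2$, kills the $B$-component by torsion-freeness, and sorts the $[P_i]$ via Corollary \ref{cor:spherical_inclusion}. The only difference is in the case $n\equiv 1 \bmod 4$: the paper sizes $\ker(\Phi_{n-3})\cong\Z_2^{r(n)-2}$ using the freeness of $H_{n-3}(X)$ from Theorem \ref{thm:hom_odd}, whereas you use that $i_*$ is block-diagonal over the components of $A$ and lift the mod-$2$ nonvanishing to integral nonvanishing by naturality of reduction (no injectivity or Tor argument is needed for that direction), which makes the lemma independent of the torsion computation.
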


Thus, a nonzero element $x \in H_{n-2}(X)$ satisfies $x \notin \mathrm{image}(\Psi_{n-2})$ if and only if $\partial_{n-2}(x)$ lies in the span of $\{[P_i]\}_{i \in I^o}$.

\begin{proof}
Let us analyze in more detail the group $\ker(\Phi_{n-3})$, which take two different forms according to the congruence class of $n$ mod $4$.
Let $(A\cap B)_i$ denote the connected component of $A\cap B$ intersecting a neighborhood of $C_i$. 
Using Mayer-Vietoris again, one finds that the spherical cylinder $[P_i]$ generates $H_{n-3}( (A \cap B)_i,\Z)\cong \Z_2$. As $[P_i]$ has order two in $H_{n-3}(A\cap B, \Z)$, orientations will be of no concern. \medskip

\textbf{Case 1: $\bm{n \equiv 3 \bmod 4}$}. Here, $S_{sing}^{no} = \emptyset$, hence $\Phi_{n-3}: H_{n-3}(A\cap B) \rightarrow H_{n-3}(A)$ is the trivial map, because $H_{n-3}(A\cap B)$ is pure torsion whereas $H_{n-3}(A) \oplus H_{n-3}(B)$ is a free $\Z$-module. Therefore, $\partial_{n-2}$ surjects onto $H_{n-3}(A\cap B)\cong \Z_{2}^{r(n)}$, which is generated by all spherical cylinders $\{ \, [P_i]\, \}_{i=1}^{r(n)}$. The claim follows since $I = I^o$ when $n \equiv 3 \bmod 4$. \medskip

\textbf{Case 2: $\bm{n \equiv 1 \bmod 4}$}. In this case, $S_{sing}^{no}\neq \emptyset$, and $\Phi_{n-3}$ fits into the exact sequence:

\begin{center}
    \begin{tikzcd}[scale=1.5, row sep=2.5em]
    |[label=below:{\substack{\cong \\ \\ \Z_{2}^{r(n)}}}]| H_{n-3}(A \cap B) \arrow{r}{\Phi_{n-3}} &
    |[label=below:{\substack{\cong \\ \\  \Z_{2}^{2} \oplus \Z^{r(n)-2} \oplus \Z^{r(n)-1}}}]| H_{n-3}(A)\oplus H_{n-3}(B)\arrow{r}{\Psi_{n-3}} &
   |[label=below:{\substack{\cong \\ \\ \Z^{2r(n)-3}}}]| H_{n-3}(X) \arrow{r}& 0
   \end{tikzcd}
\end{center} 
Here, we apply Lemma \ref{Lem:CohomologyGenericLocus}, Lemma \ref{lem:homology_singular}, and Theorem \ref{thm:hom_odd} to evaluate the homology groups. 
 In particular, since $H_{n-3}(X)$ is free, $\ker(\Phi_{n-3}) \cong \Z_{2}^{r(n)-2}$. 
A spherical cylinder $P_i$ satisfies $[P_i] \notin \ker(\Phi_{n-3})$ exactly when $i \in I^{no}$ by Corollary \ref{cor:spherical_inclusion}. 
 Therefore, the image of $\del_{n-2}$ is the subgroup of $H_{n-3}(A\cap B)$ generated by all spherical cylinders $P_{i}$ with $i \in I^{o}$.
\end{proof}

We will use Lemma \ref{Lem:ImagePsi}  to describe two explicit families of homology classes that generate $H_{n-2}(X)$. To achieve such generators, we now describe piecewise smoothly embedded $(n-2)$-spheres $Q_i^{\pm}$
satisfying $\partial_{n-2}([Q_i^{\pm}])=[P_i]$. 

To define the spheres $Q_i^{\pm}$, we shall need the following preliminary definition. 
\begin{definition}[Hemisphere Bundles]\label{Defn:HemisphereBundles}
Let $P_i' \cong \sphere^1\times \sphere^{n-4}$ be the trivial fiber sub-bundle of $p^{-1}(C_i)$ from Lemma \ref{Lem:Replace}. Denote 
$Q_i^+(A), Q_i^-(A) \cong \sphere^1 \times \D^{n-3}$ as the hemisphere bundles over $C_i$ with common boundary $P_i'$, such that $p^{-1}(C_i) =Q_i^+(A) \cup Q_i^-(A)$. 
\end{definition}

Now, the spheres of interest are built as follows. 

\begin{lemma}[Spherical Generators of $\coker(\Psi_{n-2})$]\label{Lem:DefineQi}
Let $D^2_i \subset \sphere^{n-1}$ be a smoothly embedded disk with $\partial D^2_i=C_i$ and interior $B^2_i \subset S_{gen}$. 
There is a topological $(n-2)$-sphere $Q_i^{\pm} \subset X$ satisfying the following:
\begin{enumerate}[label=(\roman*)]
    \item $Q_i^{\pm} \subset p^{-1}(D^2_i)$, so that $p$ defines a continuous map $p: Q_i^{\pm} \rightarrow D^2_i$. 
    \item $p^{-1}(C_i) = Q_i^{\pm}(A)$. 
    \item $\partial_{n-2}([Q_i^{\pm}])= [P_i]$. 
\end{enumerate}
\end{lemma}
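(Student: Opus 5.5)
The plan is to build $Q_i^{\pm}$ as a union of two pieces glued along $P_i'$: a piece over the closed disk $D^2_i$ lying in $\overline{X_{gen}}$, and the hemisphere bundle $Q_i^{\pm}(A)$ over $C_i$ from Definition \ref{Defn:HemisphereBundles}. (For (ii) I read the statement as $Q_i^{\pm}\cap p^{-1}(C_i)=Q_i^{\pm}(A)$.)

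First I build the generic piece. The interior $B^2_i$ is contractible and lies in $S_{gen}$, so the orientable $\sphere^{n-4}$-bundle $X_{gen}|_{B^2_i}$ is trivial, and $p^{-1}(B^2_i)\cong B^2_i\times\sphere^{n-4}$. To describe the behaviour at the boundary, I take a collar $C_i\times[0,\varepsilon)$ of $\partial D^2_i$, with $\varepsilon$-direction the inward normal field $\eta$. The limiting argument in the proof of Lemma \ref{Lem:Replace} then shows that the fibers $\sphere(W_{u}^\bot)$ over $u=\exp_x(t\eta(x))$ converge, as $t\to0$, to $\sphere((W_x\oplus L_{\eta(x)})^\bot)=P_i'|_x$. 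Moreover, Lemma \ref{Lem:OrthogonalBundleConvergence}, applied over $C_i\times[0,\varepsilon)$, shows that the family of fibers extends continuously to an $\sphere^{n-4}$-bundle over the closed collar whose restriction to $t=0$ is exactly $P_i'$. Gluing this with the trivial bundle over the interior gives a compact set $Q_i(B)\subset p^{-1}(D^2_i)$. It is homeomorphic to $\overline{\mathbb{B}}^2\times\sphere^{n-4}$, because an orientable $\sphere^{n-4}$-bundle over a closed disk is trivial, and $\partial Q_i(B)=P_i'$.

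Next I glue. I set $Q_i^{\pm}=Q_i(B)\cup_{P_i'}Q_i^{\pm}(A)$. Topologically this is $(\D^2\times\sphere^{n-4})\cup_{\sphere^1\times\sphere^{n-4}}(\sphere^1\times\D^{n-3})=\partial(\D^2\times\D^{n-3})\cong\sphere^{n-2}$, provided the identification of $P_i'$ induced from the two sides matches the standard product identifications. Triviality of $P_i'$ (Lemma \ref{Lem:Replace}) and of $Q_i(B)$ lets me choose framings; any mismatch is a bundle automorphism of $\sphere^1\times\sphere^{n-4}$. I would absorb such an automorphism by reparametrizing the $\D^2$ factor, using that $\pi_1(\SO(n-3))$ acts trivially on the result, so the union is still a sphere. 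Properties (i) and (ii) hold by construction.

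For (iii), recall that in Mayer--Vietoris $\partial_{n-2}$ is computed by cutting a cycle along the boundary of a neighborhood and taking the intersection with $A\cap B$. I choose $A$ so that $\partial A$ is $p^{-1}$ of a thin tube around $S_{sing}$. Then $Q_i^{\pm}\cap(\text{collar})$ is $p^{-1}(\gamma_i)\cap Q_i(B)$, which is the sphere bundle over the parallel circle $\gamma_i$, namely $P_i$. Thus $\partial_{n-2}[Q_i^{\pm}]=\pm[P_i]$, and since $[P_i]$ has order two the sign is irrelevant.

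The main obstacle is the continuity and embeddedness of $Q_i(B)$ at $C_i$. The fibers over $B^2_i$ are $\sphere^{n-4}$ while those over $C_i$ jump to $\sphere^{n-3}$, so I must check that the limit is independent of how the disk approaches, i.e. that it depends only on the inward normal. This forces $D^2_i$ to meet $C_i$ transversally in a chosen direction $\eta$, and I would fix $\eta$ to be the inward conormal of $D^2_i$.
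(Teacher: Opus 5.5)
Your construction is the same as the paper's. You glue the hemisphere bundle $Q_i^{\pm}(A)$ to the closure of $p^{-1}(B^2_i)$ along $P_i'$, using the limiting argument of Lemma \ref{Lem:Replace}, recognize the result as $\partial(\D^2\times\D^{n-3})$, and obtain (iii) by cutting $D^2_i$ along a parallel circle $\gamma_i$ so the two halves meet in $P_i=p^{-1}(\gamma_i)$. Your extra care — taking $\eta$ to be the inward conormal of $D^2_i$ so the limiting fibers are exactly $P_i'$, and reading (ii) as $Q_i^{\pm}\cap p^{-1}(C_i)=Q_i^{\pm}(A)$ — makes explicit what the paper leaves implicit; the one quibble is that a framing mismatch is absorbed by extending the fiberwise-linear twist over the $\sphere^1\times\D^{n-3}$ side (or avoided by trivializing $P_i'$ from the disk side), not by reparametrizing the $\D^2$ factor, where a nontrivial loop in $\SO(n-3)$ need not extend.
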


While the lemma is not a definition per se, it does uniquely constrain the construction. 

\begin{proof}
  We now describe $Q_i^+$, as $Q_i^-$ is analogous. 
  The idea of the construction is to build $Q_i$ in two pieces, one from each of $X_{gen}$ and $X_{sing}$. Let $\mathbb{D}^k$ denote a closed $k$-disk and observe the following: 
\begin{align}\label{sphereSplitting}
    \sphere^{n-2}=\partial\mathbb{D}^{n-1} = \partial(\mathbb{D}^{2} \times \mathbb{D}^{n-3})= (\sphere^1\times \mathbb{D}^{n-3}) \cup (\mathbb{D}^2\times \sphere^{n-4}),
\end{align} 
which is just a variant of the familiar decomposition of $\sphere^3$ as a union of two solid tori. 

We shall build $Q_i^+$ with a decomposition $Q_i^+ = Q_i^+(A) \cup \overline{Q}_i(B)$ as in \eqref{sphereSplitting}, where  
\begin{itemize}
    \item $Q_i^+(A) \subset X_{sing}$ and $Q_i^+(A) \cong \sphere^1\times \mathbb{D}^{n-3}$, 
    \item $\overline{Q}_i(B) \subset X_{gen}$ and $\overline{Q}_i(B) \cong \mathbb{D}^2 \times \sphere^{n-4}$. 
\end{itemize}
Here, $Q_i^+(A)$ is from Definition \ref{Defn:HemisphereBundles}, we set $Q_i(B) := p^{-1}(B^2_i)$ and then define $\overline{Q}_i(B)$ to be the closure of $Q_i(B)$ in $Q_i := Q_i(A) \sqcup Q_i(B)$. By definition, we automatically have $Q_i \subset p^{-1}(D^2_i)$ and $p^{-1}(C_i)=Q_i^+(A)$. 
What remains is a justification that $Q_i$ is indeed a topological $(n-2)$-sphere, and a verification of (iii). 

Following the proof of Lemma \ref{Lem:Replace}, one sees that $\partial \overline{Q}_i(B)=P_i'$. Hence, by the splitting \eqref{sphereSplitting}, we conclude that $Q_i^+(A)$ and $Q_i(B)$ glue together to produce a topological $(n-2)$-sphere. 

We now check (iii). We can decompose $Q_i^+$ alternatively to make this evident. Up to adjusting our initial disk $D^2_i$, we can suppose that the circle $\gamma_i \subset \sphere^{n-1}$ such that $P_i = p^{-1}(\gamma_i)$ satisfies $\gamma_i \subset B^2_i$. 
Let us split $D^2_i = \mathbb{A}^2_i \cup \overline{D}^2_i$, where the annulus $\mathbb{A}^2_i$ has boundary $C_i \sqcup \gamma_i$, and $\overline{D}^2_i $ is a sub-disk with boundary $\gamma_i$. 
We can then instead write $Q_i^+ = Q_i'\cup Q_i''$, where 
$Q_i' = p^{-1}(\mathbb{A}^2_i)$ and $Q_i'' = p^{-1}(\overline{D}^2_i)$. 
Observe the following: $Q_i' \subset A$, $Q_i'' \subset B$ and these submanifolds have common boundary $P_i$, with opposite orientation.
By definition of the boundary map $\partial$ in Mayer-Vietoris \cite[page 150]{Hat01}, we conclude $\partial_{n-2}([Q_i^+]) = [P_i]$. 
\end{proof}

We now describe complementary generators of $H_{n-2}(X)$, which come from the inclusion $X_{gen} \hookrightarrow X$. Recall the fibration $\sphere^{n-4} \rightarrow X_{gen}\rightarrow S_{gen}$. In $\S$\ref{Sec:SgenHomologyGenerators}, we described separating $(n-2)$-spheres $\Sigma_i$ that generate the group $H_{n-2}(S_{gen})$. The Gysin sequence implies that the fibration $p: X_{gen} \rightarrow S_{gen}$ has the property that $p_*:H_{n-2}(X_{gen}) \rightarrow H_{n-2}(S_{gen})$ is an isomorphism. 
Thus, the $(n-2)$-spheres $\Sigma_i$ induce homology classes $p_*^{-1}([\Sigma_i]) \in H_{n-2}(X_{gen})$. Although this definition is rather indirect, we shall later prove in $\S$\ref{Subsec:SeparatingSpheres} that $p:X_{gen}\rightarrow S_{gen}$ admits a section $s\in \Omega^0(\Sigma_i, X_{gen}|_{\Sigma_i})$, and hence the class
$p_*^{-1}([\Sigma_i])$ is just $s_*([\Sigma_i])$.

We now verify that the homology classes defined above generate $H_{n-2}(X)$.

\begin{lemma}[Dichotomy of Generators]\label{Lem:Dichotomy}
$H_{n-2}(X)$ is generated by the homology classes $\{[Q_{i}^{\pm}]\}_{i \in I^{o}}$ along with $\{\Psi_{n-2}(p_*^{-1}([\Sigma_i]))\}_{i \in I}$. 
\end{lemma}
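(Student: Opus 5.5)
The argument rests on the short exact sequence \eqref{MV-441}:
\[ 0 \rightarrow H_{n-2}(A)\oplus H_{n-2}(B) \xrightarrow{\Psi_{n-2}} H_{n-2}(X) \xrightarrow{\partial_{n-2}} \ker(\Phi_{n-3}) \rightarrow 0. \]
A family of classes generates $H_{n-2}(X)$ as soon as two things hold. First, the image of the family under $\partial_{n-2}$ generates $\ker(\Phi_{n-3})$. Second, the image of $\Psi_{n-2}$ lies in the subgroup generated by the family. The plan is to check these two conditions separately for the classes $[Q_i^{\pm}]$ and $\Psi_{n-2}(p_*^{-1}[\Sigma_i])$.

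\emph{Cokernel part.} By Lemma \ref{Lem:ImagePsi}, $\ker(\Phi_{n-3}) = \mathrm{image}(\partial_{n-2})$ is generated by the classes $[P_i]$ with $i\in I^o$. By Lemma \ref{Lem:DefineQi}(iii), $\partial_{n-2}([Q_i^{\pm}])=[P_i]$. Hence the classes $[Q_i^{\pm}]$, $i \in I^o$, map onto $\ker(\Phi_{n-3})$. For this step a single sign, say $Q_i^+$, would already suffice.

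\emph{Image of $\Psi_{n-2}$.} I would split this into the two summands of the domain.

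For $B$: recall that $p_*\colon H_{n-2}(X_{gen})\to H_{n-2}(S_{gen})$ is an isomorphism, which follows from the Gysin sequence since $H_{n-2}$ and $H_{n-3}$ of $S_{gen}$ are free. By \S\ref{Sec:SgenHomologyGenerators}, the spheres $[\Sigma_i]$ generate $H_{n-2}(S_{gen})$. Therefore the classes $p_*^{-1}([\Sigma_i])$ generate $H_{n-2}(B)$, and their images under $\Psi_{n-2}$ are exactly the second family in the statement.

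For $A$: we have $A\simeq X_{sing}$. On each orientable circle, $p^{-1}(C_i)\cong \sphere^1\times\sphere^{n-3}$, so $H_{n-2}$ of that component is $\Z$, generated by the fundamental class of $p^{-1}(C_i)$. On the non-orientable circles the twisted product has $H_{n-2}=0$, by Lemma \ref{lem:homology_singular}. Consequently $H_{n-2}(A)$ is generated by the classes $[p^{-1}(C_i)]$ for $i\in I^o$.

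The key observation is that $p^{-1}(C_i)=Q_i^+(A)\cup Q_i^-(A)$, glued along $P_i'$. On the other side, $Q_i^{\pm}=Q_i^{\pm}(A)\cup\overline{Q}_i(B)$, and the cap $\overline{Q}_i(B)$ is common to both spheres. Orienting everything compatibly, the $B$-pieces cancel in the difference of cycles, and one should obtain
\[ [Q_i^+]-[Q_i^-] = \pm\,\Psi_{n-2}\big([p^{-1}(C_i)]\big) \]
in $H_{n-2}(X)$. This identity shows that $\Psi_{n-2}(H_{n-2}(A))$ lies in the span of the classes $[Q_i^{\pm}]$, and this is why both signs are needed in the statement.

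Combining the two parts with exactness of \eqref{MV-441} gives the lemma.

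The step I expect to be delicate is this chain-level identity. One has to verify that $Q_i^+$ and $Q_i^-$ are cellular cycles sharing the common chain $\overline{Q}_i(B)$, and that the orientations on the two hemisphere bundles induce opposite orientations on $P_i'$. I would handle this by triangulating $p^{-1}(D_i^2)$ compatibly with the decomposition, so that $Q_i^+ - Q_i^-$ becomes literally the cycle $Q_i^+(A)-Q_i^-(A)$, whose class is the fundamental class of $p^{-1}(C_i)$.
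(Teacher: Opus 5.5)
Your proposal is correct and follows the paper's proof essentially step for step. The ingredients are the same: exactness of \eqref{MV-441}; the classes $[Q_i^{\pm}]$ hitting $\ker(\Phi_{n-3})$ via Lemmas \ref{Lem:ImagePsi} and \ref{Lem:DefineQi}; the lifts $p_*^{-1}([\Sigma_i])$ generating $H_{n-2}(B)$; and the chain-level cancellation of the common cap $\overline{Q}_i(B)$, which expresses $[p^{-1}(C_i)]$ through $[Q_i^+]$ and $[Q_i^-]$. The paper writes this last identity as a sum $[Q_i^+]+[Q_i^-]$ by giving the two copies of the cap opposite orientations, which is your difference after reorienting $Q_i^-$.

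One small correction concerns $p_*\colon H_{n-2}(X_{gen})\to H_{n-2}(S_{gen})$. It is an isomorphism because the flanking Gysin terms $H_2(S_{gen})$ and $H_1(S_{gen})$ vanish, not because $H_{n-2}$ and $H_{n-3}$ of $S_{gen}$ are free.
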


\begin{proof} From the short exact sequence
\[ 
0 \longrightarrow H_{n-2}(A) \oplus H_{n-2}(B) \xrightarrow{\Psi_{n-2}} H_{n-2}(X) \xrightarrow{\partial_{n-2}}  \ker(\Phi_{n-3})\longrightarrow 0, 
\]
we know that $H_{n-2}(X)$ is generated by the image of $\Psi_{n-2}$ together with some homology classes $\{x_{i}\}$ such that $\partial_{n-2}(x_{i})$ generate $\ker(\Phi_{n-3})$. By Lemma \ref{Lem:DefineQi} the family $\{[Q_{i}^{\pm}]\}_{i}$ has this property.
Also by construction, the classes $p_*^{-1}([\Sigma_i])$, for $i \in I$, generate $H_{n-2}(B)$.  
Thus, the lemma follows if $\Psi_{n-2}(H_{n-2}(A))$ is spanned by $\{[Q_{i}^{\pm}]\}_{i}$. The remainder of the proof verifies this is indeed the case. 

Since $A$ is homotopy equivalent to $(S_{sing}^{o} \times \sphere^{n-3}) \sqcup (S_{sing}^{no} \tilde{\times} ~ \sphere^{n-3})$, by Lemma \ref{lem:homology_singular}, the homology group $H_{n-2}(A)$ is generated by $[p^{-1}(C_i)]$ with $C_{i} \subset S_{sing}^{o}$. Let us denote $Y_i:=p^{-1}(C_i) \cong \sphere^1\times \sphere^{n-3}$. 
We still denote by $[Y_i]$ the corresponding classes in $H_{n-2}(X)$, which does not cause ambiguity because $\Psi_{n-2}$ is injective. We claim that $[Q_{i}^{+}]+[Q_{i}^{-}]=[Y_i]$, which will complete the proof. 
To see this, we represent the class $[Q_{i}^{+}]+[Q_{i}^{-}]$ by the cycle 
\[Q_{i}^{+}(A) \cup Q_{i}^+(B) \cup Q_{i}^{-}(A) \cup Q_{i}^-(B),\]
where we distinguish the orientation on $Q_i(B) \cong \mathbb{B}^2 \times \sphere^{n-4}$. 
It is clear that $Y_i = Q_i^+(A) \cup Q_i^-(A)$. The other two terms $Q_i^{+}(B)$ and $Q_i^-(B)$ cancel out as singular chains, yielding the desired equality $[Y_i]=[Q_{i}^{+}]+[Q_{i}^{-}]$.
\end{proof}

\subsubsection{Computing $S\beta$ on $Q_{i}$}

We now prove the invariant $S\beta$ vanishes 
on the homology classes $[Q_i^{\pm}]$. The following proof is quite involved, so we provide a summary here.

\begin{itemize}
    \item \textbf{Step 0}: We orthogonally split the tangent bundle $\T X = V\oplus H$ into vertical and horizontal parts using the projection map $p: X \rightarrow \sphere^{n-1}$ from the Structure Lemma. We note how this splitting interacts with $\T Q_i(A)$ and $\T Q_i(B)$.
    \item \textbf{Step 1}: We trivialize the normal bundle of $Q_i$ restricted to $Q_i(B)$ geometrically. 
    \item \textbf{Step 2}: We trivialize the normal bundle of $Q_i$ restricted to $Q_i(A)$ geometrically. 
    \item \textbf{Step 3}: We carefully smooth the corners of the $C^0$-embedded submanifold $Q_i$ to become a $C^{\infty}$-embedded submanifold $Q_i^{s}$, where $s$ stands for ``smooth''. In particular, the gluing applies a flow to $Q_i(A)$ and $Q_i(B)$, which near their common boundary pushes out, and then appends a new cylinder $\mathcal{C}\cong (\sphere^1\times \sphere^{n-4})\times (0,1)$ over the boundary in the middle. See Figure \ref{Fig:Smoothing}. This new cylindrical piece plays a key role in Step 4.
    \item \textbf{Step 4}: We produce $n-4$ linearly independent sections of the once-stabilized normal bundle $E=\N Q_i^{s}\oplus \varepsilon^1_{\R}$ by taking $(n-4)$-frames $\mathbf{V}$ and $\mathbf{W}$ for the normal bundle restricted to $Q_i(A)$ and $Q_i(B)$, respectively, and then using the new cylinder $\mathcal{C}$ to glue these local sections together to global sections.
    \item \textbf{Step 5}: We finally prove that $E$ is trivial. Since we know $E$ is stably trivial, to prove it is actually trivial, we need only show $E$ is not isomorphic to $\T \sphere^{n-2}$. By Adams' celebrated work \cite{Ada60} on vector fields on spheres, to forbid $E$ from being $\T\sphere^{n-2}$, the existence of an $(n-4)$-frame on $E$ is sufficient.
\end{itemize}
We now prove the theorem. 

\begin{theorem}\label{thm:nightmare_spheres} 
Let $n \geq 7$ be odd. Then $S\beta([Q_i])=0$. That is, the homology class $[Q_i]$ is represented by a smoothly embedded sphere $Q_i^{s}$ in $X$ such that the once stabilized normal bundle $\N Q_i^{s}\oplus \varepsilon^1_{\R}$ is trivial. 
\end{theorem}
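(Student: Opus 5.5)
I would follow the five-step outline the paper gives just before the statement. The argument rests on two facts. First, $E:=\N Q_i^s\oplus\varepsilon^1_{\R}$ is a rank $n-2$ bundle over $\sphere^{n-2}$ that is stably trivial, by Proposition \ref{Prop:StableNormal} and Lemma \ref{Lem:StablyTrivial}. Second, for $n\geq 7$ odd we have $m=n-3$ even, so the kernel of stabilization on $\pi_{n-3}(\SO(n-2))$ is the $\Z_2$ generated by $[\T\sphere^{n-2}]$; this is the case analysis in Proposition \ref{Prop:CompareBetas}(i) together with Table \ref{Table:StableHomotopyGroups}. Hence $E$ is either trivial or isomorphic to $\T\sphere^{n-2}$. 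Since $n-2$ is odd, Adams' theorem gives $\T\sphere^{n-2}$ at most $\rho(n-1)-1$ independent sections, and for $n\geq7$ this is far fewer than $n-4$. So it suffices to exhibit $n-4$ pointwise independent sections of $E$.

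\textbf{Steps 0--2 (local frames).} Split $\T X=V\oplus H$ orthogonally, where $V=\ker dp$ and $H$ is its complement in the ambient metric on $V_2(\R^n)$.

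On the generic piece $Q_i(B)=p^{-1}(B^2_i)$, a full $\sphere^{n-4}$-bundle over a disk, the normal bundle of $Q_i$ inside $X$ is horizontal. It is identified with the pullback under $p$ of $\N_{\sphere^{n-1}}D^2_i$ intersected with the image of $dp$. Over the disk, $\T X|_{Q_i(B)}$ is trivialized because $\mathcal{R}^\bot$ is trivial over a contractible base. This gives a normal frame $\mathbf{W}$ of rank $n-3$.

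On the singular piece $Q_i^+(A)$, a hemisphere bundle over $C_i$ with $i\in I^o$, the normal bundle has two parts:
\begin{itemize}
\item the vertical complement of the hemisphere inside the fiber $\sphere^{n-3}$, which is a line;
\item the horizontal directions $\N_{\sphere^{n-1}}C_i$ minus the one direction tangent to $D_i^2$.
\end{itemize}
Orientability of $\mathcal{R}^\bot|_{C_i}$ (Corollary \ref{cor:orientability_singular}) and triviality of $\T\sphere^{n-1}|_{C_i}$ make both parts trivial. I would extract a frame $\mathbf{V}$ from them. Here I would use Theorem \ref{thm:EulerClassXgen}(i) and Lemma \ref{Lem:Replace} to control how the fiber directions degenerate as one approaches $C_i$.

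\textbf{Step 3 (smoothing).} Near the corner $P_i'=\partial Q_i(A)=\partial\overline{Q}_i(B)$, the two pieces meet transversally inside the $2$-dimensional slice spanned by the normal direction of $P_i'$ in the fiber and the radial direction of $D^2_i$. I would round this corner inside a collar that is a product $P_i'\times(\text{corner in }\R^2)$. The result is a smooth sphere $Q_i^s$ that contains a cylinder $\mathcal{C}\cong P_i'\times(0,1)$, and $Q_i^s$ is isotopic to the topological $Q_i^\pm$, hence homologous to it. Checking that it stays embedded is a local tubular-neighbourhood check, and I would not dwell on it.

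\textbf{Step 4 (gluing).} On $\mathcal{C}$, the normal bundle of $Q_i^s$ rotates between the two local normals. After adding the $\varepsilon^1_{\R}$ summand, that rotating line can be absorbed into a fixed rank $2$ subbundle spanned by the two corner directions and the trivial line. I would then choose $(n-4)$ of the directions from $\mathbf{V}$ and $\mathbf{W}$ that agree along $P_i'$. These are the horizontal directions normal to $D^2_i$, which extend continuously across $C_i$ by the limit argument of Lemma \ref{Lem:Replace}. I would interpolate the remaining ones linearly across $\mathcal{C}$, which gives a global $(n-4)$-frame of $E$.

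\textbf{Main obstacle.} The hard step is this matching along $P_i'$. One must show that the horizontal frame over the open disk, which comes from the triviality of $\mathcal{R}^\bot$ over $B^2_i$, limits at $C_i$ to directions that remain normal and independent on the singular fiber, where $\dim\mathcal{R}$ jumps from $2$ to $1$. I would first try to use only directions normal to $D^2_i$ in $\sphere^{n-1}$, lifted horizontally, since these are insensitive to the jump. I expect exactly $n-4$ of them to be available, which matches the count required in Step 5.

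Once the frame exists, Step 5 concludes: $E\not\cong\T\sphere^{n-2}$ by Adams' theorem, so $E$ is trivial and $S\beta([Q_i])=0$.
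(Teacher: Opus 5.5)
Your reduction to exhibiting $n-4$ independent sections of $E$ matches the paper. However, Step 4, the obstacle you flag, fails as proposed, because the horizontal directions normal to $D^2_i$ are \emph{not} insensitive to the jump. At $x\in C_i$ with $\psi x=\lambda x$, put $L_0=\psi-\lambda\Id$. Linearizing the equations defining $X$ shows that the image of $dp_{(x,y)}$ is $x^\perp\cap(L_0y)^\perp$, a hyperplane that depends on the fiber coordinate $y$. So at $(x,y)$ only the subspace $\N_xD^2_i\cap(L_0y)^\perp$ of $\N_xD^2_i$ lifts horizontally, and this is not a fixed set of directions:
\begin{itemize}
\item on $Q_i(A)$ its rank jumps to $n-3$ at the pole of each hemisphere fiber;
\item along $P_i'$, where $\langle L_0y,w\rangle=0$ with $w$ the radial direction of $D^2_i$, it is the tangent space at $L_0y/|L_0y|$ of $\sphere(\N_xD^2_i)$.
\end{itemize}
The map $y\mapsto L_0y/|L_0y|$ is a diffeomorphism from the fiber $\sphere^{n-4}$ of $P_i'$ onto $\sphere(\N_xD^2_i)$. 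Hence the directions normal to both pieces along $P_i'$ form, on each fiber, a copy of $\T\sphere^{n-4}$. That bundle has no $n-4$ independent sections unless $n-4\in\{1,3,7\}$. So for $n=9$ and $n\geq 13$ the matching frame you want along $P_i'$ does not exist. More generally, a frame pulled back from $\N D^2_i$ cannot be continued to $P_i'$ by horizontal lifting.

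Your Step 2 description of $\N Q_i(A)$ is also off. The hemisphere is open in the fiber $\sphere^{n-3}$, so there is no vertical normal line. Instead $\N Q_i(A)$ is identified via $dp$ with $\N_xC_i\cap(L_0y)^\perp$, which has rank $n-3$, whereas your two pieces add up to $n-2$. Triviality still holds, simply because the bundle is orientable over a space homotopy equivalent to $\sphere^1$.

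The missing idea is that no pointwise matching is needed: the cylinder $\mathcal{C}$ absorbs the discrepancy through a homotopy. The paper proceeds as follows.
\begin{itemize}
\item It takes arbitrary frames $\mathbf{V}$ of $\N Q_i(A)$ and $\mathbf{W}$ of $\N\overline{Q}_i(B)$; both bundles are trivial.
\item It fixes a trivialization $\N P_i'\cong\varepsilon^{n-2}_{\R}$, using $\N P_i'\cong \N Q_i(A)|_{P_i'}\oplus\varepsilon^1_{\R}\cong\N\overline{Q}_i(B)|_{P_i'}\oplus\varepsilon^1_{\R}$.
\item It uses that both frames can be taken to depend only on the $C_i$-coordinate, so their restrictions to $P_i'$ are loops in $V_{n-4}(\R^{n-2})$.
\item That Stiefel manifold is simply connected, so the two loops are homotopic. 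The homotopy $\mathbf{Y}_t$ is transported onto the slices of $\mathcal{C}$, giving a continuous $(n-4)$-frame of $\N Q_i^s\oplus\varepsilon^1_{\R}$.
\end{itemize}

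Finally, in Step 5 your claim that $\T\sphere^{n-2}$ has far fewer than $n-4$ independent sections fails for $n=9$, since $\sphere^7$ is parallelizable. That case is instead settled by $\pi_6(\SO(7))=0$, as in the paper.
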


\begin{proof}
\textbf{Step 0: Setup}. Note that there is a natural Riemannian metric on $X$ inherited from $\R^{2n}$. Now, let us introduce some notation. Given $(x,y) \in X$, write $V_{(x,y)} \coloneqq \ker(dp_{(x,y)})$, where $p: X \rightarrow \sphere^{n-1}$ is the almost-fibration from Lemma \ref{Lem:StructureLemma}. We note the following: 
\begin{itemize}
    \item $\dim V_{(x,y)} = n-4$ if and only if $x \in S_{gen}$, 
    \item $\dim V_{(x,y)} = n-3$ if and only if $x \in S_{sing}$. 
\end{itemize}
Indeed, $\dim V_{(x,y)}=\dim\mathcal{R}^\bot_{|x}-1$.
Hence, $dp_{(x,y)}: \T_{(x,y)}X \rightarrow \T_{x}\sphere^{n-1}$ surjects exactly when $(x,y) \in X_{gen}$ and otherwise $\rank( dp_{(x,y)}) = n-2$ when $(x,y) \in X_{sing}$. 

We now define a complementary horizontal subspace via the Riemannian structure on $X$:
\[ H_{(x,y)} \coloneq [ V_{(x,y)}^\bot \subset \T_{(x,y)} X]. \]  
Thus, $\dim H_{(x,y)} =n-1$ exactly when $x \in S_{gen}$ and 
$\dim H_{(x,y)} = n-2$ exactly when $x \in S_{sing}$. \medskip 

\textbf{Step 1: Consider $Q_i(B)$}. Take $(x,y) \in Q_i(B)$. 
Recall that $Q_i(B) = X|_{B^2_i} \cong \mathbb{B}^2\times \sphere^{n-4}$, where $B^2_i\subset S_{gen}$ is the interior of a smooth $2$-disk $D^2_i=B^2_i \sqcup C_i $. 
Using the splitting $\T X = HX \oplus VX$, observe that for any $(x,y) \in Q_i(B)$, its tangent space splits as 
\[ \T_{(x,y)}Q_i(B) = \overline{H}_{(x,y)} \oplus V_{(x,y)},\]
for some two-dimensional subspace $\overline{H}_{(x,y)} \subset H_{(x,y)}$. Hence, 
\[ \big( \N _{(x,y)}Q_i \subset \T_{(x,y)} X \big) = \big(\, \overline{H}_{(x,y)}^\bot \subset H_{(x,y)} \big) .\] 
On the other hand, as noted in \emph{Step 0}, there is a canonical identification between $x^\bot$ and $H_{(x,y)}$ via $dp_{(x,y)}$ in the case that $x \in S_{gen}$. Now, note that the projection $p:V_2(\R^n) \rightarrow \sphere^{n-1}$ is a Riemannian submersion. The restriction $p|_{Q_i(B)}: Q_i(B) \rightarrow B^2_i$ is also a Riemannian submersion. This means $dp_{(x,y)}$ isometrically maps $\overline{H}_{(x,y)}$ to $\T_xB^2_i$ and hence 
\[ dp_{(x,y)}(\N_{(x,y)}Q_i(B)) = \big(\N_xB^2_i \subset \T_x\sphere^{n-1} \big) .\]
The above pointwise identifications describe a canonical bundle isomorphism: 
\[ p^*( \N B^2_i\subset \T\sphere^{n-1})\cong \N Q_i(B).\] In particular, this isomorphism provides a preferred ``geometric'' trivialization of $\N Q_i(B)$, since $\N B^2_i \rightarrow B^2_i$ is a trivial vector bundle.  

We denote $\overline{Q}_i(B) \cong \mathbb{D}^2\times \sphere^{n-4}$ as the closure of $Q_i(B)$ in $Q_i$. 
Now, $\N {\overline{Q}_{i}(B)}$ is also a trivial bundle. Indeed, this is an immediate consequence of the fact that the inclusion map 
$\iota: \mathbb{B}^2 \times \sphere^{n-4} \hookrightarrow \mathbb{D}^2 \times \sphere^{n-4}$ is a homotopy equivalence. 
In particular, we can fix a global frame $\mathbf{W}=(W_j)_{j=1}^{n-3}$ of $\N \overline{Q}_{i}(B)$ such that each section $W_j = W_j(x,y)$ depends only on $x \in \mathbb{D}^2$ and not on $y \in \mathbb{S}^{n-4}$.  

Recall that $P_i'=\partial Q_i(B)$. Since $P_i'$ has trivial normal bundle in $\overline{Q}_i(B)$,
\begin{align}\label{BoundaryNormal}
    \N P_{i}' \cong \N Q_{i}|_{P_{i}'}\oplus \varepsilon_{\R}^{1}.
\end{align}
Thus, $\N P_i'$ is also trivial. Moreover, via the trivialization of $\N\overline{Q}_{i}(B)$, we can find a global frame that restricts to $P_{i}=\partial \overline{Q}_{i}(B)$ as an $(n-3)$-frame of $\N P_{i}$. 

Now, fix any $(x,y) \in P_i'$ and decompose $\N P_i'$ into horizontal and vertical parts under the splitting $\T X= HX \oplus VX$. For later, note that the line $\T_{(x,y)}\overline{Q}_{i}(B) \cap \N_{(x,y)}P_i'$, i.e., the normal line of $P_i'$ in $\overline{Q}_i(B)$, is \emph{not} vertical. \medskip 

\textbf{Step 2: Consider $Q_i(A)$.} Recall that topologically $Q_i(A) \cong \sphere^1 \times \mathbb{D}^{n-3}$. Now, take a point $(x,y) \in Q_i(A)$. In this case, the vertical subspace $V_{(x,y)}$ has dimension $n-3$ and $\N Q_{i}(A)|_{(x,y)}$ is a codimension one subspace $H_{(x,y)}'$ of $H_{(x,y)}$. Note that for $(x,y) \in P_i' =\partial Q_i(A)$, the line $\T_{(x,y)}Q_{i}(A) \cap \N_{(x,y)}P_i'$ \emph{is} vertical. 
This shows that $Q_i$ as currently defined is not smooth. 

As in the previous step, we claim that $\N {Q_{i}(A)}\rightarrow Q_i(A)$ is a trivial vector bundle. This follows immediately from the fact that $\N Q_i(A)$ is an orientable vector bundle over $\sphere^{1} \times \D^{n-3}$. That is, topologically, triviality is controlled by the restriction of the vector bundle to a copy of $\sphere^1 \times \{*\}$. Now, just as in the previous step, a trivialization of $\N Q_{i}(A)$ restricts to an $(n-3)$-frame of $\N P_{i}$ by \eqref{BoundaryNormal}. \medskip

\textbf{Step 3: Smoothing corners.} This is the most technical and delicate part because the sphere $Q_{i}$ we defined by gluing $Q_{i}(A)$ and $\overline{Q}_{i}(B)$ is not smoothly immersed: at their common boundary $P_{i}'$, the topological manifold $Q_i$ has a corner singularity. 
By this we mean that there is a homeomorphism $\phi: P_i' \times [-\epsilon, \epsilon] \rightarrow  X$ such that $\phi|_{P_i' \times [-\epsilon, 0]}$ is a smooth embedding into $Q_i(A)$ and $\phi|_{P_i' \times [0, \epsilon]}$ is a smooth embedding into $\overline{Q}_i(B)$, but $\phi=\phi(x,t)$ is not differentiable with respect to $t$ at points of the form $(x,0)$. 
We shall smooth out the corner, replacing $Q_i$ by a smoothly embedded sphere $Q_{i}^{s}$ such that $[Q_i]=[Q_i^{s}]$ in $H_{n-2}(X)$.  

Let us start with smoothing the corner. 
While we believe the present situation to not be a novel case of smoothing, we handle it directly to fix the notation going forward. 
The crucial feature we shall use is that at every point of the common intersection $P_i'$, the normal vectors of $P_i'$ in $Q_i(A)$ and $\overline{Q}_i(B)$ make a nonzero angle. 

Let $\nu(A)$ and $\nu(B)$ be unit sections of $\T Q_{i}(A)|_{P_i'}$ and $\T \overline{Q}_{i}(B)|_{P_i'}$, each normal to $P_{i}'$ and pointing towards the exterior of $Q_{i}(A)$ and $\overline{Q}_{i}(B)$, respectively. 
We first select a smooth interpolation $(\nu_t)_{t \in [-\epsilon, \epsilon]}$ of non-vanishing normal vector fields $\nu_{t} \in \Omega^0(P_i', \N P_i')$ such that $\nu_{-\epsilon}=\nu(B)$ and $\nu_{\epsilon}=\nu(A)$. Recall from Steps 1 and 2 that $\nu(A)$ and $\nu(B)$ are globally linearly independent on $P_i'$. Thus, we can arrange the smooth interpolation to lie entirely in the plane spanned by $\nu(A)$ and $\nu(B)$, so that $\nu_{t}$ is never tangent to $Q_{i}(A)$ and to $\overline{Q}_{i}(B)$ for $t<\epsilon$ and $t>-\epsilon$, respectively. Now, ${\nu}_t$ can be expressed as a linear combination, which we write as 
\[ \nu_t=c_A^t\nu(A)+c_B^t\nu(B). \]

Let us take $Y\cong P_i' \times \R^{n-2}$ to be a regular neighborhood of $P_i'$ in $X$. 
Now, we define extensions $\hat{\nu}(A), \hat{\nu}(B) \in \Omega^0(Y,\T X)$, which can be any smooth sections satisfying the following mild prescriptions: 
\begin{itemize}
    \item $\hat{\nu}(A), \hat{\nu}(B)$ are linearly independent on $Y$, 
    \item $\hat{\nu}(A)|_{Q_i(A) \cap Y}$ is tangent to $Q_i(A)$, 
    \item $\hat{\nu}(B)|_{Q_i(B)\cap Y}$ is tangent to $Q_i(B)$.
\end{itemize}
By simultaneously `straightening' $P_i'$ in $Q_i(A)$ and $Q_i(B)$, we can produce a convenient Riemannian metric on $Y$. We will use this neighborhood to locally deform $Q_i$ only on $Q_i\cap Y$. 

\begin{lemma} There exists a Riemannian metric on $Y$ such that $P_{i}'$ is totally geodesic in both $Q_{i}(A)$ and $Q_{i}(B)$.
\end{lemma}
\begin{proof} Choose normal vector fields $\nu_{1}, \dots, \nu_{n-4} \in \Omega^{0}(P_{i}',\N P_{i}')$ that complete $\nu(A)$ and $\nu(B)$ to a frame of $\N P_{i}' \cong P_i'\times \R^{n-2}$. Take extensions $\hat{\nu}_{i} \in \Omega^{0}(Y,\T X)$ such that $(\hat{\nu}(A),\hat{\nu}(B),\hat{\nu}_1, \dots, \hat{\nu}_{n-4})$ are also linearly independent. After possibly shrinking $Y$, we can define 
a diffeomorphism $\Phi:Y \rightarrow P_{i} \times (-\delta_A, \delta_{A}) \times(-\delta_B,\delta_{B}) \times (-\delta,\delta)^{n-4}$ by flowing along these vector fields. Moreover, this map $\Phi$ will satisfy (recall: $\nu(A),\nu(B)$ are outward normals) 
\[
    \Phi(Q_{i}(A) \cap Y)=P_{i}\times (-\delta_{A}, 0] \times \{0\} \times \{0\},\qquad
    \Phi(Q_{i}(B) \cap Y)=P_{i}\times \{0\} \times (-\delta_B,0] \times \{0\}.
\]
Then the pull-back metric
\[
    g_{Y}:=\Phi^{*}(g_{P_{i}}\oplus da^2\oplus db^2 \oplus g_{\R^{n-4}}).
\]
satisfies the properties of the statement.
\end{proof}

We shall now build the smoothed sphere $Q_i^s$. 
Let $\chi:[-\epsilon, \epsilon] \rightarrow [0,1]$ be a smooth symmetric bump function such that
$\chi(t)=1$ for $|t|\leq \frac{\epsilon}{4}$ and $\chi(t)=0$ for $|t| \geq \frac{3\epsilon}{4}$. Let $s_{0}>0$ be a sufficiently small fixed real number. The desired sphere $Q_{i}^{s}$ is obtained by gluing four pieces:

\begin{itemize}
    \item $Q_{i}\setminus U$ remains untouched, where $U = \Phi^{-1}(P_i' \times (-\epsilon,\epsilon)^2\times \{0\})$,  
    \item $Q_{i}(A)\cap U$ is replaced by 
    \begin{align}\label{Q(A)Deformation}
        Q_{i}(A)^{\nu} \coloneq\ \{\Phi^{-1}(x,t, s_{0}\chi(t),0) \ | \ x\in P_{i}', \,t \in [-\epsilon, 0] \}
    \end{align}
    \item $Q_{i}(B)\cap U$ is replaced by 
    \begin{align}\label{Q(B)Deformation}
            Q_{i}(B)^{\nu} \coloneq\{ \Phi^{-1}\big(x,s_0\chi(t),-t,0 \big) \ | \ x\in P_{i}',\, t \in [0, \epsilon] \,\big\} 
    \end{align}
    \item $P_i'$ is replaced by a new cylinder diffeomorphic to $P_{i}'\times [-\epsilon, \epsilon]$ given by
    \begin{align*}
         \mathcal{C}&=\{ \exp_{x}^{Y}\big(s_{0}\nu_{t}(x)\big) \ |  \ x \in P_{i}', \  t\in [-\epsilon, \epsilon]\}\\
         & =\{\Phi^{-1}(x, s_{0}c_{A}^t(x), s_{0}c_{B}^t(x), 0) \mid  t\in [-\epsilon, \epsilon]\}.
    \end{align*}
\end{itemize}
See Figure \ref{Fig:Smoothing}, which illustrates the smoothing process.
The first three pieces glue together smoothly since $\chi(t) =0$ for $|t| \geq \frac{3\epsilon}{4}$. One also checks that $\mathcal{C}$ glues smoothly on both ends of the cylinder since $\chi(t)=1$ for $t \leq \frac{\epsilon}{4}$, 
$\nu_{-\epsilon}=\nu(B)$, and $\nu_{\epsilon}=\nu(A)$.
Thus, we have found that $Q_i^{s}$ is a smoothly immersed sphere. 

\begin{figure}[h]
\centering
\resizebox{0.4\textwidth}{!}{
  \begin{tikzpicture}[scale=1]
  \coordinate (A) at (2.00, 10.00);
  \coordinate (B) at (5.00, 3.00);
  \coordinate (P134) at (8.00, 10.00);
  \coordinate (C) at (2.50, 1.43);
  \coordinate (L) at (4.77, 3.27);
  \coordinate (M) at (2.24, 1.79);
  \coordinate (N) at (4.56, 3.82);
  \coordinate (O) at (2.29, 2.40);
  \coordinate (P) at (4.26, 4.52);
  \coordinate (Q) at (2.49, 3.40);
  \coordinate (R) at (4.03, 5.07);
  \coordinate (S) at (2.70, 4.30);
  \coordinate (T) at (1.82, 4.88);
  \coordinate (U) at (8.02, 4.88);
  \coordinate (E_1) at (5.50, 0.11);
  \coordinate (V) at (3.68, 5.74);
  \coordinate (W) at (2.90, 5.18);
  \coordinate (X) at (3.36, 6.17);
  \coordinate (Y) at (3.06, 6.01);
  \coordinate (Z) at (5.07, 3.09);
  \coordinate (AB) at (5.33, 3.52);
  \coordinate (AC) at (7.93, 1.66);
  \coordinate (AD) at (5.41, 3.98);
  \coordinate (AE) at (8.01, 2.12);
  \coordinate (AF) at (5.74, 4.50);
  \coordinate (AG) at (7.59, 3.15);
  \coordinate (AH) at (6.14, 5.38);
  \coordinate (AI) at (6.92, 4.90);
  \coordinate (AJ) at (5.03, 2.90);
  \coordinate (AK) at (5.27, 0.78);
  \coordinate (AL) at (5.00, 2.99);
  \coordinate (AM) at (6.58, 1.13);
  \coordinate (AN) at (4.92, 2.91);
  \coordinate (AO) at (3.50, 0.93);
  \coordinate (AQ) at (4.75, 0.00);
  \coordinate (AR) at (7.50, 1.25);
  \coordinate (AS) at (2.75, 8.25);
  \coordinate (AT) at (3.75, 5.50);
  \coordinate (AU) at (1.50, 2.75);
  \coordinate (AV) at (7.05, 7.75);
  \coordinate (AW) at (5.79, 4.22);
  \coordinate (AX) at (9.29, 2.22);
  \coordinate (AR_1) at (7.54, 1.22);
  \coordinate (H) at (6.75, 8.92);
  \coordinate (I) at (3.42, 8.92);

  \draw[thick] (A) -- (B);
  \draw[-Stealth, blue, dotted] (L) -- (M);
  \draw[-Stealth, blue, dotted] (B) -- (C);
  \draw[-Stealth, blue, dotted] (N) -- (O);
  \draw[-Stealth, blue, dotted] (P) -- (Q);
  \draw[-Stealth, blue, dotted] (R) -- (S);
  \node[above, blue, font=\LARGE] at (T) {$Q_i(A)^{\nu}
$};
  \node[above, violet, font=\LARGE] at (U) {$Q_i(B)^{\nu}$};
  \node[red, above, font=\LARGE] at (E_1) {$\mathcal{C}$};
  \draw[thick] (B) -- (P134);
  \draw[-Stealth, blue, dotted] (V) -- (W);
  \draw[-Stealth, blue, dotted] (X) -- (Y);
  \draw[-Stealth, violet, dotted] (Z) -- (AR_1);
  \draw[-Stealth, violet, dotted] (AB) -- (AC);
  \draw[-Stealth, violet, dotted] (AD) -- (AE);
  \draw[-Stealth, violet, dotted] (AF) -- (AG);
  \draw[-Stealth, violet, dotted] (AH) -- (AI);
  \draw[-Stealth, red, dotted] (AJ) -- (AK);
  \draw[-Stealth, red, dotted] (AL) -- (AM);
  \draw[-Stealth, red, dotted] (AN) -- (AO);
  \draw[red, ultra thick] (C) .. controls (4.75, 0.00) and (7.50, 1.25) .. (AR_1);
  \draw[blue, ultra thick] (AS) .. controls (3.75, 5.50) and (1.50, 2.75) .. (C);
  \draw[violet, ultra thick] (AV) .. controls (5.79, 4.22) and (9.29, 2.22) .. (AR_1);
  \node[above, font=\LARGE] at (H) {$Q_i(B)$};
  \node[above, font=\LARGE] at (I) {$Q_i(A)$};
  \draw (A) -- (AS);
  \draw[ultra thick] (P134) -- (AV);
  \draw[ultra thick] (A) -- (AS);
\end{tikzpicture}
}
\caption{Smoothing of $Q_{i}$. }
\label{Fig:Smoothing}
\end{figure}

\textbf{Step 3.1: Embeddedness}. While we know that the inclusion map $\iota_s: Q_i^{s} \hookrightarrow X$ is an immersion and that $\iota_s$ is homotopic to a ($C^{\infty}$-)embedding by Theorem \ref{thm:HomotopicToEmbedding}, we cannot be certain a priori that $\iota_s$ is \emph{regularly homotopic} to an embedding. For this reason, we cannot compute the invariant $S\beta$ on the homology class $[Q_i]$ by studying $\iota_s$ unless we are certain that it is a smooth embedding. We shall now verify that this is the case. 
\begin{lemma}
$Q_i^{s}$ is embedded for $s_0$ sufficiently small. 
\end{lemma}

\begin{proof} Observe that $Q_i(A)^{\nu}\cup \mathcal{C}$ and $Q_i(B)^{\nu} \cup \mathcal{C}$ are each individually embedded for $s_0$ and $\epsilon$ sufficiently small. Indeed, this follows by considering  regular neighborhoods of $Q_i(A)$ and $\overline{Q}_i(B)$ in $X$, respectively. To prove embeddedness, we need only show $Q_i(A)^{\nu} \cap Q_i(B)^{\nu} =\emptyset$ for $s_0$ small enough. We will prove that $Q_i(A)$ and $\overline{Q}_i(B)$ move apart at first order.

Take any distinct points $x \in Q_{i}(A) \cap Y$ and $y \in \overline{Q}_i(B) \cap Y$. 
From now on, we write points on $Y$ using the coordinates given by $\Phi$ so that $x$ and $y$ obtain the form 
\[
    x=\Phi^{-1}(p,r,0,0) ,\qquad y=\Phi^{-1}(q,0,u,0),
\]
where $r ,u\leq 0$. We may suppose $(r,u)\neq (0,0)$, since otherwise $x,y\in P_i'$. Replacing $s_0$ by $s$, under the deformations
defining $Q_{i}(A)^\nu$ and $Q_{i}(B)^\nu$, these points become
\[
x_s=\Phi^{-1}(p,r,s\chi(r),0),
\qquad
y_s=\Phi^{-1}(q,s\chi(u),u,0).
\]
Because the metric is a product, we have
\[
d_Y(x_s,y_s)^2
=
d_{P_{i}'}(p,q)^2
+(-r+s\chi(u))^2
+(-u+s\chi(r))^2.
\]
It follows that
\[
\frac12\frac{d}{ds}d_Y(x_s,y_s)^2
=
\chi(u)(-r+s\chi(u))+\chi(r)(-u+s\chi(r))\geq0.
\]
Since $(r,u) \neq (0,0)$, the inequality is strictly positive when $s > 0$. 
At $s=0$, we also have
\[
\frac{d}{ds}\bigg |_{s=0}d_Y(x_s,y_s)=\frac{1}{2d_{Y}(x,y)}\frac{d}{ds}\bigg|_{s=0}d_Y(x_s,y_s)^2
=  \frac{-\chi(u) r-\chi(r)u}{\sqrt{d_{P_{i}'}(p,q)^2+r^2+u^2}} >0.
\]
In any case, the function $D(s)=d_Y(x_s,y_s)$ is strictly increasing for any pair of distinct initial points $x,y$. Therefore, for $s$ sufficiently small, no point of $Q_{i}(A)^\nu$ can coincide with a point of $Q_{i}(B)^\nu$. 
\end{proof} 

We now resume the proof of Theorem \ref{thm:nightmare_spheres}.\medskip 

Note that by reversing the relevant flows, the inclusion map $\iota_s:Q^s_i \hookrightarrow X$ is homotopic to the inclusion $\iota:Q_s \hookrightarrow X$ and thus $[Q_i]=[Q_i^s]$ in $H_{n-2}(X)$. 
Therefore, by Definitions \ref{Defn:Beta} and \ref{Defn:SBeta}, we can use the smoothly embedded sphere $Q_{i}^{s}$ to compute $S\beta([Q_{i}])$. \medskip 

\textbf{Step 4: Assemble sections.} 
Just shy of a trivialization, we now produce an $(n-4)$-frame of $\N Q_i^s \oplus \varepsilon^1_{\R}$. 

In this step, we shall view the smoothed sphere $Q_i^s$ as coming in three pieces: canonically embedded copies of $Q_i(A)$ and $\overline{Q}_i(B)$, with a new cylinder $\mathcal{C} $ over $P_i'$ connecting them: 
\begin{itemize}
    \item $Q_i^s(A) := \big(Q_i(A)\setminus U) \cup Q_i(A)^{\nu}$, 
    \item $Q_i^s(B) := \big(Q_i(B)\setminus U) \cup Q_i(B)^{\nu}$, 
    \item $\mathcal{C}$, diffeomorphic to $P_i'\times [0,1]$. 
\end{itemize}

For clarity, we note the preferred diffeomorphism 
$f_A: Q_i(A) \rightarrow Q_i^s(A)$ is the identity on $Q_i(A) \setminus U$ and applies the flow described in \eqref{Q(A)Deformation} on $U$; the corresponding map $f_B: Q_i(B)\rightarrow Q_i^{s}(B)$ is defined analogously. 
Let us write $f_{\mathcal{C}}: P_i'\times [-\epsilon, \epsilon] \rightarrow \mathcal{C}$ for the preferred diffeomorphic parametrization of $\mathcal{C}$
given by $f_{\mathcal{C}}(x,t) = \exp_{x}^{Y}(s_0\nu_t(x))$. 
We then obtain diffeomorphisms $f_i^t: P_i' \rightarrow f_{\mathcal{C}}(P_i' \times \{t\})=:P_{i}^{'t}$ for any $t \in [-\epsilon, \epsilon]$. \medskip 

We now proceed to produce an $(n-4)$-frame $\mathbf{X} \in \Omega^0(Q_i^s, V_{n-4}(\N Q_i^s \oplus \varepsilon^1_{\R}))$. We shall only need $\mathbf{X}$ to be continuous. To do so, we will first specify the restriction of $\mathbf{X}$ to each of $Q_i^s(A)$ and $Q_i^s(B)$, and then interpolate across these two local sections via $\mathcal{C}$. 

To start, we describe the construction of $\mathbf{X}$ on $Q_i^s(A)$ and $Q_i^s(B)$. Recall that in Steps 1 and 2 we found global frames $\mathbf{V} = (V_{j})_{j=1}^{n-3}$ and $\mathbf{W} = (W_{j})_{j=1}^{n-3}$ of $\N Q_i(A)$ and $\N \overline{Q}_i(B)$.
Up to an application of Gram-Schmidt, we can suppose 
$\mathbf{W},\mathbf{V}$ are orthonormal. 
By using the canonical diffeomorphisms $f_A$ and $f_B$, we can push forward $\mathbf{V}$ and $\mathbf{W}$ to define local sections
$\mathbf{X}|_{Q_i^s(A)} = (f_A)_*(\mathbf{V})$ 
and $\mathbf{X}|_{Q_i^s(B)} = (f_B)_{*}(\mathbf{W})$. 
Since $f_A$ and $f_B$ are induced from local  diffeomorphisms of $X$, we see that $(f_A)_*$ and $(f_B)_*$ do push forward normal vectors to normal vectors. To see this, one can view the normal bundle $\N M$ of a submanifold $M \subset \hat{M}$ at a point $x \in M$ as $\T_{x}\hat{M}/T_{x}M$.

Next, we make some simple but important identifications.
Recall by Steps 1 and 2, we have preferred isomorphisms $\N P_i' \cong \N Q_i(A)|_{P_i'} \oplus \varepsilon^1_{\R}$ and $\N P_i' \cong \N \overline{Q}_i(B)|_{P_i'} \oplus \varepsilon^1_{\R}$.
Since the vector bundle $\N P_i' \rightarrow P_i'$ is trivial, we can fix a once-and-for-all trivialization 
$\N P_i' \cong \varepsilon^{n-2}_{\R}$. 
Thus, the restrictions of our original frames $\mathbf{V}|_{P_i'}$ and $\mathbf{W}|_{P_i'}$ can be viewed, for the moment, as of type $\mathbf{V}|_{P_i'}, \mathbf{W}|_{P_i'} \in \Omega^0(P_i', V_{n-4}(\R^{n-2}))$.

The one key property that shall guarantee our success is that we were able to select $\mathbf{V}= \mathbf{V}(x,y)$ and $\mathbf{W} = \mathbf{W}(x,y)$ to depend only on $x$. In particular, $\mathbf{V}|_{P_i'}$ and $\mathbf{W}|_{P_i'}$ depend only on $x \in \sphere^1$. 
From this perspective, we can view $\mathbf{V}|_{P_i'},\mathbf{W}|_{P_i'}$ as objects of type $\sphere^1 \rightarrow V_{n-4}(\R^{n-2})$. Now, the Stiefel manifold $V_j(\R^m)$ is $(m-j-1)$-connected. Thus, $\pi_1(V_{n-4}(\R^{n-2}))=0$. 
In particular, $\mathbf{V}|_{P_i'},\mathbf{W}|_{P_i'}: \sphere^1 \rightarrow V_{n-4}(\R^{n-2})$ are homotopic. 
Let us define $\mathbf{Y}_t: \sphere^1 \rightarrow V_{n-4}(\R^{n-2})$, for $-\epsilon \leq t \leq \epsilon$, as a continuous map such that $\mathbf{Y}_{-\epsilon} = \mathbf{V}$ and $\mathbf{Y}_{\epsilon} = \mathbf{W}$. 

Using the fixed identifications $f^t_i$, we can push forward $\mathbf{Y}_t \in \Omega^0(P_i', \N P_i')$ to the section $(f^t_i)_*(\mathbf{Y}_t)\in \Omega^0(P^{'t}_i,(\N Q_{i}^s \oplus \varepsilon_{\R}^{1})|_{P_{i}^{'t}})$. The same remarks apply as earlier: this is possible because $f^t_i$ is not just a diffeomorphism between $P_i'$ and $P_i^{'t}$, but the restriction of a local diffeomorphism of $X$. Since these $f_i^t$ vary smoothly, we then obtain $\mathbf{X}|_{\mathcal{C}}$ by letting $t$ vary between $-\epsilon$ and $\epsilon$. By construction, we see that 
$\mathbf{X}$ is a well-defined, continuous global section of $V_{n-4}(\N Q_i^s \oplus \varepsilon^1_{\R})$. Indeed, on the intersections $\mathcal{C} \cap Q_i^s(A)$ and $\mathcal{C} \cap Q_i^s(B)$, the two definitions of $\mathbf{X}$ agree. This completes \emph{Step 4}.
\medskip

\textbf{Step 5: Conclude.} We now prove that $\N Q_{i}^{s} \oplus \varepsilon_{\R}^{1}$ must be trivial, given that it has $n-4$ linearly independent sections. 
Note that the exceptional case $n=9$ is trivial by Table \ref{Table:HomotopyGroupsLowDimensions}. In the remaining cases that $n\geq 7$ and $n \neq9$, by stable triviality of $\N Q_i^s$, we need only exclude the possibility that $\N Q_{i}^{s} \oplus \varepsilon_{\R}^{1}$ is isomorphic to $\T\sphere^{n-2}$. 
Indeed, for such $n$ odd, by Tables \ref{Table:StableHomotopyGroups} and \ref{Table:HomotopyGroupsLowDimensions}, and the surjectivity of $\mathbf{S}_*: \pi_{n-3}(\SO(n-2)) \rightarrow \pi_{n-3}(\mathbf{SO})$, the homomorphism $\mathbf{S}_*$ has kernel $\Z_2$, which is generated by the clutching function of $\T\sphere^{n-2}$.  

We now explain why $\N Q_{i}^{s} \oplus \varepsilon_{\R}^{1}$ is not isomorphic to $\T\sphere^{n-2}$. 
In \cite{Ada60}, Adams provides a sharp upper bound 
for the maximum number $\rho(k)$ of linearly independent vector fields one can obtain on the sphere $\sphere^{k}$. To this end, write $k+1=2^{4a+b}(2\ell+1)$ for integers $0\leq b \leq 3$ and $a,\ell \geq 0$, and then $\rho(k)= 2^{b}+8a-1$. We now observe the following elementary inequality: 
\[ \rho(k)\leq 2\log_{2}(k+1)+2. \] 
In our setting, $k=n-2$. It is straightforward to verify that $n-4>2\log_{2}(n-1)+2$ for $n\geq 14$, so $\N Q_{i}^{s} \oplus \varepsilon_{\R}^{1}$ cannot be isomorphic to $\T\sphere^{n-2}$ for $n \geq 14$, since the former vector bundle admits more linearly independent sections than the latter. Only three cases remain, namely $n \in \{7,11,13\}$. One can easily check $n-4 > \rho(n-2)$ in each case, concluding the proof. 
\end{proof}

\subsubsection{Computing $S\beta$ for $\Psi_{n-2}(p_*^{-1}[\Sigma_{i}])$}\label{Subsec:SeparatingSpheres}
We now handle the other family of generators of $H_{n-2}(X)$ from Lemma \ref{Lem:Dichotomy}. Recall that $\Sigma_i \subset S_{gen}$ was originally defined as an $(n-2)$-sphere separating $C_i$ from the remaining exceptional circles. 
We shall now lift $\Sigma_i$ from $S_{gen}$ to $X_{gen}$.
We then prove that $S\beta$ vanishes on this class, once included into $H_{n-2}(X)$. 

To start, we have a general lemma about vector bundles on spheres. We shall apply it to $\mathcal{R}^\bot|_{\Sigma_i}\rightarrow \Sigma_i$ momentarily. In particular, if $\mathcal{R}^\bot|_{\Sigma_i}\oplus \varepsilon^1_{\R}$ is trivial, then $\mathcal{R}^\bot|_{\Sigma_i}$ has a section. 
\begin{lemma}\label{lem:section} Let $m \geq 4$ be even and $E \rightarrow \sphere^{m+1}$ be a vector bundle of rank $m$. 
If $E \oplus \varepsilon^1_{\R}$ is trivial, then $E$ admits a non-vanishing section. 
\end{lemma}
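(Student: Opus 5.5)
The plan is obstruction theory on the top cell of $\sphere^{m+1}$. Write $E\oplus\varepsilon^1_{\R}\cong\varepsilon^{m+1}_{\R}$. The bundle $E$ has rank $m$ over a sphere of dimension $m+1 > m$, so a non-vanishing section is not automatic: the primary obstruction, the Euler class in $H^{m}(\sphere^{m+1})=0$, vanishes. What remains is a secondary obstruction in $H^{m+1}(\sphere^{m+1};\pi_m(\sphere^{m-1}))\cong\Z_2$. We would avoid computing this directly and instead work with clutching functions.

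First, $E$ is classified by a class $c\in\pi_m(\SO(m))$, and the stabilization satisfies $S_*c=0$ in $\pi_m(\SO(m+1))$. By the exact sequence of the fibration $\SO(m)\to\SO(m+1)\to\sphere^m$ (the same one used in the proof of \eqref{Wall:AlmostHom}), $c$ lies in the image of
\[\partial:\pi_{m+1}(\sphere^m)\cong\Z_2\to\pi_m(\SO(m)).\]
So either $c=0$, and $E$ is trivial with the desired section, or $c=\partial\xi$ for the generator $\xi$.

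Next, $E$ has a non-vanishing section exactly when $c$ lifts to $\pi_m(\SO(m-1))$, that is, when $c$ lies in the image of $S_*:\pi_m(\SO(m-1))\to\pi_m(\SO(m))$. Equivalently, its image $p_*c\in\pi_m(\sphere^{m-1})\cong\Z_2$ vanishes, where $p:\SO(m)\to\sphere^{m-1}$ is evaluation. The point is therefore to show that $p_*\partial\xi=0$, i.e.\ that the composite
\[\pi_{m+1}(\sphere^m)\xrightarrow{\partial}\pi_m(\SO(m))\xrightarrow{p_*}\pi_m(\sphere^{m-1})\]
vanishes for $m$ even. Since $\partial$ is induced by the characteristic map $\sphere^{m-1}\to\SO(m)$ of $\T\sphere^m$ composed with suspension, the composite $p_*\partial$ is precomposition-by-$\xi$ applied to the class of $p\circ\partial\iota_m$. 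That class is the clutching map of $\T\sphere^m$ followed by evaluation, a self-map of $\sphere^{m-1}$ of degree $1+(-1)^m=2$ up to sign. Thus $p_*\partial\xi=(2\iota_{m-1})\circ\eta$, where $\eta$ is the suspended Hopf class generating $\pi_m(\sphere^{m-1})$. Because $\eta$ is a suspension, composition with it is linear in the first factor, so $(2\iota)\circ\eta=2\eta=0$. This is the same linearity principle as \cite[Lemma 5]{Wal63a}, which the paper already used.

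The main obstacle is making the identification of $p_*\partial$ with composition by the degree-$2$ map rigorous, including the orientation and sign bookkeeping. We would justify it by the standard formula $p_*\partial(\sigma\alpha)=\alpha$-precomposition of $\partial_{\sphere^m}$, taken from James/Steenrod's treatment of $\SO(m)\to\SO(m+1)\to\sphere^m$. If that proves messy, the fallback is the secondary-obstruction viewpoint. Since $E\oplus\varepsilon^1_{\R}$ is trivial, $E$ is the pullback of $\T\sphere^m\to\sphere^m$ along a map $\sphere^{m+1}\to\sphere^m$ (the class $\xi$, or the trivial map). The obstruction is then natural under pullback: it is the degree-$2$ Euler obstruction composed with $\eta$, which is $2\eta=0$. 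Either way we conclude that $E$ has a non-vanishing section. The hypothesis $m\ge 4$ ensures that all the relevant groups are in the stable range where these identifications hold.
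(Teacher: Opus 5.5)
Your argument is correct. It isolates the same obstruction as the paper but kills it by a different mechanism.

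\emph{The paper's route.} The paper uses your fallback picture. The unit section of the $\varepsilon^1_{\R}$ summand, read in an orthogonal trivialization, gives a map $g:\sphere^{m+1}\to\sphere^m$ with $E\cong g^*\T\sphere^m$. A non-vanishing section then amounts to a lift of $g$ to $\T^1\sphere^m$. Such a lift exists once the connecting map $\pi_{m+1}(\sphere^m)\to\pi_m(\sphere^{m-1})$ of the fibration $\sphere^{m-1}\to\T^1\sphere^m\to\sphere^m$ vanishes. By naturality along $\SO(m+1)\to\SO(m+1)/\SO(m-1)=V_2(\R^{m+1})$, this connecting map is exactly your composite $p_*\circ\partial$.

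\emph{How each of you shows it vanishes.} The paper argues indirectly. It computes $H_{m-1}(\T^1\sphere^m)\cong\Z_2$ and $H_m(\T^1\sphere^m)=0$ by Gysin, and deduces $\pi_m(\T^1\sphere^m)\cong\Z_2$ via Proposition~\ref{Prop:HurewiczUpOne}. The next map, into $\pi_m(\sphere^m)\cong\Z$, is then zero. So $i_*:\pi_m(\sphere^{m-1})\to\pi_m(\T^1\sphere^m)$ is forced to be an isomorphism, and the preceding connecting map is zero. You instead evaluate the composite directly, using three ingredients: Steenrod's formula $\partial(\Sigma\alpha)=(\partial\iota_m)\circ\alpha$; the degree $1+(-1)^m$ of the projected characteristic map of $\T\sphere^m$; and left-distributivity of composition over the suspension class $\eta_{m-1}$.

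\emph{What each buys.} Your route is more explicit and shows that the only input from $m\geq 4$ is that $\eta_{m-1}$ is itself a suspension. The paper's route avoids unstable composition identities and sign bookkeeping, at the cost of a homology computation and the Hurewicz-type proposition.

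One small correction to your write-up: $p_*\partial\xi$ is $(p\circ\partial\iota_m)$ precomposed with $\eta_{m-1}$, where $\xi=\Sigma\eta_{m-1}$. It is not precomposed with $\xi$ itself. Your subsequent formula $(2\iota_{m-1})\circ\eta$ already reflects the correct version.
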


\begin{proof}
By hypothesis, we have a bundle isomorphism 
$E\oplus\varepsilon^1_{\mathbb R}
\cong \varepsilon^{m+1}_{\mathbb R}$.
Choose bundle metrics and an orthogonal trivialization
$\Phi:E\oplus\varepsilon^1_{\mathbb R}
\rightarrow \varepsilon^{m+1}_{\R}$. 
Let $s \in \Omega^0 \big(\sphere^{m+1}, \sphere(E\oplus \varepsilon^1_{\R}) \big)$ be a unit section of $\varepsilon^1_{\R}$. 
Denote by $\pr_2: \varepsilon_{\R}^{m+1} \rightarrow \R^{m+1}$ the projection map of $\varepsilon^{m+1}_{\R}$.  
Then $g:=\pr_2\circ \Phi(s)$ defines a map $g:\sphere^{m+1}\longrightarrow \sphere^m\subset\mathbb R^{m+1}$.
Since $\Phi$ is orthogonal, for every $x\in \sphere^{m+1}$ the image
of the fiber $E_x$ under $\Phi$ is the orthogonal complement of $g(x)$. Therefore, we obtain an isomorphism 
$E\cong g^*\T\sphere^m$. It remains to show that $g^*\T\sphere^m$ admits a nowhere-vanishing section. Note that this task is non-trivial as $\T\sphere^m$ admits no such section. 

We now use some algebraic machinery to produce the desired section. 
Consider the unit tangent bundle fibration
\[
\sphere^{m-1}\stackrel{i}{\longrightarrow} \T^1\sphere^m
\stackrel{p}{\longrightarrow} \sphere^m,
\qquad
p(u,v)=u.
\]
The relevant part of its long exact sequence of homotopy groups is
\begin{center}
\begin{tikzcd}[scale=1.5, row sep=2.5em]
 \pi_{m+1}(\sphere^{m-1}) \arrow{r}{i_*} &
     \pi_{m+1}(\T^1 \sphere^m) \arrow{r}{p_{*}} &
     |[label=below:{\substack{\cong \\ \\  \Z_2}}]|
    \pi_{m+1}(\sphere^{m})
    \arrow[dll, rounded corners=8pt, to path={ 
        -- ([xshift=3ex]\tikztostart.east) 
        |- ([yshift=2ex]\tikztotarget.north) 
        -- (\tikztotarget) 
    }] \\
    |[label=below:{\substack{\cong \\ \\ \Z_{2}}}]| \pi_{m}(\sphere^{m-1}) \arrow{r}{i_*} &
    |[label=below:{\substack{\cong \\ \\ \Z_{2}}}]| \pi_{m}(\T^1 \sphere^{m})  \arrow{r}{p_{*}} &
    |[label=below:{\substack{\cong \\ \\ \Z}}]|
    \pi_{m}(\sphere^{m})
\end{tikzcd}
\end{center}

Let us explain the above isomorphisms. Now, the first unstable homotopy group $\pi_{m}(\sphere^{m-1})$ is $\Z_2$ for $m\geq3$. Next, we claim that $\pi_m(\T^1\sphere^m) \cong \Z_2$. 
Indeed, since $\T^1\sphere^m \cong V_2(\R^{m+1})$ is $(m-2)$-connected, we can verify the claim by applying Proposition \ref{Prop:HurewiczUpOne},  
after employing the Gysin sequence to compute $H_{m-1}(\T^1\sphere^m)\cong \Z_2$ and $H_m(\T^1\sphere^m)=0$ when $m$ is even. We now advance with the above exact sequence and the specified isomorphisms. 

Observe that the final map $p_*$ is zero and hence $i_*:\pi_m(\sphere^{m-1})\rightarrow \pi_m(V_2(\R^{m+1}))$ is an isomorphism. Thus, $p_*:\pi_{m+1}(\T^1\sphere^m)\rightarrow \pi_{m+1}(\sphere^m)$ is surjective. 
Therefore, we may choose a lift
$\widetilde g:\sphere^{m+1}\longrightarrow \T^1\sphere^m$
such that $p\circ\widetilde g=g$. Writing $\widetilde g(x)=\big(g(x),v(x)\big)$, then $v(x)\in \T_{g(x)}\sphere^m$, so $v$ defines a non-vanishing
section of $g^*\T\sphere^m\cong E$. 
\end{proof}

We intentionally ignored regularity in the previous proof for the following reason. 
\begin{remark}
Let $M$ be a smooth manifold. Then a smooth vector bundle $E \rightarrow M$ admits a continuous non-vanishing section if and only if it admits a smooth non-vanishing section. 
\end{remark}

We now verify the hypothesis of the previous lemma for $\mathcal{R}^\bot|_{\Sigma_i} \rightarrow \Sigma_i$. 

\begin{lemma}\label{lem:trivial_Rperp} Let
$n\geq7$ be odd and $\Sigma_{i}$ be a separating sphere in $S_{gen}$. Then $\mathcal{R}^{\perp}_{gen}|_{\Sigma_i}\oplus \varepsilon^1_{\R}$ is a trivial vector bundle. 
\end{lemma}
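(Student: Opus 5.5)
The plan is to use the splitting of $\T\sphere^{n-1}$ over $S_{gen}$ from the proof of the Structure Lemma \ref{Lem:StructureLemma}. There we showed that $\mathcal{R}|_{S_{gen}}$ is trivial, framed by $s_j(u)=\pi_{u^\bot}(\psi_j(u))$, and that $\T\sphere^{n-1}|_{S_{gen}}$ is trivial. This gives
\[ \mathcal{R}^\bot|_{\Sigma_i}\oplus\varepsilon^2_{\R}\;\cong\;\T\sphere^{n-1}|_{\Sigma_i}. \]
Next, $\Sigma_i$ is a smoothly embedded $(n-2)$-sphere in $\sphere^{n-1}$. Because it is separating it has a trivial normal line, so
\[ \T\sphere^{n-1}|_{\Sigma_i}\cong \T\Sigma_i\oplus\varepsilon^1_{\R}\cong\varepsilon^{n-1}_{\R}. \]
Combining the two displays, $E:=\mathcal{R}^\bot|_{\Sigma_i}$ is a rank $(n-3)$ bundle over $\sphere^{n-2}$ with $E\oplus\varepsilon^2_{\R}$ trivial. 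The task is to improve this to $E\oplus\varepsilon^1_{\R}$ trivial. Set $m=n-3$, which is even and at least $4$. Then $E\oplus\varepsilon^1_{\R}$ has rank $m+1$ over $\sphere^{m+1}$, is classified by a clutching class in $\pi_m(\SO(m+1))$, and that class lies in the kernel of $\pi_m(\SO(m+1))\to\pi_m(\SO(m+2))$.

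Since $\pi_m(\SO(m+2))$ is stable, I will read off the exact sequence of $\SO(m+1)\to\SO(m+2)\to\sphere^{m+1}$ from Table \ref{Table:StableHomotopyGroups} together with the case analysis in Proposition \ref{Prop:CompareBetas}. The kernel in question is the image of $\partial:\pi_{m+1}(\sphere^{m+1})\to\pi_m(\SO(m+1))$, which is generated by the clutching class of $\T\sphere^{m+1}$; for $m=6$ it is zero by Table \ref{Table:HomotopyGroupsLowDimensions}. Consequently $E\oplus\varepsilon^1_{\R}$ is either trivial or isomorphic to $\T\sphere^{n-2}$. What remains is to exclude the second possibility.

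I expect this exclusion to be the main obstacle. The first approach I would try is an Euler-class argument. The bundle $E\oplus\varepsilon^1_{\R}$ has a nowhere vanishing section, so its Euler class is zero, whereas $e(\T\sphere^{n-2})=\pm 2\neq0$ because $n-2$ is odd. This settles the case immediately. It remains to check that the trivial summand really is a summand of the bundle whose clutching class was computed, and that orientations are compatible. For orientations: $\mathcal{R}^\bot|_{S_{gen}}$ is orientable by Claim 2 of Lemma \ref{Lem:StructureLemma}, so Euler classes make sense. If this argument proves insufficient, the fallback is to deform $\Sigma_i$ toward a configuration such as a union of linking spheres $S_j$ joined by tubes. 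One would then use Theorem \ref{thm:EulerClassXgen}(i), which identifies $\mathcal{R}^\bot|_{S_j}\cong\T S_j$, and compare via the convergence argument of Lemma \ref{Lem:OrthogonalBundleConvergence}. I expect the Euler class route to suffice.
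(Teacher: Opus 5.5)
The reduction in your first two paragraphs is sound. It is the same reduction the paper uses in Step 5 of Theorem \ref{thm:nightmare_spheres}: $E\oplus\varepsilon^2_{\R}$ is trivial, so $E\oplus\varepsilon^1_{\R}$ is either trivial or $\T\sphere^{n-2}$. This already settles $n=9$.

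\textbf{The gap is the exclusion step.} Since $n$ is odd, $\sphere^{n-2}$ is an odd-dimensional sphere, so $e(\T\sphere^{n-2})=\chi(\sphere^{n-2})=0$, not $\pm 2$. More generally, any oriented bundle of odd rank has $2$-torsion Euler class, which is zero in $H^{n-2}(\sphere^{n-2};\Z)\cong\Z$. Moreover $\T\sphere^{n-2}$, for $\sphere^{n-2}\subset\R^{n-1}$, carries the nowhere-vanishing field $x\mapsto Jx$. The value $\pm2$ you quote belongs to the even sphere $\sphere^{n-3}$ in Theorem \ref{thm:EulerClassXgen}. Both candidates are stably trivial with all characteristic classes zero, and they differ by an unstable class of order two. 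So neither a single section of $E\oplus\varepsilon^1_{\R}$ nor any cohomological datum on $\Sigma_i$ can tell them apart.

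Your fallback does not close the gap either. Applied sphere by sphere, Theorem \ref{thm:EulerClassXgen}(i) only controls $\mathcal{R}^\perp$ on the $(n-3)$-skeleton of $S_{gen}\simeq\bigvee\sphere^{n-3}\vee\bigvee\sphere^{n-2}$. But $[\Sigma_i]\neq 0$ lives in $H_{n-2}$. What matters is how the identifications $\mathcal{R}^\perp|_{S_i(p)}\cong\T S_i(p)$ vary as $p$ runs around $C_i$.

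\textbf{What the paper does instead.} Since $\Sigma_i$ bounds a disk $D_i\supset C_i$, it suffices to trivialize $(\mathcal{R}^\perp\oplus\varepsilon^1_{\R})|_{\partial N(C_i)}$, where $\partial N(C_i)\cong C_i\times\sphere^{n-3}$, and then extend over $N(C_i)$. The converse also holds: triviality on $\Sigma_i$ forces triviality on $\partial N(C_i)$. To see this, extend trivially across the ball on the other side of $\Sigma_i$. This gives a bundle on $\sphere^{n-1}\setminus\mathrm{int}\,N(C_i)\cong D^2\times\sphere^{n-3}$, whose restriction to $\partial N(C_i)$ is pulled back from a linking sphere, where it is $\T\sphere^{n-3}\oplus\varepsilon^1_{\R}$.

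Run the proof of Theorem \ref{thm:EulerClassXgen} over all $p\in C_i$. The map $\psi\otimes y\mapsto(\psi-\lambda_\psi\Id)y$, where $\psi p=\lambda_\psi p$, gives $\mathbb{E}(\mathcal{P})\otimes\mathcal{R}^\perp|_{\partial N(C_i)}\cong\T^{\mathrm{vert}}$. Here $\T^{\mathrm{vert}}$ is the vertical tangent bundle of $\partial N(C_i)\to C_i$. For type (a) circles, $\mathbb{E}(\mathcal{P})$ is trivial and the paper's $\kappa$ gives the trivialization.

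\textbf{A caveat about type (b) circles.} For the two type (b) circles present when $n\equiv 1\bmod 4$, $\mathbb{E}(\mathcal{P})$ is a M\"obius bundle. The bundle over $\partial N(C_i)$ is then the mapping torus of $(-\Id)\oplus\Id$ on $\T\sphere^{n-3}\oplus\varepsilon^1_{\R}$. In the global frame $(v,c)\mapsto v+cw$, this gluing is
\[
w\mapsto 2ww^{T}-\Id=(r_wr_e)(-r_e),
\]
where $r_w$ is the reflection in $w^\perp$ and $-r_e\in\SO(n-2)$. So its class in $\pi_{n-3}(\SO(n-2))$ is that of $w\mapsto r_wr_e$, which is the clutching class of $\T\sphere^{n-2}$. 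A mapping torus of $g$ is trivial if and only if $[g]=0$.

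Case (ii) of the paper's proof rests on the pointwise fact that $(-\Id)\oplus 1$ lies in the identity component of $\GL(n-2,\R)$; it writes $\mathcal{R}^\perp|_{S_i(p)}\cong\T\sphere^{n-3}$ as a product. That is not enough. For $n\equiv 1\bmod 4$ with $n\geq 13$, the computation above, together with the converse remark, yields $E\oplus\varepsilon^1_{\R}\cong\T\sphere^{n-2}$ on the spheres enclosing type (b) circles.

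So your dichotomy is decided circle by circle, by the local geometry at $C_i$. No uniform argument of the kind you propose can settle it.
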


\begin{proof} Let $D_{i}$ be the disk bounded by $\Sigma_{i}$ in $\sphere^{n-1}$ and containing $C_{i}$. We will show that the vector bundle $(\mathcal{R}^{\perp}_{gen}\oplus \varepsilon_{\R}^{1})_{|_{\Sigma_{i}}}$ extends to $D_{i}$. We emphasize: the claim is non-trivial because the original projection $\mathcal{R}^{\perp}|_{D_i} \rightarrow D_i$ is not an honest vector bundle as its fibers have non-constant dimension. 
Let $N(C_{i}) \cong C_{i} \times \mathbb{D}^{n-2}$ be the closure of a regular neighborhood of $C_{i}$ in $\sphere^{n-1}$. We imagine $N(C_i)$ to be obtained by the Riemannian exponential map of a small $\varepsilon$-band in the normal bundle of $C_i$. The vector bundle $\mathcal{R}^{\perp}_{gen} \oplus \varepsilon_{\R}^{1}$ is clearly defined on $D_{i} \setminus \mathrm{int}(N(C_{i}))$. The goal is thus to extend the vector bundle from $\partial N(C_{i})$ to all of $N(C_{i})$. Now, $\partial N(C_{i})$ is diffeomorphic to $C_{i} \times \sphere^{n-3}$, where $\{p\} \times \sphere^{n-3}$ is identified with the linking sphere $S_{i}(p)$, which is in turn identified with the sphere $\sphere_{\varepsilon}(\N_pC_i)=\{x \in \N_pC_i : ||x||=\varepsilon\}$.

By Theorem \ref{thm:EulerClassXgen}, the restriction of $\mathcal{R}^{\perp}_{gen}$ to $S_{i}(p)$ is isomorphic to the tangent bundle of $S_{i}(p)$ itself. 
We recall how the isomorphism was constructed in the proof, as this identification is crucial going forward. Each circle $C_{i}$ comes equipped with a covering map $\pi: C_{i} \rightarrow \mathbb{P}(\mathcal{P})$ that associates to a point $p \in C_{i}$ the unique projective class $[\psi] \in \mathbb{P}(\mathcal{P})$ such that $p$ is an eigenvector for $\psi$. The isomorphism $L:\mathcal{R}^{\perp}_{|_{S_i(p)}}\rightarrow \T S_{i}(p)$ is obtained by the fiberwise multiplication of an endomorphism $L=L(p,\psi) := \psi-\lambda \mathrm{Id} $, where $p$ is a $\lambda$-eigenvector for $\psi$. In particular, $L(p,\psi)$ depends on the choice of a representative $\psi \in [\psi]$. 

We now recall when a continuous choice of $\psi$ can be made along all of $C_i$. 
From the proof of Lemma \ref{lem:number_circles}, we know that for $n$ odd, the singular circles $C_{i}$ come in two flavors, specifically \emph{type (a)} and \emph{type (b)} within the lemma: 
\begin{itemize} 
    \item \emph{Type (a)}: the projection $\pi$ is a double cover, hence it lifts to a map $\tilde{\pi}: C_{i} \rightarrow \mathcal{P}$, meaning a consistent choice of representatives in the pencil exists along $C_i$; \medskip
    \item \emph{Type (b)}: the map $\pi$ is a bijection. Hence, we instead parameterize $C_{i}$ by a smooth map $\gamma=\gamma_{i}: [0,1] \rightarrow C_{i}$, which is injective on $(0,1)$, and such that $\gamma_i(0)=\gamma_i(1)$. We can lift $\pi \circ \gamma:[0,1]\rightarrow \mathbb{P}(\mathcal{P})$ to a map $\psi:[0,1]\rightarrow \mathcal{P}$, i.e., such that $\psi(t)\gamma(t) = \lambda(t)\gamma(t)$. By elementary covering theory, $\psi$ will necessarily satisfy $\psi(0)=-\psi(1)$.
\end{itemize}
We employ these maps $L(p,\psi)$ now, still keeping the point $p$ fixed. 
For a fixed slice $S_i(p)$, this allows us to trivialize its fibers as follows: 
\begin{align}\label{Kappa_Isomorphism}
    \kappa(p,\psi):(\mathcal{R}^{\perp} \oplus \varepsilon_{\R}^{1})_{|_{S_i(p)}} \rightarrow  \N_pC_i \qquad \big(x,(y,c)\big)\mapsto L(p,\psi)(y)+cx.
\end{align}
For every $x \in S_i(p)$, the map $\kappa(p,\psi)|_x$ is a linear isomorphism. 
In other words, with $\psi =\psi(p)$ fixed, we obtain a uniform identification of $(\mathcal{R}^\bot \oplus \varepsilon^{1}_{\R})|_x$, for every $x \in S_i(p)$, with the normal space to $C_i$ at the corresponding point $p$.

The map $\kappa =\kappa(p,\psi)$ varies smoothly with respect to $p$ and $\psi(p)$. Moreover, $\kappa$ is \emph{linear} in $\psi$, meaning $\kappa(p, -\psi(p))=-\kappa(p, \psi(p))$.
We now consider cases for the circle $C=C_i$. \medskip

\textbf{Case (i): $C$ is of type (a).} We shall produce a vector bundle isomorphism exhibiting triviality. We take for granted that a continuous choice $\psi=\psi(p)$ has been chosen. 
Let $\pr_1: \partial N(C_i)\rightarrow C_i$ denote the projection map. 
We write $(p,x) \in \partial N(C_i)$ for a point in the base and again $(y,c)$ for a point in the fiber. 
Then we obtain a vector bundle isomorphism 
\[ \kappa:(\mathcal{R}^\bot \oplus \varepsilon_{\R}^{1})_{|_{\partial N(C_i)}}\rightarrow \pr_1^*(\N C_i ) \qquad \big( (p,x), (y,c)\big)\mapsto \big( (p,x),\kappa(p,\psi)(x,y,c)\big).\]
That is, $\kappa|_{(p,x)}$ is given by $\kappa(p,\psi)|_x$ from \eqref{Kappa_Isomorphism}.
Since $C_i$ and $\sphere^{n-1}$ are orientable, the normal bundle $\N C_i \subset \T \sphere^{n-1}$ is trivial and hence so is the pullback bundle $\pr_1^*(\N C_i)$. Thus, $(\mathcal{R}^\bot\oplus \varepsilon^1_{\R})|_{\partial N(C_i)}$ is trivial as desired, meaning the bundle extends to all of $D_i$. \medskip

\textbf{Case (ii): $C$ is of type (b)}. Here, we employ a similar strategy as in \emph{Case (i)}, except that now a continuous choice of $\psi$ cannot be made, and $\psi(0) = -\psi(1)$. 
In this case, we can say that the restricted vector bundle $(\mathcal{R}_{gen}^{\perp} \oplus \varepsilon_{\R}^{1})|_{N(C_{i})}$ is 
realized as a mapping torus and then conclude. As in the definition of \emph{type (b)}, we denote $\gamma_i:[0,1]\rightarrow C_i$ as a parametrization.

Let $I = [0,1]$ and denote $\pr_1: I \times \sphere^{n-3} \rightarrow I$. 
We have a pullback bundle 
$E := \pr_1^*(\N \gamma_i) \rightarrow I$. 
Note that the fibers $E_{\mid 0}$ and $E_{\mid 1}$ are canonically identified since $\gamma_i(0)=\gamma_i(1)$. 
We can define a similar bundle map as earlier, now 
exchanging the domain and codomain:
\[ \kappa^{-1}:\pr_1^*(\N\gamma_i) \rightarrow (\mathcal{R}^\bot\oplus \varepsilon^1_{\R})|_{\partial N(C_i)} \qquad \big((t,x), (y,c)\big)\mapsto \big( (\gamma_i(t),x), \kappa^{-1}(\gamma_i(t), \psi(t))(x,y,c)\big). \] 
By construction, the map $\kappa^{-1}|_{[0,1) \times \sphere^{n-3}}$ is injective, but $\kappa^{-1}|_{ \{0\} \times \sphere^{n-3}}$ and $\kappa^{-1}|_{\{1\}\times \sphere^{n-3}}$ have the same images. Examining the formula \eqref{Kappa_Isomorphism}, we see that, in an appropriate trivialization of $\pr_1^*(\N \gamma_i)$, we have obtained 
a mapping torus description of the bundle of interest:
\begin{align}\label{MappingTorus}
  ( \mathcal{R}^\bot\oplus \varepsilon^1_{\R})|_{\partial N(C_i)} \cong \big([0,1] \times \sphere^{n-3}\times (\R^{n-3}\oplus \R) \big)/ \sim, \; \text{where} \; (0,x, (y,c) ) \sim  (1, x, (- y, c)).
\end{align}
Note that the gluing map $F \in \GL(n-2,\R)$ such that $(0, x,y,c) \sim (1, x,F(y,c))$ is given by $F=(-\id_{\R^{n-3}}) \oplus \id_{\R}$. Thus, $F$ is in the identity component of $\GL(n-2, \R)$ since $n$ is odd. 
It follows that $\mathcal{R}^{\perp}_{gen} \oplus \varepsilon_{\R}^{1}$ is a trivial vector bundle over $\partial N(C_i)$ and thus can be extended to $N(C_{i})$ as claimed, which concludes \emph{Case (ii)} and hence the proof.
\end{proof}

Combining Lemma \ref{lem:section} and Lemma \ref{lem:trivial_Rperp} we conclude the following. 
\begin{corollary}[Lifting Separating Spheres]\label{Cor:SeparatingSphereLift}
Let $n \geq 7$ be odd and let $\Sigma_i \subset S_{gen}$ be a separating sphere. Then the vector bundle $\mathcal{R}^\perp_{\mathrm{gen}}\big|_{\Sigma_i}
\rightarrow \Sigma_i $
admits a non-vanishing section.
\end{corollary}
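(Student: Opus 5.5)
The plan is to combine Lemma \ref{lem:trivial_Rperp} with Lemma \ref{lem:section}, after matching the parameters in Lemma \ref{lem:section} to the present dimensions. The sphere $\Sigma_i$ is a smoothly embedded $(n-2)$-sphere in $S_{gen}\subset \sphere^{n-1}$. By Claim 2 in the proof of the Structure Lemma \ref{Lem:StructureLemma}, $\mathcal{R}^\perp|_{S_{gen}}$ is an honest vector bundle of rank $n-3$, so its restriction to $\Sigma_i\cong \sphere^{n-2}$ is a rank $(n-3)$ bundle over a sphere of dimension $n-2$. Set $m:=n-3$. Then $\Sigma_i\cong\sphere^{m+1}$ and $E:=\mathcal{R}^\perp_{gen}|_{\Sigma_i}$ has rank $m$. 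Since $n\geq 7$ is odd, $m$ is even and $m\geq 4$, which is exactly the range required in Lemma \ref{lem:section}.

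Next I verify the remaining hypothesis of Lemma \ref{lem:section}, namely that $E\oplus\varepsilon^1_{\R}$ is trivial. This is precisely the conclusion of Lemma \ref{lem:trivial_Rperp}, which applies because $n\geq 7$ is odd and $\Sigma_i$ is a separating sphere. Lemma \ref{lem:section} then gives a continuous non-vanishing section of $E$. By the remark following that lemma, this can be upgraded to a smooth non-vanishing section, which gives the corollary.

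There is no substantive obstacle here; the real work was done in Lemmas \ref{lem:section} and \ref{lem:trivial_Rperp}. The one point needing care is the dimension bookkeeping. One must confirm that $\Sigma_i$ lies entirely in $S_{gen}$, so that $\mathcal{R}^\perp$ has constant rank $n-3$ along it; this holds by the construction in $\S$\ref{Sec:SgenHomologyGenerators}, where the $\Sigma_i$ separate the singular circles and avoid them. One must also check that the parity condition ``$m$ even'' in Lemma \ref{lem:section} corresponds to $n$ odd. With these checked, the corollary follows. Normalizing the section then yields a section of the sphere bundle $X_{gen}|_{\Sigma_i}\to\Sigma_i$, which is what $\S$\ref{Subsec:SeparatingSpheres} uses to represent $p_*^{-1}([\Sigma_i])$ by an embedded sphere.
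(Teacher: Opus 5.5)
Your deduction is the paper's own: the corollary is obtained by feeding Lemma \ref{lem:trivial_Rperp} into Lemma \ref{lem:section}, and your bookkeeping is correct ($m=n-3$ even, $m\ge 4$, $\Sigma_i\cong\sphere^{m+1}$, rank $m$).

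The problem is the cited input. Lemma \ref{lem:trivial_Rperp} fails for the two separating spheres around the type (b) circles once $n\ge 13$. These are the circles of $S^{no}_{sing}$, and they occur exactly when $n\equiv 1 \bmod 4$.

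\begin{itemize}
\item In Case (ii) of that proof, the gluing is not the constant matrix $(-\id_{\R^{n-3}})\oplus\id_{\R}$. The summand $\mathcal{R}^\perp|_{S_i(p)}\cong\T\sphere^{n-3}$ is not trivial. What actually flips is $L(p,\psi)\mapsto -L(p,\psi)$ on $\mathcal{R}^\perp_x$, which $\kappa$ sends onto $x^\perp\subset\N_pC_i$, while the radial summand is fixed.
\item So in the trivialization $S_i(p)\times\N_pC_i$ given by $\kappa$, the gluing is the map $x\mapsto 2xx^{T}-\id=-\id_{x^\perp}\oplus\id_{\R x}$. This equals a constant element of $\SO(n-2)$ times $x\mapsto R_eR_x$, where $R_x$ is the reflection in $x^\perp$. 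That is the clutching map of $\T\sphere^{n-2}$.
\item A mapping torus with gluing $F:\sphere^{n-3}\to\GL^+(n-2,\R)$ is trivial iff $F$ is null-homotopic: a global frame $\Phi_t$ gives $F=\Phi_1\Phi_0^{-1}\simeq\id$. Taking values in $\GL^+$ pointwise is not enough. Here $[F]=[\T\sphere^{n-2}]\neq 0$ unless $n-2\in\{1,3,7\}$.
\item The bundle is trivial on the linking sphere, and $D_i\setminus\mathrm{int}\,N(C_i)\simeq\sphere^{n-3}\vee\Sigma_i$. Hence $(\mathcal{R}^\perp\oplus\varepsilon^1_\R)|_{\Sigma_i}\cong\T\sphere^{n-2}$, which is nontrivial.
\item This is consistent with a sphere enclosing all singular circles, where the two type (b) contributions cancel.
\end{itemize}

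So for $n\equiv 1\bmod 4$, $n\ge 13$, the hypothesis of Lemma \ref{lem:section} is false for these $\Sigma_i$. Your proof, like the paper's, does not cover them.

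The statement itself can be rescued with one more step.
\begin{itemize}
\item For a rank-$m$ bundle $E$ over $\sphere^{m+1}$, the obstruction to a nonvanishing section is the image of its clutching class under the homomorphism $\pi_m(\SO(m))\to\pi_m(\SO(m)/\SO(m-1))=\pi_m(\sphere^{m-1})$.
\item Lemma \ref{lem:section} says exactly that this homomorphism vanishes on $\ker(S_*)$. So the obstruction depends only on $E\oplus\varepsilon^1_\R$.
\item In the bad case $E\oplus\varepsilon^1_\R\cong\T\sphere^{n-2}$. Because $4\mid n-1$, Hurwitz--Radon/Adams gives $\sphere^{n-2}$ at least three independent vector fields. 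Hence $\T\sphere^{n-2}\cong E_1\oplus\varepsilon^1_\R$ with $E_1$ admitting a section, and therefore $E$ admits one too.
\end{itemize}

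The same defect also propagates to Corollary \ref{cor:Sbeta_Sigma}. For these two spheres it gives $S\beta([\Sigma_i^*])=[\T\sphere^{n-2}]\neq 0$, not $0$.
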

  
Now, we can easily demand the separating sphere $\Sigma_i \subset S_{gen}$ to be smoothly embedded. 
Via Corollary \ref{Cor:SeparatingSphereLift}, we can lift $\Sigma_i$ to $X_{gen}$, realizing a smoothly embedded sphere representing our homology class of interest. 

\begin{definition}
Let $\Sigma_i^* \subset X_{gen}$ be a smooth lift of $\Sigma_i \subset S_{gen}$. 
\end{definition}

We will write $[\Sigma_i^*]$ for the homology class $[\Sigma_i^*] \in H_{n-2}(X)$ obtained by pushforward from $X_{gen}$ to $X$. 
Now, we seek to compute 
$S\beta([\Sigma_i^*])$, which involves understanding the (once-stabilized) normal bundle of $\Sigma_i^*$ in $X$. Since $X_{gen}$ is open in $X$, we have the luxury to work exclusively in this submanifold. That is, 
the normal bundles of $\Sigma_i^*$ in the ambient spaces $X_{gen}$ and $X$ are canonically identified. 

To study the vector bundle, $\N_{X_{gen}}\Sigma_i^*\oplus \varepsilon^1_{\R}$, we now introduce one more object. 
Namely, we define an open $(2n-4)$-manifold $W$: the total space of the vector bundle $\mathcal{R}^{\perp}_{gen} \rightarrow S_{gen}$. Here, we denote by $\mathcal{R}^{\perp}_{gen}$ the $(n-3)$-vector bundle over $S_{gen}$ obtained by restricting the projection $\mathcal{R}^{\perp} \rightarrow \sphere^{n-1}$ to $S_{gen}$. 
Now, there is a natural inclusion $\iota: X_{gen} \hookrightarrow W$, since $X_{gen}=\sphere(\mathcal{R}^\bot_{gen})$ is exactly the associated sphere bundle. 
We may regard $\Sigma_i^*$ as embedded in $W$ after post-composing with $\iota$. 

Because the normal bundle of $X_{gen}$ in $W$ in trivial, the following relation holds between the normal bundles $\N_{X_{gen}}\Sigma_{i}^{*}$ of $\Sigma_{i}^{*}$ in $X_{gen}$ and $\N_{W}\Sigma_{i}^{*}$ of $\Sigma_i^*$  in $W$:
\begin{equation}\label{eq:normal_splitting_W}
    \N_{W}\Sigma_{i}^{*}\cong\N_{X}\Sigma_{i}^{*} \oplus \varepsilon_{\R}^{1} .
\end{equation}
Our goal thus reduces to the study of the normal bundle of $\Sigma_{i}^{*}$ seen inside $W$. 
We can simplify further by replacing $\Sigma_i^*$ by an isotopic companion $\Sigma_i^0$, namely the graph of the zero section of $W$ restricted to $\Sigma_i \subset S_{gen}$. 
There is then a clear relationship between $\Sigma_i^*$ and $\Sigma_i^0$ that we record formally.

\begin{lemma}\label{lem:isotopy}
$\Sigma_i^*$ and $\Sigma_i^0$ are isotopic in $W$. In particular, $\N_{W}\Sigma_{i}^{*} \cong \N_{W}\Sigma_{i}^{0}$.
\end{lemma}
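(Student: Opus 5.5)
The plan is to isotope $\Sigma_i^*$ to $\Sigma_i^0$ inside $W$ by shrinking the fiber coordinate linearly to zero. Write $\Sigma_i^* = \{ s(x) : x \in \Sigma_i\}$, where $s$ is the non-vanishing smooth section of $\mathcal{R}^\perp_{gen}|_{\Sigma_i}$ provided by Corollary \ref{Cor:SeparatingSphereLift}, normalized to unit length so that its image lies in $X_{gen} = \sphere(\mathcal{R}^\perp_{gen})$. Viewed in $W$, this is the image of the embedding $\iota \circ s : \Sigma_i \to W$, $x \mapsto (x, s(x))$. For $t \in [0,1]$, define
\[ F_t : \Sigma_i \longrightarrow W, \qquad F_t(x) = (x, t\, s(x)), \]
using the fiberwise linear structure of the vector bundle $\mathcal{R}^\perp_{gen} \to S_{gen}$. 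Then $F_1 = \iota\circ s$ has image $\Sigma_i^*$, and $F_0$ has image $\Sigma_i^0$, the zero section over $\Sigma_i$.

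The key step is to check that each $F_t$ is a smooth embedding, which gives an isotopy through embeddings. Let $\pi_W : W \to S_{gen}$ be the bundle projection. Then $\pi_W \circ F_t$ is the inclusion $\Sigma_i \hookrightarrow S_{gen}$, which is a smooth embedding. Hence $F_t$ is injective, since distinct base points stay distinct. It is also immersive, because $d\pi_W \circ dF_t$ is injective. Finally, $F_t$ is a homeomorphism onto its image: $\Sigma_i$ is compact, so an injective continuous map to the Hausdorff space $W$ is closed. The family $F_t$ is smooth jointly in $(t,x)$, so it is a smooth isotopy; by the isotopy extension theorem it is even ambient. In any case, $\Sigma_i^*$ and $\Sigma_i^0$ are isotopic in $W$.

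For the consequence about normal bundles, I would use that a smooth isotopy $F : \Sigma_i \times [0,1] \to W$ of embeddings yields isomorphic normal bundles $F_0^* \N \cong F_1^* \N$. The family $\N_W F_t(\Sigma_i)$ can be packaged as one vector bundle over $\Sigma_i \times [0,1]$, namely the quotient $F^*\T W / dF_t(\T\Sigma_i)$. By homotopy invariance of vector bundles, its restrictions to the two ends are isomorphic, giving $\N_W\Sigma_i^* \cong \N_W \Sigma_i^0$. Alternatively, an ambient isotopy $\Phi_1$ from the extension theorem carries one normal bundle isomorphically onto the other via $d\Phi_1$.

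There is no real obstacle here. The only point needing care is that the scaling $t\,s(x)$ leaves $X_{gen}$ for $t<1$, which is exactly why the argument takes place in the total space $W$ rather than in $X_{gen}$.
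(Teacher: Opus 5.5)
Your proof is correct. The paper states this lemma without proof, treating it as evident, and your fiberwise scaling isotopy $F_t(x) = (x, t\,s(x))$ in the total space $W$, together with homotopy invariance of the normal bundles over $\Sigma_i \times [0,1]$, is exactly the implicit argument.
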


We can finally compute $S\beta$ on the class $[\Sigma_i^*]$. 
\begin{corollary}\label{cor:Sbeta_Sigma} $S\beta([\Sigma_{i}^*])=0$.     
\end{corollary}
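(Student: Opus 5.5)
By Definitions \ref{Defn:Beta} and \ref{Defn:SBeta}, it suffices to show that $\N_X\Sigma_i^*\oplus\varepsilon^1_{\R}$ is a trivial bundle over $\Sigma_i^*\cong\sphere^{n-2}$. Here $\Sigma_i^*$ is a smoothly embedded sphere representing the class, and we may compute its normal bundle inside the open set $X_{gen}$. Combining \eqref{eq:normal_splitting_W} with Lemma \ref{lem:isotopy}, we get
\[ \N_X\Sigma_i^*\oplus\varepsilon^1_{\R}\cong \N_W\Sigma_i^*\cong \N_W\Sigma_i^0 . \]
So the problem reduces to computing the normal bundle of the zero section over $\Sigma_i$ inside the total space $W$ of $\mathcal{R}^\perp_{gen}\to S_{gen}$.

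For the zero section of a vector bundle, the tangent bundle of the total space restricted to the zero section splits canonically as horizontal plus vertical. This gives
\[ \N_W\Sigma_i^0\cong \N_{S_{gen}}\Sigma_i \oplus \mathcal{R}^\perp_{gen}|_{\Sigma_i}. \]
Since $\Sigma_i$ is a separating, hence two-sided, hypersurface in $S_{gen}\subset\sphere^{n-1}$, its normal bundle $\N_{S_{gen}}\Sigma_i$ is the trivial line bundle $\varepsilon^1_{\R}$. Therefore
\[ \N_W\Sigma_i^0\cong \mathcal{R}^\perp_{gen}|_{\Sigma_i}\oplus\varepsilon^1_{\R}. \]

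Lemma \ref{lem:trivial_Rperp} shows that this bundle is trivial for $n\geq 7$ odd. It follows that $\N_X\Sigma_i^*\oplus\varepsilon^1_{\R}$ is trivial, and hence $S\beta([\Sigma_i^*])=0$.

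The only points needing care are bookkeeping ones:
\begin{itemize}
\item the identification of the normal bundle of the zero section (standard);
\item the triviality of the normal line of $\Sigma_i$ (orientability of $\Sigma_i$ and of $\sphere^{n-1}$);
\item checking that the smooth lift $\Sigma_i^*$ does represent $\Psi_{n-2}(p_*^{-1}[\Sigma_i])$, which holds because $p\circ$ (lift) is the identity on $\Sigma_i$ and $p_*$ is an isomorphism on $H_{n-2}(X_{gen})$.
\end{itemize}
All of the substantive work has already been done in Lemma \ref{lem:trivial_Rperp}, so I do not expect any serious obstacle here.
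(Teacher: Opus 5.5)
Your reduction is the paper's own proof almost verbatim. You use \eqref{eq:normal_splitting_W}, Lemma~\ref{lem:isotopy} and the zero-section splitting to get $\N_X\Sigma_i^*\oplus\varepsilon^1_{\R}\cong \mathcal{R}^\perp_{gen}|_{\Sigma_i}\oplus\varepsilon^1_{\R}$, and then hand everything to Lemma~\ref{lem:trivial_Rperp}. Those steps are fine.

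The gap is the step you describe as already done. Case (ii) of Lemma~\ref{lem:trivial_Rperp}, which handles the two type (b) circles occurring exactly when $n\equiv 1 \bmod 4$, does not hold up. The gluing $(y,c)\mapsto(-y,c)$ in \eqref{MappingTorus} is written in the splitting $\mathcal{R}^\perp|_x\oplus\R\cong \T_x\sphere^{n-3}\oplus\R x$ induced by $L$. That splitting is not a trivialization over the linking sphere: $\mathcal{R}^\perp|_{S_i(p)}\cong\T\sphere^{n-3}$ has Euler class $\pm 2$.

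Use instead the genuine trivialization $\kappa(p,\psi)$ of \eqref{Kappa_Isomorphism} by $\N_pC_i\cong\R^{n-2}$. Replacing $\psi$ by $-\psi$ replaces $L$ by $-L$ and keeps the radial term $cx$. So the gluing is $v\mapsto -v+2\langle v,x\rangle x$, i.e.\ the non-constant map $x\mapsto -R_x$, where $R_x$ is the reflection in $x^\perp$.

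Fix a unit vector $e$. Since $n-2$ is odd, $-R_e\in\SO(n-2)$, and $-R_x=(-R_e)(R_eR_x)$. Hence the class of the gluing in $\pi_{n-3}(\SO(n-2))$ is that of $x\mapsto R_eR_x$. Up to sign, this is the clutching class of $\T\sphere^{n-2}$: the two stereographic charts are related by $u\mapsto u/|u|^2$, whose derivative on the unit sphere is $R_u$. Knowing that each $F_x$ lies in $\GL^+(n-2,\R)$ gives only orientability, not triviality.

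Now take $n\equiv 1\bmod 4$ with $n\geq 13$, so that $n-2\notin\{1,3,7\}$ and $\T\sphere^{n-2}$ is non-trivial. For $C_i$ of type (b), $(\mathcal{R}^\perp\oplus\varepsilon^1_{\R})|_{\partial N(C_i)}$ is then a mapping torus with non-zero gluing class in the abelian group $\pi_{n-3}(\SO(n-2))$, hence non-trivial.

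It follows that $(\mathcal{R}^\perp\oplus\varepsilon^1_{\R})|_{\Sigma_i}$ cannot be trivial either. Otherwise you could cap it off by a trivial bundle over the complementary ball $\sphere^{n-1}\setminus \mathrm{int}(D_i)$. That would give a bundle over $\sphere^{n-1}\setminus \mathrm{int}\,N(C_i)\simeq\sphere^{n-3}$ whose restriction to a linking sphere is $\T\sphere^{n-3}\oplus\varepsilon^1_{\R}$, hence trivial. This would force triviality on $\partial N(C_i)$, a contradiction.

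So for these indices your (correct) chain of isomorphisms actually gives $\N_X\Sigma_i^*\oplus\varepsilon^1_{\R}\cong\T\sphere^{n-2}$, hence $S\beta([\Sigma_i^*])\neq 0$, whenever the lift $\Sigma_i^*$ exists at all. Its existence, via Corollary~\ref{Cor:SeparatingSphereLift}, also rested on Lemma~\ref{lem:trivial_Rperp}.

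Your argument, like the paper's, is complete for $n\equiv 3\bmod 4$, for $n=9$, and for type (a) indices in general. The type (b) case with $n\geq 13$ is not bookkeeping, and there the computation does not give the claimed vanishing.
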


\begin{proof} 
Endow $W$ with the Riemannian metric induced by ambient space $\R^{2n}$. 
We denote $p: \Sigma_i^0 \rightarrow \Sigma_i$ for the projection. 
Observe that the normal bundle $\N_{W}\Sigma_i^0$ splits as:
\begin{equation}\label{eq:split_normal}
    \N_{W}\Sigma_{i}^{0} \cong p^*({\mathcal{R}^{\perp}_{gen}}|_{\Sigma_{i}} \oplus \varepsilon_{\R}^{1}).
\end{equation}
Indeed, as $\Sigma_{i}^{0}$ lies inside the zero section, the vertical fiber $\mathcal{R}^{\perp}_{gen}$ is contained in the normal bundle. The   factor $\varepsilon^1_{\R}$ comes from the normal bundle $\N_{S_{gen}}\Sigma_i\cong \N_{\sphere^{n-1}}\Sigma_i$, which is trivial. 

Since $\Sigma_i^*$ is smoothly embedded, we conclude that 
$S\beta([\Sigma_i^*])=0$ by combining equations \eqref{eq:normal_splitting_W}, \eqref{eq:split_normal}, Lemma \ref{lem:isotopy} and Lemma \ref{lem:trivial_Rperp}.  
\end{proof}

We have now all the ingredients to determine the topology of the fibers for $n$ odd.

\begin{theorem}[Fiber Classification, Odd Case]
Let $n \geq 7$ be odd and $X = \hat{\mathcal{B}}(\mathcal{P})$ be the base of pencil of $\mathcal{P}=\T_o\Ha^2_{\pr}$. Then $X$ is almost diffeomorphic to $\#_{i=1}^{2n-1}(\sphere^{n-2}\times \sphere^{n-3})$. 
\end{theorem}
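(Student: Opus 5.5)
The plan is to apply the Simplified Classification, Theorem \ref{thm:WallSimplified}, with $m=n-3$, which is even and at least $4$ since $n\geq 7$ is odd. First I check the hypotheses. By Corollary \ref{cor:highly-connected}, $X$ is a closed $(n-4)$-connected $(2n-5)$-manifold, that is, $(m-1)$-connected of dimension $2m+1$. By Theorem \ref{thm:hom_odd}, $X$ has torsion-free integer homology, and Corollary \ref{Cor:StableTangentialInvariants} (via Lemma \ref{Lem:StablyTrivial} and Proposition \ref{Prop:StablyTrivaltoTrivial}) gives $\alpha=0$. Wall's classification then reduces to two invariants: the Betti number $b_{m}=b_{n-3}$, and the homomorphism $S\beta: H_{n-2}(X,\Z)\to \pi_{n-3}(\SO(n-2))$.

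The Betti number is already known: Theorem \ref{thm:hom_odd} gives $b_{n-3}=b_{n-2}=2n-1$. For $S\beta$, note that it is a homomorphism, so it suffices to show it vanishes on a generating set of $H_{n-2}(X)$. By Lemma \ref{Lem:Dichotomy}, such a generating set is given by the classes $[Q_i^{\pm}]$ for $i\in I^o$ together with the classes $\Psi_{n-2}(p_*^{-1}[\Sigma_i])$ for $i\in I$. Theorem \ref{thm:nightmare_spheres} gives $S\beta([Q_i^{\pm}])=0$; its argument applies verbatim to both signs $\pm$. Corollary \ref{cor:Sbeta_Sigma} gives $S\beta([\Sigma_i^*])=0$, and by Corollary \ref{Cor:SeparatingSphereLift} the class $[\Sigma_i^*]$ agrees with $\Psi_{n-2}(p_*^{-1}[\Sigma_i])$, since $p_*$ is an isomorphism on $H_{n-2}(X_{gen})$. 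Hence $S\beta\equiv 0$.

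Next I compute the invariants of the model $Y=\#_{i=1}^{2n-1}(\sphere^{n-2}\times\sphere^{n-3})$. It is $(n-4)$-connected and stably parallelizable, since products of spheres are and connected sums preserve this. It has torsion-free homology with $b_{n-3}=2n-1$. Each generator of $H_{n-2}(Y)$ is represented by a factor $\sphere^{n-2}\times\{*\}$, whose normal bundle is trivial, so $S\beta_Y=0$; this is also consistent with additivity of the invariants under connected sum. Thus $X$ and $Y$ have isomorphic systems of invariants, any isomorphism $H_{n-3}(X)\cong H_{n-3}(Y)$ working since both $S\beta$'s vanish. Theorem \ref{thm:WallSimplified} then yields that $X$ is almost diffeomorphic to $Y$.

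The main obstacle is purely bookkeeping: making sure the generating set of Lemma \ref{Lem:Dichotomy} really covers all of $H_{n-2}(X)$, including the case $n\equiv 1 \bmod 4$, where $I^{no}\neq\emptyset$ and the classes $[Q_i^\pm]$ for $i\in I^{no}$ are absent. That lemma already handles this, via the description of $\ker(\Phi_{n-3})$ in Lemma \ref{Lem:ImagePsi}. For the small case $m=4$ (that is, $n=7$), I would double-check that Proposition \ref{Prop:CompareBetas} and Wilkens' extension are not needed: $m=4$ is even and is not among the excluded values $3,7$, so Wall's theorem applies directly.
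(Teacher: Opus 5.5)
Your proposal is correct and follows the paper's proof essentially verbatim. You check the hypotheses of Theorem \ref{thm:WallSimplified} (high connectivity, torsion-free homology, $\alpha=0$), take $b_{n-3}=2n-1$ from Theorem \ref{thm:hom_odd}, and kill $S\beta$ on the generators of Lemma \ref{Lem:Dichotomy} using Theorem \ref{thm:nightmare_spheres} and Corollary \ref{cor:Sbeta_Sigma}. Your explicit verification that the model $\#_{i=1}^{2n-1}(\sphere^{n-2}\times\sphere^{n-3})$ has the same trivial invariants is correct, and is a step the paper leaves implicit.
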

\begin{proof} 
Theorem \ref{thm:hom_odd} asserts that $X$ is $(n-4)$-connected, has torsion-free homology, and has Betti number $b_{n-2}=2n-1$. By Theorem \ref{thm:nightmare_spheres}, Corollary \ref{cor:Sbeta_Sigma} and Lemma \ref{Lem:Dichotomy}, the invariant $S\beta$ vanishes for $X$. 
The result then follows from Theorem \ref{thm:WallSimplified}. 
\end{proof}

\begin{corollary}
Let $n\geq 7$ be odd, $\rho: \pi_1S\rightarrow \PSL(n,\R)$ be a Hitchin representation, and $\Omega_{\rho} \subset \mathcal{F}_{1,n-1}$ the domain \eqref{Omega_Thick}, with universal cover $\hat{\Omega}_{\rho} \subset V_2(\R^n)$. 
Then there is a smooth fiber bundle projection
$\hat{\Omega}_{\rho} \rightarrow \Ha^2$ with fiber almost diffeomorphic to $\#_{i=1}^{2n-1}(\sphere^{n-2}\times \sphere^{n-3})$. 
\end{corollary}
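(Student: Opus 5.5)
The corollary should follow by combining the odd-case fiber classification just proved with the topological-invariance results of \S\ref{Subsec:ComparisonDomains}. I would proceed in three steps.

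\textbf{Step 1: reduce to the Fuchsian-Hitchin case.} Fix a covering $q: V_2(\R^n)\rightarrow \mathcal{F}_{1,n-1}$. By the corollary following \cite[Theorem 2.10]{AMTW25}, the diffeomorphism type of $\hat{\Omega}_{\rho}=q^{-1}(\Omega_\rho)$ does not depend on the Hitchin representation $\rho$. So it suffices to treat $\rho_0=\iota_{\pr}\circ\rho_0'$ with $\rho_0'$ Fuchsian. Theorem \ref{thm:EquivalentDomains} identifies $\Omega_{\rho_0}$ with the Busemann domain $\Omega_f$. Corollary \ref{Cor:NearestPointProjUpstairs} then gives a smooth fiber bundle $\pi\circ q:\hat{\Omega}\rightarrow \tilde S\cong\Ha^2$ with fiber $q^{-1}(\mathcal{B}(\mathcal{P}_x))$. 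Choosing $q=G_Q$ and $Q=f(x)$, this fiber is exactly $X=\hat{\mathcal{B}}(\mathcal{P})$ with $\mathcal{P}=\T_Q\Ha^2_{\pr}$.

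\textbf{Step 2: transport the bundle to a general $\rho$.} For an arbitrary Hitchin $\rho$, compose with the diffeomorphism $\hat{\Omega}_\rho\cong\hat{\Omega}_{\rho_0}$ from Step 1. The composite is still a smooth fiber bundle over $\Ha^2$ with fiber diffeomorphic to $X$.

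\textbf{Step 3: identify the fiber and the universal cover.} The preceding theorem says $X$ is almost diffeomorphic to $\#_{i=1}^{2n-1}(\sphere^{n-2}\times\sphere^{n-3})$. It remains to check that $\hat{\Omega}_\rho$ is the universal cover of $\Omega_\rho$. Since $\hat\Omega\simeq X$ and $X$ is $(n-4)$-connected by Corollary \ref{cor:highly-connected}, $\hat\Omega$ is simply connected because $n\ge 7$. It is connected because $X$ is connected. As $q$ restricted to $\hat\Omega$ is a $(\Z_2\times\Z_2)$-covering of $\Omega_\rho$, it is therefore the universal cover.

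\textbf{Expected obstacle.} The only subtle point is that the earlier corollary is stated for a fixed covering $q$. I therefore need the identification of $\hat{\Omega}_\rho$ to be independent of which $(\Z_2\times\Z_2)$-covering is used: $\pi$ from Proposition \ref{Prop:LHTopologicalType} versus $G_Q$. This holds because $G_Q=\pi\circ h$ for the diffeomorphism $h(x,y)=\big((x+y)/\sqrt2,(x-y)/\sqrt2\big)$ of $V_2^Q(\R^n)$, so the two lifted domains are diffeomorphic.
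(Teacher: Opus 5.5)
Your proposal is correct and follows the paper's route: the corollary comes from the odd-case fiber classification, combined with the $\rho$-independence of the diffeomorphism type of $\hat{\Omega}_\rho$ and, in the Fuchsian-Hitchin case, the nearest-point-projection fibration of Corollary \ref{Cor:NearestPointProjUpstairs}. Your extra checks are valid details the paper leaves implicit: that $\hat{\Omega}_\rho$ is the universal cover because $X$ is $(n-4)$-connected, and that replacing $\pi$ by $G_Q=\pi\circ h$ is harmless.
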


\section{Special Low-Dimensional Cases}\label{Sec:SpecialCases}

In this section, we treat the special cases $n\in\{3,4,5\}$ individually. The case $n=3$ is known and provides a sanity check for our methods. When $n=4$, the fiber $\hat{\mathfrak{F}}$ is a 3-manifold. This case is exceptional since $\hat{\mathfrak{F}}$ is not simply connected. When $n=5$, the fiber $\hat{\mathfrak{F}}$ is a 5-manifold and Wall's classification does not apply. Moreover, the calculation of homology does not quite fit the general pattern, and must be addressed separately. 

\subsection{\texorpdfstring{$n=5$}{n=5}}\label{Sec:n=5}

The study of the topology of $X$ for $n=5$ is similar to the general case, but some homological computations differ due to low-dimensionality. 
We will also see that the diffeomorphism type of $X$ is easier to determine and will follow directly from work of Smale \cite{Sma62}, 
later extended by Barden \cite{Bar65}, 
without having to refer to the more general and involved theory of Wall \cite{Wal67}. Indeed, there is a complete classification of simply-connected closed 5-manifolds up to diffeomorphism, unlike the more general case of highly connected closed odd-dimensional manifolds for $2n+1\geq 7$. \medskip 

Recall that $X =\hat{\mathcal{B}}(\mathcal{P})$ is the total space of an almost-fibration $p: X \rightarrow \sphere^{4}$ from Lemma \ref{Lem:StructureLemma}. There are $r(5) =6$ singular circles in this case by Lemma \ref{lem:number_circles}. 
Again we compute the homology of $X$ by applying Mayer-Vietoris, where $A$ and $B$ are as follows:
\begin{itemize}
    \item $A$ is a regular neighborhood of $X_{sing} =p^{-1}(S_{sing})$ in $X$. Here, we have singular locus 
    $X_{sing} \cong \bigsqcup_{i=1}^{4}(\sphere^1\times\sphere^{2}) \sqcup \bigsqcup_{i=1}^{2}(\sphere^{1} \tilde{\times} ~ \sphere^2)$ by Corollary \ref{cor:orientability_singular}. 
    \item $B = X_{gen}$ is an orientable $\sphere^{1}$-bundle over $S_{gen} \simeq \big(\bigvee_{i=1}^{6}\sphere^{2} \big) \vee \big(\bigvee_{i=1}^{5} \sphere^{3} \big)$ with Euler class $e(X_{gen}) = (\pm 2,\dots, \pm 2)\in H^2(S_{gen},\Z) \cong \Z^{6}$.
    \item $A \cap B$ is homotopy equivalent to an orientable $\sphere^{1}$-bundle over $ \bigsqcup_{i=1}^{6} (\sphere^1\times \sphere^{2})$.  
\end{itemize}
We first compute the homology of $B$ and $A\cap B$. 
\begin{lemma}[Homology of Generic Locus, $n=5$]\label{Lem:CohomologyGenericLocus_5} We have
\begin{align}\label{CohomologyGenericLocus_5}
    H_k(B,\Z) = \begin{cases}
        \Z & k =0,\\
        \Z_2 &  k =1, \\
        \Z^{5} & k=2, \\
        \Z^{11} & k=3,\\
        \Z^{5} & k=4.\\
    \end{cases}
\end{align}
\end{lemma}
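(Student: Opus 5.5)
I plan to compute cohomology of $B=X_{gen}$ from the Gysin sequence of the oriented circle bundle $\sphere^1\to X_{gen}\to S_{gen}$, using $S_{gen}\simeq \big(\bigvee_{i=1}^{6}\sphere^{2}\big)\vee\big(\bigvee_{i=1}^{5}\sphere^{3}\big)$ from $\S$\ref{Sec:SgenHomology} and the Euler class $e=(\pm2,\dots,\pm2)\in H^2(S_{gen},\Z)\cong\Z^6$ from Theorem \ref{thm:EulerClassXgen}. The plan is to pass back to homology at the end via the universal coefficient theorem, as in Lemma \ref{Lem:CohomologyGenericLocus}: ranks agree and $T_k=T^{k+1}$. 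Here $X_{gen}$ is a $5$-manifold but not compact, so the answer need not be Poincar\'e symmetric.

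The Gysin sequence is
\[ \cdots\to H^{k-2}(S_{gen})\xrightarrow{\cup e}H^k(S_{gen})\xrightarrow{p^*}H^k(X_{gen})\to H^{k-1}(S_{gen})\xrightarrow{\cup e}H^{k+1}(S_{gen})\to\cdots \]
and the nonzero cohomology of $S_{gen}$ is $H^0=\Z$, $H^2=\Z^6$, $H^3=\Z^5$. I would go through the degrees in order.
\begin{itemize}
\item In degree $0$ we get $H^0(X_{gen})=\Z$.
\item In degree $1$, the sequence $0\to H^1(X_{gen})\to H^0\xrightarrow{\cup e}H^2$ has $\cup e$ injective, so $H^1(X_{gen})=0$.
\item In degree $2$, the sequence is $H^0\xrightarrow{\cup e}H^2\to H^2(X_{gen})\to H^1(S_{gen})=0$, so $H^2(X_{gen})=\coker(\cup e)\cong\Z^5\oplus\Z_2$. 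This uses that $e$ has all entries $\pm2$, hence divisibility exactly $2$.
\item In degree $3$, the sequence is $0=H^1(S_{gen})\to H^3(S_{gen})\to H^3(X_{gen})\to H^2(S_{gen})\to H^4(S_{gen})=0$, so $H^3(X_{gen})$ is an extension of $\Z^6$ by $\Z^5$, namely $\Z^{11}$.
\item In degree $4$, the sequence is $H^4(S_{gen})=0\to H^4(X_{gen})\to H^3(S_{gen})\to H^5(S_{gen})=0$, so $H^4(X_{gen})\cong\Z^5$.
\item In degree $5$ we get $H^5(X_{gen})=0$.
\end{itemize}

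Translating to homology, the ranks are $1,0,5,11,5$. The only torsion is $T^2=\Z_2$, which becomes $T_1=\Z_2$. This gives $H_1=\Z_2$, $H_2=\Z^5$, $H_3=\Z^{11}$, $H_4=\Z^5$, matching the statement and agreeing with $\pi_1(X_{gen})\cong\Z_2$ from Lemma \ref{lem:simplyconnected}.

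There is no real obstacle here. The only point needing care is that $S_{gen}$ has the homotopy type of a finite CW complex, so the universal coefficient translation is valid; this is supplied by the homotopy equivalence in $\S$\ref{Sec:SgenHomology}. It also relies on the Euler class being primitive-times-$2$ rather than merely even in each coordinate, which the basis in Theorem \ref{thm:EulerClassXgen} gives directly.
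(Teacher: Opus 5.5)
Your proposal is correct and is exactly the argument the paper intends: the paper omits the details, saying only that the lemma is a straightforward Gysin-sequence computation analogous to Lemma \ref{Lem:CohomologyGenericLocus}, and your degree-by-degree computation supplies them correctly; in particular the cokernel $\Z^6/\Z\cdot 2(\pm1,\dots,\pm1)\cong\Z^5\oplus\Z_2$ gives $T^2=\Z_2$, hence $T_1=\Z_2$. One small precision: the universal coefficient translation needs $H_*(X_{gen})$ itself (not just $H_*(S_{gen})$) to be finitely generated, which holds because $X_{gen}$ is an $\sphere^1$-bundle over a space homotopy equivalent to a finite complex, or alternatively by running the same Gysin sequence in homology.
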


This calculation is a straightforward application of the Gysin sequence and completely analogous to the calculations in Lemma \ref{Lem:CohomologyGenericLocus}, so we omit the details. \medskip 

Next, we compute the homology of $A \cap B$. Comparing with Lemma \ref{Lem:HomologyIntersection}, we see the homology differs from the general case.  
This discrepancy is why we treat $n=5$ separately.  

\begin{lemma}[Homology of Intersection, $n=5$]\label{Lem:HomologyIntersection_5} We have 
\begin{align}\label{CohomologyOfIntersection_5}
    H_k(A \cap B) = \begin{cases}
        \Z^{6} & k =0,\\
        \Z^6 \oplus \Z_2^6 & k= 1,\\
        \Z_2^{6} & k =2 ,\\
        \Z^6 & k \in \{3,4\} .
    \end{cases}
\end{align}
\end{lemma}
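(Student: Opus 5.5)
The plan is to compute the homology one component at a time, by identifying each component of $A\cap B$ with the explicit manifold $\sphere^1\times\RP^3$ and then applying the K\"unneth formula. The identification is preferable to a direct diagram chase in the Gysin sequence, because the Gysin sequence determines the torsion only up to extension problems.

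\textbf{Step 1 (reduce to one component).} As recorded above, $A\cap B$ is homotopy equivalent to an orientable $\sphere^1$-bundle over $\bigsqcup_{i=1}^6(\sphere^1\times\sphere^2)$. Each summand is the restriction of $X_{gen}\to S_{gen}$ to a punctured regular neighborhood $U_i\simeq \sphere^1\times\sphere^2$ of $C_i$. It therefore suffices to compute the homology of one component $E_i=p^{-1}(U_i)$ and take a sixfold direct sum.

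\textbf{Step 2 (identify the bundle).} Oriented circle bundles over a CW complex $Y$ are classified by their Euler class in $H^2(Y,\Z)$. Here
\[
H^2(U_i,\Z)\cong H^2(\sphere^1\times\sphere^2,\Z)\cong\Z ,
\]
and it is generated by the pullback of the orientation class of $\sphere^2$ under the projection $\mathrm{pr}_2:\sphere^1\times\sphere^2\to\sphere^2$. This sphere factor is the linking sphere $S_i$. By Theorem~\ref{thm:EulerClassXgen}, the Euler class of $X_{gen}|_{S_i}$ is $\pm2$, because that restriction is the unit tangent bundle of $\sphere^2$. Hence $e(E_i)=\pm2\,\mathrm{pr}_2^*[\sphere^2]$. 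Consequently $E_i$ is isomorphic to the pullback along $\mathrm{pr}_2$ of the circle bundle over $\sphere^2$ with Euler number $\pm2$. That bundle is $\T^1\sphere^2\cong\RP^3$, so
\[
E_i\;\cong\;\sphere^1\times\RP^3 .
\]
The only point to check is that the restriction of the Euler class to $U_i$ is detected entirely by its pairing with $[S_i]$. This holds because $H^2(U_i,\Z)$ is infinite cyclic, generated by the class dual to $[S_i]$.

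\textbf{Step 3 (K\"unneth).} The integral homology of $\RP^3$ is $\Z,\Z_2,0,\Z$ in degrees $0,1,2,3$. Since $H_*(\sphere^1)$ is free, the K\"unneth formula has no $\mathrm{Tor}$ terms, and we get:
\begin{itemize}
\item $H_0(E_i)\cong\Z$;
\item $H_1(E_i)\cong H_1(\sphere^1)\oplus H_1(\RP^3)\cong\Z\oplus\Z_2$;
\item $H_2(E_i)\cong H_1(\sphere^1)\otimes H_1(\RP^3)\oplus H_2(\RP^3)\cong\Z_2$;
\item $H_3(E_i)\cong H_3(\RP^3)\cong\Z$, since $H_1(\sphere^1)\otimes H_2(\RP^3)=0$;
\item $H_4(E_i)\cong H_1(\sphere^1)\otimes H_3(\RP^3)\cong\Z$.
\end{itemize}
Summing over the six components yields \eqref{CohomologyOfIntersection_5}.

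\textbf{Expected obstacle.} The main step needing care is Step~2, namely justifying the product identification of $E_i$. If one prefers to avoid the classification of circle bundles, the fallback is to run the Gysin sequence as in Lemma~\ref{Lem:HomologyIntersection}. The cup product with $e$ is multiplication by $2$ in the maps $H^0\to H^2$ and $H^1\to H^3$. The resulting short exact sequences then determine $H^*(E_i)$, and one converts to homology via $T_k=T^{k+1}$.
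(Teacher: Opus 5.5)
Your proof is correct, but it reaches the answer by a different computational route than the paper's.

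\textbf{The paper's route.} The paper stays with the Gysin sequence of a single component $E \to \sphere^1\times\sphere^2$. Since $\cup e$ is multiplication by $2$ on $H^0\to H^2$ and on $H^1\to H^3$, it reads off $H^1(E)\cong\Z$, $H^2(E)\cong\Z_2$, $H^3(E)\cong\Z\oplus\Z_2$ and $H^4(E)\cong\Z$. It then converts to homology via $T_k=T^{k+1}$. This is exactly the fallback you describe.

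\textbf{Your route.} You use that $H^2(\sphere^1\times\sphere^2;\Z)\cong\Z$ is generated by $\mathrm{pr}_2^*$ of the orientation class, so the oriented circle bundle is pulled back from the Euler-number-$\pm2$ bundle over the linking sphere. This gives $E_i\simeq\sphere^1\times\RP^3$. Strictly, since $U_i$ is open you get $\sphere^1\times\RP^3\times(0,\epsilon)$, which is harmless for homology. K\"unneth with free $H_*(\sphere^1)$ then gives the homology directly.

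\textbf{Comparison.} Your approach gives an explicit homotopy type for each component. It also avoids both the cup-product check on $H^1\to H^3$ and the passage from cohomology back to homology. The cost is two extra inputs: the classification of oriented circle bundles by their Euler class, and the product structure $U_i\simeq C_i\times S_i$. The latter needs the normal bundle of $C_i$ in $\sphere^4$ to be trivial, which holds because it is an orientable rank-$3$ bundle over a circle. Your stated motivation, however, overstates the weakness of the Gysin route in this case. The only nontrivial extension that arises is $0\to\Z_2\to H^3(E)\to\Z\to 0$, and it splits automatically because $\Z$ is free. So the paper's computation has no genuine torsion ambiguity.
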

\begin{proof} We recall that $A\cap B$ is homotopy equivalent to the disjoint union of six $\sphere^1$-bundles over $\sphere^1\times \sphere^2$ with each Euler classes in $H^2(\sphere^1\times \sphere^2)\cong \Z$ equal to $2$. It is enough to compute the cohomology of one connected component of $A \cap B$, which we denote by $E$. 

Since $E$ is homotopy equivalent to a closed orientable 4-manifold, $H^4(E)\cong \mathbb{Z}$. 
Now, in the Gysin exact sequence, the map $\cup e:H^k(B)\rightarrow H^{k+2}(B)$ is injective for $k \leq 1$. Hence, for $k \leq 0$, the Gysin sequence decomposes into short exact sequences:
\[ 0 \longrightarrow H^k(B) \stackrel{\cup e}{\longrightarrow}  H^{k+2}(B) \stackrel{p^*}{\longrightarrow} H^{k+2}(E) \longrightarrow 0.\]
Taking $k \in \{-1,0\}$, one finds $H^1(E) \cong \Z$ and $H^2(E) \cong \Z_2$. 
When $k=1$, we have the exact sequence 
\[ 0 {\longrightarrow} \Z\cong H^1(\sphere^1\times \sphere^2)\stackrel{\cdot 2}{\longrightarrow} \Z \cong H^3(\sphere^1\times \sphere^2) \stackrel{p^*}{\longrightarrow} H^3(E) \stackrel{\partial}{\longrightarrow} \Z \cong H^2(\sphere^1\times \sphere^2) \longrightarrow 0.\]
Hence, $H^3(E)\cong \Z \oplus \Z_2$. The result follows by taking six direct sums. 
\end{proof}

With the homology of $A$ (recall Lemma \ref{lem:homology_singular}), $B,A \cap B$ in hand, we can now compute the homology of $X$. 
\begin{theorem}\label{thm:hom_5}
The simply connected 5-manifold $X = \hat{\mathcal{B}}(\mathcal{P})$ has torsion-free integer homology and second Betti number $b_2=9$. 
\end{theorem}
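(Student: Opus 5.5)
We already know from Lemma \ref{lem:simplyconnected} that $X$ is simply connected, and from Lemma \ref{Lem:StablyTrivial} that $X$ is a closed orientable $5$-manifold. So only $H_2(X)$ and $H_3(X)$ remain. Poincar\'e duality and the universal coefficient theorem give $H_4(X)\cong H^1(X)=0$ and $H_3(X)\cong H^2(X)$, which is free of rank $b_2$. Since $H_1(X)=0$, we have $T_2 = T^3 = T_1$, and this torsion need not vanish a priori. The plan therefore has two parts: compute $b_2$ via ranks in Mayer--Vietoris for $X=A\cup B$, and show the torsion of $H_2(X)$ vanishes by comparing with $\Z_2$-coefficients, exactly as in Theorem \ref{thm:hom_odd}.

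\textbf{Ranks.} By Lemma \ref{lem:homology_singular} with $n=5$ and $r(5)=6$, the homology of $A$ is $H_0=H_1=\Z^6$, $H_2(A)=\Z^4\oplus\Z_2^2$ and $H_3(A)=\Z^4$. We combine this with Lemmas \ref{Lem:CohomologyGenericLocus_5} and \ref{Lem:HomologyIntersection_5}. The Mayer--Vietoris sequence is exact and finite, so the alternating sum of ranks vanishes. One must be careful in low degrees: $H_1(A\cap B)$ has rank $6$, and it maps isomorphically (rationally) onto $H_1(A)$. With $H_5(X)=\Z$ and $H_4(X)=0$, the rank count gives
\[ 6-6+(6+5)-b_3 \;-\; 0 \;+\; (4+5)-b_2 \;-\;\ldots=0, \]
which is to be checked term by term, and should yield $2b_2=18$, i.e. $b_2=b_3=9=2\cdot 6-3$. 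This matches the pattern $2r(n)-3$ from Case 2 of Theorem \ref{thm:hom_odd}.

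\textbf{Torsion.} The connecting map $\partial_2$ injects $T_2/H$ into $H_1(A\cap B)$, where $H$ is the image of the $\Z_2^2$-torsion of $H_2(A)$. So $T_2$ is a $2$-group of bounded exponent. It therefore suffices to show $\dim_{\Z_2}H_2(X,\Z_2)=9$, because $H_2(X,\Z_2)\cong H_2(X)\otimes\Z_2$ (as $H_1(X)=0$). We split
\[ H_2(X,\Z_2)\cong\coker\Phi_2\oplus\ker\Phi_1 \]
and compute each piece.

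For $\coker\Phi_2$, the Euler classes are even, so the mod-$2$ Gysin sequences behave like products. This gives $H_2(A\cap B,\Z_2)=\Z_2^{12}$, generated by the linking spheres $S_i$ and the cylinders $P_i$, and $H_2(B,\Z_2)=\Z_2^6$, generated by the $S_i$. The mod-$2$ homology $H_2(A,\Z_2)$ has dimension $6$, spanned by the fibers of $p^{-1}(C_i)$. By Corollary \ref{cor:spherical_inclusion}, the four orientable $P_i$ map to zero and the two non-orientable $P_i$ map nontrivially into $H_2(A,\Z_2)$. The $S_i$ map onto $H_2(B,\Z_2)$ and to $0$ in $A$, since the fiber over a point of $C_i$ bounds. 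Hence $\coker\Phi_2\cong\Z_2^{4}$.

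For $\ker\Phi_1$, we have $H_1(A\cap B,\Z_2)=\Z_2^{12}$, generated by the base circles $\gamma_i$ and the $\sphere^1$-fibers. The circles $\gamma_i$ map isomorphically onto $H_1(A,\Z_2)=\Z_2^6$. The fibers map to the class generating $H_1(B,\Z_2)=\Z_2$ and to $0$ in $A$, since they bound in $p^{-1}(C_i)$. Thus $\ker\Phi_1\cong\Z_2^{5}$, and the total dimension is $4+5=9$. This forces the torsion $T_2$ to vanish.

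\textbf{Main obstacle.} The delicate step is the degree-$1$ analysis. Unlike the general case, $H_1$ of $A$, $B$ and $A\cap B$ are all nonzero, so one must genuinely identify the images of the $\gamma_i$ and of the fibers in both $A$ and $B$, including the fact that a fiber is trivial in $H_1(A)$. I would verify this directly from Lemma \ref{Lem:Replace}: the $\sphere^1$-fiber over a nearby generic point degenerates into a great circle inside the $\sphere^2$-fiber over $C_i$, where it bounds.
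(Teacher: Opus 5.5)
Your overall route is the paper's: Mayer--Vietoris for $X=A\cup B$ to get the rank, then a mod-$2$ Mayer--Vietoris count $\dim_{\Z_2}H_2(X;\Z_2)=\dim\coker\Phi_2+\dim\ker\Phi_1=4+5=9$ to rule out the $2$-primary torsion. Your mod-$2$ bookkeeping is correct, and it spells out what the paper only defers to Case 2 of Theorem \ref{thm:hom_odd}.

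\textbf{The rank step does not work as written.} The alternating sum of ranks over the whole Mayer--Vietoris sequence carries no information here. With the ranks you list, it reduces to $b_3-b_2=0$, i.e.\ to $\chi(X)=0$. More generally, in any alternating sum along the sequence, $H_3(X)$ and $H_2(X)$ sit three places apart and so enter with opposite signs; such a sum can never produce ``$2b_2=18$''. Your displayed sum also has a wrong entry: $H_3(A)\oplus H_3(B)$ has rank $4+11=15$. This step is load-bearing. The mod-$2$ count only gives $b_2+s=9$, where $s$ is the number of cyclic summands of the $2$-group $T_2$. Without an independent value of $b_2$ you cannot conclude $s=0$.

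\textbf{The fix is the paper's argument, and you already have its ingredient.} Since $H_2(A\cap B)\cong\Z_2^6$ is torsion, $\im\Phi_2$ is finite. Hence $\coker\Phi_2$ has rank equal to the rank of $H_2(A)\oplus H_2(B)$, which is $4+5=9$. Next, $\ker\Phi_1$ is finite: $\Phi_1$ maps onto $H_1(A)\oplus H_1(B)$ because $H_1(X)=0$, and both sides have rank $6$. Therefore $b_2=9+0=9$. Equivalently, $\Phi_3$ is injective (as $H_4(X)=0$) and $\partial_3$ has finite image, so $b_3=15-6=9$. Splitting the rational sequence at the term $H_2(A\cap B)$ is exactly what your ``one must be careful in low degrees'' remark was reaching for.

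\textbf{Two smaller points.}
\begin{enumerate}
\item The chain $T_2=T^3=T_1$ is a slip, since it would force $T_2=0$. What holds is $T^2=T_1=0$, so $H_3(X)$ is free, while $T_2\cong T^3$ is the genuine unknown.
\item The degree-one ``main obstacle'' is unnecessary. Since $H_1(X;\Z_2)=0$, the map $\Phi_1\colon\Z_2^{12}\to\Z_2^{6}\oplus\Z_2$ is onto. This gives $\dim\ker\Phi_1=12-7=5$ without identifying where the fiber circles go in $A$.
\end{enumerate}
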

\begin{proof} 

$X$ is simply-connected by Lemma \ref{lem:simplyconnected}. By Poincar\'e duality, $H_{3}(X)$ is a free $\Z$-module with the same rank as $H_{2}(X)$, so it is sufficient to compute $H_{2}(X)$. 
The Mayer-Vietoris exact sequence is the following:
\begin{align*}
    \cdots \rightarrow  H_{3}(X) \xrightarrow{\partial_{3}} H_{2}(A\cap B) \xrightarrow[\Phi_{2}]{(i_{*},j_{*})} H_{2}(A) \oplus H_{2}(B) \xrightarrow{k_{*}-l_{*}} H_{2}(X) \xrightarrow{\partial_{2}} H_{1}(A\cap B) \rightarrow \cdots \,, 
\end{align*}
which furnishes the short exact sequence
\begin{align}\label{short_sequence_5}
    0 \longrightarrow \coker(\Phi_{2}) \longrightarrow H_{2}(X) \longrightarrow \im(\partial_{2}) \longrightarrow 0.
\end{align}
By exactness, the image of $\partial_{2}$ coincides with the kernel of the map 
\[ \Phi_{1}: \Z^6 \oplus \Z_2^{6} \cong H_{1}(A\cap B) \rightarrow H_{1}(A)\oplus H_{1}(B) \cong \Z^6 \oplus \Z_2. \]
Now, $\Phi_{1}$ is surjective since $H_1(X) =0$. Moreover, $\ker(\Phi_1) \cong \Z_{2}^{5}$ because $\Phi_1$ restricts to an injective map $\Z^6\rightarrow \Z^6$. 
Since $H_{2}(A\cap B)$ is purely torsion, $\coker(\Phi_{2})$ has the same rank as $H_{2}(A) \oplus H_{2}(B) \cong \Z^{9} \oplus \Z_{2}^2$. These observations imply that $H_{2}(X) \cong \Z^{9} \oplus T_{2}$, where the torsion subgroup $T_2$ satisfies $T_{2}/H \leq (\Z_{2})^{5}$, and $H \leq \Z_2^2$. Here, $H$ is the image of the torsion subgroup $\Z_2^2$ of $H_2(A)\oplus H_2(B)$ under $k_*-l_*$. Thus, we can write $T_{2}=\Z_{2}^{k} \oplus \Z_4^l$ for some integers $k, l \geq 0$. 
We can prove that $k=l=0$ and hence $T_2=0$ by the exact same reasoning as in Theorem \ref{thm:hom_odd} \emph{Case 2}.
\end{proof}

Finally, we obtain the diffeomorphism type of the fiber in this case. 

\begin{corollary}[The Fiber $n=5$]
The manifold $X_5 = \hat{\mathcal{B}}(\mathcal{P})$ is diffeomorphic to $\#_{i=1}^{9} (\sphere^2 \times \sphere^3)$. 
\end{corollary}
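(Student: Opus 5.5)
The corollary should follow from the Smale classification of closed, simply connected, spin $5$-manifolds. That classification says such a manifold is determined up to diffeomorphism by $H_2(\,\cdot\,,\Z)$. When $H_2$ is free of rank $k$, the manifold is $\#_{i=1}^{k}(\sphere^2\times\sphere^3)$, and for $k=0$ it is $\sphere^5$. So I need to check three hypotheses for $X_5$: it is closed, it is simply connected, and it is spin. I also need $H_2(X_5,\Z)\cong\Z^9$.

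Closedness and smoothness come from Lemma \ref{Lem:SmoothBase}(ii). Simple connectivity is Lemma \ref{lem:simplyconnected}, which holds for $n=5$. The homology computation is Theorem \ref{thm:hom_5}: it gives $H_2(X_5,\Z)\cong\Z^9$ with no torsion. Torsion-freeness matters here, because Smale's list includes other manifolds, for example $M_k$ with $H_2\cong\Z_k\oplus\Z_k$.

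For the spin condition I plan to use Lemma \ref{Lem:StablyTrivial}, which gives $\T X_5\oplus\varepsilon^5_\R\cong\varepsilon^{10}_\R$. Stiefel--Whitney classes are stable, so $w_1(X_5)=0$ and $w_2(X_5)=0$. Hence $X_5$ is orientable and spin. A second route would be to note that $H^2(X_5,\Z_2)$ is generated by reductions of integral classes, but stable triviality is already sufficient.

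Applying Smale's theorem \cite{Sma62} with $H_2\cong\Z^9$ free then gives $X_5\cong\#_{i=1}^9(\sphere^2\times\sphere^3)$. The only point to watch is that Smale's result holds in the smooth category with no exotic ambiguity, which is true because $\Theta_5=0$. I expect no real obstacle: all the substantive work, namely the torsion-freeness and the Betti number count in Theorem \ref{thm:hom_5}, is already done. The one small check is that the torsion argument there really carries over from Theorem \ref{thm:hom_odd}, Case 2, and I will take that as given.
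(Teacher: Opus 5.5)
Your proposal is correct and matches the paper's proof: it gets closedness, simple connectivity and spin (via stable triviality from Lemma \ref{Lem:StablyTrivial}), then applies Smale's classification with $H_2(X_5,\Z)\cong\Z^9$ torsion-free from Theorem \ref{thm:hom_5}. Your remarks on why torsion-freeness matters and on $w_1=w_2=0$ following from stable triviality are accurate.
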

\begin{proof}
By Corollary \ref{cor:highly-connected} and Lemma \ref{Lem:StablyTrivial}, the manifold $X$ is closed, $1$-connected and spin. Manifolds with these properties have been classified up to diffeomorphism by Smale \cite[Theorem A]{Sma62}. The complete invariant is the second homology group of $X$ over the integers, which in our case is $H_{2}(X,\Z)\cong \Z^{9}$. Since the smooth manifold $\#_{i=1}^{9} (\sphere^2 \times \sphere^3)$ satisfies all of the desired conditions, the result follows.  
\end{proof}

\begin{corollary}
Let $\rho: \pi_1S\rightarrow \SL(5,\R)$ be a Hitchin representation, and $\Omega_{\rho} \subset \mathcal{F}_{1,4}$ the domain \eqref{Omega_Thick}, with universal cover $\hat{\Omega}_{\rho} \subset V_2(\R^5)$. 
Then there is a smooth fiber bundle projection
$\hat{\Omega}_{\rho} \rightarrow \Ha^2$ with fiber diffeomorphic to $\#_{i=1}^{9}(\sphere^{2}\times \sphere^{3})$. 
\end{corollary}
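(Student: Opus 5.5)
The statement to prove is the final corollary: for $\rho:\pi_1S\rightarrow \SL(5,\R)$ Hitchin, $\hat{\Omega}_{\rho}$ fibers smoothly over $\Ha^2$ with fiber diffeomorphic to $\#_{i=1}^{9}(\sphere^2\times\sphere^3)$. The argument assembles the Fuchsian-Hitchin case and then transfers it to an arbitrary Hitchin representation using the topological invariance results already recorded.

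\emph{Step 1: reduce to the Fuchsian-Hitchin case.} By the corollary following the Equivalence of Domains theorem, the diffeomorphism type of the lifted domain $\hat{\Omega}_{\rho}=q^{-1}(\Omega_\rho)$ does not depend on the Hitchin representation $\rho$. So it suffices to treat $\rho_0=\iota_{\pr}\circ\rho_{\mathrm{Fuchs}}$. There Theorem \ref{thm:EquivalentDomains} gives $\Omega_{\rho_0}=\Omega_f$, and Corollary \ref{Cor:NearestPointProjUpstairs} provides a smooth fiber bundle $\pi\circ q:\hat{\Omega}_{\rho_0}\rightarrow \tilde S\cong \Ha^2$ whose fiber is $q^{-1}(\mathcal{B}(\mathcal{P}_x))$, with $\mathcal{P}_x$ the tangent pencil of $\Ha^2_{\pr}$.

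\emph{Step 2: identify the fiber with $X_5$.} For each $x$ there is an element of $\PSL(5,\R)$ carrying $f(x)$ to the basepoint $Q$ and carrying $\mathcal{P}_x$ to $\T_Q\Ha^2_{\pr}$. Its action on $V_2(\R^5)$ is a diffeomorphism taking the fiber to the lifted base $\hat{\mathcal{B}}(\mathcal{P})$. Before relying on this, I will check one detail: whichever $(\Z_2\times\Z_2)$-covering $q$ is used, the preimage of $\mathcal{B}(\mathcal{P})$ is diffeomorphic to $G_Q^{-1}(\mathcal{B}(\mathcal{P}))$. This holds because $q$ and $G_Q$ differ by the linear change of variables $(u,v)\mapsto(x,y)$ on $V_2^Q(\R^5)$, which covers the identity on $\mathcal{F}_{1,4}$. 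The preceding corollary then says the fiber is diffeomorphic to $\#_{i=1}^{9}(\sphere^2\times\sphere^3)$.

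\emph{Step 3: transport the bundle to arbitrary $\rho$.} Let $\Phi:\hat{\Omega}_{\rho_0}\rightarrow\hat{\Omega}_{\rho}$ be a diffeomorphism. Then $(\pi\circ q)\circ\Phi^{-1}$ is a smooth fiber bundle projection $\hat{\Omega}_\rho\rightarrow\Ha^2$ with the same fiber, which proves the statement.

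The main obstacle is Step 3. The invariance corollary gives only an abstract diffeomorphism of the lifted domains; it does not say the projection is canonically attached to $\rho$. The statement, however, only asserts the existence of some smooth fibration, so transporting the fibration along $\Phi$ is enough. I would also note that the isotopy produced in the invariance proof yields $\Phi$ as a genuine smooth diffeomorphism, obtained by pulling the covering $q$ back along isotopic inclusions.
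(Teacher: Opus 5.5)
Your proposal is correct and follows essentially the paper's route. The paper obtains this corollary the same way: it takes the Fuchsian-Hitchin fibration of the lifted domain (Corollary \ref{Cor:NearestPointProjUpstairs}, with fiber $\hat{\mathcal{B}}(\mathcal{P})$), uses the Smale-classification result that this fiber is diffeomorphic to $\#_{i=1}^{9}(\sphere^2\times\sphere^3)$, and transports the bundle to an arbitrary Hitchin $\rho$ using the fact that the diffeomorphism type of $\hat{\Omega}_\rho$ does not depend on $\rho$. Your extra check on the choice of covering is harmless, and it can be shortened by noting that $V_2(\R^5)$ is simply connected, so any two $(\Z_2\times\Z_2)$-coverings of $\mathcal{F}_{1,4}$ by it are isomorphic.
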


\subsection{\texorpdfstring{$n=4$}{n=4}}\label{Sec:n=4}
When $n=4$, the fiber of the Guichard-Wienhard domain of discontinuity in $\mathcal{F}_{1,3}(\R^{4})$ is a $3$-manifold, and $X = \hat{\mathcal{B}}(\mathcal{P})$ is not simply connected. 
Nonetheless, we are able to determine the topology of $X$ using the almost-fibration. In particular, we will describe $X$ as a graph manifold obtained by gluing two copies of the complement in $\sphere^{3}$ of the link $L$ formed by the singular circles along their boundaries. Each piece is a Seifert fibered manifold, which we are able to describe completely. \medskip 

Let us first observe what the Structure Lemma \ref{Lem:StructureLemma} tells us. We have a smooth surjective map $p: X \rightarrow \sphere^3$, whereby
$X= X_{gen} \sqcup X_{sing}$ decomposes as follows: 
\begin{itemize}
    \item $\sphere^1 \rightarrow X_{sing} \rightarrow S_{sing}$, where $S_{sing} =:L \subset \sphere^3$ is a two component link. This fibration is orientable, hence $X_{sing}$ is the disjoint union of two 2-tori.
    \item $\sphere^0 \rightarrow X_{gen} \rightarrow S_{gen}= (\sphere^3 \setminus L)$. This fibration is also orientable, so $X_{gen}$ is a disjoint pair of copies of $\sphere^3\setminus L$. 
\end{itemize}

\begin{figure}[ht]
\centering
\includegraphics[scale=0.30]{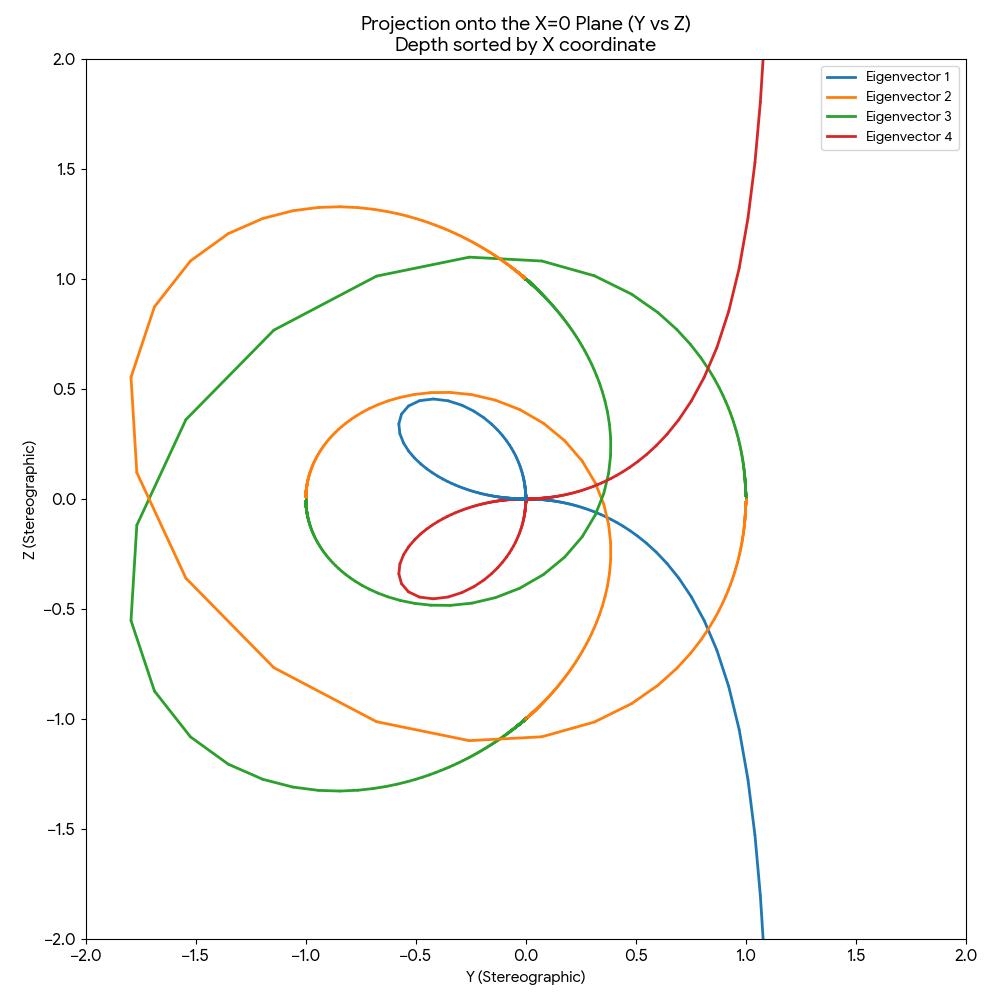}
\caption{\small{\emph{Link diagram of the singular circles after stereographic projection. Each color represents the antipodal paths traced by the unit eigenvectors of $\psi(t)$ relative to the $k$-th largest eigenvalue. The knot $K_1$ is red-blue and $K_2$ is orange-green.}}}
\label{fig:link}
\end{figure}

Next, we determine the link $L \subset \sphere^3$ formed by the singular circles. By Lemma \ref{lem:number_circles}, this link $L$ has two components $K_{1}$ and $K_{2}$. The component $K_1$ is traced out by the unit eigenvectors relative to the top and bottom eigenvalues of unit elements in the pencil, while $K_2$ is traced by the eigenvectors relative to the middle eigenvalues of the pencil. We shall work with the `simplified' pencil $\mathcal{P}_0$ described in Lemma \ref{lem:def_pencil}. Let us now describe these two knots $K_1, K_2$ explicitly. First, recall the basis $\{\psi_{1}, \psi_{2}\}$ of the pencil $\mathcal{P}_0$ from Lemma \ref{lem:def_pencil}. All elements of the pencil $\mathcal{P}_0$ excluding $\pm \psi_{1}$ can be written, up to scalars, as
\[
    \psi(t):=\psi_{2}+t\psi_{1}=\begin{pmatrix}
        \frac{3t}{2} & 1 & 0 & 0 \\
            1 & \frac{t}{2} & 1 & 0 \\
            0 & 1 & -\frac{t}{2} & 1 \\
            0 & 0 & 1 &-\frac{3t}{2}
    \end{pmatrix},
\]
with $t\in \mathbb{R}$. The eigenvector equation $\psi(t)v(t)=\lambda(t)v(t)$ yields the following recurrence in terms of the entries $v_{k}(t)$ of the eigenvector:
\begin{equation}\label{eq:recursion}
    \begin{cases}
        \left(\frac{3t}{2}-\lambda(t)\right)v_{1}(t)+v_{2}(t)=0 \\
        v_{1}(t)+\left(\frac{t}{2}-\lambda(t)\right)v_{2}(t)+v_{3}(t)=0 \\
        v_{2}(t)-\left(\lambda(t)+\frac{t}{2}\right)v_{3}(t)+v_{4}(t)=0\\
        v_{3}(t)-\left(\frac{3t}{2}+\lambda(t)\right)v_{4}(t)=0 \ ,
    \end{cases}
\end{equation}
which can be easily solved. Moreover, the characteristic polynomial of $\psi(t)$ is bi-quadratic, so the eigenvalues can be explicitly described as follows:
\[
    \lambda(t)=\pm \frac{1}{2}\sqrt{5t^2+6\pm2\sqrt{(2t^2+1)(2t^2+5)}}\ .
\]
Combining the formulas above, we obtain a parameterization of the knots $K_{i} \subset \sphere^{3}$. With the help of Python, we can visualize how the knots $K_1$ and $K_2$ link: we first project stereographically to $\mathbb{R}^{3}$ from the south pole $(0,0,0,1)$ and then we further project onto the $x=0$ plane in order to obtain the link diagram in Figure \ref{fig:link}. The code keeps track of which branch passes above or below each crossing by averaging the $x$-values of points near the intersections in the $\{x=0\}$-plane. We immediately see that the knot $K_{2}$ traced by the unit eigenvectors relative to the middle eigenvalues is trivial. For the other knot $K_{1}$, this projection is not sufficiently transverse, so we prove it directly now. 

\begin{lemma}\label{lem:trivial_knot} The knot $K_{1}\subset \sphere^3$ 
is trivial.    
\end{lemma}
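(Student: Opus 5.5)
We prove Lemma \ref{lem:trivial_knot} by writing $K_1$ explicitly and exhibiting a smooth disk it bounds, or equivalently by showing that $K_1$ lies on a $2$-sphere or on a standard Hopf-type torus in a way that forces triviality.

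\textbf{Structure of $K_1$.} By the proof of Lemma \ref{lem:number_circles} with $n=4$ (Case 2, $k=1$, $n+1-k=4$), $K_1$ is the union of four arcs.
\begin{itemize}
\item Two arcs trace the unit top eigenvector $v^{(1)}(t)$ of $\psi(t)=\psi_2+t\psi_1$ and its negative.
\item Two arcs trace the unit bottom eigenvector $v^{(4)}(t)$ of $\psi(t)$ and its negative.
\end{itemize}
As $t\to+\infty$ these arcs join at $\pm e_1$ and $\pm e_4$, and as $t\to -\infty$ at $\pm e_4$ and $\pm e_1$. The involution $\tau$ of Lemma \ref{lem:tau-symmetry} satisfies $\tau\psi(t)\tau=-\psi(-t)$, so it maps top eigenvectors of $\psi(t)$ to bottom eigenvectors of $\psi(-t)$. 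Hence $K_1$ is invariant under both $\tau$ and $-\mathrm{Id}$.

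\textbf{Step 1: a sign pattern from the recurrence.} Solve the recurrence \eqref{eq:recursion} for the top eigenvalue. Since the off-diagonal entries are positive, Perron--Frobenius (for a large shift of $\psi(t)$) shows the top eigenvector can be taken with all entries positive. The bottom eigenvector has alternating signs $(+,-,+,-)$, since it is the top eigenvector of $-\psi(t)$ conjugated by $\mathrm{diag}(1,-1,1,-1)$.

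\textbf{Step 2: locating the arcs.} The arcs $\pm v^{(1)}$ lie in the open orthant cells $\{\text{all } x_i>0\}$ and $\{\text{all } x_i<0\}$. The arcs $\pm v^{(4)}$ lie in the cells with alternating signs. These four open cells are pairwise disjoint. Each arc is a graph over the parameter $t$ and joins $e_1$-type endpoints to $e_4$-type endpoints. I expect $K_1$ to lie on the torus
\[
T=\{x_1^2+x_4^2=x_2^2+x_3^2\},
\]
or on a similar explicit surface, and to be isotopic to a $(1,1)$ curve. The identity that would give this is to be checked from the explicit eigenvector formulas.

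\textbf{Step 2, fallback.} If that identity fails, I would instead use the coordinate function $h=x_1x_4-x_2x_3$, or the projection to the plane $(x_2+x_3,\,x_1-x_4)$ in stereographic coordinates. The goal is a diagram with at most one crossing. A one-crossing diagram is forced to be the unknot.

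\textbf{Step 3: a bridge-number argument.} Consider the height function $f=x_1^2-x_4^2$ restricted to $K_1$. Differentiate along each arc and use monotonicity of the entries of the top eigenvector in $t$, which follows from first-order perturbation as in the proof of Theorem \ref{thm:EulerClassXgen}. This should show that $f$ is strictly monotone on each of the four arcs. Then $f|_{K_1}$ has exactly two maxima and two minima, so $K_1$ has bridge number at most $2$. That alone does not give the unknot, since bridge number $2$ includes all $2$-bridge knots.

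\textbf{Step 4: the symmetry plus a disk.} To upgrade Step 3, use the invariance of $K_1$ under $-\mathrm{Id}$ and $\tau$. Pass to the quotient, or look for an explicit spanning disk. A natural candidate is the surface swept by the great-circle segments joining $v^{(1)}(t)$ to $v^{(4)}(t)$; these vectors are orthogonal, so the segment never passes through the origin. On the union of the top and bottom arcs, this sweep forms the boundary of an embedded disk once the endpoints are capped off.

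\textbf{Main obstacle.} Embeddedness of this swept disk is the hardest step. It requires that distinct planes $\mathrm{span}\{v^{(1)}(t),v^{(4)}(t)\}$ meet only at $0$ along the relevant quarter-circles. I would try to verify this first using the Jacobi structure of $\psi(t)$ and the sign patterns from Step 1.
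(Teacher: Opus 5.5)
\textbf{There is a genuine gap.} None of the steps that are supposed to deliver unknottedness actually does so: one is false, one is insufficient, and one is unverified.

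\emph{Step 2 (the torus).} The guess fails at once. $K_1$ contains $\pm e_1$ (the top eigenvector of $\psi_1$) and $\pm e_4$, and at these points $x_1^2+x_4^2=1\neq 0=x_2^2+x_3^2$, so $K_1\not\subset T$. The fallback is only the hope of finding a one-crossing diagram.

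\emph{Step 3 (bridge number).} As you say yourself, this cannot separate the unknot from other $2$-bridge knots. Moreover, $f=x_1^2-x_4^2$ has a whole critical circle $\{x_1=x_4=0\}$ on $\sphere^3$, so even the bridge bound is not justified as stated.

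\emph{Step 4 (the swept disk).} The swept surface does not have boundary $K_1$. Take $v^{(1)}$ running from $e_4$ to $e_1$, and $v^{(4)}$ (normalized with $x_1>0$) running from $e_1$ to $-e_4$. The segments from $v^{(1)}(t)$ to $v^{(4)}(t)$ then sweep a band whose boundary is these two arcs together with two quarter-circles of $\sphere^3\cap\Span\{e_1,e_4\}$: one from $e_4$ to $e_1$ at $t=-\infty$, one from $e_1$ to $-e_4$ at $t=+\infty$. Two corners of this band both map to $e_1$. Adding the antipodal band leaves the entire great circle $\sphere^3\cap\Span\{e_1,e_4\}$ as extra boundary, and that circle meets $K_1$ at $\pm e_1,\pm e_4$. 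So the sweep does not produce a surface bounded by $K_1$, let alone an embedded disk, and the embeddedness question you flag is left open anyway.

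\emph{A small slip.} $\tau\psi(t)\tau=\psi(-t)$, not $-\psi(-t)$. It is $\diag(1,-1,1,-1)$ that conjugates $\psi(t)$ to $-\psi(-t)$.

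\textbf{The missing idea} is a coordinate in which $K_1$ is visibly a graph over a line. The paper's argument runs as follows.
\begin{enumerate}
\item Use the antipodal invariance of $K_1$ to pass to $\overline{K}_1\subset\RP^3$.
\item By \eqref{eq:recursion}, $\overline{K}_1$ meets $\{x_1=0\}$ only at $[e_4]$. Your Step 1 sign pattern also gives this.
\item The first equation of \eqref{eq:recursion} gives the single ratio $X=x_2/x_1=\lambda(t)-\tfrac{3t}{2}$. This is positive and strictly decreasing on the top branch, and negative and strictly decreasing on the bottom branch. For instance, $\lambda_{\max}(t)>\langle\psi(t)e_1,e_1\rangle=\tfrac{3t}{2}>\lambda_{\min}(t)$, and $\lambda'(t)=\langle\psi_1 v,v\rangle<\tfrac32$ on either branch.
\item Hence $\overline{K}_1$ meets each affine plane $X=c$ exactly once, so it is isotopic to a projective line, and its lift $K_1$ is isotopic to a great circle.
\end{enumerate}
Your Steps 1 and 3 contain the right ingredients, namely the signs and monotonicity along the arcs. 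The conclusion, however, must come from monotonicity of one projective ratio along the whole projectivized knot, not from a bridge count.

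\textbf{An alternative fix.} Your torus idea does work for the undeformed pencil $\T_I\Ha^2_{\pr}$. It is $\Delta$-regularly homotopic to $\mathcal{P}_0$, so the singular link changes only by an isotopy. That pencil is invariant under the principal $\SO(2)$, so $K_1$ is the $\SO(2)$-orbit of $e_1$. This $\SO(2)$ rotates the two weight planes of $\R^4$ with speeds $3$ and $1$, and $e_1$ has nonzero components in both. So $K_1$ is a $(3,1)$ curve on a torus, hence unknotted. The relevant torus is this weight-space torus, not $T$.
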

\begin{proof} By Lemma \ref{lem:number_circles}, $K_{1}$ is invariant under the antipodal map.  We will show that its projection $\overline{K}_{1}$ to $\mathbb{RP}^{3}$ is isotopic to a projective line: this will conclude the proof, since, by the homotopy lifting property of the universal covering map, this implies that $K_{1}$ is isotopic to a great circle in $\sphere^{3}$ and it is thus the unknot. 

Now, the knot $\overline{K}_{1}$ is made up of the two arcs of projective classes of eigenvectors $v^{(1)}(t)$ and $v^{(4)}(t)$ of $\psi(t)$ relative to the top and bottom eigenvalues together with the points $[1,0,0,0]$ and $[0,0,0,1]$. The recursion in equation \eqref{eq:recursion} shows that $\overline{K}_{1}$ intersects the projective hyperplane $\{x_{1}=0\}$ only at $[0,0,0,1]$, so we can view the entire knot minus this point in the affine chart in which $x_{1}\neq 0$. We consider then the projective coordinate $X(t)=x_{2}(t)/x_{1}(t)$. 
For later, we introduce the function 
\[ \lambda_{max}(t)=\frac{1}{2}\sqrt{5t^2+6+2\sqrt{(2t^2+1)(2t^2+5)}}, \]
which is the top eigenvalue of $\psi(t)$, corresponding to the eigenvector $v^{(1)}(t)$. 

We claim that the coordinate $X(t)$ of the vector $v^{(1)}(t)$ is non-negative and decreasing. Indeed, 
the first equation in \eqref{eq:recursion} implies that for all $t\in \mathbb{R}$, 
\[
    -X(t)=\left(\frac{3t}{2}-\lambda_{max}(t)\right) \leq 0. 
\]
Moreover, we can directly compute that 
\[
    X'(t)=\left(\frac{d}{dt}\lambda_{max}(t)-\frac{3}{2} \right)<0,
\]
for all $t\in \mathbb{R}$. 

By a similar argument, the projective coordinate $X(t)$ of the vector $v^{(4)}(t)$ is non-positive and decreasing. We deduce that $\overline{K}_{1}$ intersects each affine plane $\{y\in \RP^3 \mid X(y)=c\}$ at exactly one point and we can thus isotope $\overline{K}_{1}$ to the projective line $\ell= \mathbb{P}( \langle e_{1}\rangle \oplus \langle e_{2}\rangle )$.
\end{proof}

The linking number of $L=K_{1}\cup K_{2}$ is computed from Figure \ref{fig:link} to be 3. This leads to the natural conjecture that $L$ is the torus link $T(2,6)$. 
We can now verify this is the case. 
\begin{lemma}\label{Lem:T(2,6)}
The singular locus $L=S_{sing}$ is isotopic to the torus link $T(2,6)$. 
\end{lemma}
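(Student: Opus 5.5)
We need to prove that $L=K_1\cup K_2$ is isotopic to $T(2,6)$. The plan uses three facts already in hand. Both components are unknots: $K_2$ by the diagram, $K_1$ by Lemma \ref{lem:trivial_knot}. The linking number is $3$. Both components are invariant under the antipodal map, by the case analysis of Lemma \ref{lem:number_circles} for $n=4$ (Case 2, four branches per circle). The linking number alone does not determine a two-component link of unknots, so we need more geometric structure. The natural route is to exhibit a Hopf-type fibration in which both components are fibers or multisections.

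\textbf{Step 1: the antipodal symmetry.} Pass to $\RP^3=\sphere^3/\pm$. The images $\overline{K}_1,\overline{K}_2$ are each double covered by $K_1,K_2$. The proof of Lemma \ref{lem:trivial_knot} shows $\overline{K}_1$ is isotopic to a projective line. The first aim is to prove the analogous statement for $\overline{K}_2$, using the middle-eigenvalue eigenvectors $v^{(2)}(t),v^{(3)}(t)$. I would choose affine coordinates, for instance $x_3/x_2$ or a suitable ratio read off from \eqref{eq:recursion}, in which the coordinate is strictly monotone in $t$, exactly as in Lemma \ref{lem:trivial_knot}.

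\textbf{Step 2: the pencil circle as a fibration parameter.} I would then use the common parameter $[\psi]\in\mathbb{P}(\mathcal{P}_0)\cong\sphere^1$ directly. Each point of $L$ is an eigenvector of a unique $[\psi]$, and each $\overline{K}_j$ is a $2$-fold cover of $\mathbb{P}(\mathcal{P}_0)$. The idea is to build a one-parameter family of orthonormal eigenframes of $\psi(t)$ and view $\sphere^3$ as foliated by them. More concretely, the map $t\mapsto$ (orthonormal eigenbasis of $\psi(t)$) gives a loop in $\mathrm{O}(4)$ acting on $\sphere^3$. Both $K_1$ and $K_2$ are then orbits of the four basis vectors under this loop. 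Deforming this loop within rotations to a one-parameter subgroup (a homotopy of loops in $\SO(4)$, carried along with the $\Delta$-regular deformation of Lemma \ref{lem:def_pencil}) keeps the four eigenvector curves pairwise disjoint. This is because distinct eigenlines of a $\Delta$-regular element never meet. The deformation should therefore be an isotopy of $L$ to a link traced by a rotation $e^{\theta A}$ with $A$ block-diagonal with rotation speeds $(a,b)$ on two orthogonal planes. Such orbit links are torus links of type determined by $(a,b)$ and the covering degrees.

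\textbf{Step 3: identify the torus link.} A link formed by two orbits of a linear $\sphere^1$-action on $\sphere^3$ lying on one standard torus $T(p,q)$-family is a torus link. With both components unknotted and linking number $3$, it must be $T(2,6)$. The competing candidates, such as two Hopf-fiber-like circles, give $T(2,2)$ with linking number $1$, so they are excluded.

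The main obstacle is Step 2: rigorously showing that the eigenframe loop can be isotoped to a one-parameter subgroup while preserving disjointness of the four eigenvector curves. The worry is that eigenvectors are only defined up to sign and the frame has monodromy $\psi\mapsto-\psi$ (reversing eigenvalue order $k\mapsto n+1-k$). That monodromy is exactly what the torus-link winding should encode. If the group-theoretic deformation proves unwieldy, the fallback is a more hands-on check. We would show that the stereographic projection in Figure \ref{fig:link} (or a perturbed projection direction) gives a diagram with exactly $6$ crossings, all of the same sign, between two unknotted components. A two-component diagram with six same-sign inter-component crossings and no self-crossings, and with the components presentable as a $2$-braid closure, is $T(2,6)$. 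We would justify the crossing count by the monotonicity arguments of Lemma \ref{lem:trivial_knot} applied to a suitable coordinate.
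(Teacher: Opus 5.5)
\textbf{The gap is Step 2, and it carries all the weight.} Step 1 only re-proves unknottedness: two disjoint curves in $\RP^3$, each isotopic to a projective line, can still be linked in many ways. Step 3 only applies once $L$ is known to be a union of orbits of a linear circle action.

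A homotopy of the eigenframe path in $\SO(4)$ does not induce an isotopy of the traced curves. Orthonormality keeps the eigenvector curves apart only at \emph{equal} parameter values. Nothing prevents $g_s(\theta)e_k=\pm g_s(\theta')e_j$ with $\theta\neq\theta'$, or a curve from crossing itself, during the deformation. For an actual pencil, embeddedness across different parameters comes from a property of pencils: two non-proportional elements of a pencil without common eigenline share no eigenvector. An arbitrary path of frames is not the eigenframe path of any pencil, so this property is lost. Worse, homotopy classes of such paths lie in a torsor over $\pi_1(\SO(4))\cong\Z_2$, so they cannot detect a linking number. For example, loops of rotations with speeds $(1,3)$ and $(1,7)$ are homotopic, yet their generic orbit links are $T(2,6)$ and $T(2,14)$.

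\textbf{The missing idea is an exact symmetry rather than a deformation.} Work with the undeformed pencil $\T_I\Ha^2_{\pr}$. Conjugation by the compact part $K_{\pr}\cong\SO(2)$ of the principal subgroup preserves the pencil and rotates $\sphere(\mathcal{P})$ transitively. If $\psi$ has eigenvector $x$, then $k\psi k^{-1}$ has eigenvector $kx$. Hence $S_{sing}$ is a union of $K_{\pr}$-orbits, where $K_{\pr}$ acts on $\R^4=V_3\oplus V_1$ with rotation speeds $(3,1)$. Each component contains an $H$-eigenvector, and these have nonzero components in both planes; in $\mathrm{Sym}^3\R^2$ with $z=x+iy$, for instance, $x^3=\tfrac14\mathrm{Re}(z^3)+\tfrac34\mathrm{Re}(|z|^2z)$. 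So each component is a generic orbit, i.e.\ a $(3,1)$ torus curve, and two such orbits form $T(2,6)$. Every pencil in the Jacobi deformation of Lemma \ref{lem:def_pencil} has no common eigenline, so the singular loci move by an isotopy and the conclusion transfers to $\mathcal{P}_0$. This would give a conceptual proof quite different from the paper's.

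\textbf{Your fallback is also insufficient as stated.} Six same-sign inter-component crossings between two unknotted components do not identify $T(2,6)$. The two-bridge link $S(10,3)$ (Rolfsen's $6^2_2$) has unknotted components, linking number $3$, and crossing number $6$. Hence every minimal diagram of it consists of two Jordan curves meeting in six crossings of the same sign, yet it is not $T(2,6)$ (its determinant is $10$, not $6$). What singles out $T(2,6)$ is the planar pattern of the two curves, namely the closed $2$-braid pattern. Your monotonicity argument targets only the crossing count, not this pattern. Moreover, the projection in Figure \ref{fig:link} is not of that form, since $K_1$ has loops there.

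The paper's proof is precisely the direct diagrammatic check. It starts from the computed diagram, collapses the two inner loops of $K_1$, straightens it, and applies an explicit sequence of Reidemeister moves (Figure \ref{fig:Reidemeister}) ending at the standard $T(2,6)$ diagram.
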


\begin{proof}
See Figure \ref{fig:Reidemeister}. 
\begin{figure}[ht]
    \centering
    \includegraphics[width=0.50\linewidth]{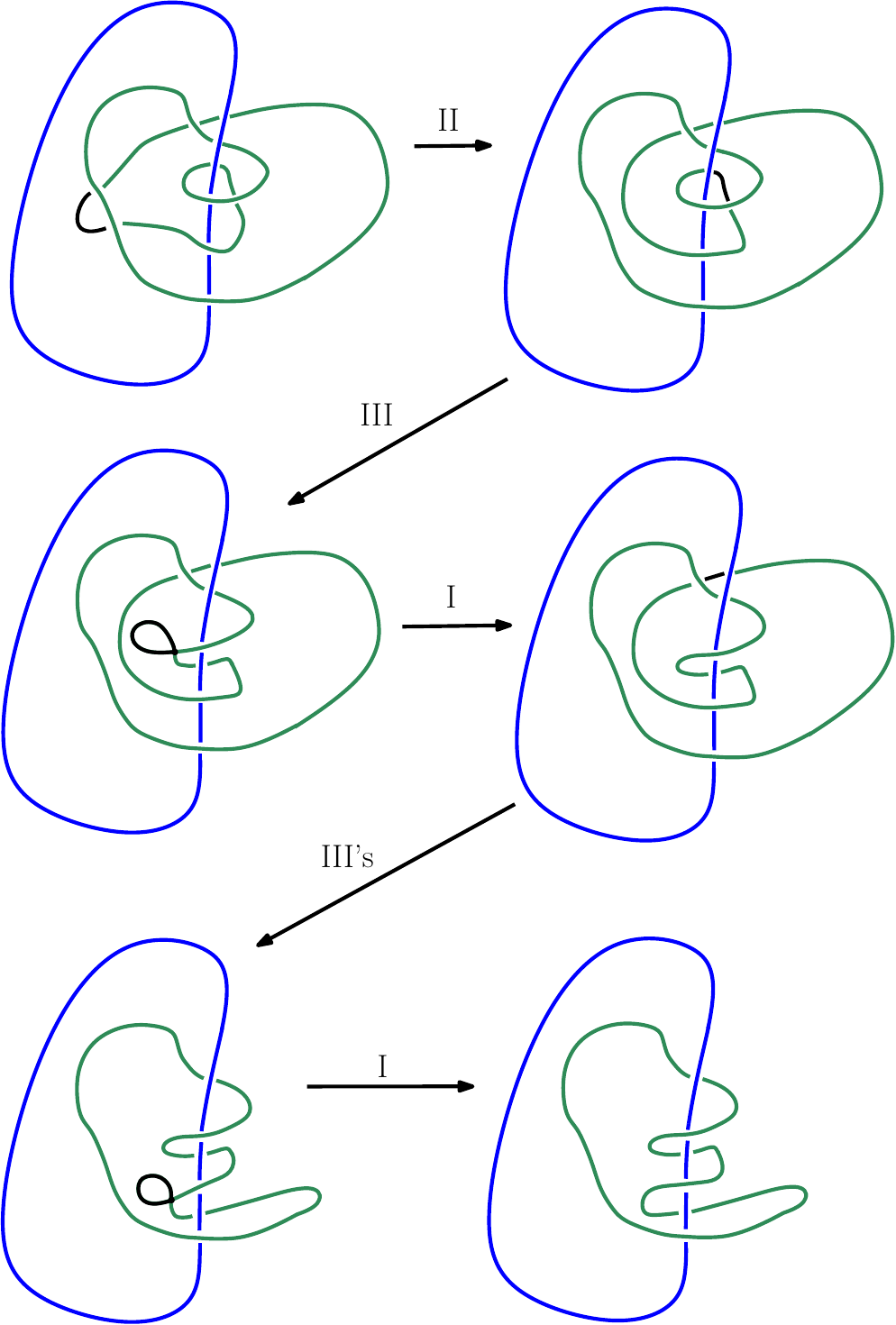}
    \caption{The Reidemeister moves to convert the singular link $L$ to the torus link $T(2,6)$. Here, in the initial diagram, $K_1$ is in blue and $K_2$ in green. We start with $L$ from Figure \ref{fig:link} after collapsing the two inner loops of $K_1$ and `straightening' it out. 
    In the penultimate step, we apply multiple III moves at once. See \cite[Figure 11.7]{Martelli} for more torus knots and links. }
    \label{fig:Reidemeister}
\end{figure}
\end{proof}

Lemma \ref{Lem:T(2,6)} gives us a lot of information about the topology of $X$. In fact, 
$X$ is obtained by gluing two copies $X_{+}$ and $X_{-}$ diffeomorphic to $\sphere^{3}\setminus N(L)$ along their boundary tori, where $N(L) \cong L \times \R^2$ is a regular neighborhood of $L$. Indeed, 
we can write $X_{gen}= X_{gen}^+\sqcup X_{gen}^-$ as connected components and set $X_{\pm}$ as the closure of $X_{gen}^{\pm}$ in $X$. Then $X$ is obtained from $X_+\sqcup X_-$ by gluing their mutual boundary $X_{sing}\cong \mathbb{T}^2\sqcup \mathbb{T}^2$.

3-manifold theory tells us that a torus link complement $\sphere^3 \setminus N(T(p,q))$ is Seifert fibered \cite{JSJ_dec_knot}. In our case, we have the following. 
\begin{proposition}
$X_{\pm}$ is isomorphic to the Seifert manifold $M(\Sigma_{0,2};(3,1))$: it fibers over a genus zero surface with two boundary components with one singular fiber of order $(3,1)$. 
\end{proposition}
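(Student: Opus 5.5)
We identify $X_{\pm}$ with $\sphere^3\setminus N(L)$, where $L\cong T(2,6)$ by Lemma \ref{Lem:T(2,6)}. Indeed, $p$ restricts to a diffeomorphism from each component $X^{\pm}_{gen}$ onto $S_{gen}=\sphere^3\setminus L$, since the $\sphere^0$-bundle is trivial. The closure $X_\pm$ is then a compact manifold with two torus boundary components. To see this, we use the local model near $X_{sing}$ from Lemma \ref{Lem:Replace}: over a normal disk to $C_i$, the two sheets of $X_{gen}$ meet the circle fiber $p^{-1}(x)$ at the two limiting points $\sphere((W_x\oplus L_{\eta})^\bot)$. Consequently, as the normal direction $\eta$ rotates, each sheet has an annular collar with boundary the torus $p^{-1}(C_i)$. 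Hence $X_\pm$ is diffeomorphic to the exterior $E(L)=\sphere^3\setminus \mathrm{int}\,N(L)$, and it suffices to describe the Seifert structure of $E(T(2,6))$.

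For the Seifert structure we use the standard description. Write $\sphere^3=\{|z_1|^2+|z_2|^2=1\}$ with the circle action $\theta\cdot(z_1,z_2)=(e^{i\theta}z_1,e^{3i\theta}z_2)$. Its orbits are $(1,3)$-torus knots in generic position, while the two cores $\{z_2=0\}$ and $\{z_1=0\}$ are exceptional orbits of orders $1$ and $3$. The link $T(2,6)$ is the union of two regular orbits, for instance the two components of $z_2^{\,}=c\,z_1^3$ after reparametrization. Equivalently, $T(2,6)$ consists of two parallel copies of the $(1,3)$ torus knot on a standard torus, so it is a union of two regular fibers of the Seifert fibration of $\sphere^3$ over $\sphere^2$ with exceptional fibers of orders $1$ and $3$.

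Removing tubular neighborhoods of two regular fibers turns the base $\sphere^2$ into an annulus $\Sigma_{0,2}$. The order-$1$ exceptional fiber is really regular, so it disappears, and one exceptional fiber of order $3$ remains. We will normalize its Seifert invariant to $(3,1)$; for orders $3$ we have $\beta\in\{1,2\}$, and with the boundary present $\beta$ can be changed by reorienting or by shifting the Euler number into the boundary. This gives $X_\pm\cong M(\Sigma_{0,2};(3,1))$.

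The main obstacle is the identification $X_\pm\cong E(L)$. This requires checking that the closure of each sheet of $X_{gen}$ is a topological manifold with boundary exactly the torus $p^{-1}(C_i)$, and not something pinched. We will verify it explicitly from the convergence argument in Lemma \ref{Lem:Replace}. The limiting fiber point depends only on the normal direction $\eta$ and injectively on it, so the closure of each sheet over a small normal disk to $C_i$ is an annulus, namely the blow-up of the disk at its center.

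As a consistency check, we will compare $H_1$: using $H_1(E(L))\cong\Z^2$ and the Seifert presentation $\langle c_1,c_2,x,h \mid x^3h,\ c_1c_2x=1,\ h \text{ central}\rangle$, the abelianization is $\Z^2$.
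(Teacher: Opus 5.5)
Your proposal is correct, but it reaches the conclusion by a different route from the paper.

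\textbf{The paper's route.} The paper's proof is a two-line citation. It quotes from the literature that the exterior of $T(2,6)$ is $M(\Sigma_{0,2};(3,4))$, then applies the inverse of a standard Seifert move to normalize $(3,4)$ to $(3,1)$. The identification $X_\pm\cong \sphere^3\setminus N(L)$ is taken as given from the discussion preceding the proposition.

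\textbf{Your route.} You do two things the paper does not spell out.
\begin{enumerate}
\item You justify $X_\pm\cong E(L)$ via the ``real blow-up'' local model from Lemma~\ref{Lem:Replace}. The fact that the limiting fiber point has degree one in the normal direction is exactly what the paper later uses in the proof of Theorem~\ref{thm:double}.
\item You build the Seifert fibration by hand from the weighted action $(e^{i\theta}z_1,e^{3i\theta}z_2)$. Its generic orbits are $(1,3)$ curves, so two of them on a standard torus form $T(2,6)$. The core $\{z_1=0\}$ is the unique fiber of order $3$ and the other core is regular.
\end{enumerate}

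\textbf{Comparison.} Your argument is self-contained and explains why there is exactly one exceptional fiber, of order $3$. The paper's citation is shorter and delivers the oriented invariant directly. You pin down $\beta$ only up to reorientation. That suffices for the proposition read as an unoriented identification. It is also harmless for Corollary~\ref{Cor:SeifertFibered}: the two halves enter the double with opposite orientations, so one gets $(3,\beta),(3,-\beta)$ whichever $\beta\in\{1,2\}$ occurs.

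\textbf{Two small slips.}
\begin{itemize}
\item $\{z_2=c\,z_1^3\}\cap\sphere^3$ is a single orbit, because $|z_1|$ is determined by $|z_1|^2+|c|^2|z_1|^6=1$. The two-component link is $\{z_2^2=c^2z_1^6\}$, that is, $z_2=\pm c\,z_1^3$.
\item Calling the order-$1$ core ``exceptional'' is a misnomer. You correct this yourself a few lines later.
\end{itemize}
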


\begin{proof} By \cite[Proposition 3.3]{JSJ_dec_knot}, the complement of $T(2,6)$ is the Seifert manifold $M(\Sigma_{0,2};(3,4))$. Apply the inverse of move $(10)$ in \cite[Proposition 10.3.11]{Martelli}.
\end{proof}

Analyzing more carefully the fibration $p: X \rightarrow \sphere^3$, we can understand how the boundary tori of $X_{\pm}$ are glued together, thus obtaining the following.

\begin{theorem}[The Fiber, $n=4$]\label{thm:double} $X$ is diffeomorphic to the double of $\sphere^3\setminus N(T(2,6))$.
\end{theorem}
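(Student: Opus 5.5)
The plan is to assemble $X$ from the decomposition already in hand. By the discussion preceding the statement, $X = X_+ \cup X_-$. Each $X_\pm$ is the closure in $X$ of one sheet $X_{gen}^\pm$ of the trivial double cover $X_{gen}\to \sphere^3\setminus L$. The two pieces meet along $X_{sing}\cong \mathbb{T}^2\sqcup\mathbb{T}^2$. Since $p|_{X_{gen}^\pm}$ is a diffeomorphism onto $\sphere^3\setminus L$, each $X_\pm$ is identified with the compactified complement $\sphere^3\setminus N(L)$, and by Lemma \ref{Lem:T(2,6)} this is the complement of $N(T(2,6))$. The content of the theorem is then that the gluing map between $\partial X_+$ and $\partial X_-$ is the identity under these identifications. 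In other words, $X$ is the double rather than some other Dehn-twisted gluing.

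To pin down the gluing, I will work locally near one singular circle $C=K_j$. Over $C$ the fiber of $p$ is the circle $\sphere(\mathcal{R}^\perp|_x)$, and $\mathcal{R}^\perp|_C$ is a rank-two bundle, oriented by Corollary \ref{cor:orientability_singular}(i). Over a nearby point $u\in S_{gen}$ the fiber is the two-point set $\sphere(\mathcal{R}^\perp|_u)=\{\pm y(u)\}$. As in Lemma \ref{Lem:Replace}, approaching $x\in C$ along a normal direction $\eta$, the line $\mathcal{R}^\perp|_u$ converges to the line $(W_x\oplus L_\eta)^\perp$ inside the plane $\mathcal{R}^\perp|_x$. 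Choosing a smooth local model $p\approx(\theta,w)\mapsto (\theta, w)$ with $\theta\in C$ and $w\in\N_\theta C\cong\R^2$, I expect the map $\eta\mapsto (W_x\oplus L_\eta)^\perp$ to be given, through the isomorphism $L_0=\psi-\lambda\Id$ from the proof of Theorem \ref{thm:EulerClassXgen}, by essentially $\eta\mapsto$ the line $L_0^{-1}$-orthogonal to $\eta$. This is a degree-one map $\sphere(\N_\theta C)\to \mathbb{P}(\mathcal{R}^\perp_\theta)$ on each normal circle. Lifted to unit vectors, it says that a neighborhood of $p^{-1}(C)$ in $X$ is a solid-torus bundle model. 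Concretely, $X$ near $p^{-1}(C)$ looks like $C\times\{(w,y): y\perp w\text{ up to } L_0\}$, which is $C\times$ (an annulus $\sphere^1\times(-\varepsilon,\varepsilon)$). The two boundary circles of this annulus correspond to $X_+$ and $X_-$, and each meridian circle $\{|w|=\varepsilon\}$ on the $+$ side is matched with the same meridian on the $-$ side through the torus $p^{-1}(C)$. Meanwhile the longitude direction along $C$ is matched tautologically.

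Globally, I will then check that the induced map $\partial X_+\to\partial X_-$ preserves both meridian and longitude classes, with the correct signs, on each boundary torus. By the local model, the composite $\partial N(C)\to p^{-1}(C)\leftarrow \partial N(C)$ is isotopic to the map covering the identity of $\partial N(C)\subset\sphere^3$. Here the identification of the two sides is via $(w,y)\mapsto(w,-y)$ composed with the retraction onto $p^{-1}(C)$, and the point to verify is that this is fiber-preserving over $C$ and degree one on meridians. The Seifert description from the preceding proposition then gives no further ambiguity: a gluing of two copies of $\sphere^3\setminus N(L)$ by a map isotopic to the identity on the boundary is the double.

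The main obstacle is making the local model rigorous. One has to verify that the fiber directions over $u$ near $C$ actually sweep out the circle $\sphere(\mathcal{R}^\perp_x)$ with degree exactly one, and not two, as $u$ goes once around the meridian. The projective map has degree one, but its lift to unit vectors is subtle because $\pm y$ are exchanged. I would try the explicit linearization $L_0$ first, computing the winding directly. If that is messy, the fallback is to use the $\Z_2$-involution $(x,y)\mapsto(x,-y)$ swapping $X_\pm$, which fixes nothing on $X_{gen}$ and acts freely on $X_{sing}$. This involution should directly exhibit $X$ as a double, provided it restricts to a reflection across $X_{sing}$.
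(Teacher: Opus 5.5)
Your proposal is correct and follows essentially the paper's route: identify $X_\pm$ with the link complement via $p$, use the limit from Lemma~\ref{Lem:Replace} to see that each sheet sweeps the singular fiber over $x_0\in C$ with degree one around a meridian, and conclude that the boundary gluing is isotopic to the identity.

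One slip needs fixing. Since $L_{-\eta}=L_\eta$, the map $\eta\mapsto(W_x\oplus L_\eta)^\perp$ is the double cover $\sphere(\N_xC)\to\mathbb{P}(\N_xC)$ followed by the diffeomorphism $[w]\mapsto[(L_0|_{\mathcal{R}^\perp_x})^{-1}(w^\perp\cap\N_xC)]$. So it has degree two, not one, and this is exactly why it lifts to the two degree-one maps $\pm y$ and makes the gluing the fiberwise antipodal map, which is isotopic to the identity. Likewise, in your fallback the involution $\sigma(x,y)=(x,-y)$ is free on $X_{sing}$ and so is not a reflection. That does not matter: $X\cong X_+\cup_{\sigma|_{X_{sing}}}X_+$ with $\sigma|_{X_{sing}}$ isotopic to the identity already gives the double.
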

\begin{proof} It is sufficient to show that the gluing maps between boundary tori are isotopic to the identity. We provide the proof for one boundary component, as the other case is identical.  

Let $U_{1}\subset \sphere^{3}$ be a tubular neighborhood of $K_{1}$ that does not intersect $K_{2}$. We view $U_{1}$ as foliated by tori $T_{r}$, with $r\in (0,\epsilon)$ being the distance from the core $K_{1}$, collapsing to the knot $K_{1}$ as $r\to 0$. 
By definition of the almost-fibration, we know that $p^{-1}(T_{r})=T_{r}\times \sphere^{0}$. We use coordinates $(\theta, \varphi)$ on $T_{r}$ so that the angle $\theta$ corresponds to rotation around a meridian of $K_{1}$ and the $\varphi$ corresponds to rotation around $K_{1}$ (longitudes). Let $\varphi=\varphi_{0}$ be fixed. 
The two points in the fiber of a point $x_{r}(\theta):=x_{r}(\theta, \varphi_{0})\in T_{r}$ are antipodal and will be denoted by $y_{\pm}(x_{r}(\theta))$. As $r\to 0$, the points $x_{r}(\theta)$ converge to $x_{0} \in K_{1}$ independently of $\theta$. By definition of the singular locus, there exists $\mathbf{c}=(c_{1}, c_{2}, c_{3}) \in \mathbb{R}^{3}\setminus \{\mathbf0\}$ such that $c_{1}x_{0}+c_{2}\psi_{1}(x_{0})+c_{3}\psi_{2}(x_{0})=0$. Now, the direction $\partial_{r}(\theta)$ of the gradient of the distance from $K_{1}$ is orthogonal to $K_{1}$, hence we can apply the discussion in the proof of Lemma \ref{Lem:Replace}, which says that each of $y_{\pm}(x_{r}(\theta))$ converge as $r\to 0$ to a point $y_{\pm}(x_{0}(\theta))$ in the singular fiber $p^{-1}(x_{0})=\sphere^{1}$ 
that is orthogonal to $x_{0}, \psi_{1}(x_{0}), \psi_{2}(x_{0})$ (which, we recall, are linearly dependent) and $c_{1}\partial_{r}(\theta)+c_{2}\psi_{1}(\partial_{r}(\theta))+c_{3}\psi_{2}(\partial_{r}(\theta))$. 
In particular, we have described $y_{\pm}(x_0)$ up to sign and found $y_{+}(x_0)=\pm y_{-}(x_0)$. 

As we vary $\theta$, the points $y_{\pm}(x_{0}(\theta))$ rotate around the singular fiber $p^{-1}(x_0)$, providing degree one maps $\mathcal{C}_{r,\varphi_0}^{\pm}\cong \sphere^1\rightarrow \sphere^1\cong p^{-1}(x_0)$ by $y_{\pm}(x_r(\theta))\mapsto y_{\pm}(x_0(\theta))$.
This means that the torus boundary components corresponding to $K_{1}$ are identified via a map
\begin{align*}
    \sphere^{1}\times \sphere^{1} &\rightarrow \sphere^{1}\times \sphere^{1} \\
        (\theta, \varphi) &\mapsto (\pm \theta, \varphi)
\end{align*}
which is isotopic to the identity, as claimed.    
\end{proof}

We can then identify $X$ as a standard Seifert fibered manifold. 

\begin{corollary}\label{Cor:SeifertFibered}
$X$ is the Seifert fibered manifold $M(\Sigma_{1,0};(3,1),(3,-1))$: it fibers over the torus $\mathbb{T}^2= \Sigma_{1,0}$ with Euler number $e=0$ and two singular fibers of order $(3,1)$ and $(3,-1)$. 
\end{corollary}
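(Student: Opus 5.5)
The plan is to read off the Seifert structure directly from Theorem \ref{thm:double}, which exhibits $X$ as the double $X_+\cup_{\partial} X_-$ of $\sphere^3\setminus N(T(2,6))$, together with the preceding Proposition, which identifies each half as $X_\pm\cong M(\Sigma_{0,2};(3,1))$.

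\textbf{Step 1: the doubling map preserves the fibration.} Fix a Seifert fibration on $X_+$ realizing $M(\Sigma_{0,2};(3,1))$. Its restriction to each boundary torus is a product $\sphere^1\times\sphere^1$ by regular fibers. The double is formed with the identity map of $\partial X_+$, viewed as the boundary of the mirror copy $-X_+=X_-$. This identity carries fibers to fibers and boundary sections to boundary sections, so the fibrations glue to a global Seifert fibration on $X$. Its base orbifold is the double of the annulus $\Sigma_{0,2}$ along both boundary circles, namely the torus $\Sigma_{1,0}$.

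\textbf{Step 2: the singular fibers.} Each half contains exactly one exceptional fiber, so $X$ has two. In the standard normalized presentation, the half $X_+$ contributes the invariant $(3,1)$. The other half carries the reversed orientation, since a double is $M\cup_{\id}\overline{M}$ and must be oriented consistently. Reversing orientation sends each Seifert invariant $(\alpha,\beta)$ to $(\alpha,-\beta)$, so $X_-$ contributes $(3,-1)$. This gives the unnormalized symbol $M(\Sigma_{1,0};(3,1),(3,-1))$.

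\textbf{Step 3: the Euler number.} Choose a section $s_+$ of the fibration over the complement of the cone point in $X_+$; on the boundary tori it defines curves $\sigma_+$. In $X_-$ take the mirror section $s_-$. On $\partial X_-=\partial X_+$ it restricts to the same curves $\sigma_+$, because the gluing is the identity. The two sections therefore glue without any correction twist along the tori. The obstruction term is then $b=0$, and the rational Euler number is
\[ e=-\bigl(b+\tfrac13-\tfrac13\bigr)=0. \]
Equivalently, the orientation-reversing involution of $X$ that swaps the halves preserves fibers and reverses the orientation of the total space, which forces $e=-e$, so $e=0$.

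\textbf{Main obstacle.} The step needing care is the sign conventions: that the double really carries $(3,-1)$ rather than $(3,1)$, and that no extra twist of the boundary sections arises. I would settle both by the symmetry argument in Step 3, checked explicitly against the convention for $(\alpha,\beta)$ used in \cite{Martelli}.
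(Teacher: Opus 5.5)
Your proposal is correct and follows essentially the same route as the paper: you glue the Seifert fibrations of the two halves $X_\pm\cong M(\Sigma_{0,2};(3,1))$ of the double along the identity, so the base becomes the double $\Sigma_{1,0}$ of the annulus and the two oppositely oriented copies contribute the exceptional fibers $(3,1)$ and $(3,-1)$. The only difference is that you justify $e=0$ explicitly (by matching boundary sections, or by noting that the fiber-preserving, orientation-reversing swap of the halves forces $e=-e$), whereas the paper just states that the Euler number is computed directly and cites Martelli.
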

\begin{proof}
$X$ is the double of $\sphere^{3}\setminus N(T(2,6))$, hence it is obtained by gluing two copies of $\sphere^{3}\setminus N(T(2,6))$ with opposite orientations along their common boundary. The Seifert description of $\sphere^{3}\setminus N(T(2,6))$ with the opposite orientation is $M(0,2;(3,-1))$ \cite{Martelli}. Now, when we glue the two Seifert fibrations, the new base is the double of the original, which is $\Sigma_{1,0}$. We have one singular fiber from each copy of $X$, which have already been described. The Euler number $e=0$ is computed directly (cf. \cite[$\S$10.3.4]{Martelli}). 
\end{proof}

\begin{remark}
Since $\chi(B) < 0$, where $B=(\Sigma_{1,0};(3,1);(3,-1))$, and $e(M)= 0$, the closed Seifert fibered manifold $X$ carries $\Ha^2\times \R$-geometry \cite[Proposition 12.4.6]{Martelli}. 
\end{remark}

\subsection{\texorpdfstring{$n=3$}{n=3}}\label{Sec:n=3}

This section is a sanity check. We apply the same strategy to show how to reinterpret the well-known case of $X =\mathcal{F}_{1,2} = \Flag(\R^3)$. See also \cite[Example 6.9]{Dav25}. 

We consider $\mathcal{P} = \T_Q\Ha^2_{\pr}$, as usual. 
We fix the following basis for the pencil $\mathcal{P}$:
$$\psi_{1}=\begin{pmatrix}
                2 & 0 & 0 \\
                0 & 0 & 0 \\
                0 & 0 & -2
    \end{pmatrix}   \ \ \ \ \  \psi_{2}=\begin{pmatrix} 
                                         0 & 1 & 0 \\
                                         1 & 0 & 1 \\
                                         0 & 1 & 0
                                        \end{pmatrix}.$$  
                                        
The base of pencil $\hat{\mathcal{B}}(\mathcal{P})$ identifies with the total space $\mathbb{S}(\mathcal{R}^{\perp})$, which we can write as
\[
    \mathbb{S}(\mathcal{R}^{\perp})=\{(x,y)\in \mathbb{S}^{2}\times \mathbb{S}^{2} \ | \ \langle x,y \rangle = \langle \psi_{1}(x),y\rangle = \langle \psi_2(x), y \rangle = 0\} \ .
\]
We denote by $p:\sphere(\mathcal{R}^\bot) \rightarrow \sphere^2$ the projection onto the first factor. Generically, given $x \in \mathbb{S}^{2}$, the vectors $x$, $\psi_{1}(x)$ and $\psi_{2}(x)$ are linearly independent, so $p^{-1}(x)=\emptyset$. In other words, the ``generic locus'' $X_{gen}$, as we have called it in this work, is empty. 

We now describe the singular locus.
First, note the fiber over $x \in \sphere^2$ is non-empty if and only if $\mathcal{R}_{x}=\pi_{x^\bot}\{\psi_{1}(x), \psi_{2}(x)\}$ has dimension $1$, meaning $\dim \Span \{x, \psi_1(x),\psi_2(x)\}=2$. 
Hence, the singular locus $S_{sing} = p(\sphere(\mathcal{R}^\bot))$ is given by 
\[
    p(\mathbb{S}(\mathcal{R}^{\perp}))=\{x \in \mathbb{S}^{2} \ | \ \det(x,\psi_{1}(x),\psi_{2}(x))=0 \} \ .
\]
We can now compute this set explicitly: a point $(x_{1},x_{2},x_{3}) \in p(\mathbb{S}(\mathcal{R}^{\perp}))$ if and only if
\[
    0=\det\begin{pmatrix}
        x_{1} & 2x_{1} & x_{2} \\
        x_{2} & 0 & x_{1}+x_{3} \\
        x_{3} & -2x_{3} & x_{2}
    \end{pmatrix} = 2(x_{1}+x_{3})(2x_{1}x_{3}-x_{2}^2)=2(x_{1}+x_{3})((x_{1}+x_{3})^2-1) .
\]
Therefore, the singular locus $S_{sing}=p(\mathbb{S}(\mathcal{R}^{\perp}))$ consists of three disjoint circles:
\[
    \gamma_{0}=\{x_{1}+x_{3}=0\} \cap \mathbb{S}^{2} \ \ \ \text{and} \ \ \ \ \gamma_{\pm1}=\{x_{1}+x_{3}=\pm 1\} \cap \mathbb{S}^{2}.
\]
For each $x \in S_{sing}$, the preimage $p^{-1}(x)$ consists of the two unit vectors orthogonal to $x, \psi_{1}(x)$ and $\psi_{2}(x)$, thus $\hat{\mathcal{B}}(\mathcal{P})$ is the union of six disjoint circles:
\begin{align*}
    \gamma_{0a}&=\left\{ (x,y) \in \sphere^2 \times \sphere^2 \mid (x_{1},x_{2},x_{3}) \in \gamma_{0}, \; y=\frac{1}{\sqrt{2}}(x_{2},-2x_{1},-x_{2})  \right\} \\
    \gamma_{0b}&=\left\{ (x,y) \in \sphere^2\times \sphere^2 \mid (x_{1},x_{2},x_{3}) \in \gamma_{0}, \; y=\frac{1}{\sqrt{2}}(-x_{2}, 2x_{1},x_{2})  \right\} \\
    \gamma_{1a}&=\left\{ (x,y) \in \sphere^2 \times \sphere^2 \mid  (x_{1},x_{2},x_{3}) \in \gamma_{1}, \;y=(-x_{3},x_{2},-x_{1}) \right\} \\
      \gamma_{1b}&=\left\{ (x,y) \in \sphere^2 \times \sphere^2 \mid  (x_{1},x_{2},x_{3}) \in \gamma_{1}, \;y= (x_{3},-x_{2},x_{1}) \right\} \\
    \gamma_{-1a}&=\left\{ (x,y) \in \sphere^2 \times \sphere^2 \mid (x_{1},x_{2},x_{3}) \in \gamma_{-1}, \; y= (-x_{3},x_{2},-x_{1}) \right\} \\
    \gamma_{-1b}&=\left\{ (x,y) \in \sphere^2 \times \sphere^2 \mid (x_{1},x_{2},x_{3}) \in \gamma_{-1} , \; y= (x_{3},-x_{2},x_{1}) \right\}
\end{align*}
That is, $p^{-1}(\gamma_i)=\gamma_{ia} \sqcup \gamma_{ib}$ in our notation.

The group $\mathbb{Z}_{2}\times \mathbb{Z}_{2}$ generated by the duality $\sigma(x,y)=(y,x)$ and the antipodal map $a(x,y)=(-x,-y)$ acts on $\hat{\mathcal{B}}(\mathcal{P})$. Each map permutes the six circles in the following way (this can be verified by looking at the explicit parameterization of each circle given above):
\begin{align*}
    \sigma(\gamma_{1a})=\gamma_{-1a} \ \ \ \ \  \sigma(\gamma_{1b})=\gamma_{1b} \ \ \ \ \ \ \sigma(\gamma_{0a})=\gamma_{0b} \\
    a(\gamma_{1a})=\gamma_{-1a} \ \ \ \ \  a(\gamma_{1b})=\gamma_{-1b} \ \ \ \ \ \ a(\gamma_{0a})=\gamma_{0a} 
\end{align*}

The action of $\sigma$ and $a$ on the remaining three circles follows since they are involutions. We conclude that under the $\Z_2\times \Z_2$ action, the following pairs of circles are glued together: $(\gamma_{1, a},\gamma_{-1,a}), (\gamma_{1,b},\gamma_{-1,b}), (\gamma_{0a},\gamma_{0b})$. 
Since the $\mathbb{Z}_{2}\times\mathbb{Z}_{2}$-action is free, we conclude that $G_{Q}(\hat{\mathcal{B}}(\mathcal{P}))$ and hence the fiber $\mathfrak{F}$ in $\mathcal{F}_{1,2}$ is the disjoint union of three circles.

\bibliographystyle{alpha}
\small{\bibliography{Bibliography}}

\end{document}